\title[sum of powers of subtree sizes]
{The sum of powers of subtree sizes for conditioned
 Galton--Watson trees}
\date{April 4, 2021}
\newcommand\urladdrx[1]{{\urladdr{\def~{{\tiny$\sim$}}#1}}}
\author{James Allen Fill}
\address{Department of Applied Mathematics and Statistics,
The Johns Hopkins University,
3400 N.~Charles Street,
Baltimore, MD 21218-2682 USA}
\email{jimfill@jhu.edu}
\thanks{Research of the first author supported 
%by NSF grants
%DMS-0104167 and DMS-0406104 and 
by the Acheson~J.~Duncan Fund for the Advancement of Research in
Statistics.}
\author{Svante Janson}
\thanks{Research of the second author supported by the Knut and Alice
  Wallenberg Foundation} 
\address{Department of Mathematics, Uppsala University, PO Box 480,
SE-751~06 Uppsala, Sweden}
\email{svante.janson@math.uu.se}
\keywords{Conditioned Galton--Watson tree, simply generated random tree, additive functional, tree recurrence, subtree sizes, Brownian excursion, random analytic function, generating function, singularity analysis, Hadamard product of sequences, method of moments, polylogarithm}
\subjclass[2020]{Primary:\ 05C05; Secondary:\ 60F05, 60C05, 30E99} 
\numberwithin{equation}{section}
\renewcommand\le{\leqslant}
\renewcommand\ge{\geqslant}
\theoremstyle{plain}% default
\newtheorem{theorem}{Theorem}[section]
\newtheorem{lemma}[theorem]{Lemma}
\newtheorem{corollary}[theorem]{Corollary}
\theoremstyle{definition}
\newtheorem{example}[theorem]{Example}
\newtheorem{problem}[theorem]{Problem}
\newtheorem{remark}[theorem]{Remark}
\newtheorem*{acks}{Acknowledgements}
\newenvironment{romenumerate}[1][-10pt]{% optional argument changes indentation
\addtolength{\leftmargini}{#1}\begin{enumerate}% gives (i), (ii) etc.
 }{\end{enumerate}}
\newcounter{oldenumi}
\newcounter{thmenumerate}
\newenvironment{thmenumerate}
{\setcounter{thmenumerate}{0}%
 \def\item{\par% \ifnum\thethmenumerate=0\else\par\fi %\noindent\fi
 \refstepcounter{thmenumerate}\textup{(\roman{thmenumerate})\enspace}}
}
{}
\newcounter{xenumerate}   %no left indentation; thus wider lines
\newcommand\pfitemx[1]{\par#1:}
\newcommand\pfitemref[1]{\pfitemx{\ref{#1}}}
\newcommand\pfcase[2]{\smallskip\noindent\emph{Case #1: #2} \noindent}
\newcommand{\refT}[1]{Theorem~\ref{#1}}
\newcommand{\refTs}[1]{Theorems~\ref{#1}}
\newcommand{\refL}[1]{Lemma~\ref{#1}}
\newcommand{\refLs}[1]{Lemmas~\ref{#1}}
\newcommand{\refR}[1]{Remark~\ref{#1}}
\newcommand{\refS}[1]{Section~\ref{#1}}
\newcommand{\refSs}[1]{Sections~\ref{#1}}
\newcommand{\refSS}[1]{Section~\ref{#1}}
\newcommand{\refE}[1]{Example~\ref{#1}}
\newcommand{\refApp}[1]{Appendix~\ref{#1}}
\newcommand{\refApps}[1]{Appendices~\ref{#1}}
\newcommand\nopf{\qed}   % for theorem without proof
\DeclareMathOperator*{\sumx}{\sum\nolimits^{*}}
\DeclareMathOperator*{\sumxx}{\sum\nolimits^{**}}
\newcommand{\sumjo}{\sum_{j=0}^\infty}
\newcommand{\sumj}{\sum_{j=1}^\infty}
\newcommand{\sumko}{\sum_{k=0}^\infty}
\newcommand{\sumk}{\sum_{k=1}^\infty}
\newcommand{\summo}{\sum_{m=0}^\infty}
\newcommand{\summ}{\sum_{m=1}^\infty}
\newcommand{\summk}{\sum_{m=0}^k}
\newcommand{\summl}{\sum_{m=0}^\ell}
\newcommand{\sumno}{\sum_{n=0}^\infty}
\newcommand{\sumn}{\sum_{n=1}^\infty}
\newcommand{\sumik}{\sum_{i=1}^k}
\newcommand{\sumim}{\sum_{i=1}^m}
\newcommand{\sumin}{\sum_{i=1}^n}
\newcommand{\sumkn}{\sum_{k=1}^n}
\newcommand{\prodik}{\prod_{i=1}^k}
\newcommand{\prodim}{\prod_{i=1}^m}
\newcommand\set[1]{\ensuremath{\{#1\}}}
\newcommand\xpar[1]{(#1)}
\newcommand\bigpar[1]{\bigl(#1\bigr)}
\newcommand\Bigpar[1]{\Bigl(#1\Bigr)}
\newcommand\biggpar[1]{\biggl(#1\biggr)}
\newcommand\lrpar[1]{\left(#1\right)}
\newcommand\bigsqpar[1]{\bigl[#1\bigr]}
\newcommand\Bigsqpar[1]{\Bigl[#1\Bigr]}
\newcommand\biggsqpar[1]{\biggl[#1\biggr]}
\newcommand\lrsqpar[1]{\left[#1\right]}
\newcommand\abs[1]{|#1|}
\newcommand\bigabs[1]{\bigl|#1\bigr|}
\newcommand\Bigabs[1]{\Bigl|#1\Bigr|}
\def\rompar(#1){\textup(#1\textup)}    % usage: \rompar(...)
\newcommand\xfrac[2]{#1/#2}
\newcommand\parfrac[2]{\lrpar{\frac{#1}{#2}}}
\newcommand\Bigparfrac[2]{\Bigpar{\frac{#1}{#2}}}
\def\xexp(#1){e^{#1}}
\newcommand\ceil[1]{\lceil#1\rceil}
\newcommand\floor[1]{\lfloor#1\rfloor}
\newcommand\ntoo{\ensuremath{{n\to\infty}}}
\newcommand\Ntoo{\ensuremath{{N\to\infty}}}
\newcommand\ktoo{\ensuremath{{k\to\infty}}}
\newcommand\ttoo{\ensuremath{{t\to\infty}}}
\newcommand\bmin{\wedge}
\newcommand\norm[1]{\|#1\|}
\newcommand\downto{\searrow}
\newcommand\upto{\nearrow}
\newcommand\half{\tfrac12}
\newcommand\punkt{.\spacefactor=1000}    % om problem!
\newcommand\ie{i.e\punkt}
\newcommand\eg{e.g\punkt}
\newcommand\cf{cf\punkt}
\newcommand{\as}{a.s\punkt}
\newcommand{\aex}{a.e\punkt}
\newcommand\ii{\mathrm{i}}
\newcommand{\tend}{\longrightarrow}
\newcommand\dto{\overset{\mathrm{d}}{\tend}}
\newcommand\pto{\overset{\mathrm{p}}{\tend}}
\newcommand\asto{\overset{\mathrm{a.s.}}{\tend}}
\newcommand\eqd{\overset{\mathrm{d}}{=}}
\newcommand\bbR{\mathbb R}
\newcommand\bbC{\mathbb C}
\newcommand\bbZ{\mathbb Z}
\newcounter{CC}
\newcommand{\CC}{\stepcounter{CC}\CCx} %new constant C_i
\newcommand{\CCx}{C_{\arabic{CC}}}     %repeats the last C_i
\newcommand{\CCdef}[1]{\xdef#1{\CCx}}     %defines #1 as the last C_i
\newcommand{\CCreset}{\setcounter{CC}0} %repeats from  C_1
\newcounter{cc}
\newcommand{\cc}{\stepcounter{cc}\ccx} %new constant c_i
\newcommand{\ccx}{c_{\arabic{cc}}}     %repeats the last c_i
\newcommand{\ccdef}[1]{\xdef#1{\ccx}}     %defines #1 as the last c_i
\renewcommand\Re{\operatorname{Re}}
\renewcommand\Im{\operatorname{Im}}
\newcommand\E{\operatorname{\mathbb E{}}}
\renewcommand\P{\operatorname{\mathbb P{}}}
\newcommand\Var{\operatorname{Var}}
\newcommand\Cov{\operatorname{Cov}}
\newcommand\Exp{\operatorname{Exp}}
\newcommand\Po{\operatorname{Po}}
\newcommand\Bi{\operatorname{Bi}}
\newcommand\Ge{\operatorname{Ge}}
\newcommand\ga{\alpha}
\newcommand\gb{\beta}
\newcommand\gd{\delta}
\newcommand\gD{\Delta}
\newcommand\gf{\varphi}
\newcommand\gam{\gamma}
\newcommand\gG{\Gamma}
\newcommand\gk{\varkappa}
\newcommand\go{\omega}
\newcommand\gs{\sigma}
\newcommand\gss{\sigma^2}
\newcommand\gS{\Sigma}
\newcommand\gth{\theta}
\newcommand\eps{\varepsilon}
\newcommand\cA{\mathcal A}
\newcommand\cF{\mathcal F}
\newcommand\cH{\mathcal H}
\newcommand\cI{\mathcal I}
\newcommand\cL{{\mathcal L}}
\newcommand\cP{\mathcal P}
\newcommand\cS{{\mathcal S}}
\newcommand\cT{{\mathcal T}}
\newcommand\ett[1]{\boldsymbol1_{#1}}
\newcommand\qw{^{-1}}
\newcommand\qww{^{-2}}
\newcommand\qq{^{1/2}}
\newcommand\qqw{^{-1/2}}
\newcommand\qqq{^{1/3}}
\newcommand\intii{\int_{-1}^1}
\newcommand\intoi{\int_0^1}
\newcommand\intoo{\int_0^\infty}
\newcommand\intoooo{\int_{-\infty}^\infty}
\newcommand\oi{[0,1]}
\newcommand\ooo{[0,\infty)}
\newcommand\dtv{d_{\mathrm{TV}}}
\newcommand\dd{\,\mathrm{d}}
\newcommand\ddx{\mathrm{d}}
\newcommand\ddxx{\frac{\ddx}{\ddx x}}
\newcommand{\pgf}{probability generating function}
\newcommand{\chf}{characteristic function}
\newcommand{\ui}{uniformly integrable}
\newcommand\lhs{left-hand side}
\newcommand\rhs{right-hand side}
\newcommand\GW{Galton--Watson}
\newcommand\GWt{\GW{} tree}
\newcommand\cGWt{conditioned \GW{} tree}
\newcommand\tX{{\widetilde X}}
\newcommand\tY{{\widetilde Y}}
\newcommand\spann[1]{\operatorname{span}(#1)}
\newcommand\tn{\cT_n}
\newcommand\tnv{\cT_{n,v}}
\newcommand\tnV{\cT_{n,V}}
\newcommand\xT{\mathfrak T}
\newcommand\rea{\Re\ga}
\newcommand\rga{{\Re\ga}}
\newcommand\rgb{{\Re\gb}}
\newcommand\wgay{{-\ga-\frac12}}
\newcommand\qgay{{\ga+\frac12}}
\newcommand\rqgay{{\rga+\frac12}}
\newcommand\ex{\mathbf e}
\newcommand\hX{\widehat X}
\newcommand\sgrt{simply generated random tree}
\newcommand\hh[1]{d(#1)}
\newcommand\WW{\widehat W}
\newcommand\coi{C\oi}
\newcommand\out{\gd^+}
\newcommand\zne{Z_{n,\eps}}
\newcommand\ze{Z_{\eps}}
\newcommand\gatoo{\ensuremath{\ga\to\infty}}
\newcommand\rtoo{\ensuremath{r\to\infty}}
\newcommand\Yoo{Y_\infty}
\newcommand\bes{R}
\newcommand\tex{\tilde{\ex}}
\newcommand\tbes{\tilde{\bes}}
\newcommand\Woo{W_\infty}
\newcommand{\hm}{m_1}
\newcommand{\thm}{\tilde m_1}
\newcommand{\bbb}{B^{(3)}}
\newcommand{\rr}{r^{1/2}}
\newcommand\coo{C[0,\infty)}
\newcommand\coT{\ensuremath{C[0,T]}}
\newcommand\expx[1]{e^{#1}}
\newcommand\gdtau{\gD\tau}
\newcommand\ygam{Y_{(\gam)}}
\newcommand\EE{V}
\newcommand\pigsqq{\sqrt{2\pi\gss}}
\newcommand\pigsqqw{\frac{1}{\sqrt{2\pi\gss}}}
\newcommand\gapigsqqw{\frac{(\ga-\frac12)\qw}{\sqrt{2\pi\gss}}}
\newcommand\gdd{\frac{\gd}{2}}
\newcommand\gdq{\frac{\gd}{4}}
\newcommand\eit{e^{\ii t}}
\newcommand\emit{e^{-\ii t}}
\newcommand\tgf{\widetilde\gf}
\newcommand\txi{\tilde\xi}
\newcommand\intT{\frac{1}{2\pi}\int_{-\pi}^\pi}
\newcommand\intpi{\int_{-\pi}^\pi}
\newcommand\Li{\operatorname{Li}}
\newcommand\xq{\setminus\set{\frac12}}
\newcommand\xo{\setminus\set{0}}
\newcommand\tqn{t/\sqrt n}
\newcommand\intpm[1]{\int_{-#1}^{#1}}
\newcommand\gnaxt{g_n(\ga,x,t)}
\newcommand\gaxt{g(\ga,x,t)}
\newcommand\gssx{\frac{\gss}2}
\newcommand\tq{\tilde q}
\newcommand\gao{\ga_0}
\newcommand\ppp{\cP_1}
\newcommand\tpsi{\widetilde\psi}
\newcommand\tgD{\tilde\gD}
\newcommand\xinn{\xi_{n-1,N}}
\newcommand\zzn{\frac12+\ii y_n}
\newcommand\tgdn{\tgD_N}
\newcommand\xgdn{\gD^*_N}
\newcommand\act{|\cT|}
\newcommand\gna{g_{n,\ga}}
\newcommand\hna{h_{n,\ga}}
\newcommand\hnax{\hna^*}
\newcommand\doi{D_{1}}
\newcommand\cHoi{\cH(\doi)}
\newcommand\db{D}
\newcommand\dbm{D_-}
\newcommand\dbmx{\widehat D}
\newcommand\dbmB{D_-^B}
\newcommand\dbmxB{\widehat{D}^B}
\newcommand\DB{D^B}
\newcommand\bDB{\overline{\DB}}
\newcommand\Dbad{D_0}
\newcommand\Dbadm{D_-}
\newcommand\Dbadx{D^*}
\newcommand\OHD{O_{\cH(\Dbad)}}
\newcommand\OHDx{O_{\cH(\Dbadx)}}
\newcommand\Dgdx{D}  % depends on $\gd$!
\newcommand\DEgdx{D^\gd}
\newcommand\VZ{V}
\newcommand\GDD{$\gD$-domain}
\newcommand\gdaf{\gda{} function}
\newcommand\gda{$\gD$-analytic}
\newcommand\xxl{^{(\ell)}}
\newcommand\xxll{^{(\ell_1,\ell_2)}}
\newcommand\xxllo{^{(\ell,\ell)}}
\newcommand\xxm{^{(m)}}
\newcommand\ctn{\cT_n}
\newcommand\oz[1]{o\bigpar{|1-z|^{#1}}}
\newcommand\Oz[1]{O\bigpar{|1-z|^{#1}}}
\newcommand\Ozo[1]{O\bigpar{|1-z|^{#1}+1}}
\newcommand\kk{\kappa}
\newcommand\kkk{\chi}
\newcommand\REA{\Re A}
\newcommand\yz{\bigpar{y(z)}}
\newcommand\gax{\ga'}
\newcommand\bga{\overline\ga}
\newcommand\QX{\mathbf{X}}
\newcommand\NNo{\set{0,1,2,\dots}}
\newcommand\dF{\widecheck F}
\newcommand\zddz{\vartheta}
\newcommand\ddz{\frac{\ddx}{\dd z}}
\newcommand\gaoo{\ga_\infty}
\newcommand\gthx{\theta_1}
\newcommand\gthy{\theta_2}
\newcommand\vpx{\varpi}
\newcommand\xxi{\tau}
\newcommand{\Holder}{H\"older}
\newcommand\CS{Cauchy--Schwarz}
\newcommand\CSineq{\CS{} inequality}
\newcommand{\Levy}{L\'evy}
\newcommand{\Takacs}{Tak\'acs}
\begin{document}

\begin{abstract} 
We study the additive functional $X_n(\ga)$ on \cGWt{s} given, for arbitrary complex~$\ga$, by summing the $\ga$th power of all subtree sizes.  Allowing complex~$\ga$ is advantageous, even for the study of real~$\ga$, since it allows us to use powerful results from the theory of analytic functions in the proofs.  

For $\rea < 0$, we prove that $X_n(\ga)$, suitably normalized, has a complex normal limiting distribution; moreover, as processes in~$\ga$, the weak convergence holds in the space of analytic functions in the left half-plane.  We establish, and prove similar process-convergence extensions of, limiting distribution results for~$\ga$ in various regions of the complex plane.  We focus mainly on the case where $\rea > 0$, for which $X_n(\ga)$, suitably normalized, has a limiting distribution that is \emph{not} normal but does not depend on the offspring distribution~$\xi$ of the \cGWt, assuming only that $\E \xi = 1$ and $0 < \Var \xi < \infty$.  Under a weak extra moment assumption on~$\xi$, we prove that the convergence extends to moments, ordinary and absolute and mixed, of all orders.

At least when $\rea > \frac12$, the limit random variable $Y(\ga)$ can be expressed as a function of a normalized Brownian excursion.  
\end{abstract}

\maketitle

\section{Introduction and main results}\label{S:intro}

In the study of random trees, one important part is the study of
\emph{additive functionals}. These are functionals of rooted trees 
of the type
\begin{equation}\label{F}
  F(T):=\sum_{v\in T} f(T_v),
\end{equation}
where $v$ ranges over all nodes of the tree $T$, $T_v$ is the subtree
consisting of $v$ and all its descendants, and $f$ is a given functional of
trees, often called the \emph{toll function}.
Equivalently, additive functionals may be defined by the recursion
\begin{equation}\label{F2}
  F(T):=f(T)+\sum_{i=1}^d F(T_{v(i)}),
\end{equation}
where $d$ is the degree of the root $o$ of $T$ and $v(1),\dots,v(d)$ are the
children of $o$.
(All trees in this paper are rooted.)

We are mainly interested in the case when $T=\tn$ is some random tree of  
order $|\tn|=n$, and we study asymptotics of $F(\tn)$ as \ntoo.
Such problems have been studied by many authors, for different classes of
functionals $f$ and different classes of random trees $\cT_n$; 
some examples are
\cite{
HwangR,%2002
FillK04,
FillFK, % 2005
FillK05,
SJ296, % 2015
Wagner15,
SJ285, % 2016
Delmas18,
RWagner19,
SJ347, % preprint 2020-03 tries
Delmas20, %preprint 2020-09
Caracciolo20}. %preprint 2020-10

In the present paper we consider the case where 
the toll function is $f_\ga(T):=|T|^\ga$ for some constant $\ga$, and
$\tn$ is a \cGWt,
defined by some offspring distribution $\xi$ with $\E\xi=1$ and
$0<\Var\xi<\infty$; 
see \refS{SSGW}
for definitions and note that this includes for example uniformly random
labelled trees, ordered trees, and binary trees.
(We use these standing assumptions on $\tn$ and~$\xi$ throughout the
paper, whether said explictly or not.)
Some previous papers dealing with this situation, in varying generality, are
\cite{
FillK04,
FillFK, % 2005
Delmas18,
Delmas20,
Caracciolo20}.
We denote the corresponding additive functional \eqref{F} by $F_\ga$;
thus $F_\ga(T)$ is the sum of the $\ga$th power of all subtree sizes for $T$.
We also introduce the following notation:
\begin{align}
  X_n(\ga)&:=F_\ga(\tn):=\sum_{v\in\tn}|\tnv|^\ga,\label{Xn}
\\
\tX_n(\ga)&:=X_n(\ga)-\E X_n(\ga).\label{tX}
\end{align}
Note that for $\ga=0$, we trivially have $X_n(0)=F_0(\tn)=n$.
The case $\ga=1$ yields, as is well known, 
the \emph{total pathlength},
see \refE{Ega=1}.

Previous papers have studied
the case when~$\ga$ is real, but 
we consider these variables for arbitrary complex~$\ga$. 
This is advantageous, even for the study of real~$\ga$, 
since it allows us to use powerful results from the
theory of analytic functions in the proofs. 
We also find new phenomena for non-real $\ga$ (for example \refT{TEgd}).
Note that $X_n(\ga)$ and $\tX_n(\ga)$ are random
entire functions of $\ga$, for any given $n$. [The expectation in
\eqref{tX} exists because, for a given $n$, the variable $X_n(\ga)$ takes
only a finite number of different values.]

We begin with the case $\rea<0$, where $X_n(\ga)$ is asymptotically normal
as an easy consequence of \cite[Theorem~1.5 and Remark~1.6]{SJ285}.
More precisely, the following holds. 
(Proofs of this and other theorems stated here are given later.)
We say that a complex random variable $\zeta$ is \emph{normal}
if $(\Re\zeta,\Im\zeta)$ has a two-dimensional normal distribution.
(See \cite[Section 1.4]{SJIII}, and note that a real normal variable is a
special case.)

\begin{theorem}\label{T<0}
Let $\tn$ be a \cGWt{} defined  by an offspring distribution $\xi$ with
$\E\xi=1$ and $0<\gss:=\Var\xi<\infty$.
Then there exists a family of centered complex
normal random variables $\hX(\ga)$,
$\rea<0$, such that, as \ntoo,
\begin{equation}\label{t0}
 n^{-1/2}\tX_n(\ga)=  \frac{X_n(\ga)-\E X_n(\ga)}{\sqrt n}\dto \hX(\ga),
\qquad \rea<0.
\end{equation}
Moreover, $\hX(\ga)$ is a (random) analytic function of $\ga$, and 
the convergence \eqref{t0} holds in the space $\cH(H_-)$ of analytic
functions in the left half-plane $H_-:=\set{\ga:\rea<0}$.
Furthermore,
\begin{equation}\label{t0symm}
\overline{\hX(\ga)}=\hX(\overline{\ga}),
\qquad \ga\in H_-.  
\end{equation}
The covariance function $\E\bigpar{\hX(\ga)\hX(\gb)}$ is an analytic
function of two variables $\ga,\gb\in H_-$, and, as \ntoo, 
\begin{equation}\label{t0cov}
 n\qw \Cov\bigpar{X_n(\ga),X_n(\gb)}\to \E\bigsqpar{\hX(\ga)\hX(\gb)},
\qquad \ga,\gb\in H_-.
\end{equation}
\end{theorem}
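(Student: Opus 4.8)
The plan is to reduce everything to the machinery of \cite[Theorem~1.5 and Remark~1.6]{SJ285} on additive functionals of \cGWt{s} with toll functions of polynomial growth, and then to bootstrap the one-point convergence in \eqref{t0} to process convergence in $\cH(H_-)$ using a tightness argument in the space of analytic functions. First I would check the hypotheses of the cited theorem: for fixed~$\ga$ with $\rea<0$ the toll function $f_\ga(T)=|T|^\ga$ satisfies $|f_\ga(T)|=|T|^{\rea}\le 1$, so it is bounded (in fact $\ell^2$-summable with room to spare), and the cited result gives a centered complex normal limit $\hX(\ga)$ together with the covariance convergence \eqref{t0cov} for each fixed pair $\ga,\gb$; complex $\ga$ is handled by splitting $|T|^\ga=|T|^{\rga}\cos(\Im\ga\,\log|T|)+\ii|T|^{\rga}\sin(\Im\ga\,\log|T|)$ into real and imaginary parts, each a bounded real toll function, and applying the real vector-valued version of the cited theorem to get joint normality of $(\Re X_n(\ga),\Im X_n(\ga))$. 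The identity \eqref{t0symm} is immediate from $X_n(\bar\ga)=\overline{X_n(\ga)}$ (since $|\tnv|$ is a positive integer and $\overline{|\tnv|^\ga}=|\tnv|^{\bar\ga}$), which passes to the limit, and \eqref{t0cov} together with analyticity of $\hX$ forces the covariance $\E[\hX(\ga)\hX(\gb)]$ to be analytic in $(\ga,\gb)$ by, e.g., Morera plus Fubini, or simply because it is a locally uniform limit of the entire functions $n\qw\Cov(X_n(\ga),X_n(\gb))$.

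The substantive step is promoting pointwise-in-$\ga$ convergence to convergence in $\cH(H_-)$. The strategy is the standard one: convergence in distribution in the Fréchet space $\cH(H_-)$ (with the topology of uniform convergence on compact subsets) follows from (a) convergence of finite-dimensional distributions $\bigpar{n\qqw\tX_n(\ga_1),\dots,n\qqw\tX_n(\ga_k)}\dto\bigpar{\hX(\ga_1),\dots,\hX(\ga_k)}$, and (b) tightness of $\set{n\qqw\tX_n}_n$ in $\cH(H_-)$. Part (a) again reduces to the multivariate CLT in \cite{SJ285} applied to the finite family of bounded real toll functions obtained from the real and imaginary parts of $f_{\ga_1},\dots,f_{\ga_k}$. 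For part (b), by Montel's theorem a subset of $\cH(H_-)$ is relatively compact iff it is uniformly bounded on every compact $K\subset H_-$; hence tightness of $n\qqw\tX_n$ follows from the uniform second-moment bound
\begin{equation}\label{pf:unifbound}
 \sup_n\; \sup_{\ga\in K}\; \E\bigabs{n\qqw\tX_n(\ga)}^2
 =\sup_n\;\sup_{\ga\in K}\; n\qw\Var\bigpar{X_n(\ga)}<\infty,
\end{equation}
for each compact $K\subset H_-$, via Markov's inequality applied to $\sup_{\ga\in K}$ of the (random, analytic) modulus after covering $K$ by a slightly larger compact set and using a Cauchy-integral bound $\sup_{\ga\in K}|g(\ga)|\le C_K\int_{K'}|g|$ for analytic~$g$. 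The bound \eqref{pf:unifbound} itself is obtained either by invoking the variance asymptotics already contained in \cite{SJ285} and noting they are locally uniform in~$\ga$ (the relevant series $\sum_v \E|f_\ga(\tnv)|^2$-type quantities and their cross terms depend on~$\ga$ only through $|\tnv|^{\rga}$, which is dominated by $|\tnv|^{\rga_0}\le1$ uniformly for $\rga\ge\rga_0$ on~$K$), or directly by a second-moment computation using the known joint law of subtree sizes along the lines of the tree recurrence \eqref{F2}.

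The main obstacle I expect is establishing \eqref{pf:unifbound}, i.e.\ making the variance bound genuinely \emph{uniform} over compact sets of~$\ga$ rather than just pointwise: the covariance of $X_n(\ga)$ and $X_n(\gb)$ is a double sum over pairs of nodes, and one must control it uniformly as $\ga,\gb$ range over~$K$, which requires either a careful reading of the error terms in the proof of \cite[Theorem~1.5]{SJ285} to see that they are locally uniform in the toll function's parameters, or an independent estimate. Once \eqref{pf:unifbound} is in hand, Montel gives tightness, the finite-dimensional convergence is routine, and Prohorov's theorem plus identification of the limit (its finite-dimensional distributions are those of $\hX$, and an analytic limit of analytic functions is analytic) yield \eqref{t0} in $\cH(H_-)$; the covariance convergence \eqref{t0cov} then also upgrades to hold locally uniformly, and in particular the limiting covariance function is analytic in both variables as claimed.
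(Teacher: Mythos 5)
Your proposal is correct and follows essentially the same route as the paper: fixed-$\ga$ and finite-dimensional convergence from \cite[Theorem 1.5]{SJ285} via the Cram\'er--Wold device applied to $\Re f_\ga$ and $\Im f_\ga$, tightness in $\cH(H_-)$ from a locally uniform second-moment bound combined with Montel's theorem and an integral-representation bound on $\sup_K|\cdot|$ (the paper's Lemmas~\ref{Lhd}--\ref{LL1}), and then Lemma~\ref{Lsub}-type identification of the limit, with \eqref{t0symm} and \eqref{t0cov} handled as you indicate. The one step you flag as the main obstacle, the uniform bound $n\qw\E|\tX_n(\ga)|^2\le C(\ga)$ with $C$ bounded on proper half-planes $\rea<-\eps$, is obtained in the paper exactly along your first suggested route, by applying \cite[Theorem 6.7]{SJ285} to the truncated toll function $f_\ga(T)\ett{|T|\le n}$ (Lemma~\ref{LL2}\ref{LL2<0}).
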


The convergence in $\cH(H_-)$ 
means  uniform convergence on compact sets and
implies joint convergence for different $\ga$
in \eqref{t0}; see \refSS{SSanalytic}.

The distribution of the limit $\hX(\ga)$ depends on the offspring
distribution $\xi$ in a rather complicated way. 
Since the variables $\hX(\ga)$ are complex normal, 
and \eqref{t0symm} holds, 
the joint distribution of all $\hX(\ga)$ is determined by the covariance
function $\E\bigpar{\hX(\ga)\hX(\gb)} $, $\ga,\gb\in H_-$. We give a formula
for this in \eqref{r0cov}, but we do not know any simple way to evaluate it.

In most parts of the paper we assume $\rea>0$.
%(For $\ga=0$, we trivially have $X_n(0)=F_0(\tn)=n$.)
We introduce a normalization that will turn out to be correct 
for $\rea>0$ and define
\begin{align}
Y_n(\ga)&:=n^{-\ga-\frac12}X_n(\ga),\label{Yn}
\\
\tY_n(\ga)&:=n^{-\ga-\frac12}\tX_n(\ga)=Y_n(\ga)-\E Y_n(\ga)
.\label{tY}
\end{align}

Then the following holds. 
%\cf{} \refT{T<0}.

\begin{theorem}\label{T1}
There exists a family of complex
random variables $\tY(\ga)$, $\rea>0$, such that if 
$\tn$ is a \cGWt{} defined  by an offspring distribution $\xi$ with
$\E\xi=1$ and $0<\gss:=\Var\xi<\infty$, then, as \ntoo,
\begin{equation}\label{t1}
\gs n^{-\ga-\frac12}\tX_n(\ga)=  \gs\tY_n(\ga)\dto \tY(\ga),
\qquad \rea>0.
\end{equation}
Moreover, $\tY(\ga)$ is a (random) analytic function of $\ga$, and 
the convergence \eqref{t1} holds in the space $\cH(H_+)$ of analytic
functions in the right half-plane $H_+:=\set{\ga:\rea>0}$.
\end{theorem}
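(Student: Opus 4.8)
The plan is to establish \eqref{t1} first as convergence of finite-dimensional distributions together with a tightness argument in the space $\cH(H_+)$, since convergence in $\cH(H_+)$ is equivalent to locally uniform convergence, and by standard results (see \refSS{SSanalytic}) it suffices to prove (a) convergence of the finite-dimensional distributions $\bigpar{\gs\tY_n(\ga_1),\dots,\gs\tY_n(\ga_k)}$ for every finite tuple $\ga_1,\dots,\ga_k\in H_+$, and (b) a tightness/stochastic-boundedness estimate for the random analytic functions $\gs\tY_n(\cdot)$ on compact subsets of $H_+$. For (a) I would use the method of moments: compute asymptotics of the mixed (and mixed absolute) moments $\E\bigsqpar{\prod_j \gs^{m_j}\tY_n(\ga_j)^{m_j}}$ and show they converge to the corresponding moments of a limiting family $\tY(\ga_j)$, which we may construct directly from the limiting moment formulas; the key input here is that the normalization $n^{-\ga-\frac12}$ is chosen exactly so that the dominant contribution to $\E X_n(\ga)$ and to the centered moments comes from subtrees of size of order $n$, governed by the local limit theorem for $|\tnv|$ and ultimately by functionals of the Brownian excursion. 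I would derive a recursion for the (multivariate) moment generating structure of $X_n(\ga)$ from \eqref{F2}, pass to generating functions in $n$, and apply singularity analysis; the standing assumption $0<\gss<\infty$ enters through the square-root singularity of the generating function of $\xi$.

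A cleaner route, which I would actually follow, is to piggyback on the analytic-function machinery already set up for \refT{T<0} and to exploit analyticity in $\ga$ to reduce the work. Concretely: (i) for a fixed real $\ga>0$ one can obtain the one-dimensional convergence $\gs\tY_n(\ga)\dto\tY(\ga)$ either from known results on additive functionals with polynomial toll functions on \cGWt{s} (e.g.\ the results quoted from \cite{SJ285} and the Brownian-excursion representation, valid at least for $\rea>\tfrac12$, cf.\ the final sentence of the abstract) or by the moment computation above; (ii) the family $\set{\tY_n(\ga)}_{\rea>0}$ consists of random analytic functions, and the moment bounds show that, on any compact $K\subset H_+$, $\E\sup_{\ga\in K}\bigabs{\gs\tY_n(\ga)}^2$ is bounded uniformly in $n$ — this gives tightness in $\cH(K)$ via a Cauchy-estimate argument (bound the sup by a contour integral of $\abs{\gs\tY_n}$ over a slightly larger compact set, then use the $L^2$ bound and Montel/Prokhorov as in the proof of the analogous statement for $\cH(H_-)$); (iii) any subsequential limit in $\cH(H_+)$ is a random analytic function whose finite-dimensional distributions are pinned down by the moment asymptotics from step (a), hence the limit is unique, which upgrades tightness to convergence. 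The symmetry $\overline{\tY(\ga)}=\tY(\overline\ga)$ (not stated in \refT{T1} but true and useful) follows since $\tX_n$ has real coefficients.

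The main obstacle is part (a): getting \emph{uniform-in-$\ga$} (over compacts) asymptotics for the mixed moments of $\tX_n(\ga)$, with enough control to conclude both the limiting moment formulas and the $L^2$-tightness bound. The difficulty is that $\ga$ appears in the exponent, so the generating-function singularity analysis must be carried out with $\ga$ as a parameter ranging over a compact subset of $\set{\rea>0}$, tracking error terms uniformly; in particular one must show that the contribution of "small" subtrees (those of size $o(n)$) is negligible after the $n^{-\ga-\frac12}$ normalization, uniformly in $\ga$ on compacts, which requires care near the line $\rea=0$ where the normalization degenerates — this is presumably why the theorem is stated on the open half-plane $H_+$ and handled separately from the boundary and from $\rea<0$. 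Once the uniform moment asymptotics are in hand, the construction of $\tY(\ga)$ as an analytic process and the passage to process convergence are routine given the framework of \refSS{SSanalytic}. I expect the detailed proof to occupy a later section, with the moment recursion and its singularity analysis doing the heavy lifting, and the Brownian-excursion representation of $\tY(\ga)$ (for $\rea>\tfrac12$) obtained afterwards by identifying the limit moments with moments of the corresponding excursion functional.
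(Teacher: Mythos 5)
Your overall architecture --- tightness of the random analytic functions $\gs\tY_n(\cdot)$ in $\cH(H_+)$ from second-moment bounds plus a harmonic/Cauchy estimate, finite-dimensional convergence, and uniqueness of subsequential limits --- is exactly the paper's, and your tightness step matches \refL{LL1} and \refL{LL2}\ref{LL2>0} (which get $\E|\tY_n(\ga)|^2\le C(\ga)$ from \cite[Theorem 6.7]{SJ285} and then use a Poisson-kernel bound and Montel's theorem). The decisive difference is where and how the finite-dimensional limits are identified. The paper does this only for $\rea>1$, where $(2|\tnv|)^\ga$ is an absolutely convergent double integral over the depth-first-walk interval $J_v$, so that $Y_n(\ga)$ becomes a continuous functional of $\WW_n$ and Aldous's convergence $\WW_n\dto2\ex$ gives the joint convergence directly (\refL{LB1}); the extension to all of $H_+$ is then free, because a tight sequence of random analytic functions whose finite-dimensional distributions converge on a set with a limit point converges in all of $\cH(D)$ (\refL{Lsub}). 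No moments are computed at all in the proof of \refT{T1}.

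The gap in your proposal is in step (a): you want mixed-moment asymptotics, uniformly over compacts of $H_+$, under only $\E\xi=1$ and $0<\gss<\infty$, and you want the limit law to be determined by those moments at every $\ga\in H_+$. Both requirements fail at $\ga=\frac12$ (and the uniformity is delicate on the whole line $\rea=\frac12$): the paper's singularity analysis gives moment convergence for fixed $\ga$ only when $\rga\neq\frac12$ (\refT{TXmom}), the uniform estimates of \refSS{SSuniform} require the extra hypothesis $\E\xi^{2+\gd}<\infty$, and the case $\ga=\frac12$ without that hypothesis is explicitly left open (\refR{RT1mom}); moment-determinacy of the limit is likewise only established for $\ga\neq\frac12$ (\refR{Runique}). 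So as written, your identification of the subsequential limits does not go through on all of $H_+$ under the standing assumptions. The repair is the one you gesture at but do not commit to: it suffices to identify the finite-dimensional distributions on \emph{any} subset of $H_+$ with a limit point --- e.g.\ $\set{\ga:\rea>1}$, or even a real interval such as $(2,3)$ --- and there the continuous-mapping route through the Brownian excursion is available and avoids moments entirely. Note also that the excursion representation for $\frac12<\rga\le1$ already requires a truncation argument (\refL{LB1/2}), and for $0<\rga\le\frac12$ no explicit representation is known (\refR{Rrepr}), so it cannot serve as the input you suggest in step (i) for general real $\ga>0$.
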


Here $\tY(\ga)$ is \emph{not} normal. [In fact, it follows from \eqref{Y}
and \eqref{tx1} below that if $\ga>\frac12$, then $\tY(\ga)$ is bounded below.]
On the other hand, note that the family $\tY(\ga)$ does \emph{not} depend on the
offspring 
distribution $\xi$; it is the same for all \cGWt{s} satisfying our
conditions
$\E\xi=1$ and $0<\gss<\infty$, and thus the asymptotics of $\tX_n$ depends
on $\xi$ only through the scaling factor $\gs$.
Hence, we have universality of the limit when $\rea>0$, but not when $\rea<0$.

We can add moment convergence to \refT{T1}, at least
provided we add a weak extra moment assumption.

\begin{theorem}\label{T1mom}
Assume, in addition to the conditions on $\xi$ in \refT{T1}, that
$\E\xi^{2+\gd}<\infty$ for some $\gd>0$.
Then, the limit \eqref{t1} holds with all moments,
ordinary and absolute.
In other words,  if\/ $\rga>0$, then $\E|\tY(\ga)|^r<\infty$ for every
$r<\infty$; 
furthermore, for any integer $\ell\ge1$,
  \begin{equation}\label{t1mom}
 n^{-\ell(\qgay)}\E\bigsqpar{\tX_n(\ga)^\ell}
= \E\bigsqpar{\tY_n(\ga)^\ell}
\to  \gs^{-\ell}\E\bigsqpar{ \tY(\ga)^\ell},
\qquad \rea>0
,\end{equation}
and similarly for absolute moments and mixed moments of $\tX_n(\ga)$
and $\overline{\tX_n(\ga)}$.

Moreover, for each fixed $\ell$, \eqref{t1mom} 
and its analogues for absolute moments and mixed moments
hold uniformly for $\ga$ in
any fixed compact subset of $H_+$;
the limit $\E\tY(\ga)^\ell$ is an analytic
function of $\ga\in H_+$ while absolute moments and
mixed moments of $\tY(\ga)$ and $\overline{\tY(\ga)}$ 
are continuous functions of $\ga\in H_+$.
\end{theorem}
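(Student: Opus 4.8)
The plan is to upgrade the distributional convergence of \refT{T1} to convergence of moments by establishing uniform integrability of suitable powers, which in turn follows from uniform $L^r$-bounds on $\tY_n(\ga)$ for every $r<\infty$. Concretely, I would first prove that, under the extra assumption $\E\xi^{2+\gd}<\infty$, there is for each compact $K\subset H_+$ and each integer $p\ge1$ a constant $C=C(K,p)$ with
\begin{equation*}
\sup_n\;\sup_{\ga\in K}\;\E\bigabs{\tY_n(\ga)}^{2p}\le C.
\end{equation*}
Granted such a bound, the families $\{\tY_n(\ga)^\ell\}_n$ and $\{|\tY_n(\ga)|^r\}_n$ (and the mixed products $\tY_n(\ga)^{\ell_1}\overline{\tY_n(\ga)}^{\ell_2}$) are bounded in some $L^{1+\eps}$, hence uniformly integrable; combined with \refT{T1} this gives \eqref{t1mom}, its absolute and mixed analogues, and finiteness of $\E|\tY(\ga)|^r$. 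The uniformity in $\ga$ over compacts then comes from a routine equicontinuity argument: the bound above plus Cauchy estimates on the random analytic functions $\tY_n$ (they are analytic in $\ga$ for each $n$, as noted after \eqref{tX}) show the moment functions $\ga\mapsto\E[\tY_n(\ga)^\ell]$ are uniformly Lipschitz on slightly smaller compacts, so pointwise convergence promotes to uniform convergence, and the limiting moment functions inherit analyticity (for holomorphic moments, via Morera or as a locally uniform limit of analytic functions) or continuity (for absolute/mixed moments).

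The heart of the matter is therefore the uniform moment bound, and the natural route is a recursion on moments using the additive structure \eqref{F2}. Writing $X_n(\ga)=n^\ga+\sum_{i=1}^{d}X_{n_i}(\ga)$ conditionally on the root degree $d$ and the subtree sizes $n_1,\dots,n_d$ (which, for a \cGWt, have the usual description in terms of the offspring variables conditioned to sum appropriately), one expands $\E[\tX_n(\ga)^{2p}]$ and controls the cross terms. The key analytic input is a good estimate for $\E X_n(\ga)$; one expects $\E X_n(\ga)=\mu(\ga)\,n^{\ga+\frac12}+(\text{lower order})$ for a suitable constant $\mu(\ga)$ when $\rga>0$, uniformly on compacts, and this should be derivable from the known singularity analysis of the generating function of $\E X_n(\ga)$ (the same machinery that must underlie \refT{T1}). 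With that in hand, $\tX_n(\ga)=\sum_i\tX_{n_i}(\ga)+(\text{deterministic remainder of order }n^{\rga+\frac12-\text{something}})$, and an induction on $n$ closes: the martingale-type cancellation from centering, together with $\E\xi^{2+\gd}<\infty$ to control the contribution of atypically large root degree, keeps the $2p$-th moment of order $n^{p(2\rga+1)}$, i.e.\ $\E|\tY_n(\ga)|^{2p}=O(1)$.

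The main obstacle I anticipate is making the recursion uniform in $\ga$ over a compact $K\subset H_+$ simultaneously with being uniform in $n$: the constants produced by singularity analysis and by the combinatorial expansion of the $2p$-th moment both depend on $\ga$, and one must check they stay bounded as $\ga$ ranges over $K$ — in particular near the boundary $\rga\downarrow 0$, where the normalization exponent $\ga+\frac12$ degenerates toward the $\rea<0$ regime and the estimates are most delicate. A secondary technical point is handling the large-degree events: the step from $\E\xi^2<\infty$ to $\E\xi^{2+\gd}<\infty$ is exactly what is needed to bound sums like $\E\sum_i |\tX_{n_i}(\ga)|^{2p}$ when the number of summands $d$ itself has only a slightly-better-than-second moment, and one must track the $\gd$ through the induction. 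I would organize the argument so that \refT{T1mom} follows from one clean lemma asserting the uniform bound $\sup_n\sup_{\ga\in K}\E|\tY_n(\ga)|^{2p}<\infty$, relegating the inductive estimate and the generating-function asymptotics for $\E X_n(\ga)$ to that lemma, and then deduce moment convergence, uniformity, analyticity and continuity by the soft arguments above.
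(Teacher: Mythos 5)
Your overall architecture is the paper's: everything is reduced to the single uniform bound $\sup_n\sup_{\ga\in K}\E|\tY_n(\ga)|^{2p}<\infty$ (this is Lemma \ref{LU4} in the paper), after which uniform integrability plus \refT{T1}, a Skorohod-coupling/subsequence argument for uniformity on compacts, and locally uniform limits of analytic functions give everything else; that soft part of your proposal is fine. The gap is in how you propose to prove the bound. A direct induction on $n$ through the root decomposition of the \emph{conditioned} tree does not close as sketched: conditioned on $|\cT|=n$, the root degree and the subtree sizes $n_1,\dots,n_d$ have a nontrivial joint law, and after centering the remainder $n^\ga+\sum_i\E[X_{n_i}(\ga)\mid n_i]-\E X_n(\ga)$ is \emph{random}, not a deterministic term of controllable order, so the claimed martingale-type cancellation is not available. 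The paper avoids this entirely by running the moment recursion at the level of generating functions $M_\ell(z)=\E[F(\cT)^\ell z^{|\cT|}]$ for the \emph{unconditioned} tree (Lemma \ref{LH}), where the subtrees really are i.i.d., then doing uniform singularity analysis with Hadamard products (Lemmas \ref{LU}--\ref{LU2}) and transferring to $\cT_n$ only at the end by coefficient extraction; the $\E\xi^{2+\gd}$ hypothesis enters there through the refined expansion of $y(z)$ (Lemma \ref{Lgdy}), not through control of large root degrees.

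Two further concrete problems. First, your ansatz $\E X_n(\ga)=\mu(\ga)\,n^{\ga+\frac12}+\cdots$ is wrong for $0<\rga<\frac12$: there the leading term is $\mu(\ga)n$ (see \refT{TE}\ref{TE-}), so the toll must be recentred to $n^\ga-\mu(\ga)$ in that range and the two regimes patched together. Second, the genuinely delicate point is not $\rga\downto0$ but $\ga=\frac12$, where $\mu$ has a pole and neither normalization of the toll works; the paper's fix (proof of Lemma \ref{LU4}, Case 2) is to observe that $\ga\mapsto\tY_n(\ga)$ is an $L^r$-valued analytic function and to apply the maximum modulus principle on a small circle around $\frac12$, importing the bound from the boundary circle where the singularity-analysis estimates are valid. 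Without some substitute for that step your uniform bound fails exactly on compacts containing $\frac12$, which is precisely where \refT{T1mom} goes beyond \refT{TXmom}.
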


The result extends to joint moments for several $\ga\in H_+$.
The moments of 
$\tY(\ga)$ may be computed by \eqref{Y} and the recursion
formula
\eqref{kk1}--\eqref{kk2} below.
Note that $\tY(\ga)$ is centered: $\E\tY(\ga)=0$; this follows, \eg, by the case $\ell=1$ of \eqref{t1mom}.
See also 
\refR{Rcent} and \refE{EVar}.

\begin{remark}\label{RT1mom}
 We conjecture that \refT{T1mom} holds also without the extra moment  condition.
Note that even without that condition, \eqref{t1mom} holds for
$\ga\neq\frac12$ as a simple consequence of \refT{TXmom} below.
The case $\ga=\frac12$ is more complicated, but has been treated directly in
the special case $\xi\sim\Bi(2,\frac12)$ (binary trees) 
by \cite{FillK04}; that special case satisfies 
$\E \xi^r<\infty$ for every $r$, 
but it seems likely that the proof in \cite{FillK04} 
can be adapted to the general case by arguments similar to those in \refS{Smom}.
However, we have not pursued this and leave it as an open problem.
See also \cite{Caracciolo20}.
\end{remark}

Theorems \ref{T<0} and \ref{T1} are stated for the centered variables
$\tX_n(\ga)$. 
We obtain results for $X_n(\ga)$ by combining Theorems \ref{T<0}--\ref{T1}
with the  
asymptotics for the expectation $\E X_n(\ga)$ given in the next theorem, but
we first need more notation.

Let $\cT$ be the \GWt{} (without conditioning) defined by the offspring
distribution $\xi$; see \refSS{SSGW}.
It follows from \eqref{pct} that $f_\ga(\cT)=|\cT|^\ga$ has a finite
expectation if and only if $\rea<\frac12$, and we define
\begin{equation}
  \label{mu}
\mu(\ga):=\E f_\ga(\cT)=\E|\cT|^\ga
=\sumn n^\ga\P(|\cT|=n),
\qquad \rea<\tfrac12.
\end{equation}
This is an analytic function in the half-plane $\rea<\frac12$. Note that
$\mu(\ga)$ depends on the offspring distribution $\xi$, although we do not
show this in the notation.
Note also that $\mu(\ga)$ has a singularity at $\ga=\frac12$; in fact, it
is easily seen from \eqref{pct} that
\begin{equation}
  \label{aaa}
\mu(\ga)\sim \frac{(2\pi\gss)\qqw}{\frac12-\ga}, \qquad 
\text{as  $\ga\upto\tfrac12$}.
\end{equation}

\begin{remark}\label{Rmua}
In \refS{Smua} (\refT{TM}), 
we show by a rather complicated argument 
that although $\mu(\ga) \to \infty$ as $\ga \upto \frac12$ (se \eqref{aaa}),   
$\mu(\ga)$ has a continuous extension to all other points on the line 
$\rea = \frac12$.
\end{remark}

It is shown by  \citet{Aldous-fringe} % assuming span \xi = 1
that if we construct
a random fringe tree $\tnV$ by first choosing a random 
\cGWt{} $\cT_n$ as above, and then a random node $V$ in the tree,
then $\tnV$ converges in distribution as \ntoo{}
to the random \GWt{} $\cT$.
This was sharpened in \cite[Theorem 7.12]{SJ264}
to the corresponding 'quenched' result:
the conditional distribution of $\tnV$ given $\cT_n$ converges in
probability to the distribution of $\cT$.
As a consequence (see \refS{Sfringe}), we obtain the following
results, which show the central role of $\mu(\ga)$ in the study of
$X_n(\ga)$. 
\begin{theorem}\label{Tfringe}
  \begin{thmenumerate}
  \item \label{TfringeE}
If\/ $\rea\le0$, then as \ntoo,
\begin{equation}
  \label{tfringe}
\E X_n(\ga) = \mu(\ga) n + o(n).  
\end{equation}

  \item \label{TfringeP}
If\/ $\rea \le 0$, then $X_n(\ga)/n \pto\mu(\ga)$.
  \end{thmenumerate}
\end{theorem}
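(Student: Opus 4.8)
The plan is to read off both parts from the fringe-tree limit theorem of \citet{Aldous-fringe} and its quenched strengthening \cite[Theorem~7.12]{SJ264}. The starting point is that, with $V$ a uniformly random node of $\tn$,
\begin{equation*}
\frac{X_n(\ga)}{n}=\frac1n\sum_{v\in\tn}|\tnv|^\ga=\E\bigsqpar{\,|\tnV|^\ga\mid\tn\,},
\end{equation*}
and that, since $|\tnv|\ge1$ and $\rea\le0$, each summand has modulus $\le1$, so $|X_n(\ga)|\le n$ deterministically. Hence part~\ref{TfringeE} follows from part~\ref{TfringeP}: convergence in probability of the bounded sequence $X_n(\ga)/n$ to the constant $\mu(\ga)$ upgrades to convergence in $L^1$, giving $\E X_n(\ga)=\mu(\ga)n+o(n)$. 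So the content is part~\ref{TfringeP}.

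For part~\ref{TfringeP}, set $p_{n,k}:=n\qw\#\set{v\in\tn:|\tnv|=k}$ (a random quantity) and $p_k:=\P(|\cT|=k)$, where $\cT$ is the unconditioned \GWt{} of \refSS{SSGW}. Then $\sum_{k\ge1}p_{n,k}=1$ surely, $\sum_{k\ge1}p_k=1$ since the critical \GWt{} $\cT$ is almost surely finite, $X_n(\ga)/n=\sum_k k^\ga p_{n,k}$, and $\mu(\ga)=\sum_k k^\ga p_k$. Summing the quenched convergence \cite[Theorem~7.12]{SJ264} over the (finitely many) rooted trees with $k$ nodes gives $p_{n,k}\pto p_k$ for each fixed $k$. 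It remains to pass from this to $\sum_k k^\ga p_{n,k}\pto\sum_k k^\ga p_k$; using $|k^\ga|\le1$ and truncating at a level $K$,
\begin{equation*}
\Bigabs{\sum_k k^\ga p_{n,k}-\mu(\ga)}
\le\Bigabs{\sum_{k\le K}k^\ga(p_{n,k}-p_k)}+\sum_{k>K}p_{n,k}+\sum_{k>K}p_k,
\end{equation*}
where the first term is a finite sum of quantities each $\pto0$, the last term is deterministic and $\to0$ as $K\to\infty$, and the middle term equals $1-\sum_{k\le K}p_{n,k}$, which $\pto\sum_{k>K}p_k$. Choosing $K$ large and then $n$ large yields part~\ref{TfringeP}, and hence part~\ref{TfringeE}.

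The argument is essentially soft --- the real work is in the two cited fringe-tree theorems --- so I do not expect a serious obstacle. The one point requiring care is that when $\rea=0$ with $\ga\ne0$ the weight $k\mapsto k^\ga$ is bounded but has no continuous extension to $k=\infty$, so one cannot simply invoke the continuous-mapping theorem for the tree-valued convergence; the truncation above (whose needed tightness of $\set{|\tnV|}_n$ is automatic from $|\tnV|\dto|\cT|$ together with $|\cT|<\infty$ almost surely, the latter convergence itself following from Aldous's theorem since $\set{|T|=k}$ is clopen in the local topology) handles all of $\rea\le0$ at once. For $\rea<0$ there is the shorter alternative of compactifying, extending $f_\ga$ continuously by $f_\ga(T):=0$ on infinite $T$, and reading part~\ref{TfringeE} directly off the annealed theorem and part~\ref{TfringeP} off the quenched one. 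The details are carried out in \refS{Sfringe}.
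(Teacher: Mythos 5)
Your proposal is correct and follows essentially the same route as the paper: both rest on the identity $X_n(\ga)/n=\E\bigsqpar{f_\ga(\tnV)\mid\tn}$ together with Aldous's fringe-tree theorem and its quenched form \cite[Theorem~7.12]{SJ264}. The only (harmless) differences are that you derive part~(i) from part~(ii) by bounded convergence rather than directly from the annealed theorem, and that you spell out via truncation the step the paper handles by observing that every bounded function on the discrete space $\xT$ is continuous.
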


The following theorem improves and extends the estimate \eqref{tfringe}; in
particular, note that [in parts \ref{TE<0} and \ref{TE-}]
the error term in \eqref{tfringe} is improved to
$o\bigpar{n\qq}$ for $\rea<0$ and $O\bigpar{n\qq}$ for $\rea=0$.

\begin{theorem}\label{TE}
The following estimates hold as \ntoo, in all cases uniformly for $\ga$ in
compact subsets of the indicated domains.
  \begin{romenumerate}
  \item \label{TE<0}
If\/ $\rea<0$, then 
  \begin{equation}\label{te<0}
	\E X_n(\ga)
	=\mu(\ga)n+o\bigpar{n\qq}.
  \end{equation}
\item \label{TE-}
  If\/ $-\frac12<\rea<\frac12$, then
  \begin{equation}\label{te-}
	\E X_n(\ga)
	=\mu(\ga)n+\frac{1}{\sqrt{2}\gs}\frac{\gG(\ga-\frac12)}{\gG(\ga)}\,n^\qgay
+o\bigpar{n^{(\rea)_++\frac12}}.
  \end{equation}
  \item \label{TE+}
  If\/ $\rea>\frac12$, then
  \begin{equation}\label{te+}
	\E X_n(\ga)
=\frac{1}{\sqrt{2}\gs}\frac{\gG(\ga-\frac12)}{\gG(\ga)}n^\qgay 
+ o\bigpar{n^\qgay}. %\rqgay?
  \end{equation}
  \item \label{TE=}
  If\/ $\ga=\frac12$, then
  \begin{equation}\label{te=}
	\E X_n(1/2)
=
\frac{1}{\sqrt{2\pi\gss}} n \log n + o\bigpar{n\log n}.
  \end{equation}
  \end{romenumerate}
\end{theorem}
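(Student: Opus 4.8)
The plan is to compute $\E X_n(\ga)=\sum_{v\in\tn}\E|\tnv|^\ga$ by conditioning on a uniformly random node, so that $\E X_n(\ga)=n\,\E|\cT_{n,V}|^\ga$, and then to pass from the fringe-tree expectation to $\mu(\ga)$ via a precise estimate of the error. Concretely, I would write
\begin{equation}\label{pfTE-split}
\E X_n(\ga)=\sum_{k=1}^{n}k^\ga\,\E\bigpar{\#\{v\in\tn:|\tnv|=k\}}
=\sum_{k=1}^{n}k^\ga\, b_{n,k},
\end{equation}
where $b_{n,k}$ is the expected number of fringe subtrees of size~$k$ in $\tn$. The key input is an exact/asymptotic formula for $b_{n,k}$: by the standard size-biasing identity for \cGWt{s} (as in \cite{Aldous-fringe,SJ264}), $b_{n,k}=n\,\P(|\cT|=k)\,\dfrac{\P(|\cT_{n-k}^{(1)}|\text{ fits})}{\P(|\cT_n|=n)}$ type ratio; more usefully, $b_{n,k}= n\,\P(|\cT|=k)\,\dfrac{q_{n-k}}{q_n}\cdot(1+o(1))$ where $q_n=\P(\,|\cT|=n\,)\sim c\,n^{-3/2}$ by the local limit theorem (this is \eqref{pct}). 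Thus $b_{n,k}/n\to\P(|\cT|=k)$ for fixed $k$, which already gives \refT{Tfringe}, and the refinement comes from tracking the ratio $q_{n-k}/q_n=(1-k/n)^{-3/2}(1+o(1))$ uniformly over $k\le(1-\eps)n$, plus a separate crude bound for $k$ near~$n$.

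Granting this, \eqref{pfTE-split} becomes, up to lower-order terms,
\begin{equation}\label{pfTE-main}
\E X_n(\ga)\approx n\sum_{k=1}^{n}k^\ga\,\P(|\cT|=k)\Bigpar{1-\tfrac{k}{n}}^{-3/2}.
\end{equation}
Splitting the sum at a threshold (say $k\le n^{1-\delta}$ versus larger~$k$): on the small-$k$ range, $(1-k/n)^{-3/2}=1+O(k/n)$, so that part contributes $n\sum_k k^\ga\P(|\cT|=k)+O\bigpar{\sum_k k^{\ga+1}\P(|\cT|=k)}$; when $\rea<\tfrac12$ the first sum is $\mu(\ga)n$ plus an exponentially small tail, giving the leading term $\mu(\ga)n$ in \ref{TE<0} and \ref{TE-}. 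On the large-$k$ range I substitute $\P(|\cT|=k)\sim (2\pi\gss)^{-1/2}k^{-3/2}$ and approximate the sum by the integral $(2\pi\gss)^{-1/2}n\int_0^1 (nx)^\ga (nx)^{-3/2}(1-x)^{-3/2}\,n\,dx = (2\pi\gss)^{-1/2}n^{\ga+1/2}\int_0^1 x^{\ga-3/2}(1-x)^{-3/2}\,dx$. This Beta integral is $B(\ga-\tfrac12,-\tfrac12)=\dfrac{\gG(\ga-\frac12)\gG(-\frac12)}{\gG(\ga-1)}$; using $\gG(-\tfrac12)=-2\sqrt\pi$ and $\gG(\ga-1)=\gG(\ga)/(\ga-1)$ and simplifying (the $\frac12-\ga$ from the $(1-x)^{-3/2}$ endpoint divergence must be handled by analytic continuation / integration by parts, which is exactly where the pole of $\mu$ at $\ga=\frac12$ is absorbed) yields the constant $\dfrac{1}{\sqrt2\,\gs}\dfrac{\gG(\ga-\frac12)}{\gG(\ga)}$ multiplying $n^{\ga+1/2}$. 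For $\rea>\tfrac12$ the leading term $\mu(\ga)n$ no longer exists and only the $n^{\ga+1/2}$ term survives, giving \ref{TE+}; for $\ga=\tfrac12$ the exponents coincide and the Beta integral $\int_0^1 x^{-1}(1-x)^{-3/2}dx$ diverges logarithmically, producing the $n\log n$ in \ref{TE=} with constant $(2\pi\gss)^{-1/2}$ from the near-$0$ logarithmic divergence cut off at $k\asymp1$.

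Uniformity in $\ga$ over compacta is automatic from the fact that all estimates (the local limit theorem, the ratio bound for $q_{n-k}/q_n$, the Euler--Maclaurin comparison of sum and integral) are uniform, and $k^\ga$, $|k^\ga|=k^{\rea}$ vary continuously in~$\ga$. The main obstacle I anticipate is twofold: first, making the passage from $b_{n,k}$ to its asymptotic form genuinely \emph{uniform} in $k$ over the whole range $1\le k\le n-1$, including the delicate regime $k=n-O(1)$ and $k\asymp n$, where the naive local-limit substitution fails and one needs the exact combinatorial identity for $b_{n,k}$ together with monotonicity/crude bounds to show that range contributes only within the claimed error; second, the careful bookkeeping of the $n^{\ga+1/2}$-coefficient across the three regimes and the correct identification of the error exponent $(\rea)_++\frac12$ in \ref{TE-}, which requires the integral remainder $n^{\ga+1}\int (1-x)^{-3/2}\cdot O(x)\,dx$-type terms to be weighed against $n\sum k^{\ga+1}\P(|\cT|=k)\asymp n\cdot n^{(\rea+1/2)_+\vee 0}$ and shown to match. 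I would likely organize this as a lemma giving the uniform two-sided estimate $b_{n,k}=n\,\P(|\cT|=k)(1-k/n)^{-3/2}\bigpar{1+O(k/n)}+O(\text{boundary})$ and then feed it into the three cases mechanically.
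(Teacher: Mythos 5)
Your overall strategy is the one the paper actually uses: an exact formula for the expected number $b_{n,k}$ of fringe subtrees of size $k$, the local limit theorem, and a Riemann-sum/Beta-integral evaluation with the sum split between small and large $k$ (this is \refL{LE} and the subsequent proofs in \refS{SE}). However, your key quantitative input is wrong. The correct identity is
\[
 b_{n,k}=n\,\frac{\P(S_{n-k}=n-k)}{\P(S_n=n-1)}\,\P(|\cT|=k),
\]
and by the local limit theorem the ratio here is $\sim(1-k/n)^{-1/2}$, \emph{not} $(1-k/n)^{-3/2}$. The ratio that appears is of the random-walk point probabilities $\P(S_m=m)\asymp m^{-1/2}$, not of $q_m=\P(|\cT|=m)\asymp m^{-3/2}$: the extra factor $1/m$ in $q_m=\frac1m\P(S_m=m-1)$ attaches to the size-$k$ piece, not to the complement of size $n-k$. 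A quick sanity check: at $k=n$ one must get $b_{n,n}=1$ (only the root), which the displayed identity gives and your version does not; also $\sum_k b_{n,k}=n$ forces an exponent that keeps $\sum_k\P(|\cT|=k)(1-k/n)^{-\gb}$ bounded, which fails for $\gb=\frac32$.

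This error is not cosmetic; it propagates through every case. With the correct exponent the large-$k$ contribution is $(2\pi\gss)^{-1/2}\,n^{\qgay}\int_0^1x^{\ga-3/2}(1-x)^{-1/2}\dd x=(2\pi\gss)^{-1/2}B(\ga-\frac12,\frac12)\,n^{\qgay}$, a \emph{convergent} integral for $\rea>\frac12$ that yields $\frac{1}{\sqrt2\gs}\frac{\gG(\ga-\frac12)}{\gG(\ga)}$ directly, with no analytic continuation needed. Your $B(\ga-\frac12,-\frac12)$ diverges for every $\ga$ (at the endpoint $x=1$), and if one nevertheless inserts the Gamma-function value $\gG(\ga-\frac12)\gG(-\frac12)/\gG(\ga-1)$, it equals $-2(\ga-1)$ times the correct constant, so your ``simplifying'' step does not in fact produce the coefficient in \eqref{te+}. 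Likewise at $\ga=\frac12$ the divergence of $\int_0^1 x^{-1}(1-x)^{-3/2}\dd x$ at $x=1$ is a power divergence which, after the cutoff $1-x\gtrsim 1/n$, would contribute a spurious term of order $n^{3/2}$ swamping the true $n\log n$; the genuine $n\log n$ in \eqref{te=} comes solely from the logarithmic divergence at $x=0$ played against the integrable factor $(1-x)^{-1/2}$. Two further points: extracting the second-order term in part \ref{TE-} requires comparing $n\mu_n(\ga)$ with $n\mu(\ga)$ (the tail $n\sum_{k>n}k^{\ga-1}\P(S_k=k-1)$ contributes at the same order $n^{\qgay}$ as the Beta term, cf.\ \refL{LEn} and \eqref{magnus}), which your small-$k$/large-$k$ bookkeeping does not isolate; and part \ref{TE<0} with error $o(n^{1/2})$ for all $\rea<0$ is not reachable by the $1+O(k/n)$ expansion alone --- the paper quotes it from \cite{SJ285}, and its own refined \refL{LEn} only covers $\rea>-\frac12$.
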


\begin{remark}\label{RTE1/2}
As shown in \refT{T1/2}\ref{T1/2E}, the estimate~\eqref{te-} holds also for
$\ga = \frac12 + \ii y$, $y \neq 0$, where $\mu(\ga)$ is the continuous
extension 
described in \refR{Rmua}.
\end{remark}

Theorems \ref{T<0} and \ref{TE}\ref{TE<0} together yield the following
variant of \eqref{t0}.
\begin{theorem}
  \label{TX<0}
If\/ $\rea<0$, then, as \ntoo, 
  \begin{equation}
\frac{ X_n(\ga)-n\mu(\ga)}{\sqrt n}
	 \dto \hX(\ga).
  \end{equation}
Moreover, this holds in the space $\cH(H_-)$.
\end{theorem}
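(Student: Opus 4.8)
The plan is to deduce \refT{TX<0} directly from \refT{T<0} and \refT{TE}\ref{TE<0} by a Slutsky-type argument carried out in the space $\cH(H_-)$ of analytic functions on the left half-plane, equipped with its usual topology of uniform convergence on compact sets (see \refSS{SSanalytic}). First I would write, for each $n$,
\begin{equation*}
\frac{X_n(\ga)-n\mu(\ga)}{\sqrt n}
 =\frac{X_n(\ga)-\E X_n(\ga)}{\sqrt n}+r_n(\ga),
\qquad
r_n(\ga):=\frac{\E X_n(\ga)-n\mu(\ga)}{\sqrt n},
\end{equation*}
and note that both summands are honest elements of $\cH(H_-)$: the first is a random analytic function because $X_n$ is entire in $\ga$ (as remarked after \eqref{tX}), and $r_n$ is a deterministic analytic function because $\E X_n(\ga)$ is entire while $\mu(\ga)$ is analytic on $H_-$ (as remarked after \eqref{mu}).

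Next I would dispose of the two pieces separately. By \refT{T<0}, $\bigpar{X_n(\ga)-\E X_n(\ga)}/\sqrt n=n^{-1/2}\tX_n(\ga)\dto\hX(\ga)$ in $\cH(H_-)$. By \refT{TE}\ref{TE<0}, $\E X_n(\ga)-n\mu(\ga)=o\bigpar{n\qq}$ uniformly for $\ga$ in compact subsets of $H_-$; dividing by $\sqrt n$, this says precisely that $r_n\to0$ uniformly on compacts, i.e.\ that $r_n\to0$ in $\cH(H_-)$.

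Finally I would combine the two. Since $\cH(H_-)$ is a separable metric space and addition is continuous on it, $n^{-1/2}\tX_n\dto\hX$ together with the deterministic convergence $r_n\to0$ gives $n^{-1/2}\tX_n+r_n\dto\hX$ — the standard fact that convergence in distribution is stable under adding a sequence of constants that converges, obtained by applying the continuous mapping theorem to the map $(x,y)\mapsto x+y$ after noting $(n^{-1/2}\tX_n,r_n)\dto(\hX,0)$ jointly. This is exactly the assertion of \refT{TX<0}. There is no real obstacle here; the only point worth flagging is that \refT{Tfringe}\ref{TfringeE} by itself would not suffice — one genuinely needs the sharper error $o\bigpar{n\qq}$ of \refT{TE}\ref{TE<0}, together with its uniformity on compact sets, so that $r_n$ vanishes in the topology of $\cH(H_-)$ and not merely pointwise.
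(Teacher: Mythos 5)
Your proposal is correct and is exactly the argument the paper intends: Theorem~\ref{TX<0} is deduced immediately from Theorems~\ref{T<0} and~\ref{TE}\ref{TE<0}, with the decomposition, the uniform $o(n\qq)$ bound on compacts, and the Slutsky step in $\cH(H_-)$ being precisely the (unwritten) details behind the paper's one-line proof.
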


Similarly, Theorems \ref{T1} and \ref{TE} [parts \ref{TE+} and \ref{TE-}]
yield the following.
We define, for $\rea>0$ and $\ga\neq\frac12$,
the complex random variable
\begin{equation}
  \label{Y}
Y(\ga):=\tY(\ga)+\frac{1}{\sqrt{2}}\frac{\gG(\ga-\frac12)}{\gG(\ga)}.
\end{equation}

\begin{theorem}
  \label{TX}
  \begin{thmenumerate}
  \item   \label{TX>}
If\/ $\rea>\frac12$, then, as \ntoo,
  \begin{equation}\label{tx1}
	  Y_n(\ga):= n^{\wgay} X_n(\ga)
	 \dto \gs\qw Y(\ga).
  \end{equation}
  \item  \label{TX<} 
If\/ $0<\rea<\frac12$, then, as \ntoo,
  \begin{equation}\label{tx<}
n^{\wgay}\bigsqpar{ X_n(\ga)-n\mu(\ga)}
	 \dto \gs\qw Y(\ga).
  \end{equation}
%\item \label{TX=} For $\ga=\frac12$ 
  \end{thmenumerate}Moreover, in both cases, 
this holds in the space $\cH(D)$ for the indicated domain $D$.
\end{theorem}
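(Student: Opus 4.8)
The plan is to obtain \refT{TX} essentially by adding the deterministic expectation asymptotics of \refT{TE} to the centered process convergence of \refT{T1}. For $\rea>0$ we have, from \eqref{Yn}, \eqref{tY} and \eqref{tX},
\[
Y_n(\ga)=n^{\wgay}X_n(\ga)=\tY_n(\ga)+n^{\wgay}\E X_n(\ga),
\]
and, for part~\ref{TX<},
\[
n^{\wgay}\bigsqpar{X_n(\ga)-n\mu(\ga)}=\tY_n(\ga)+n^{\wgay}\bigsqpar{\E X_n(\ga)-n\mu(\ga)}.
\]
By \refT{T1}, $\gs\tY_n(\ga)\dto\tY(\ga)$ in $\cH(H_+)$; since the restriction map $\cH(H_+)\to\cH(D)$ is continuous for any subdomain $D\subseteq H_+$, this convergence also holds in $\cH(D)$ for the two domains occurring in \refT{TX}. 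It therefore suffices to show that the deterministic second terms converge in $\cH(D)$ to the function $c(\ga):=\frac{1}{\sqrt2\gs}\frac{\gG(\ga-\frac12)}{\gG(\ga)}$, which is analytic on $H_+\xq$ and hence on both $D=\set{\ga:\rea>\frac12}$ and $D=\set{\ga:0<\rea<\frac12}$.

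For part~\ref{TX>}, \refT{TE}\ref{TE+} gives $\E X_n(\ga)=c(\ga)n^\qgay+o\bigpar{n^\qgay}$ uniformly on compact subsets of $D=\set{\ga:\rea>\frac12}$; dividing by $n^\qgay$ yields $n^{\wgay}\E X_n(\ga)\to c(\ga)$ uniformly on compacts, i.e.\ in $\cH(D)$. For part~\ref{TX<}, the exponent $(\rea)_+$ in \refT{TE}\ref{TE-} equals $\rea$ throughout $D=\set{\ga:0<\rea<\frac12}$, so there $\E X_n(\ga)-n\mu(\ga)=c(\ga)n^\qgay+o\bigpar{n^\qgay}$ uniformly on compact subsets, whence $n^{\wgay}\bigsqpar{\E X_n(\ga)-n\mu(\ga)}\to c(\ga)$ in $\cH(D)$.

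Finally, combine the two limits via a Slutsky-type argument in the separable \Frechet{} space $\cH(D)$: if $Z_n\dto Z$ and $W_n\to w$ with each $W_n$ deterministic, then $Z_n+W_n\dto Z+w$ (the convergence-together lemma is valid in any metric space; see \refSS{SSanalytic} for the relevant facts about $\cH(D)$). Taking $Z_n=\gs\tY_n(\ga)$ and $W_n=\gs n^{\wgay}\E X_n(\ga)$ (respectively $W_n=\gs n^{\wgay}\bigsqpar{\E X_n(\ga)-n\mu(\ga)}$), and then dividing by~$\gs$, gives
\[
Y_n(\ga)\dto\gs\qw\tY(\ga)+c(\ga)=\gs\qw\Bigpar{\tY(\ga)+\tfrac1{\sqrt2}\tfrac{\gG(\ga-\frac12)}{\gG(\ga)}}=\gs\qw Y(\ga)
\]
in $\cH(D)$ for part~\ref{TX>}, and the analogous statement for part~\ref{TX<}, with $Y(\ga)$ as in \eqref{Y}. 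I do not expect any genuine obstacle: all the analytic and probabilistic substance has already been packed into Theorems~\ref{T1} and~\ref{TE}, and the only point requiring a little care is that the Slutsky step is applied in the infinite-dimensional space $\cH(D)$, with its topology of uniform convergence on compacts, rather than in $\bbC$.
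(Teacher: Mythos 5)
Your proposal is correct and follows exactly the paper's (very terse) proof: the paper simply states that \refT{TX} follows immediately from \refT{T1} together with the expectation estimates of \refT{TE}, which is precisely the decomposition into $\tY_n(\ga)$ plus a deterministic term that you carry out. Your added details — the uniform-on-compacts convergence of the deterministic term from \refT{TE} and the Slutsky step in $\cH(D)$ — are the right way to make the paper's ``follow immediately'' rigorous.
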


\begin{remark}\label{RT1/2X}
As shown in \refT{T1/2}\ref{T1/2X}, the limit result~\eqref{tx<} holds also
for $\ga = \frac12 + \ii y$, $y \neq 0$, where $\mu(\ga)$ is the continuous
extension of \refR{Rmua}.
\end{remark}

We can add moment convergence to \refT{TX}, too.

\begin{theorem}\label{TXmom}
The limits \eqref{tx1} and \eqref{tx<} hold with all moments,
for $\rga>\frac12$, and $0<\rga<\frac12$, respectively.
In other words, for any integer $\ell\ge1$,
if\/ $\rga>\frac12$, then
  \begin{equation}\label{mtx1}
\E X_n(\ga)^\ell
= \gs^{-\ell}\E Y(\ga)^\ell n^{\ell(\qgay)}
+ o\bigpar{n^{\ell(\qgay)}} %\rqgay?
,  \end{equation}
and if\/ $0<\rga<\frac12$, then
  \begin{equation}\label{mtx<}
\E\bigsqpar{ X_n(\ga)-n\mu(\ga)}^\ell
=  \gs^{-\ell} \E Y(\ga)^\ell n^{\ell(\qgay)}
+ o\bigpar{n^{\ell(\qgay)}} %\rqgay?
.  \end{equation}
Moreover, in both cases, the moments
$\kk_\ell=\kk_\ell(\ga):=\E Y(\ga)^\ell$ are given by the recursion formula
\begin{align}\label{kk1}
  \kk_1&=\frac{\gG(\ga-\frac12)}{\sqrt2\,\gG(\ga)},
\intertext{and, for $\ell\ge2$, with $\gax:=\ga+\frac12$,}
\kk_\ell&=
\frac{\ell\gG(\ell\gax-1)}{\sqrt2\,\gG(\ell\gax-\frac12)}\kk_{\ell-1}
+\frac{1}{4\sqrt\pi}\sum_{j=1}^{\ell-1}\binom{\ell}{j}
\frac{\gG(j\gax-\frac12)\gG((\ell-j)\gax-\frac12)}{\gG(\ell\gax-\frac12)}
\kk_j\kk_{\ell-j}.
\label{kk2}
\end{align}
\end{theorem}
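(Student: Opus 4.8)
The overall plan is to evaluate the normalized moments of $X_n(\ga)$ directly, by a generating-function recursion on the unconditioned \GWt{} combined with singularity analysis, and then to deduce the stated moment convergence from \refT{TX} by uniform integrability; the recursion \eqref{kk1}--\eqref{kk2} will drop out of the singularity analysis. This follows the ``method of moments / Hadamard products / polylogarithms / singularity analysis'' route already used for binary trees in \cite{FillK04}. Since convergence in distribution is supplied (in $\cH(D)$, hence jointly and locally uniformly) by \refT{TX}, it suffices to show (a) convergence, after the normalization of \eqref{Yn}, of every \emph{polynomial} moment $\E X_n(\ga)^\ell$ and mixed moment $\E\bigsqpar{X_n(\ga)^a X_n(\overline\ga)^b}$ (recall $\overline{X_n(\ga)}=X_n(\overline\ga)$), and (b) $L^r$-boundedness of all powers. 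For (b), when $\rea>\tfrac12$ one uses $\bigabs{X_n(\ga)}\le X_n(\rga)$ to reduce to the real case; when $0<\rea<\tfrac12$ one uses that $\bigabs{Z_n(\ga)}^{2m}=Z_n(\ga)^m\,\overline{Z_n(\ga)}^m$, with $Z_n(\ga):=n^{\wgay}\bigsqpar{X_n(\ga)-n\mu(\ga)}$, is (after expanding out the deterministic shift, and noting $\overline{\mu(\ga)}=\mu(\overline\ga)$) a polynomial mixed moment of $X_n(\ga)$ and $X_n(\overline\ga)$, hence bounded by the estimates of (a). Together with \refT{TX} this upgrades to convergence of all moments --- ordinary, absolute, and mixed --- and identifies $\E Y(\ga)^\ell$ with the limit of $\gs^\ell\,n^{-\ell(\qgay)}\E X_n(\ga)^\ell$; local uniformity of the distributional convergence gives continuity in $\ga$ of the absolute and mixed moments, and analyticity of $\ell\mapsto\E Y(\ga)^\ell$ follows from its being a locally uniform limit of the entire functions $\ga\mapsto n^{-\ell(\qgay)}\E X_n(\ga)^\ell$.

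To compute the moments themselves I would work on the unconditioned \GWt{} $\cT$. Let $\gf$ be the \pgf{} of $\xi$ and $B(x):=\E x^{|\cT|}$, so $B=x\gf(B)$, $B(1)=1$, with the usual square-root singularity $B(x)=1-\sqrt{2/\gss}\,(1-x)^{1/2}+O(1-x)$ at $x=1$ (a periodic $\xi$ is handled by the standard device of passing to the span). For $\ell\ge0$ set $f_\ell(x):=\E\bigsqpar{X(\ga)^\ell x^{|\cT|}}$ and $\widetilde f_\ell(x):=\E\bigsqpar{\widetilde X(\ga)^\ell x^{|\cT|}}$, where $\widetilde X(\ga):=X(\ga)-|\cT|^\ga=\sum_i X_i(\ga)$ is the contribution of the subtrees rooted at the children of the root. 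Decomposing at the root gives the coupling-free identity
\begin{equation*}
 \widetilde f_\ell(x)=x\sum_{m=1}^\ell\frac{\gf^{(m)}\bigpar{B(x)}}{m!}
 \sum_{\substack{\ell_1,\dots,\ell_m\ge1\\ \ell_1+\dots+\ell_m=\ell}}
 \binom{\ell}{\ell_1,\dots,\ell_m}\prod_{j=1}^m f_{\ell_j}(x)\qquad(\ell\ge1),
\end{equation*}
while $X(\ga)^\ell=\bigpar{|\cT|^\ga+\widetilde X(\ga)}^\ell$ and the fact that re-weighting a coefficient sequence by $(n^{\ga k})_n$ is the Hadamard product $\odot$ with the polylogarithm $\Li_{-\ga k}$ give
\begin{equation*}
 f_\ell(x)=\sum_{k=0}^\ell\binom{\ell}{k}\bigpar{\Li_{-\ga k}\odot\widetilde f_{\ell-k}}(x),
\end{equation*}
with $f_0=\widetilde f_0=B$. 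Substituting the first identity into the second and moving the resulting $x\gf'(B(x))f_\ell(x)$ (the $m=1$ contribution) to the left gives $f_\ell(x)\bigpar{1-x\gf'(B(x))}=Q_\ell(x)+R_\ell(x)$, where $Q_\ell$ collects the $m\ge2$ contributions and $R_\ell$ the $k\ge1$ ones.

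The induction on $\ell$ then proceeds as follows. Each $f_\ell$ is $\gD$-analytic --- this needs the corresponding statement for the Hadamard products $\Li_{-\ga k}\odot\widetilde f_{\ell-k}$, which is the main technical point (see below) --- and, since $1-x\gf'(B(x))\sim\sqrt{2\gss}\,(1-x)^{1/2}$ as $x\to1$, the singular expansion of $f_\ell$ is obtained by transferring that of $Q_\ell+R_\ell$. The crucial simplification is that for the \emph{leading} singular term of $f_\ell$ (which governs \eqref{tx1}) only the $m=2$ part of $Q_\ell$ and the $k=1$ part of $R_\ell$ survive: the $m\ge3$ terms are lower order because $\gf^{(m)}\bigpar{B(x)}=o\bigpar{(1-x)^{-(m-2)/2}}$, a bound requiring only $\E\xi^2<\infty$, and the $k\ge2$ terms are lower order by the transfer rule for Hadamard products with polylogarithms. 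Propagating the leading coefficients through these two surviving contributions produces a recursion for them; dividing by $[x^n]B\sim(2\pi\gss)\qqw n^{-3/2}$, which turns $[x^n]f_\ell$ into $\E X_n(\ga)^\ell$, converts it --- after the substitution fixing the normalization of \eqref{Yn} --- into exactly \eqref{kk1}--\eqref{kk2} with $\kk_\ell=\E Y(\ga)^\ell$, giving \eqref{mtx1}. When $0<\rea<\tfrac12$ the leading singular terms of $f_\ell$ are instead those built from the finite constants $\mu(\ga),\mu(2\ga),\dots$ (the values of $\Li_{-\ga k}\odot B$ at $x=1$, cf.\ \refT{TE}), and the combination $\bigsqpar{X_n(\ga)-n\mu(\ga)}^\ell=\sum_k\binom{\ell}{k}\E\bigsqpar{X_n(\ga)^k}\bigpar{-n\mu(\ga)}^{\ell-k}$ is exactly arranged so that these cancel, leaving the same term of order $n^{\ell\gax}$; carrying the singular expansions of $f_0,\dots,f_\ell$ down far enough and checking this cancellation gives \eqref{mtx<}. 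All estimates are uniform for $\ga$ in compact subsets of the relevant domains because the singular expansions and the transfer and Hadamard theorems are.

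The step I expect to be the real obstacle is the rigorous singularity analysis of the Hadamard products $\Li_{-\ga k}\odot\widetilde f_{\ell-k}$: one needs $\gD$-analyticity of such a product together with a transfer theorem producing its singular expansion --- to enough orders, and uniformly in $\ga$ on compacts --- starting from the known but intricate behaviour of $\Li_{-\gb}$ near $x=1$. Bundled with this is the lower-order bookkeeping (the $o\bigpar{(1-x)^{-(m-2)/2}}$ estimate for $\gf^{(m)}(B)$ under only a second-moment hypothesis, the control of the $k\ge2$ and $m\ge3$ contributions, and --- in the strip $0<\rea<\tfrac12$ --- the cancellation against the powers of $n\mu(\ga)$), plus the routine treatment of a periodic offspring law. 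By contrast, the algebraic identification of the resulting recursion with \eqref{kk1}--\eqref{kk2}, and the passage from these moment asymptotics to the stated moment convergence via the uniform-integrability argument above and \refT{TX}, are straightforward.
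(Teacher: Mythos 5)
Your proposal follows essentially the same route as the paper: a root decomposition yielding a recursion for the generating functions $\E\bigsqpar{F(\cT)^\ell z^{|\cT|}}$ involving Hadamard products with polylogarithms and the factors $\Phi^{(m)}(y(z))$, the observation that only the $m\le 2$ and $\ell_0\le 1$ contributions affect the leading singular term (with the bound $\Phi^{(m)}(y(z))=o\bigpar{|1-z|^{1-m/2}}$ for $m\ge3$ under $\E\xi^2<\infty$), singularity analysis to extract the recursion \eqref{kk1}--\eqref{kk2}, and uniform integrability via the mixed moments with $\overline\ga$ to upgrade \refT{TX} to moment convergence. The one place where the paper's execution differs is the case $0<\rga<\tfrac12$: rather than expanding $\bigsqpar{X_n(\ga)-n\mu(\ga)}^\ell$ binomially and verifying the cancellation of the $\mu$-terms across many orders of the singular expansion, the paper shifts the toll function itself to $b_n:=n^\ga-\mu(\ga)$, so that $F(\ctn)=X_n(\ga)-\mu(\ga)n$ directly and $(B\odot y)(1)=0$ by construction; this avoids the delicate multi-order bookkeeping your cancellation argument would require and lets both cases run through the identical induction.
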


The result extends to joint moments; see \refSS{SSmom-mix}. %\refT{Tmix}.

\begin{remark}\label{RFillK04}
For the case of random binary trees [the case $\xi\sim\Bi(2,\frac12)$]
and real $\ga$,
\refTs{TX} and \ref{TXmom} were shown already by \citet{FillK04}, by
the method used here in \refS{Smom}
to show \refT{TXmom}
(namely, singularity analysis of generating functions and the method of
moments).
Recently (and independently),
the case of uniformly random ordered trees
[$\xi\sim\Ge(\frac12)$, in connection with a study of Dyck paths] has been
shown (also by such methods) 
by \citet{Caracciolo20}, and they have extended their result to general
$\xi$, at least when $\xi$ has a finite exponential moment 
[personal communication].
\end{remark}

\begin{remark}\label{RDelmas2}
\refT{TX}\ref{TX>} has also been shown %, by yet another method, 
by \citet{Delmas18} (for $\ga>1$, or for full binary trees)
and \citet{Delmas20} (in general).
(They consider only real $\ga$, but their results extend immediately to
complex $\ga$.)
The results in these papers are more general and allow more general
toll functions,
and they show how the result can be formulated in an
interesting way as convergence of random measures defined by the trees; 
moreover, they consider also
more general \cGWt{s}, where $\Var(\xi)$ may be infinite
provided $\xi$ belongs to the domain of attraction of a stable distribution.
We do not consider such extensions here.
\end{remark}

\begin{remark}\label{Rcent}
Centered moments $\E\tY(\ga)^k$ can as always be found from the ordinary
moments given by the recursion above. 
Alternatively, \cite[Proposition 3.9]{FillK04}
gives a (more complicated) recursion formula for the centered moments that
yields them directly. [The formula there is given for real $\ga$, but it
extends to complex $\ga$ with $\rga>0$ by the same proof or by analytic
continuation.
Note also the different normalizations: $Y$ there is our $\sqrt2 Y(\ga)$.]
Another formula for centered moments is given by 
\cite[Proposition 7]{Caracciolo20}
[again with a different normalization: $x_p$ there is our $2\qqw Y(p)$].
\end{remark}

\begin{example}
  \label{EVar}
Consider for simplicity real $\ga>0$. 
%If $\ga\neq\frac12$, 
It follows from
\eqref{kk1}--\eqref{kk2} that
\begin{align}\label{evar}
  \E \tY(\ga)^2&
 = \Var Y(\ga)
= \kk_2-\kk_1^2
\notag\\&
=\frac{\gG(2\ga)\gG(\ga-\frac12)}{\gG(2\ga+\frac12)\gG(\ga)}
+\frac{\gG(\ga-\frac12)^2}{4\sqrt\pi\gG(2\ga+\frac12)}
-\frac{\gG(\ga-\frac12)^2}{2\gG(\ga)^2}
,\qquad \ga\neq\tfrac12
.\end{align}
Moreover, the moments of $\tY(\ga)$ (which do not depend on $\xi$) are
continuous functions of $\ga$ by \refT{T1mom}, and thus we can obtain
the variance $\Var\tY(\frac12)$ by taking the limit of \eqref{evar} as
$\ga\to\frac12$. 
A simple calculation using Taylor and Laurent expansions of $\gG(z)$
yields, \cf{} \cite[Remark 3.6(c)(iv)]{FillK04},
\begin{align}
\E\tY(\tfrac12)^2=  \Var \tY(\tfrac12) = 
\frac{4\log 2}{\pi}-\frac{\pi}4.
\end{align}
Higher moments of $\tY(\frac12)$ can be calculated in the same way.
The moments of $\tY(\frac12)$ were originally found in 
\cite[Proposition~3.8 and Theorem~3.10(b)]{FillK04}, and given by a recursion
there.
[Note  again that~$Y$ there is our $\sqrt2 Y(\frac12)$.]
See
\cite[Proposition 7 and Table 3]{Caracciolo20}
for another formula and explicit expressions up to order 5
(again with a different normalization).
\end{example}

Theorems \ref{T<0} and \ref{T1},
or \ref{TX<0} and \ref{TX},
show that the asymptotic distribution exhibits a phase transition at
$\rea=0$. 

\begin{remark}\label{Riy}
We do not know how to bridge the gap between the two cases
$\rea<0$ and $\rea>0$. 
Moreover, we do not know the asymptotic distribution, if any, when $\rea=0$
(excepting the trivial case $\ga=0$ when $X_n(0)=n$ is deterministic),
although we note that \refT{Tfringe}\ref{TfringeP} yields a weaker result
on convergence in probability.
%(This can be regarded as a law of large numbers for $|\tnv|^\ga$.)
However,
we conjecture that $(n\log n)\qqw \tX_n(\ii t)$ converges in distribution to
a symmetric complex normal distribution, for any $t\neq0$.
\end{remark}

\begin{problem}\label{Piy}
Does $X_n(\ii t)$ have an asymptotic distribution, after suitable normalization,
for (fixed and real) $t \neq 0$? If so, what is it?
\end{problem}

\begin{remark}\label{R0}
For real $\ga\downto0$, \eqref{kk1}--\eqref{kk2} show that 
$\E Y(\ga)^2\to0$,
and thus $Y(\ga)\pto0$. [See also~\eqref{evar}.]
As remarked in \cite[Remark 3.6(e)]{FillK04}, one can use
\eqref{kk1}--\eqref{kk2} and the method of moments to show that
\begin{align}\label{r0}
\ga\qqw Y(\ga)\dto N(0,2-2\log 2),
\qquad \ga\downto0.  
\end{align}

If we consider complex $\ga$ with $\rga>0$, and let $\ga\to 0$ 
from  various different
directions, then $\ga\qqw Y(\ga)$ converges in
distribution to various different limits, each of which has a certain
complex normal distribution;
see \refApp{A0}. 

If we
instead let
$\ga\to \ii t$ with
$t\neq0$ real, then \eqref{kk1}--\eqref{kk2} imply that the (complex)
moments $\E Y(\ga)^\ell$ converge. However, the absolute moment
$\E|Y(\ga)|^2\to\infty$ by a similar calculation; see \eqref{e|2|}.  
%This requires $\Re\frac{\gG(\ga-\tfrac12)}{\gG(\ga)}\neq0$, $t\neq0$,
% which I can show with some effort.
It can be shown, again by the method of moments, that in this case, 
$(\rga)\qq Y(\ga)$ converges in distribution to a symmetric complex normal
distribution; see
\refApp{Ait}.
As a consequence, the imaginary axis
is \as{} a natural boundary for the random analytic functions $Y(\cdot)$
and $\tY(\cdot)$
\ie, they have no analytic extension to any 
larger domain; see again \refApp{Ait} for details. 
\end{remark}

Theorems \ref{TE} and \ref{TX}
show another phase transition at $\rea=\frac12$; 
this phase transition comes from the behavior of the mean $\E X_n(\ga)$,
while the fluctuations $\tX_n(\ga)$ vary analytically by \refT{T1}.
To be precise, there is a singularity at $\ga=\frac12$, as shown by 
\eqref{aaa} together with \eqref{te-} or \eqref{tx<}.
For non-real $\ga$ on the line $\rea=\frac12$, the situation is more
complicated.
As said in Remarks \ref{Rmua}, \ref{RTE1/2}, and \ref{RT1/2X}, 
the results for $\rea<\frac12$ extend continuously to 
$\Re\ga=\frac12$, $\ga\neq\frac12$.
Moreover, the next theorem (\refT{TEgd}) shows that
if we add a weak moment
assumption on $\xi$,  
then 
we can extend Theorems~\ref{TE} and~\ref{TX} \emph{analytically} 
across the line $\rea=\frac12$, 
and also refine the result at the exceptional case $\ga=\frac12$.
[The results now depend on~$\xi$ through more than just
$\gss$, see \eqref{tegd-c}.]
Hence, assuming a higher moment, there is a singularity at $\ga=\frac12$ but
no other singularities at the line $\rea=\frac12$.
However, in general (without higher moments), $\mu(\ga)$ \emph{cannot} be
extended analytically across the line $\rea = \frac12$, see \refT{Tbad}; 
hence, in general the entire line $\rea=\frac12$ is 
a singularity---in other words, a phase transition.

\begin{theorem}
  \label{TEgd}
Suppose that $\E \xi^{2+\gd}<\infty$ for some $\gd\in(0,1]$.
Then:
\begin{romenumerate}
\item \label{TEgdmu}
$\mu(\ga)$ can be analytically continued to a meromorphic function in
  $\rea<\frac12+\gdd$, with a single pole at $\ga=\frac12$ with residue
$-1/\sqrt{2\pi\gss}$.
\item \label{TEgd-}
Using this extension of $\mu(\ga)$, \eqref{te-} holds, uniformly on compact
sets, 
%in $\set{\ga:-\frac12<\rea<\frac12+\gdd}\setminus\set{\frac12}$.
for $-\frac12<\rea<\frac12+\gdd$ with $\ga\neq\frac12$.
\item \label{TEgd=}
For some constant $c$ (depending on the offspring distribution),
  \begin{equation}\label{tegd=}
	\E X_n(\tfrac12)
=
\frac{1}{\sqrt{2\pi\gss}} n \log n + cn + o(n).
  \end{equation}
\end{romenumerate}
\end{theorem}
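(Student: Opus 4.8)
The plan is to analyze $\mu(\ga)=\sum_{n\ge1}n^\ga\P(|\cT|=n)$ by recalling the classical asymptotics of the singleton probabilities $p_n:=\P(|\cT|=n)$. By the standard formula (Dwass / Otter) together with singularity analysis of the generating function $\sum_n p_n z^n$, one has an expansion $p_n=c_0 n^{-3/2}+c_1 n^{-3/2-1/2}+\dots$ where the length of the expansion is controlled by the number of finite moments of $\xi$; specifically, if $\E\xi^{2+\gd}<\infty$ then one gets $p_n = (2\pi\gss)^{-1/2}\,n^{-3/2}\bigpar{1+O(n^{-\gd/2})}$, or more precisely a correction term of the form $b\,n^{-3/2-1/2}$ plus an $O(n^{-3/2-(1+\gd)/2})$ remainder when $\gd>1$ (and just the $O(n^{-3/2-\gd/2})$ remainder when $\gd\le1$). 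Substituting into the Dirichlet-type series, $\mu(\ga)=\sum_n n^\ga p_n$ is then expressed as a linear combination of shifted Riemann zeta functions $\zeta(s)$ evaluated at $s=\frac32-\ga$, $s=2-\ga$, etc., plus a tail series that converges absolutely and analytically for $\rea<\frac12+\frac\gd2$. Since $\zeta$ is meromorphic on $\bbC$ with its only pole a simple one at $s=1$, the term $\zeta(\frac32-\ga)$ contributes a single simple pole at $\ga=\frac12$; computing its residue from the leading coefficient $(2\pi\gss)^{-1/2}$ gives residue $-1/\sqrt{2\pi\gss}$ (the sign coming from $\frac{d}{d\ga}(\frac32-\ga)=-1$). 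This proves~\ref{TEgdmu}.

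For~\ref{TEgd-}, I would revisit the proof of \refT{TE}\ref{TE-}. That proof must already pass through an analysis of $\E X_n(\ga)=\sum_{v}\E|\tnv|^\ga$ via the fringe-tree decomposition, writing $\E X_n(\ga)$ as a convolution-type sum involving $p_k$ and the subtree-size distribution, and extracting $\mu(\ga)n$ plus the correction $\frac{1}{\sqrt2\,\gs}\frac{\gG(\ga-\frac12)}{\gG(\ga)}n^{\qgay}$ by singularity analysis in a region $\rea<\frac12$. The point is that the error estimate in that argument is governed by exactly the same tail bounds on $p_n$ that the extra moment assumption now improves; under $\E\xi^{2+\gd}<\infty$ the singularity analysis of the relevant generating function extends to the strip $\rea<\frac12+\frac\gd2$, and the identity~\eqref{te-} persists there by analytic continuation of both sides (the left side is entire in $\ga$ for each fixed $n$, and the asymptotic relation, once established on an open set, propagates by the uniform-on-compacts nature of the estimates). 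Uniformity on compact subsets of $\{-\frac12<\rea<\frac12+\frac\gd2,\ \ga\neq\frac12\}$ comes as usual from the locally uniform error bounds in transfer theorems.

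For~\ref{TEgd=}, the case $\ga=\frac12$ is the confluence point where the pole of $\mu(\ga)$ and the $\gG(\ga-\frac12)$ factor interact. I would take~\eqref{te-} in the refined form from~\ref{TEgd-}, which near $\ga=\frac12$ reads $\E X_n(\ga)=\mu(\ga)n+\frac{1}{\sqrt2\,\gs}\frac{\gG(\ga-\frac12)}{\gG(\ga)}n^{\qgay}+o(n^{\rea+\frac12})$ with uniformity, and let $\ga\to\frac12$. Using $n^{\qgay}=n\cdot n^{\ga-\frac12}=n\,e^{(\ga-\frac12)\log n}=n\bigpar{1+(\ga-\frac12)\log n+\dots}$ together with the Laurent expansions $\mu(\ga)=\frac{-(2\pi\gss)^{-1/2}}{\ga-\frac12}+a_0+O(\ga-\frac12)$ and $\gG(\ga-\frac12)=\frac{1}{\ga-\frac12}-\gamma+O(\ga-\frac12)$, the two singular $\frac{1}{\ga-\frac12}$ contributions cancel against the $n\log n$ piece, leaving $\E X_n(\tfrac12)=\frac{1}{\sqrt{2\pi\gss}}n\log n+cn+o(n)$ with $c$ an explicit combination of $a_0$, $\gamma$, and $\gG'(\frac12)/\gG(\frac12)=-\gamma-2\log2$; hence $c$ depends on $\xi$ through $a_0$ (and $\gss$), as asserted. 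Making the limit interchange rigorous—i.e.\ ensuring the $o(\cdot)$ term is uniform enough in $\ga$ near $\frac12$ to survive the limit, which is really the content of the "uniformly on compact sets" clause including a punctured neighborhood of $\frac12$—is the one delicate point; I expect this to be the main obstacle, and it is handled by carrying the singularity-analysis error terms with explicit dependence on $\ga$ rather than invoking the transfer theorem as a black box. Alternatively, one can prove~\ref{TEgd=} directly by isolating the $n^{-3/2}\log$-type term in the generating-function analysis at $\ga=\frac12$, which sidesteps the limiting argument but duplicates much of the work.
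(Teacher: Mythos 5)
Part \ref{TEgdmu} of your proposal is essentially the paper's argument: the refined local limit theorem $\P(S_n=n-1)=(2\pi\gss)^{-1/2}n^{-1/2}+O(n^{-1/2-\gd/2})$ (valid under $\E\xi^{2+\gd}<\infty$), fed into $\mu(\ga)=\sum_n n^{\ga-1}\P(S_n=n-1)$, yields $\zeta(\tfrac32-\ga)/\sqrt{2\pi\gss}$ plus a series analytic for $\rea<\frac12+\gdd$, and your residue computation is correct. The real problem is part \ref{TEgd-}. The assertion that \eqref{te-} ``persists by analytic continuation of both sides'' because ``the asymptotic relation, once established on an open set, propagates'' is not a valid step: an $o(\cdot)$ estimate is not an analytic object and cannot be continued, and the uniform-on-compacts bounds you invoke in the larger strip are precisely what has to be proved. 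Concretely, the content of \ref{TEgd-} beyond \refT{TE}\ref{TE-} is the new uniform estimate $\mu_n(\ga)-\mu(\ga)=\frac{(\ga-\frac12)^{-1}}{\sqrt{2\pi\gss}}\,n^{\ga-\frac12}+O\bigpar{n^{\rea-\frac12-\gdd}}$ on compact subsets of $\{-\frac12<\rea<\frac12+\gdd\}\setminus\{\frac12\}$. For $\rea\ge\frac12$ the tail $\sum_{k>n}k^{\ga-1}\P(S_k=k-1)$ diverges, so the series must be reorganized (the paper uses a telescoping decomposition, proves the estimate separately on the two sides of the line $\rea=\frac12$, and glues across that line by continuity of $\mu_n(\ga)-\mu(\ga)$). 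Nothing in your proposal supplies this; and \refT{Tbad} shows that without the moment hypothesis the line $\rea=\frac12$ is a natural boundary, so this is exactly where the hypothesis does its work and cannot be waved through.

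For part \ref{TEgd=}, your confluence argument (letting $\ga\to\frac12$ in the refined \eqref{te-} and cancelling the pole of $\mu(\ga)$ against that of $\gG(\ga-\frac12)$ via $n^{\ga-\frac12}=1+(\ga-\frac12)\log n+\dots$) stalls on precisely the point you flag: the error term in \eqref{te-} is controlled only uniformly on compacts bounded away from $\frac12$, so the limit interchange is unjustified as stated, and ``carrying the error terms with explicit dependence on $\ga$'' is a plan rather than a proof. The paper sidesteps the confluence entirely: its \refL{LEn} estimates $\E X_n(\ga)-n\mu_n(\ga)$ with the \emph{truncated} mean $\mu_n(\ga)$, is valid and continuous at $\ga=\frac12$ itself (the bracket there has a removable singularity), and \ref{TEgd=} then follows from the refinement $\mu_n(\frac12)=(2\pi\gss)^{-1/2}\log n+c_1+o(1)$, which uses $\eps_k=O(k^{-\gd/2})$ to make the correction series summable. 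If you wish to keep your route, you would in effect have to re-derive such a lemma with an error term uniform in a full (unpunctured) neighborhood of $\frac12$ after the two singular terms are combined.
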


\begin{remark}\label{Rhighmom}
If $\xi$ has higher moments, then $\mu(\ga)$ can be continued even further:\ see \refT{TH}.
In particular, if $\xi$ has finite moments of all orders, then $\mu(\ga)$
can be continued to a meromorphic function in the entire complex plane
$\bbC$, with poles at $j+\frac12$, $j=0,1,2,\dots$
\hspace{-.07in}
(or possibly a subset 
thereof).
\end{remark}

\begin{theorem}
  \label{TXgd}
Suppose that $\E \xi^{2+\gd}<\infty$ for some $\gd\in(0,1]$.
Then:
\begin{romenumerate}
\item \label{TXgd-}
The limit in distribution \eqref{tx<} holds for all $\ga\in
\Dgdx:=\set{\ga\neq\frac12:0<\rea<\frac12+\gdd}$; 
moreover \eqref{tx<} holds in
$\cH(\Dgdx)$. 
\item \label{TXgd=}
For some constant $c$ (depending on the offspring distribution),
  \begin{equation}\label{txgd=}
n^{-1}\Bigsqpar{ X_n(\tfrac12)-\pigsqqw n\log n }
	 \dto \gs\qw \tY(\tfrac12)+c.
  \end{equation}
\end{romenumerate}
\end{theorem}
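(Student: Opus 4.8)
The plan is to deduce \refT{TXgd} by combining the distributional convergence of the centered functional from \refT{T1} (which needs only $\E\xi=1$ and $0<\gss<\infty$) with the sharpened mean asymptotics of \refT{TEgd} (which is where the hypothesis $\E\xi^{2+\gd}<\infty$ actually enters); no new probabilistic input is required. I will use repeatedly that $n^{\wgay}=n^{-\ga-\frac12}$, that $\tX_n(\ga)=X_n(\ga)-\E X_n(\ga)$, and that $\tY_n(\ga)=n^{\wgay}\tX_n(\ga)$.

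For part~\ref{TXgd-}, I would fix $\ga\in\Dgdx$ and write
\begin{equation*}
n^{\wgay}\bigsqpar{X_n(\ga)-n\mu(\ga)}
= \tY_n(\ga)+n^{\wgay}\bigsqpar{\E X_n(\ga)-n\mu(\ga)},
\end{equation*}
where $\mu$ now denotes the meromorphic continuation furnished by \refT{TEgd}\ref{TEgdmu}; since $X_n$ is entire and $\mu$ is analytic on $\Dgdx$, the left side is a genuine random element of $\cH(\Dgdx)$. By \refT{TEgd}\ref{TEgd-}, the deterministic second term equals $\tfrac{1}{\sqrt2\,\gs}\tfrac{\gG(\ga-\frac12)}{\gG(\ga)}+o\bigpar{n^{(\rea)_+-\rea}}$; on $\Dgdx$ one has $(\rea)_+=\rea$, so this is $\tfrac{1}{\sqrt2\,\gs}\tfrac{\gG(\ga-\frac12)}{\gG(\ga)}+o(1)$, uniformly on compact subsets of $\Dgdx$ (which are bounded away from $\ga=\frac12$, so that $\gG(\ga-\frac12)/\gG(\ga)$ stays bounded there); hence this term converges in $\cH(\Dgdx)$ to $\tfrac{1}{\sqrt2\,\gs}\tfrac{\gG(\ga-\frac12)}{\gG(\ga)}$. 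At the same time \refT{T1} gives $\tY_n\dto\gs\qw\tY$ in $\cH(H_+)$, hence in $\cH(\Dgdx)$ by restriction, since $\Dgdx\subset H_+$ and restriction is continuous between these spaces. Adding a deterministic sequence that converges in $\cH(\Dgdx)$ preserves convergence in distribution there, so the left side converges in distribution in $\cH(\Dgdx)$ to $\gs\qw\tY(\ga)+\tfrac{1}{\sqrt2\,\gs}\tfrac{\gG(\ga-\frac12)}{\gG(\ga)}=\gs\qw Y(\ga)$ by the definition~\eqref{Y}; this is exactly \eqref{tx<}, now valid on all of $\Dgdx$.

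For part~\ref{TXgd=}, I would specialize to $\ga=\frac12\in H_+$. Since point evaluation $f\mapsto f(\tfrac12)$ is continuous from $\cH(H_+)$ to $\bbC$, \refT{T1} gives $\tY_n(\tfrac12)=n\qw\tX_n(\tfrac12)\dto\gs\qw\tY(\tfrac12)$. By \refT{TEgd}\ref{TEgd=}, $\E X_n(\tfrac12)=\pigsqqw n\log n+cn+o(n)$ with $c$ the constant appearing there, so
\begin{equation*}
n\qw\Bigsqpar{X_n(\tfrac12)-\pigsqqw n\log n}
= \tY_n(\tfrac12)+c+o(1)\dto\gs\qw\tY(\tfrac12)+c,
\end{equation*}
which is \eqref{txgd=}, with the same constant $c$ as in \eqref{tegd=}.

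The heavy lifting is all upstream: \refT{T1} for the fluctuations and, above all, \refT{TEgd} for the meromorphic continuation of $\mu$ across $\rea=\frac12$ and for pinning down the $cn$ term at $\ga=\frac12$ under the minimal moment hypothesis. Within the present argument the only points needing care are bookkeeping ones: that the normalized error in \eqref{te-} is genuinely $o(1)$ rather than $o$ of a growing power (which holds because $\qgay-\ga-\frac12=0$ and $(\rea)_+=\rea$ on $\Dgdx$) and is uniform on compacta of $\Dgdx$, and that $\cH(H_+)$-convergence in distribution restricts to $\cH(\Dgdx)$ and is stable under adding a deterministic $\cH(\Dgdx)$-convergent perturbation. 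I do not anticipate any genuine obstacle at this stage.
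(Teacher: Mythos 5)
Your proposal is correct and matches the paper's argument, which proves \refT{TXgd} exactly by combining \refT{T1} with the mean estimates of \refT{TEgd}; you have simply made explicit the bookkeeping (the $o(1)$ normalized error on $\Dgdx$, the restriction from $\cH(H_+)$ to $\cH(\Dgdx)$, and the Slutsky-type addition of a convergent deterministic term) that the paper leaves to the reader. No gaps.
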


The constants $c$ in \eqref{tegd=} and \eqref{txgd=} are equal. 
The proof yields the formula \eqref{tegd-c}.

\begin{remark}
The phase transitions at $\rea=0$ and $\rea=\frac12$ can be explained as
follows. 
Consider for simplicity real $\ga$, when all terms in \eqref{F} are
positive.
The expected number of subtrees $\tnv$ of order $k$ is
roughly $n\P(|\cT|=k)=\Theta(n k^{-3/2})$,
by \cite[Theorem 7.12]{SJ264} (see \refS{Sfringe}) 
and \eqref{pct}.
Hence, if 
$\ga>\frac12$, $\E X_n(\ga)$ is dominated by the rather few large
$\tnv$ of size $\Theta(n)$; there are roughly $\Theta(n\qq)$ such trees,
which explains the order $n^\qgay$ of $\E X_n(\ga)$.
For $\ga<\frac12$, $\E X_n(\ga)$ is dominated by the small 
subtrees $\tnv$, of size
$O(1)$, and this yields the linear behavior of $\E X_n(\ga)$ in \refT{TE}.

For $\ga<0$, the fluctuations, too, are dominated by the small subtrees (as
shown in the proof of \cite[Theorem~1.5]{SJ285});
there are $\approx n$ of these, 
and they are only weakly dependent on each other,
and as a result $X_n(\ga)$ has an asymptotic normal distribution with the
usual scaling.

For $0<\ga<\frac12$, on the other hand, 
the mean $\E X_n(\ga)$ is dominated by the small subtrees as just said,
but 
fluctuations are dominated by the large subtrees
of order $\Theta(n)$. 
(To see this, note that for $\ga>0$ and $\eps>0$, the 
contribution to $X_n(\ga)$ from subtrees of order $\le\eps n$ has
variance $O\bigpar{\eps^{2\ga}n^{2\ga+1}}$ 
by \cite[Theorem 6.7]{SJ285}.)
Hence, we
have the same asymptotic behavior of $\tX_n(\ga)$ as for larger $\ga$. 
The large subtrees  are more
strongly dependent on each other, and lead to a non-normal limit; on the
other hand, asymptotically they do not depend on details in the offspring
distribution. 
\end{remark}

At least when $\rea>\frac12$, the limit random variable $Y(\ga)$ can be
expressed as a function of a normalized Brownian excursion 
$(\ex(t))$. [Recall
that $(\ex(t))$ is a random continuous function on $\oi$; see, \eg{}, \cite{RY}
for a definition.] 
For a function $f$ defined on an interval, define
\begin{equation}
  \label{m}
m(f;s,t):=\inf_{u\in[s,t]}f(u).
\end{equation}
The general representation formula for $\rea>\frac12$ is a little bit
complicated, and  
we give three closely related versions \eqref{wa0}--\eqref{wa2}, where the
first two are related by mirror symmetry and the third, symmetric, formula 
is the average of the two preceding. (See further the proof,
which also gives a fourth formula \eqref{richard}.
The representations \eqref{wa2} and \eqref{wb} were stated in
\cite[(4.2)--(4.3), see also Examples 4.6 and 4.7]{SJ197};
the present paper gives, after a long delay, the proof promised there.)
Note that the integrals in \eqref{wa0}--\eqref{wa2} converge (absolutely) \as{}
when $\rea>\frac12$,
since $\ex(t)$ is \as{} 
\Holder($\gam$)-continuous for every $\gam<\frac12$,
and thus, \eg, $|\ex(t)-m(\ex;s,t)|\le C(t-s)^{\gam}$ for
some random constant $C$.
(This well-known fact follows \eg{} from the corresponding fact for Brownian
motion together with the construction of $\ex$ from the excursions of the
Brownian motion, see \cite[Theorem I.(2.2) and Chapter XII.2--3]{RY}.)

We also give a simpler expression \eqref{wb} valid for $\rea>1$. [The
integral in \eqref{wb} diverges for $\rea\le1$.]
\begin{theorem}
  \label{Tbrown}
  \begin{thmenumerate}
  \item   \label{Tbrown>1/2}
If\/ $\rea>\frac12$, then, jointly for all such $\ga$, 
\begin{align}
Y(\ga)&\eqd
 2\ga \int^{1}_{0}\!t^{\ga - 1}\ex(t)\dd t
\notag
\\&\hskip4em
{}- 2\ga (\ga - 1) \iint\limits_{0<s<t<1} (t - s)^{\ga - 2}
\bigsqpar{{\ex(t)}-{m(\ex;s,t)}}\dd s \dd t 
\label{wa0}
\\
&=
 2\ga \int^{1}_{0}\!(1-t)^{\ga - 1}\ex(t)\dd t
\notag
\\& \hskip4em
{} - 2\ga (\ga - 1) \iint\limits_{0<s<t<1} (t - s)^{\ga - 2}
\bigsqpar{{\ex(s)}-{m(\ex;s,t)}}\dd s \dd t 
\label{wa1}
\\
&=
\ga \int^1_0 \left[ t^{\ga - 1} + (1 - t)^{\ga - 1} \right]\ex(t)\dd t 
\notag
\\&\hskip1em
 {} - \ga (\ga - 1) \iint\limits_{0 < s < t < 1} 
 (t - s)^{\ga - 2} \left[ \ex(s) + \ex(t) - 2 m(\ex;s,t) \right]\dd s\dd t. 
\label{wa2}
\end{align}

\item   \label{Tbrown>1}
If\/ $\rea>1$, we have also the simpler representation
\begin{equation}\label{wb}
Y(\ga)\eqd2\ga (\ga - 1) \iint\limits_{0 < s < t < 1} 
 (t - s)^{\ga - 2}  m(\ex;s,t) \dd s\dd t. 
\end{equation}
  \end{thmenumerate}
\end{theorem}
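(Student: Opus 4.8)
The plan is to deduce the representation from \refT{TX}\ref{TX>} together with the classical convergence of the contour function of $\ctn$ to $\ex$, and then to identify the resulting Brownian excursion functional with each of the four displayed expressions by elementary manipulations of integrals. Let $C_n$ be the (linearly interpolated) depth-first walk of $\ctn$, a path on $[0,2(n-1)]$. Each non-root node $v$ at depth $d$ corresponds to exactly one excursion interval $I_v$ of $\set{t:C_n(t)>d-1}$, and $\tfrac12|I_v|=|\tnv|$; conversely every excursion interval of $\set{C_n>h}$, $h\in\bbZgeo$, arises this way. Hence, separating off the root term $n^\ga$,
\[
 X_n(\ga)=n^\ga+\sum_{h=0}^\infty\ \sum_{I}\bigpar{\tfrac12|I|}^\ga ,
\]
the inner sum running over the excursion intervals $I$ of $\set{C_n>h}$. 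Aldous's theorem gives $\tfrac{\gs}{2\sqrt n}C_n(2n\,\cdot\,)\dto\ex$ in $\coi$; on a Skorohod coupling, rescaling time by $2n$ and height by $\tfrac{2\sqrt n}{\gs}$ turns the double sum into a Riemann sum over a height grid of mesh $\tfrac{\gs}{2\sqrt n}$, and one expects
\[
 n^{\wgay}X_n(\ga)\dto \frac{2}{\gs}\int_0^\infty\Bigsqpar{\sum_{I}|I|^\ga}\dd h ,
\]
where now $I$ ranges over the connected components of $\set{t\in\oi:\ex(t)>h}$. Since $n^{\wgay}X_n(\ga)\dto\gs\qw Y(\ga)$ in $\cH(\set{\rea>\tfrac12})$ by \refT{TX}\ref{TX>}, uniqueness of the limit gives $Y(\ga)\eqd2\int_0^\infty\bigsqpar{\sum_I|I|^\ga}\dd h$, \emph{jointly} for all $\rea>\tfrac12$ and in $\cH$ (the right-hand side being a.s.\ analytic in $\ga$ on $\set{\rea>\tfrac12}$ by differentiation under the integral, using the a.s.\ \Holder($\gam$)-continuity of $\ex$).

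\emph{From the excursion functional to \eqref{wa0}--\eqref{wb}.} The remaining identities are deterministic and hold with any continuous $f$ on $\oi$ vanishing at the endpoints in place of $\ex$. For a component $(a,b)$ of $\set{f>h}$ write $(b-a)^\ga=\ga\int_a^b(t-a)^{\ga-1}\dd t$; with $a_h(t):=\sup\set{s<t:f(s)\le h}$, Tonelli's theorem gives $\int_0^\infty\bigsqpar{\sum_I|I|^\ga}\dd h=\ga\int_0^1\bigsqpar{\int_0^{f(t)}(t-a_h(t))^{\ga-1}\dd h}\dd t$. Now the elementary identity $(t-a)^{\ga-1}=t^{\ga-1}-(\ga-1)\int_0^t(t-s)^{\ga-2}\ett{s<a}\dd s$, together with $\int_0^{f(t)}\ett{s<a_h(t)}\dd h=f(t)-m(f;s,t)$ for $0<s<t<1$, produces precisely $\tfrac12$ of the right-hand side of \eqref{wa0}. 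Next, \eqref{wa0} equals \eqref{wa1}: subtracting, the minima cancel since $[\ex(t)-m(\ex;s,t)]-[\ex(s)-m(\ex;s,t)]=\ex(t)-\ex(s)$, and a Fubini computation shows $\iint_{0<s<t<1}(t-s)^{\ga-2}[\ex(t)-\ex(s)]\dd s\dd t=\tfrac1{\ga-1}\int_0^1\ex(u)[u^{\ga-1}-(1-u)^{\ga-1}]\dd u$, which annihilates the difference. Then \eqref{wa2} is the average of \eqref{wa0} and \eqref{wa1}, and for $\rea>1$ formula \eqref{wb} follows from \eqref{wa0} by splitting $\ex(t)-m(\ex;s,t)$ and using $\iint_{0<s<t<1}(t-s)^{\ga-2}\ex(t)\dd s\dd t=\tfrac1{\ga-1}\int_0^1 t^{\ga-1}\ex(t)\dd t$; the fourth formula \eqref{richard} comes from the analogous bookkeeping. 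All these manipulations are legitimate for $\rea>\tfrac12$ because the a.s.\ \Holder($\gam$)-continuity of $\ex$ ($\gam<\tfrac12$) makes every integrand absolutely integrable, and they genuinely fail for \eqref{wb} when $\rea\le1$, where $(t-s)^{\ga-2}m(\ex;s,t)$ is not integrable.

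\emph{Main obstacle.} The one non-routine step is the convergence claimed in the first paragraph: the functional $f\mapsto\int_0^\infty\sum_{I\subset\set{f>h}}|I|^\ga\dd h$ is \emph{not} continuous on $\coi$ in the uniform topology when $\rea<2$ (after the rewriting above it carries the diagonal singularity $(t-s)^{\ga-2}$, and the component structure of $\set{f>h}$ reacts discontinuously to small uniform perturbations of $f$), so a bare continuous-mapping argument does not apply. One must combine the weak convergence of $C_n$ with a truncation: show that the contribution to $n^{\wgay}X_n(\ga)$ of subtrees of order $\le\eps n$ is $\Op(\eps^{c})$ for some $c=c(\rga)>0$ --- this is the variance estimate already quoted in the remark after \refT{TXgd}, \cite[Theorem 6.7]{SJ285}, together with an analogous bound on the mean --- pass to the limit on the truncated sums by the continuous-mapping theorem (the truncated functional \emph{is} a.s.\ continuous at $\ex$, since $\ex$ a.s.\ has distinct local minima), and let $\eps\downto0$ with a matching bound for the limiting functional; tracking the multiplicative constants through the two rescalings also takes care, since the final formula is universal. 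An alternative that bypasses the contour walk: compute $\E\bigsqpar{\bigpar{2\int_0^\infty\sum_I|I|^\ga\dd h}^\ell}$ directly by splitting $\ex$ at an independent $\mathrm{Unif}(0,1)$ time into two independent $\mathrm{Bessel}(3)$ bridges --- which reproduces exactly the convolution terms $\binom{\ell}{j}\kk_j\kk_{\ell-j}$ of \eqref{kk2} --- verify that it satisfies \eqref{kk1}--\eqref{kk2} and a Carleman bound, and invoke \refT{TXmom}; but that computation is itself lengthy, so I would present the scaling-limit route as primary.
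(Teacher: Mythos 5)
Your strategy is essentially the paper's: the paper also goes through the depth-first walk and Aldous's convergence \eqref{WW2e}, rewrites $(2|\tnv|)^\ga$ by a calculus identity so that $\sum_v\ett{\cdots}$ becomes the running minimum of the walk, truncates to handle the non-integrable kernel when $\frac12<\rea\le1$, controls the remainder by a first-moment bound [$\E Y_n(\gb)=O(1)$ from \refT{TE}\ref{TE+}], and identifies the limit with $Y(\ga)$ via \refT{TX}\ref{TX>} (see \refLs{LB1} and~\ref{LB1/2}). Your ``horizontal'' decomposition $\int_0^\infty\sum_I|I|^\ga\dd h$ is the same object in different bookkeeping: the paper's identity $\sum_v\ett{x,y\in J_v}=\hh{v(x)\wedge v(y)}+1=\ceil{m(W_n;x,y)}+1$ is precisely your level count. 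Two places where your variant needs more care than you give it. First, you truncate by excursion length and assert that the truncated level-set functional is a.s.\ continuous at $\ex$ because $\ex$ has distinct local minima; that is true but requires an argument, since the component structure of $\set{f>h}$ is genuinely unstable under uniform perturbation. The paper sidesteps this by performing the calculus rewriting already at the discrete level and truncating in the variable $t-s$, after which the truncated functional is a finite combination of integrals of $f(t)$ and $m(f;s,t)$ over fixed domains and hence manifestly continuous on $\coi$; I would recommend that route. Second, in deriving \eqref{wa0}$=$\eqref{wa1} you apply Fubini to $\iint_{0<s<t<1}(t-s)^{\ga-2}\ex(t)\dd s\dd t$, but $\int_0^t(t-s)^{\ga-2}\dd s$ diverges for $\rea\le1$, so the two pieces you separate are individually infinite; the cancellation must be carried out under an $\eps$-cutoff $t-s>\eps$ and the limit taken afterwards, which is exactly what the paper's formula \eqref{richard} accomplishes. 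With those two repairs the argument is complete, and the remaining identities \eqref{wa2} and \eqref{wb} are handled as you describe.
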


\begin{example}\label{Ega=1}
  For $\ga=1$, \eqref{wa0}--\eqref{wa2} reduce to 
  \begin{equation}\label{y1}
	Y(1)=2\intoi\!\ex(t)\dd t,
  \end{equation}
twice the \emph{Brownian excursion area}.
In fact, with $\hh{v}$ denoting the depth of a given 
node~$v$, it is easy to see that
\begin{equation}\label{xn1}
  X_n(1)=\sum_{v\in\tn}|\tnv|=\sum_{v\in\tn}(\hh{v}+1)
=n+\sum_{v\in\tn}\hh{v},
\end{equation}
\ie, $n$ plus the \emph{total pathlength}. The convergence of the
total pathlength, suitably rescaled, to the Brownian excursion area was
shown by \citet{AldousII,AldousIII}, see also \cite{SJ146}. 
The Brownian excursion area has been studied by many authors in various
contexts, for example
\cite{Lou:kac,Lou,Takacs:Bernoulli,Takacs:rooted,Takacs:binary,Spencer,FPV,FL:Airy,SJ133},
see also \cite{SJ201} and the further references there.

Furthermore, for $\ga=2$, \eqref{wb} reduces to 
\begin{equation}\label{y2}
  Y(2)\eqd 4 \iint\limits_{0 < s < t < 1}\!m(\ex;s,t) \dd s\dd t. 
\end{equation}
This too was studied in \cite{SJ146}, where $Y(2)$ was denoted $\eta$.
Moreover, the random variable 
$P(\ctn)$ there equals $X_n(1)-n$,
$Q(\ctn)$  equals $X_n(2)-n^2$,
and the Wiener index $W(\ctn)=nP(\ctn)-Q(\ctn)$ equals $nX_n(1)-X_n(2)$.
Hence, the limit theorem \cite[Theorem 3.1]{SJ146} follows from 
\refTs{TX} and \ref{Tbrown}.

Moreover, as noted by \cite{FillK04},
\refT{TXmom} yields for $\ga=1$ 
a recursion formula for the moments of the Brownian excursion area, 
which is equivalent to the formulas given by
\cite{Takacs:Bernoulli,Takacs:rooted,Takacs:binary,FPV,FL:Airy},
see also \cite[Section 2]{SJ201}.
Similarly, also noted by \cite{FillK04},
\refT{TXmom} yields for $\ga=2$ the recursion formula for moments of $Y(2)$
given in \cite{SJ146}.  More generally, the recursion in \cite{SJ146}
for mixed moments of $Y(1)$ and $Y(2)$ follows from \refT{Tmix} below.
\end{example}

\begin{remark}\label{Rdelmas}
For $\ga>\frac12$,  
a different (but equivalent) 
representation of the limit $Y(\ga)$ as a function of a
Brownian excursion~$\ex$ is given by
\citet[(1.10) and (2.6)]{Delmas18}.
That representation can also be written as a functional of the Brownian
continuum random tree; see
\citet[Theorem 1.1]{Delmas20}.
\end{remark}

\begin{remark}\label{Rrepr}
As demonstrated in \refS{Spf2}, it follows from the proof of \refT{T1} given in that section
that there exists a representation of
$Y(\ga)$ as a (measurable) functional of~$\ex$ also for $0<\rga\le\frac12$.
However, this is only an  existence statement, and we do not know
any explicit representation.
More precisely, there exists a measurable function 
$\Psi: H_+\times\coi \to\bbC$ such that 
\begin{align}
 \label{rrepr}
Y(\ga) = \Psi(\ga,\ex),
\qquad \rga >0,
\end{align}
where $\ex$ as above is a Brownian excursion.
Moreover, $\Psi(\ga,f)$ is an analytic function of
$\ga\in H_+$ for every $f\in\coi$.
For $\rga>\frac12$, $\Psi(\ga,\ex)$ is \as{} given by the formulas
\eqref{wa0}--\eqref{wa2}, and for $\rga>1$ also by \eqref{wb}.
Hence, in principle, $\Psi(\ga,\ex)$ is given by an analytic extension of 
\eqref{wb} to all $\ga\in H_+$, and such an extension (necessarily unique)
 exists a.s.
(Note that for $\rga<1$, the double integrals in \eqref{wa0}--\eqref{wa2} do
not  converge for every function $\ex\in\coi$, so we can
only claim existence of the extension a.s.)
%(For $\rga<1$, 
%the double integrals in \eqref{wa0}--\eqref{wa2} do not converge for every
%...

We  concede that the existence of an analytic extension $\Psi(\ga,\ex)$ gives a
``representation'' of $Y(\ga)$ only in a rather abstract sense.
\end{remark}

\begin{problem}
Find an explicit representation for $Y(\ga)$ as a function of~$\ex$
for $0<\rga<\frac12$, or even for $0<\ga<\frac12$.
\end{problem}

Finally, 
we consider real $\ga$ and let $\ga\to\infty$. 
We show the following asymptotic
result yielding a limit of the limit in \refT{TX};
this improves a result in \cite{FillK04} which shows
the existence of such a limit together with \eqref{EZk}.
Let $B(t)$, $t\ge0$, be a standard Brownian motion, and let
\begin{equation}\label{S}
S(t):=\sup_{s\in[0,t]} B(s)  
\end{equation}
be the corresponding supremum process.

\begin{theorem}
  \label{Tlol}
As $\ga\to+\infty$
along the real axis, 
we  have 
$\ga\qq Y(\ga)\dto \Yoo$, where $\Yoo$ is a random variable with 
the representation
\begin{equation}\label{Z}
  \Yoo = \intoo e^{-t} S(t)\dd t.
\end{equation}
and moments
\begin{equation}\label{EZk}
  \E \Yoo^k = 2^{-k/2}\sqrt{k!}, 
\qquad k\ge0,
\end{equation}
and more generally, for real or complex $r$,
\begin{equation}\label{EZr}
  \E \Yoo^r = 2^{-r/2}\sqrt{\gG(r+1)}, 
\qquad \Re r>-1.
\end{equation}
\end{theorem}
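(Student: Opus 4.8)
The plan is to work entirely with moments, since we have the explicit recursion \eqref{kk1}--\eqref{kk2} for $\kk_\ell(\ga)=\E Y(\ga)^\ell$, and then invoke the method of moments together with a separate (soft) argument identifying the limit law via the representation \eqref{Z}. First I would compute $\lim_{\ga\to+\infty}\ga^{-\ell/2}\kk_\ell(\ga)$ for each fixed integer $\ell\ge1$. Write $\gax=\ga+\tfrac12$, so $\gax\sim\ga$ as $\gatoo$. Using Stirling's formula in the form $\gG(\gax+a)/\gG(\gax+b)\sim\gax^{\,a-b}$ as $\gatoo$, the first term on the \rhs{} of \eqref{kk2} has prefactor $\ell\gG(\ell\gax-1)/\bigpar{\sqrt2\,\gG(\ell\gax-\frac12)}\sim \ell(\ell\gax)^{-1/2}/\sqrt2\sim (\ell/2)^{1/2}\ga^{-1/2}$, while each summand in the sum has prefactor $\gG(j\gax-\frac12)\gG((\ell-j)\gax-\frac12)/\gG(\ell\gax-\frac12)$, which by Stirling behaves like a constant times $\ga^{-1/2}$ as well (the $\gax$-powers combine to $(j\gax)^{j\gax-1}\cdots$, and the leading exponential/polynomial factors yield an extra $\ga^{-1/2}$; one checks the precise constant is $\sqrt{2\pi}\,(j(\ell-j)/\ell)^{1/2}\,\ell^{-\ell\gax}j^{\,j\gax}(\ell-j)^{(\ell-j)\gax}\cdot$[lower order], but the key point is only the $\ga^{-1/2}$ scaling). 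Hence if I set $a_\ell:=\lim_{\gatoo}\ga^{-\ell/2}\kk_\ell(\ga)$, the recursion passes to the limit and gives a clean recursion for $a_\ell$; checking against the conjectured answer $a_\ell=2^{-\ell/2}\sqrt{\ell!}$ is then a finite verification. I expect the cross-term sum to actually contribute to the limit only through a telescoping/binomial identity, and matching it with $2^{-\ell/2}\sqrt{\ell!}$ should reduce to a known identity for $\sum_j\binom{\ell}{j}\sqrt{j!\,(\ell-j)!}\,\big/\sqrt{\ell!}$ type sums; this is the one genuinely computational step.

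Next I would check that the moment sequence $(a_\ell)_{\ell\ge0}=(2^{-\ell/2}\sqrt{\ell!})$ determines a unique distribution: since $\sqrt{\ell!}\le \ell!$, Carleman's condition $\sum_\ell a_{2\ell}^{-1/(2\ell)}=\infty$ holds easily, so the method of moments applies and $\ga\qq Y(\ga)\dto \Yoo$ where $\Yoo$ is the unique law with moments $a_\ell$. (For absolute/complex bookkeeping there is nothing extra here because $Y(\ga)$ is real for real $\ga>\tfrac12$, by \eqref{Y} and the Brownian-excursion representation \refT{Tbrown}, so all $\kk_\ell$ are real and positive.)

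It then remains to identify $\Yoo\eqd\intoo e^{-t}S(t)\dd t$ and to verify the moment formulas \eqref{EZk}--\eqref{EZr}. For this I would compute the moments of $Z:=\intoo e^{-t}S(t)\dd t$ directly. By Brownian scaling $S(t)\eqd\sqrt t\,|N|$ for fixed $t$ with $N$ standard normal, but more useful is the reflection-principle fact that the process $(S(t))$ has the same one-dimensional marginals as $(|B(t)|)$; better still, I would use that $S(t)=\max_{s\le t}B(s)$ and compute $\E Z^k=k!\int_{0<t_1<\dots<t_k}e^{-(t_1+\dots+t_k)}\E\bigsqpar{S(t_1)\cdots S(t_k)}\dd t_1\cdots\dd t_k$, inserting the joint law of the running maximum. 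A cleaner route: substitute $t=-\log u$ to turn $Z$ into $\int_0^1 S(-\log u)\dd u$, or better, recall the classical identity that $\intoo e^{-t}S(t)\dd t$ has the same distribution as a certain functional whose Mellin transform is computable; in fact one can show $\E Z^r=2^{-r/2}\sqrt{\gG(r+1)}$ by recognizing that the Laplace-type averaging of $S$ against $e^{-t}\dd t$ corresponds, via the identity $S(t)\eqd\sqrt{2}\,\bes_t$ in distribution at the level of the relevant functional (with $\bes$ a Bessel(3)-type object) — I would instead simply verify \eqref{EZr} by checking it satisfies the same recursion as the $a_\ell$ (for integer $r=\ell$) after establishing, via an integration-by-parts / Markov-property splitting of $S$ at an independent exponential time (the "memoryless" factorization $\intoo e^{-t}S(t)\dd t \eqd \intoo e^{-t}\bigpar{S(\tau)+ \widetilde S(t-\tau)^+}\dd t$ does not literally factor, so instead use the rescaling $\E[S(t_1)\cdots S(t_k)]$ explicitly through the density of the maximum). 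The honest statement: $\E Z^r=2^{-r/2}\sqrt{\gG(r+1)}$ follows from the known Mellin transform of $\intoo e^{-t}S(t)\dd t$; I would cite or re-derive that the law of $Z$ is that of $\tfrac{1}{\sqrt2}$ times a random variable with moments $\sqrt{\gG(r+1)}$, equivalently $Z^2$ has the moments of $\tfrac12\cdot$(a size-biased-type Gamma), and match.

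The main obstacle I anticipate is the second half: cleanly evaluating the moments of $\intoo e^{-t}S(t)\dd t$ and seeing the square-root-of-factorial pattern emerge. The first half (passing \eqref{kk2} to the limit via Stirling) is routine asymptotics; the delicate point there is only bookkeeping the $\ga^{-1/2}$ powers and confirming the combinatorial constant in the limiting recursion matches $2^{-\ell/2}\sqrt{\ell!}$. Given \eqref{EZk} as a target, the cleanest exposition is probably: (i) derive the limiting recursion for $a_\ell$ from \eqref{kk1}--\eqref{kk2}; (ii) check $2^{-\ell/2}\sqrt{\ell!}$ solves it (finite identity) and that it is a determinate moment sequence, giving the convergence $\ga\qq Y(\ga)\dto\Yoo$ with these moments and \eqref{EZr} by the same computation with $\gG$; (iii) separately, compute $\E\bigpar{\intoo e^{-t}S(t)\dd t}^k$ and show it equals $2^{-k/2}\sqrt{k!}$, hence $\Yoo\eqd\intoo e^{-t}S(t)\dd t$ by determinacy. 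That keeps the analytic and the probabilistic identifications logically independent and makes each a self-contained computation.
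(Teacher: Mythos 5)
Your route to the convergence $\ga\qq Y(\ga)\dto\Yoo$ is genuinely different from the paper's. The paper works pathwise from the representation \eqref{wb}: it proves a scaling limit for the excursion minimum near the two endpoints (via $\ex(t)\eqd(1-t)\bes(t/(1-t))$, Brownian scaling, and Pitman's theorem $(J)\eqd(S)$), obtains $\Yoo$ directly as $2\iint e^{-x-y}\bigpar{J(x)\wedge J'(y)}\dd x\dd y$, and only afterwards computes moments. Your plan instead passes the moment recursion \eqref{kk1}--\eqref{kk2} to the limit and applies the method of moments; this is essentially the argument of \cite{FillK04}, which the paper explicitly credits with establishing the existence of the limit together with \eqref{EZk}, so that half is sound. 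One correction to your step (i): the cross-term sum in \eqref{kk2} contributes \emph{nothing} in the limit. By Stirling, $\gG(j\gax-\frac12)\gG((\ell-j)\gax-\frac12)/\gG(\ell\gax-\frac12)$ carries a factor $\bigpar{j^j(\ell-j)^{\ell-j}/\ell^\ell}^{\gax}$, which is exponentially small for $1\le j\le\ell-1$, while $\kk_j\kk_{\ell-j}$ is only polynomially small. Hence the limiting recursion is simply $a_\ell=(\ell/2)\qq a_{\ell-1}$ from the first term alone, giving $a_\ell=2^{-\ell/2}\sqrt{\ell!}$ at once; there is no binomial identity of the type $\sum_j\binom{\ell}{j}\sqrt{j!\,(\ell-j)!}/\sqrt{\ell!}$ to verify, and looking for one would be a wrong turn.

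The genuine gap is in your step (iii), which is where the new content of the theorem lies. You do not actually have a method to evaluate $\E\bigpar{\intoo e^{-t}S(t)\dd t}^k$: integrating the joint law of $(S(t_1),\dots,S(t_k))$ would be very painful, and the ``known Mellin transform'' you invoke is precisely what has to be proved. The device that makes the computation work is to note that $\tau(a):=\min\set{t:B(t)=a}$ and $S$ are inverse processes, so Fubini converts $\intoo e^{-t}S(t)\dd t$ into $\intoo e^{-\tau(s)}\dd s$; then the independent stationary increments of the stable subordinator $\tau$, with $\E e^{-s\tau(t)}=e^{-t\sqrt{2s}}$, factor the $k$th moment into $k!\prod_{j=1}^k(2j)^{-1/2}=2^{-k/2}\sqrt{k!}$. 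Without this (or an equivalent identity) the identification \eqref{Z} is not established.

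A second, smaller gap: \eqref{EZr} for non-integer and complex $r$ does not follow from the integer-moment recursion ``by the same computation with $\gG$''. One needs an extra argument; the paper's is to set $Z:=\log\Yoo+\frac12\log2$ and observe that $e^{Z+Z'}$ (with $Z'$ an independent copy) has moments $k!$, hence is $\Exp(1)$, whence $\bigpar{\E e^{rZ}}^2=\gG(r+1)$ for $\Re r>-1$. Determinacy of the sequence $2^{-k/2}\sqrt{k!}$ (your Carleman check) pins down the law, but it does not by itself produce the fractional moments.
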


Further representations of $\Yoo$ are given in \eqref{yoo} and \eqref{ytau}.

\begin{remark}\label{R'}
  Since convergence in the space $\cH(D)$ (for a domain $D\subseteq\bbC$)
of a sequence of analytic functions implies convergence of their
derivatives, the results above imply corresponding results for $X_n'(\ga)$
and $Y_n'(\ga)$ (and also for higher derivatives).
Note that $X_n'(\ga)$ is the additive functional given by the toll function
$\frac{\ddx}{\ddx\ga}f_\ga(T)=|T|^\ga\log|T|$. In particular, we have
\begin{align}\label{r'}
  X_n'(0) = \sum_{v\in \cT_n}\log|\cT_{n,v}|
=\log\prod_{v\in \cT_n}|\cT_{n,v}|,
\end{align}
which is known as the \emph{shape functional}, see \eg{}
\cite{Fill96,MM98}.
Unfortunately, because of the phase transition at $\rea=0$,
most of our results do not include $0$ in their domains.
The exception is \refT{TE}\ref{TE-}, which implies
\begin{align}\label{x'0}
  \E X_n'(0) = \mu'(0) n + o\bigpar{n\qq\log n},
\end{align}
where the error term is obtained from \eqref{te-} and Cauchy's estimates 
using the circle $|z|=1/\log n$. 
More precise estimates of $\E X_n'(0)$ have been proved by
\cite{Fill96,FillK04,FillFK} 
[random binary trees, the case $\xi\sim\Bi(2,\frac12)$], 
and
\cite{MM98} (general $\xi$ with an exponential moment);
furthermore, these papers also give results for the variance 
(which is of order $n\log n$). 
Moreover, asymptotic normality of $X_n'(0)$ has been shown in special cases 
by
\citet{Pittel} [random labelled trees, the case $\xi\sim\Po(1)$], 
\citet{FillK04} [random binary trees, the case $\xi\sim\Bi(2,\half)$],
and
\citet{Caracciolo20} [random ordered trees, the case $\xi\sim\Ge(\half)$]. 
We have been able to extend this to general $\xi$, assuming
$\E \xi^{2+\gd}<\infty$ for some $\gd>0$,  by suitable modifications
of the arguments in \refS{Smom} (we might provide details in future work).
It seems to be an open problem to show asymptotic normality 
of $X_n'(0)$
for arbitrary $\xi$ with $0<\Var\xi<\infty$ (and $\E\xi=1$, as always).

Note that although the asymptotic normality of $X_n'(0)$ does not follow
from the results in the present paper, 
it fits well together with \refT{T<0} which shows that
$X_n(\ga)$ is asymptotically normal for every $\ga<0$.
\end{remark}

The contents of the paper are as follows.
\refS{Sprel} contains some preliminaries. 
\refS{Sfringe} gives the simple proof of \refT{Tfringe}.
\refS{Stight} shows two lemmas on tightness,
and \refS{S<0} then gives a short proof of \refT{T<0}.
\refS{SE} is a detailed study of the expectation $\E X_n(\ga)$.
\refS{Sbrown} treats
convergence to Brownian excursion and functions thereof.
\refS{Spf2} gives some remaining proofs.
\refS{Slim} discusses the limit as real $\ga\to+\infty$.
\refSs{Smua} and \ref{S:bad} give proofs and a counterexample, respectively,
for the case $\rea=\frac12$.
\refS{Smom} studies moments and gives proofs of \refTs{T1mom} and \ref{TXmom}.
This section uses a method different from that of the previous sections; the two
methods complement each other and combine in the proof of \refT{T1mom}.
Finally,
\refApp{Amuexamples} 
discusses calculation of $\mu(\ga)$ and gives some examples of it;
\refApp{Apoly} gives a proof of a technical lemma in \refS{Smom},
together with some background on polylogarithms used in the proof;
\refApps{A0} and \ref{Ait}
give proofs of the additional results claimed in \refR{R0}.

\begin{acks}
We are grateful to Nevin Kapur for his contributions to \refS{Smom}; Kapur
also coauthored the related unpublished manuscript~\cite{FillK03}.
The present paper was originally conceived as a joint work including him.

We are also grateful to Lennart Bondeson for helpful comments on the topic of~\refR{Rid}.
\end{acks}

\section{Preliminaries and notation}\label{Sprel}

\subsection{Conditioned Galton--Watson trees}\label{SSGW}

Given a non-negative integer-valued random variable $\xi$, 
with distribution $\cL(\xi)$,
the \emph{\GWt}
$\cT$
with offspring distribution $\cL(\xi)$ is constructed recursively by
starting with a root and 
giving each node a number of children that is a new copy of $\xi$,
independent of the numbers of children of the other nodes.
%(This is thus the family tree of a \GWp, see \eg{} \cite{AthreyaN}.) 
Obviously, 
only the distribution $\cL(\xi)$
of $\xi$ matters; we abuse language and say also
that $\cT$ has offspring distribution $\xi$.
Furthermore, let $\tn$ be $\cT$ conditioned on having
exactly $n$ nodes; this is called a \emph{\cGWt}.
(We consider only $n$ such that $\P(|\cT|=n)>0$.)

%As said in the introduction, this includes several important types of random
%trees. 

We assume that $\P(\xi=0)>0$, since otherwise the tree $\cT$ is \as{} infinite.
In fact, we consider here only the \emph{critical} case $\E\xi=1$; 
in this case
$\cT$ is \as{} finite (provided $\P(\xi\neq1)>0$).
It is well known that in most cases, but not all,
a \cGWt{} with an offspring distribution $\xi'$ with an expectation
$\E\xi'\neq1$ is equivalent to a \cGWt{} with another offspring distribution
$\xi$ satisfying $\E\xi=1$, so this is only a minor restriction. 
See \eg{}
\cite[Section~4]{SJ264} for details.

We also assume $0<\Var\xi<\infty$ (but usually no higher moment assumptions).

\begin{remark}
  More generally, 
a \emph{simply generated random tree} $T_n$ defined by a given 
sequence of non-negative weights
$(\phi_k)_0^\infty$ is a random ordered tree with $n$ nodes such
that for every ordered tree $T$ with $|T|=n$, the probability $\P(\tn=T)$ is
proportional to $\prod_{v\in T} \phi_{\out(v)}$, 
where $\out(v)$ denotes the outdegree of $v$, see \eg{} \cite{MM} or
\cite[Section 1.2.7]{Drmota}. Every \cGWt{} is a \sgrt, and the converse
holds under a weak condition. In particular, if the generating function
$\Phi(z):=\sumko\phi_k z^k$ has a positive radius of convergence $R$ and
there exists $\tau$ with $0<\tau<R$ and $\tau\Phi'(\tau)=\Phi(\tau)$
(which is a common assumption in studies of \sgrt{s}), then the \sgrt{}
$T_n$ equals a \cGWt{} defined by a suitable $\xi$ with $\E\xi=1$;
furthermore, this $\xi$ has finite moment generating function 
$\E e^{t\xi} < \infty$ at some $t > 0$,
and thus
finite moments of all orders. Again, see \eg{}
\cite[Section~4]{SJ264} for details.
\end{remark}

Let $\xi_1,\xi_2,\dots$ be independent  copies of $\xi$ and define
\begin{equation}
  \label{Sn}
S_n:=\sumin \xi_i.
\end{equation}
It is well known (see \citet{Otter}, or \cite[Theorem 15.5]{SJ264} and the
further references given there)
that for any $n\ge1$,
\begin{equation}\label{ptk}
{\P(|\cT|=n)}=\frac{1}n\P(S_n=n-1).  
\end{equation}
In particular, \eqref{mu} can be written
\begin{equation}
  \label{mua}
\mu(\ga)=\sumn n^{\ga-1}\P(S_n=n-1), 
\qquad \rea<\tfrac12.
\end{equation}

For some examples where exact (and in one case rational) 
values of $\mu(\ga)$ can be computed when~$\ga$ is a negative integer, see
\refApp{Amuexamples}.

Recall that the 
\emph{span} of an integer-valued random variable $\xi$,
denoted $\spann\xi$, 
is the largest integer $h$ such that $\xi\in a+h\bbZ$ 
\as{} for some $a\in\bbZ$; we  consider only $\xi$ with $\P(\xi=0)>0$
and then the span is the largest integer $h$ such that $\xi/h\in\bbZ$ a.s.,
\ie, the greatest common divisor of \set{n:\P(\xi=n)>0}.
(Typically, $h=1$, but we have for example $h=2$ in the case of full binary
trees, when $\xi\in\set{0,2}$.)
The local limit theorem for discrete random variables
can in our setting can be stated as follows;
see, \eg{}, 
%\cite{Gnedenko}
\cite[Theorem 1.4.2]{Kolchin} or 
\cite[Theorem VII.1]{Petrov}.

\begin{lemma}[Local limit theorem]\label{LLT}
  Suppose that $\xi$ is an integer-valued random variable with 
$\P(\xi=0)>0$, 
$\E\xi=1$, 
$0<\gss:=\Var\xi<\infty$, and span $h$.
Then, as \ntoo, uniformly in all $m\in h\bbZ$,
\begin{equation}\label{llt}
  \P(S_n=m)=\frac{h}{\sqrt{2\pi\gss n}} \Bigsqpar{e^{-(m-n)^2/(2n\gss)}+o(1)}.
\end{equation}
\nopf
\end{lemma}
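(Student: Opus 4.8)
The statement to prove is the Local Limit Theorem (\refL{LLT}): for an integer-valued $\xi$ with $\P(\xi=0)>0$, $\E\xi=1$, $0<\gss=\Var\xi<\infty$, and span $h$, one has uniformly over $m\in h\bbZ$
\[
\P(S_n=m)=\frac{h}{\sqrt{2\pi\gss n}}\Bigsqpar{e^{-(m-n)^2/(2n\gss)}+o(1)}.
\]

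My plan is to deduce this from the standard characteristic-function (Fourier inversion) argument for lattice distributions, exactly as in the references cited (\cite[Theorem 1.4.2]{Kolchin}, \cite[Theorem VII.1]{Petrov}). First I would reduce to span $h=1$: replacing $\xi$ by $\xi/h$ (which is integer-valued with $\P=0$ positive, mean $1/h$, finite positive variance, and span $1$) and correspondingly $S_n$ by $S_n/h$, it suffices to prove the case $h=1$; then one rescales back, noting $\gss$ scales by $h^2$ and the $\frac{1}{\sqrt n}$ prefactor picks up the factor $h$. Actually it is cleaner to just note $S_n\in n + h\bbZ$ shifted appropriately and run the argument directly with general $h$, but the reduction to $h=1$ is the least error-prone route. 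So assume $h=1$ henceforth.

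For $h=1$, let $\gf(t):=\E e^{\ii t\xi}$ be the characteristic function of $\xi$. By Fourier inversion for $\bbZ$-valued variables, $\P(S_n=m)=\frac{1}{2\pi}\intpi e^{-\ii t m}\gf(t)^n\dd t$. Writing $m=n+x$ and centering, $\P(S_n=m)=\frac{1}{2\pi}\intpi e^{-\ii t x}\bigpar{e^{-\ii t}\gf(t)}^n\dd t$, where $\psi(t):=e^{-\ii t}\gf(t)$ is the characteristic function of the centered variable $\xi-1$, so $\psi(t)=1-\tfrac12\gss t^2+o(t^2)$ near $0$. The key steps are the usual three: (i) substitute $t=s/\sqrt n$ and show that on $|s|\le A$ (any fixed $A$) the integrand $\psi(s/\sqrt n)^n e^{-\ii s x/\sqrt n}$ converges to $e^{-s^2\gss/2}e^{-\ii s x/\sqrt n}$ — but since $x/\sqrt n$ is not fixed we keep the $e^{-\ii s x/\sqrt n}$ factor and instead compare with $\frac{1}{2\pi}\int_{\bbR}e^{-\ii s x/\sqrt n}e^{-s^2\gss/2}\dd s=\frac{1}{\sqrt{2\pi\gss n}}e^{-x^2/(2n\gss)}\cdot\sqrt n$, giving the main term; (ii) on $\eps\le|t|\le\pi$ use that span $1$ forces $|\gf(t)|<1$ strictly on the compact set $\eps\le|t|\le\pi$, hence $\sup|\psi(t)|^n=\sup|\gf(t)|^n\to0$ geometrically, so this range contributes $o(1/\sqrt n)$; (iii) on the intermediate range $A/\sqrt n\le|t|\le\eps$ choose $\eps$ small so that $|\psi(t)|\le e^{-c t^2}$ for a fixed $c>0$, giving a bound $\int_{|s|\ge A}e^{-c s^2}\dd s$ after substitution, which is $o(1)$ as $A\to\infty$ uniformly in $n$ and in $x$. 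Combining and multiplying by $\sqrt{2\pi\gss n}$ yields the claim, with the $o(1)$ uniform in $m\in\bbZ$ because all estimates (ii),(iii) are uniform in $x$, and in (i) the convergence $\frac{1}{\sqrt n}\intpm{A\sqrt n}\cdots\to$ main term is uniform in $x$ by dominated convergence with the $e^{-cs^2}$ majorant.

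The main obstacle — such as it is — is bookkeeping the uniformity in $m$ (equivalently in $x=m-n$, which ranges over all of $\bbZ$): one must be careful that the $e^{-\ii sx/\sqrt n}$ oscillatory factor is simply carried along (it has modulus $1$ and does not spoil any modulus bound), and that the error terms in (ii) and (iii) do not depend on $x$ at all, while the leading term is computed exactly by the Gaussian Fourier integral rather than by a pointwise limit. Since this is a classical result with the precise statement available in the cited texts, the cleanest write-up simply invokes \cite[Theorem 1.4.2]{Kolchin} or \cite[Theorem VII.1]{Petrov} after the span reduction; I would present the Fourier argument in the compressed form above only if a self-contained proof is wanted. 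Given the \verb|\nopf| already in the statement, the intended treatment is precisely to cite those references, so the proposal is: reduce to span $1$ and quote the standard local limit theorem.
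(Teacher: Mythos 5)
Your proposal is correct and matches the paper's treatment: the lemma carries \verb|\nopf| and is justified solely by citation to the standard references (Kolchin, Petrov), exactly as you conclude. Your supplementary Fourier-inversion sketch (three-range split, span condition giving $|\gf(t)|<1$ away from the lattice points, and uniformity in $m$ via the modulus-one oscillatory factor) is the classical argument and is sound.
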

In particular, for any fixed $\ell\in\bbZ$, as \ntoo{} with
$n\equiv \ell \pmod h$,
\begin{equation}\label{snn}
  \P(S_n=n-\ell)\sim\frac{h}{\sqrt{2\pi\gss }}\,n\qqw.
\end{equation}

Combining  \eqref{ptk} and \eqref{snn} we see that
\begin{equation}\label{pct}
  \P(|\cT|=n)\sim\frac{h}{\sqrt{2\pi\gss }}\,n^{-3/2}
\end{equation}
as \ntoo{} with $n\equiv1\pmod h$. 
[The probability is 0 when $n\not\equiv1\pmod h$.]

We will for simplicity assume
in some proofs below 
that the span of $\xi$ equals 1;
then \eqref{pct} is valid as $\ntoo$ without restriction.
However, this is just for convenience, and the results hold also for $h>1$,
using standard modifications of the arguments. (We leave these to the
reader, but give sometimes a hint.)

\subsection{Random analytic functions}\label{SSanalytic}

For a domain (non-empty open connected set) $D\subseteq\bbC$, let $\cH(D)$
denote the 
space of all analytic functions on $D$, equipped with the usual topology of
uniform convergence on compact sets; this is a topological vector space
with the topology given by the seminorms $p_K(f):=\sup_{z\in K}|f(z)|$,
with $K$ ranging over all compact subsets of $D$.
The space $\cH(D)$ is a Fr\'echet space, \ie{}, a locally convex space with a
  topology that can be defined by a complete translation-invariant metric,
and it has (by Montel's theorem on normal families) the property that
every closed bounded subset is compact,
see \eg{} \cite[\S1.45]{Rudin-FA}
or \cite[Example 10.II and Theorem 14.6]{Treves}.
Furthermore, $\cH(D)$ is separable.
$\cH(D)$ is thus a Polish space (\ie, a complete separable metric space).
We equip $\cH(D)$ with its  Borel $\gs$-field, and note that this is
generated by the point evaluations $f\mapsto f(z)$, $z\in D$.
[This can be seen by choosing 
%each seminorm $p_K$ is a supremum
%of a countable set of $|f(z)|$, and that 
an increasing sequence $(K_i)$ of compact sets with $D=\bigcup_i K_i$,
and  a countable dense subset $(f_j)$ of $\cH(D)$, and noting that
then the sets
$U_{i,j,n}:=\set{f:p_{K_i}(f-f_j)<1/n}$ form a countable basis of the
topology of $\cH(D)$; furthermore, each $U_{i,j,n}$ belongs to the
$\gs$-field generated by the point evaluations. We omit the standard details.]
It follows from this and the monotone class theorem that the distribution of
a random function $f$ in $\cH(D)$ is determined by its finite-dimensional
distributions (\ie, the distributions of finite sets of point evaluations).

We can use the general theory in
\eg{} Billingsley \cite{Billingsley} or Kallenberg \cite{Kallenberg}
for convergence in distribution of random
functions in $\cH(D)$.
%(equipped with their Borel $\gs$-fields).
In particular, recall that a sequence $(X_n)$ of random variables in a
metric space $\cS$ is \emph{tight} if for every $\eps>0$, there exists
a compact subset $K\subseteq\cS$ such that $\P(X_n\in K)>1-\eps$ for
every $n$.
Prohorov's theorem \cite[Theorems 6.1--6.2]{Billingsley},
\cite[Theorem 16.3]{Kallenberg} says that 
in a Polish space, a sequence $X_n$ is tight if and only if
the corresponding sequence of distributions $\cL(X_n)$ is relatively
compact, \ie, each subsequence has a subsubsequence that converges in
distribution.

It is easy to characterize tightness in $\cH(D)$ in terms of tightness of
real-valued random variables.

\begin{lemma} \label{Lhd}
Let $D$ be a domain in $\bbC$, and let $(X_n(z))$ be a sequence 
of random analytic functions on $D$. Then the following are equivalent.
\begin{romenumerate}
\item \label{Lhdt}
The sequence $(X_n(z))$  is tight in $\cH(D)$.
\item \label{LhdK}
The sequence
$(\sup_{z\in K}|X_n(z)|)$ is tight for every compact $K\subset D$.
\item \label{LhdB}
The sequence
$(\sup_{z\in B}|X_n(z)|)$ is tight for every closed disc $B\subset D$.
\end{romenumerate}
\end{lemma}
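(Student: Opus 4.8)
The plan is to prove the cycle of implications \ref{Lhdt} $\Rightarrow$ \ref{LhdK} $\Rightarrow$ \ref{LhdB} $\Rightarrow$ \ref{Lhdt}. The implication \ref{Lhdt} $\Rightarrow$ \ref{LhdK} is the easy direction: for a fixed compact $K \subset D$, the map $\Phi_K \colon \cH(D) \to \bbR$ given by $\Phi_K(f) := \sup_{z \in K}|f(z)| = p_K(f)$ is continuous (indeed it is one of the defining seminorms of the topology). A continuous image of a tight sequence is tight --- if $\cK \subseteq \cH(D)$ is compact with $\P(X_n \in \cK) > 1 - \eps$ for all $n$, then $\Phi_K(\cK)$ is a compact subset of $\bbR$ and $\P(\Phi_K(X_n) \in \Phi_K(\cK)) > 1-\eps$. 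Hence $(\sup_{z\in K}|X_n(z)|)$ is tight. The implication \ref{LhdK} $\Rightarrow$ \ref{LhdB} is trivial since every closed disc $B \subset D$ is in particular a compact subset of $D$.

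The substantive direction is \ref{LhdB} $\Rightarrow$ \ref{Lhdt}. Here I would use Prohorov's theorem together with Montel's theorem, exactly as foreshadowed in \refSS{SSanalytic}: in the Polish space $\cH(D)$, tightness of $(X_n)$ is equivalent to the existence, for each $\eps > 0$, of a compact set $\cK \subseteq \cH(D)$ with $\inf_n \P(X_n \in \cK) \ge 1 - \eps$; and by Montel's theorem a subset of $\cH(D)$ has compact closure precisely when it is bounded in each seminorm $p_K$, i.e.\ uniformly bounded on each compact subset of $D$. So it suffices, given $\eps > 0$, to produce a set of the form $\cK = \set{f \in \cH(D) : p_{K_i}(f) \le M_i \text{ for all } i}$ (with $(K_i)$ a fixed exhaustion of $D$ by compact sets and $M_i < \infty$ constants) whose probability under every $X_n$ exceeds $1 - \eps$; such a $\cK$ is closed and bounded, hence compact. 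To build it: first reduce from arbitrary compact subsets of $D$ to closed discs --- every compact $K \subset D$ is covered by finitely many open discs whose closures lie in $D$, and $\sup_K |f| \le \max_j \sup_{B_j}|f|$ over those finitely many closed discs $B_j$; so boundedness on all closed discs $\subset D$ implies boundedness on all compact subsets, and it is enough to control $(\sup_{z \in B}|X_n(z)|)$ for a countable family of closed discs $B^{(i)} \subset D$ whose interiors cover $D$ (such a countable family exists since $D$ is a separable metric space). By hypothesis \ref{LhdB}, each sequence $(\sup_{z \in B^{(i)}}|X_n(z)|)_n$ is tight in $\bbR$, so choose $M_i < \infty$ with $\P\bigpar{\sup_{z \in B^{(i)}}|X_n(z)| > M_i} < \eps\, 2^{-i}$ for all $n$. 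Then $\cK := \bigcap_i \set{f : \sup_{B^{(i)}}|f| \le M_i}$ satisfies $\P(X_n \notin \cK) < \eps$ for every $n$, and $\cK$ is a closed, seminorm-bounded, hence (by Montel) compact subset of $\cH(D)$. Prohorov's theorem then gives tightness of $(X_n)$ in $\cH(D)$.

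The only mildly delicate point --- and the step I would be most careful about --- is the passage from "tight on each closed disc" to "relatively compact as a family of functions", i.e.\ verifying that the candidate set $\cK$ really is compact in $\cH(D)$. This is where Montel's theorem does the work: a pointwise- or locally-uniformly-bounded family of analytic functions is a normal family, so $\cK$, being closed in $\cH(D)$ and bounded in every seminorm $p_K$ (because every compact $K$ is covered by finitely many of the $B^{(i)}$), has compact closure equal to itself. I would also remark, as in the bracketed discussion after the statement of $\cH(D)$ being Polish, that one should check the $\sigma$-field technicalities are harmless --- the events $\set{\sup_{z \in B^{(i)}}|X_n| > M_i}$ are measurable since the supremum over a compact set of an analytic function equals a supremum over a countable dense subset, hence is a measurable functional of $X_n$ --- but this is routine. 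No genuine analytic estimate (Cauchy's inequalities, etc.) is needed here; the lemma is purely a translation between the topology of $\cH(D)$ and uniform bounds, packaged through Prohorov and Montel.
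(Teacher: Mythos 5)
Your proposal is correct and follows essentially the same route as the paper: the forward implications are immediate, and for \ref{LhdB}$\implies$\ref{Lhdt} both arguments take a countable family of closed discs whose interiors cover $D$, choose thresholds $M_i$ with exceptional probability $\eps\,2^{-i}$, and observe that the resulting intersection is closed and seminorm-bounded, hence compact by Montel, with total exceptional probability at most $\eps$. The only cosmetic difference is your invocation of Prohorov's theorem at the end, which is not needed since exhibiting the compact set of probability at least $1-\eps$ is already the definition of tightness.
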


\begin{proof}
This proof is an easy exercise that we include for completeness.

\ref{Lhdt}$\implies$\ref{LhdK}$\implies$\ref{LhdB} is trivial.

\ref{LhdB}$\implies$\ref{Lhdt}. Assume 
that \ref{LhdB} holds and choose a
sequence of closed discs $B_j\subset D$, $j\ge1$, 
such that the interiors $B_j^\circ$
cover $D$. 
%No paragraph break here!
Let $\eps>0$. Then, by \ref{LhdB}, for each $j$ there exists
$M_j<\infty$ such that $\P(\sup_{z\in B_j}|X_n(z)|>M_j)<2^{-j}\eps$.
Let $L:=\set{f\in\cH(D):\sup_{z\in B_j}|f(z)|\le M_j\textrm{\ for all~$j$}}$. 
Each compact subset $K$ of $D$ is covered by a finite collection of open
discs $B_j^\circ$, and it follows that there exists $M_K<\infty$ such that
if $f\in L$, then $p_K(f):=\sup_{z\in K}|f(z)|\le M_K$. 
In other words, $\sup_{f\in L}p_K(f)<\infty$ for each compact $K\subset D$,
which 
says that $L$ is bounded in $\cH(D)$, because the topology is defined
by the seminorms $p_K$
\cite[Proposition 14.5]{Treves}.
Moreover, $L$ is a closed set in $\cH(D)$, and thus
$L$ is compact in $\cH(D)$ by the Montel property mentioned above.
Furthermore, $\P(X_n\notin L)< \sumj 2^{-j}\eps=\eps$.
\end{proof}

This leads to the following simple sufficient condition.

\begin{lemma}
  \label{LL1}
Let $D$ be a domain in $\bbC$ and let $(X_n(z))$ be a sequence of random
analytic functions in $\cH(D)$.
Suppose that there exists a function $\gam:D\to(0,\infty)$, bounded on
each compact subset of $D$, such that 
$\E|X_n(z)|\le \gam(z)$
for every $z\in D$. Then the sequence $(X_n)$ is tight in $\cH(D)$.
\end{lemma}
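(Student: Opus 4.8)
The plan is to reduce everything to \refL{Lhd}, using the implication \ref{LhdB}$\implies$\ref{Lhdt} established there: it suffices to check that for every closed disc $B\subset D$ the sequence of nonnegative real random variables $\bigpar{\sup_{z\in B}\abs{X_n(z)}}_{n}$ is tight. Since tightness of a sequence of nonnegative random variables follows from a uniform bound on their expectations (via Markov's inequality), the whole task is to exhibit, for each closed disc $B\subset D$, a finite constant $M_B$ independent of $n$ with $\E\bigsqpar{\sup_{z\in B}\abs{X_n(z)}}\le M_B$.

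To produce such a bound I would fix $B=\set{z:\abs{z-z_0}\le r}\subset D$ and pick $r'>r$ with $\set{z:\abs{z-z_0}\le r'}\subset D$, possible since $B$ is compact and $D$ open; let $C'$ be the circle $\set{w:\abs{w-z_0}=r'}$. Cauchy's integral formula, applied to the (almost surely) analytic function $X_n$ on a neighbourhood of the disc bounded by $C'$, gives for every $z\in B$
\[
\abs{X_n(z)}
=\Bigabs{\frac{1}{2\pi\ii}\int_{C'}\frac{X_n(w)}{w-z}\dd w}
\le\frac{1}{2\pi(r'-r)}\int_{C'}\abs{X_n(w)}\,\abs{\ddx w},
\]
using $\abs{w-z}\ge r'-r$ for $w\in C'$, $z\in B$. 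The key point is that the right-hand side no longer depends on $z$, so
\[
\sup_{z\in B}\abs{X_n(z)}\le\frac{1}{2\pi(r'-r)}\int_{C'}\abs{X_n(w)}\,\abs{\ddx w}
\qquad\text{a.s.}
\]
Taking expectations, moving $\E$ inside the contour integral by Tonelli's theorem (the integrand is nonnegative), and then invoking the hypothesis $\E\abs{X_n(w)}\le\gam(w)$ together with the boundedness of $\gam$ on the compact set $C'$, we obtain
\[
\E\Bigsqpar{\sup_{z\in B}\abs{X_n(z)}}
\le\frac{1}{2\pi(r'-r)}\int_{C'}\gam(w)\,\abs{\ddx w}
\le\frac{r'}{r'-r}\,\sup_{w\in C'}\gam(w)=:M_B<\infty,
\]
with $M_B$ independent of $n$. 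Markov's inequality then gives $\P\bigpar{\sup_{z\in B}\abs{X_n(z)}>\gl}\le M_B/\gl$ for every $\gl>0$, uniformly in $n$, so $\bigpar{\sup_{z\in B}\abs{X_n(z)}}_{n}$ is tight; as $B$ was an arbitrary closed disc in $D$, \refL{Lhd} gives that $(X_n)$ is tight in $\cH(D)$.

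I expect no genuine obstacle: the argument is simply a repackaging of the Cauchy estimate, and the proof should take only a short paragraph. The only subtleties worth a word are (i) that the Cauchy integral over the slightly larger circle $C'$ bounds $\abs{X_n(z)}$ by a quantity \emph{uniform} in $z\in B$, which is precisely what lets one interchange the expectation with the supremum, and (ii) the routine measurability remarks --- that $\sup_{z\in B}\abs{X_n(z)}$ is a bona fide random variable (by continuity of $X_n$ it equals the supremum over a countable dense subset of $B$) and that Tonelli's theorem legitimately applies to $(\go,w)\mapsto\abs{X_n(w)}$, a Carath\'eodory function (continuous in $w$, measurable in $\go$). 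I would note these in passing and not dwell on them.
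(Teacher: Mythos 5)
Your proof is correct and follows essentially the same route as the paper: reduce to Lemma~\ref{Lhd}\ref{LhdB}, bound $\sup_{z\in B}|X_n(z)|$ by an integral of $|X_n|$ over a slightly larger circle, take expectations, and apply Markov's inequality. The only (immaterial) difference is that you use the Cauchy integral formula where the paper uses the Poisson integral representation; both yield the same uniform $L^1$ bound.
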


\begin{proof}
  Let $B\subset D$ be a closed disc. There exists a circle $\gG\subset D$
  such that $B$ lies in the interior of $\gG$. 
If $f\in\cH(D)$, then the value $f(z)$ at a 
point inside $\gG$ can be
expressed by a 
Poisson integral 
$\int_\gG P(z,w) f(w)|\ddx w|$ 
over the circle $\gG$, where $P$ is the 
Poisson kernel.
(This is because analytic functions are harmonic.
See \eg{} \cite[11.4, 11.12, and 11.13]{Rudin-RealAndComplex}.)
Furthermore, the Poisson kernel is continuous, and
thus bounded by some constant $\CC$ for all $z\in B$ and $w\in \gG$.
Consequently, for every $f\in\cH(D)$ we have
\begin{equation}
\sup_{z\in B}  |f(z)|\le \CCx \int_\gG |f(w)|\,|\ddx w|.
\end{equation}
Applying this to $X_n(z)$ and taking the expectation, we obtain
\begin{equation}
  \begin{split}
\E \sup_{z\in B} |X_n(z)|
&
\le \CCx \E \int_\gG |X_n(w)|\,|\ddx w|
= \CCx \int_\gG \E|X_n(w)|\,|\ddx w|
\\&
\le \CCx \int_\gG \gam(w)\,|\ddx w|<\infty.	
  \end{split}
\end{equation}
  Hence the sequence $(X_n)$ satisfies \refL{Lhd}\ref{LhdB} (by Markov's
  inequality), and the
  conclusion follows by \refL{Lhd}. 
\end{proof}

We shall also use the following, which again uses properties of analytic
functions. 

\begin{lemma}\label{Lsub}
Let $D$ be a domain in $\bbC$ and let $E$ be a subset of $D$ that has a
limit point in $D$. (I.e., there exists a sequence $z_n\in E$ of distinct
points and $z_\infty\in D$ such that $z_n\to z_\infty$.)
Suppose that $(X_n)$ is a tight sequence of random elements of $\cH(D)$ and
that there exists 
a family of random variables $\set{Y_z:z\in E}$ such that for each $z\in E$,
$X_n(z)\dto Y_z$ and, moreover, this holds jointly for any finite set of
$z\in E$. 
Then $X_n\dto Y$ in $\cH(D)$, for some random function
$Y(z)\in\cH(D)$. 
Furthermore,
$Y(z)\eqd Y_z$, jointly for any finite set of $z\in E$.
That is, $Y$ restricted to $E$ and $(Y_z)$ have the same finite-dimensional
distributions, and thus have the same distribution as random elements of
$\bbC^E$. 
\end{lemma}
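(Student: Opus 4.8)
The plan is to exploit the fact, established in \refSS{SSanalytic}, that the Borel $\gs$-field on $\cH(D)$ is generated by the point evaluations, so that the law of a random element of $\cH(D)$ is determined by its finite-dimensional distributions; and to combine this with Prohorov's theorem via the tightness hypothesis. First I would use tightness of $(X_n)$: by Prohorov's theorem (recalled above), $(X_n)$ is relatively compact, so every subsequence has a subsubsequence converging in distribution in $\cH(D)$ to some random analytic function. The work is to show that all such subsequential limits have the same distribution, so that the full sequence converges.

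So let $(X_{n_k})$ and $(X_{m_k})$ be two subsequences with $X_{n_k}\dto Y'$ and $X_{m_k}\dto Y''$ in $\cH(D)$, where $Y',Y''\in\cH(D)$ are random. For any finite set $z_1,\dots,z_r\in E$, the evaluation map $f\mapsto(f(z_1),\dots,f(z_r))$ is continuous from $\cH(D)$ to $\bbC^r$, so by the continuous mapping theorem $(X_{n_k}(z_1),\dots,X_{n_k}(z_r))\dto(Y'(z_1),\dots,Y'(z_r))$ and likewise for $Y''$. But by hypothesis both of these converge to $(Y_{z_1},\dots,Y_{z_r})$ (this is exactly the joint-convergence assumption on $E$). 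Hence $Y'$ and $Y''$ have the same finite-dimensional distributions along $E$. Now I invoke the uniqueness principle for analytic functions: if two random analytic functions on $D$ agree in distribution on the point evaluations at a set $E$ with a limit point in $D$, then they agree in distribution on \emph{all} of $D$. This is the one genuinely analytic step and I would expect it to be the main obstacle to write carefully.

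To justify that step I would argue as follows. Fix any finite set $w_1,\dots,w_s\in D$; I must show $(Y'(w_1),\dots,Y'(w_s))\eqd(Y''(w_1),\dots,Y''(w_s))$. Enlarge the set $E$: pick a sequence $\zeta_n\in E$ of distinct points with $\zeta_n\to\zeta_\infty\in D$, and consider the countable set $E_0:=\set{\zeta_n:n\ge1}\cup\set{w_1,\dots,w_s}$. The finite-dimensional distributions of $Y'$ and $Y''$ agree on $\set{\zeta_n}$, but a priori not yet at the $w_i$. Here is the key point: for an analytic function $f$ on $D$, the value $f(w)$ at any point $w\in D$ is a \emph{measurable} function of the restriction $f|_{\set{\zeta_n:n\ge1}}$, because $f$ is determined on its connected domain by its values on any sequence with a limit point in $D$ (identity theorem), and the map ``sequence of values $\mapsto$ analytic continuation evaluated at $w$'' can be realized measurably (e.g.\ by reconstructing Taylor coefficients at $\zeta_\infty$ via divided differences, then analytically continuing along a fixed path from $\zeta_\infty$ to $w$ inside $D$, a countable limiting procedure). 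Thus the random vector $(Y'(\zeta_1),Y'(\zeta_2),\dots)$ determines $(Y'(w_1),\dots,Y'(w_s))$ through a fixed measurable map $\Phi$, and the same $\Phi$ works for $Y''$. Since $(Y'(\zeta_n))_n\eqd(Y''(\zeta_n))_n$ as random elements of $\bbC^{\bbN}$ (equality of all finite-dimensional distributions plus the monotone class theorem), applying $\Phi$ gives $(Y'(w_1),\dots,Y'(w_s))\eqd(Y''(w_1),\dots,Y''(w_s))$. Hence $Y'\eqd Y''$ in $\cH(D)$.

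It follows that all subsequential limits of $(X_n)$ in $\cH(D)$ share a common distribution; call a random function with this law $Y$. By the standard relative-compactness argument (Prohorov, plus the fact that a relatively compact sequence all of whose subsequential limits coincide in law converges), $X_n\dto Y$ in $\cH(D)$. Finally, for any finite $z_1,\dots,z_r\in E$, continuity of evaluation and the continuous mapping theorem give $(X_n(z_1),\dots,X_n(z_r))\dto(Y(z_1),\dots,Y(z_r))$, while by assumption the same sequence converges to $(Y_{z_1},\dots,Y_{z_r})$; uniqueness of limits in distribution yields $(Y(z_1),\dots,Y(z_r))\eqd(Y_{z_1},\dots,Y_{z_r})$. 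Thus $Y|_E$ and $(Y_z)_{z\in E}$ have the same finite-dimensional distributions, hence the same law on $\bbC^E$, completing the proof. The one subtlety worth a careful sentence is the measurability of ``evaluate the analytic continuation at $w$'' as a function on sequences of values; everything else is soft and follows from the setup already laid out in \refSS{SSanalytic} together with Prohorov's theorem.
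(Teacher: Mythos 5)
Your argument is correct, and it is in substance the same argument the paper uses --- except that the paper compresses almost all of it into a citation. The published proof reduces to countable $E$, observes that the restriction map $\phi:f\mapsto(f(z_i))_i$ from $\cH(D)$ to $\bbC^E$ is continuous and injective (injectivity being the identity theorem, since $E$ has a limit point in $D$), and then invokes \cite[Lemma 7.1]{SJ185}, which is precisely the general statement ``tightness in $\cS_1$ plus convergence of $\phi(X_n)$ in $\cS_2$ for a continuous injection $\phi$ implies convergence of $X_n$.'' Your subsequence-plus-identification-of-limits argument is exactly what sits inside that black box, so you have in effect reproved the cited lemma in the case at hand. The one point where your route genuinely diverges is the measurability of the inverse of the restriction map: you build the map $(f(\zeta_n))_n\mapsto f(w)$ by hand via divided differences and analytic continuation along a path, which works but requires some care about where the iterated limits exist (it suffices that they exist and give the right value on the range of $\phi$, since the subsequential limits lie in $\cH(D)$ a.s.). The slicker alternative, which the paper itself uses in the proof of \refR{Rrepr}, is the Lusin--Souslin theorem: a continuous injection between Polish spaces has Borel range and Borel measurable inverse \cite[Theorem 8.3.7 and Proposition 8.3.5]{Cohn}; combined with the fact from \refSS{SSanalytic} that point evaluations generate the Borel $\gs$-field of $\cH(D)$, this disposes of your ``one subtlety'' in a single citation. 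Either way the proof is sound.
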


\begin{proof}
  It suffices to consider the case when $E=\set{z_1,z_2,\dots}$ with $z_n\to
  z_\infty\in D$.
The result then is a special case of 
\citet[Lemma 7.1]{SJ185}; in the notation there we take $\cS_1=\cH(D)$,
$\cS_2=\bbC^E$ and let $\phi$ be the obvious restriction map 
$f(z)\mapsto (f(z_i))_{i=1}^{\infty}$; note that $\phi$ is injective by the
standard uniqueness for analytic functions. The assumption of joint
convergence $X_n(z)\dto Y_z$ for any finite subset of $E$ is equivalent to 
the convergence $\phi(X_n)\dto (Y_{z_i})$ in $\bbC^E$, since this space has
the product topology
\cite[p.~19]{Billingsley}. 
The conclusion follows from
\cite[Lemma 7.1]{SJ185}.
\end{proof}

\begin{remark}
  \refL{Lsub} may fail 
if we do not assume joint convergence; \ie, if only
  $X_n(z)\dto Y_z$ for each $z\in E$ separately. For a counterexample, 
let $D=\bbC$ and $E=\set{z:|z|=1}$; further, let
  $U$ be uniformly distributed on the unit circle \set{z:|z|=1},
let $X_{2n}(z):=U$ (a constant function) and $X_{2n+1}(z):=Uz$.
Then $X_n(z)\dto U$ for each fixed $z\in E$, 
and 
$(X_n)$ is tight in $\cH(D)$ by \refL{LL1} with $\gamma(z) := \max\{1,|z|\}$,
but $X_n$ does not converge in
$\cH(\bbC)$; for example, $X_n(0)$ does not converge in distribution.

We do not know whether it would be sufficient to assume $X_n(z)\dto Y_z$ for
each $z\in E$ separately in the case when $E$ contains a non-empty open set. 
\end{remark}

\subsection{Dominated convergence}\label{SSdom}

To show uniformity in $\ga$ of various estimates, we use the following
simple, but perhaps not so well known, version of Lebesgue's dominated
convergence theorem.

\begin{lemma}
  \label{Ldom}
Let $\cA$ be an arbitrary index set.
Suppose that, for $\ga\in\cA$ and $n\ge1$, $f_{\ga,n}(x)$ are measurable
functions on a measure space $(\cS,\cF,\mu)$, and that 
for \aex{} fixed $x\in\cS$, we have
$f_{\ga,n}(x)\to g_\ga(x)$ as \ntoo, uniformly in $\ga\in\cA$. 
Suppose furthermore that
$h(x)$ is an integrable function on $\cS$, such that $|f_{\ga,n}(x)|\le
h(x)$ \aex{} for each $\ga$ and $n$. Then
$\int_{\cS} f_{\ga,n}(x)\dd\mu(x) \to \int_{\cS} g_{\ga}(x)\dd \mu(x)$ as
\ntoo, uniformly in $\ga\in\cA$. 
\end{lemma}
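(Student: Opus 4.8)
The plan is to reduce the statement to the ordinary dominated convergence theorem applied to a single, judiciously chosen sequence of genuinely measurable functions. First I would record the routine preliminaries. For each fixed $\ga\in\cA$, the function $g_\ga$ is the pointwise (\aex{}) limit of the measurable functions $f_{\ga,n}$, hence measurable, and $|g_\ga(x)|\le h(x)$ \aex{}; in particular $g_\ga$ and each $f_{\ga,n}$ are integrable, so all integrals in the statement are well defined, and
\begin{equation*}
  \Bigabs{\int_\cS f_{\ga,n}\dd\mu-\int_\cS g_\ga\dd\mu}
  \le\int_\cS|f_{\ga,n}-g_\ga|\dd\mu .
\end{equation*}
It therefore suffices to prove that $s_n:=\sup_{\ga\in\cA}\int_\cS|f_{\ga,n}(x)-g_\ga(x)|\dd\mu(x)\to0$ as \ntoo; note that $s_n\le\int_\cS 2h\dd\mu<\infty$.

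The key observation is that although the pointwise supremum $\phi_n(x):=\sup_{\ga\in\cA}|f_{\ga,n}(x)-g_\ga(x)|$ need not be measurable (the index set $\cA$ is arbitrary, possibly uncountable), we never need to integrate it: by the hypothesis of uniform-in-$\ga$ convergence, $\phi_n(x)\to0$ for \aex{} $x$, and $\phi_n(x)\le2h(x)$ \aex{}, and these are the only facts about $\phi_n$ that will be used. For each $n$ choose $\ga_n\in\cA$ with $\int_\cS|f_{\ga_n,n}-g_{\ga_n}|\dd\mu\ge s_n-1/n$ (possible by the definition of the supremum; if $s_n=0$ take any $\ga_n$). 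Put $\psi_n:=|f_{\ga_n,n}-g_{\ga_n}|$. Then each $\psi_n$ is measurable, $\psi_n\le2h$ with $h$ integrable, and for \aex{} $x$ we have $\psi_n(x)\le\phi_n(x)\to0$. Hence the ordinary dominated convergence theorem applies to $(\psi_n)$ and gives $\int_\cS\psi_n\dd\mu\to0$; since $s_n\le\int_\cS\psi_n\dd\mu+1/n$, we conclude $s_n\to0$, which is exactly what is needed.

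The only genuinely delicate point is the measurability issue just flagged: the supremum $\phi_n$ over the arbitrary set $\cA$ may fail to be measurable, so one cannot simply apply dominated convergence to $(\phi_n)$. The device of replacing $\phi_n$ by the honest measurable function $\psi_n=|f_{\ga_n,n}-g_{\ga_n}|$ built from a near-extremal index $\ga_n$ — using $\phi_n$ only as an \emph{a priori} pointwise bound along that sequence — circumvents it, and everything else is bookkeeping. (If one prefers a contrapositive phrasing: were $s_n\not\to0$, one could extract $\eps>0$, a subsequence $n_k\uparrow\infty$ and indices $\ga_k\in\cA$ with $\int_\cS|f_{\ga_k,n_k}-g_{\ga_k}|\dd\mu\ge\eps$; then $\psi_k:=|f_{\ga_k,n_k}-g_{\ga_k}|\le\phi_{n_k}\to0$ \aex{}, so dominated convergence gives $\int_\cS\psi_k\dd\mu\to0$, a contradiction.)
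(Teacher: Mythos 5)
Your proof is correct and uses essentially the same device as the paper: the paper's proof also picks a sequence $\ga_n\in\cA$ (an arbitrary one, which subsumes your near-extremal choice) and applies the ordinary dominated convergence theorem to $f_{\ga_n,n}-g_{\ga_n}$, which tends to $0$ a.e.\ by the uniform convergence and is dominated by $2h$. Your explicit discussion of the non-measurability of the pointwise supremum over $\cA$ is a nice clarification, but the argument is the same.
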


\begin{proof}
Note first that the assumptions imply $|g_{\ga}(x)|\le h(x)$ \aex{} for
each $\ga$;
hence,
$\bigabs{f_{\ga,n}(x)-g_{\ga}(x)}\le 2h(x)$ a.e.
  Let $\ga_n$ be an arbitrary sequence of elements of $\cA$. Then
$\int_{\cS} \bigpar{f_{\ga_n,n}(x)-g_{\ga_n}(x)}\dd\mu(x) \to 0$ as \ntoo{} by
  the standard dominated convergence theorem. The result follows.
\end{proof}

\begin{remark}\label{Rdom}
Suppose that the assumptions of \refL{Ldom} hold, and furthermore 
that $\cA$ is an open
  set in the complex plane and that $g_\ga(x)$ is 
an analytic function of $\ga$
  for every $x\in\cS$, and jointly measurable in $\ga$ and $x$.
Then the limit $G(\ga):=\int_{\cS} g_\ga(x)\dd\mu(x)$
  is an   analytic function of $\ga\in\cA$. 
To see this, note again
that the assumptions imply $|g_\ga(x)|\le h(x)$ \aex{} for
each $\ga$. It follows by dominated convergence that $G(\ga)$ is a
continuous function of $\ga$, and by Fubini's theorem that the line integral
of $G(\ga)$ around the boundary of any closed triangle inside $\cA$ is 0;
hence $G(\ga)$ is analytic by Morera's theorem.
\end{remark}

\subsection{Further notation}

We denote the distance between two nodes $v$ and $w$ in a tree by $d(v,w)$.
Furthermore, we let $\hh{v}:=d(v,o)$ denote the distance from $v$ to the
root $o$; this is usually called the \emph{depth} of $v$.
%\marginal{Or do you prefer $d(v,o)$, or $h$, or  another notation?}

For two nodes $v,w$ of a rooted tree $T$, $v\prec w$ means that $w$ is a
descendant of $v$. Thus, $w\in T_v\iff w\succeq v$.
Furthermore, $v\land w$ denotes the last common ancestor of $v$ and $w$.
Thus, 
\begin{equation}
  \label{min}
u\preceq v\land w \iff (u\preceq v) \land  (u\preceq w).
\end{equation}
For real numbers $x$ and $y$, $x\land y$ is another notation for 
$\min\xpar{x,y}$.
Furthermore, $x_+:=\max(x,0)$ and $x_-:=-\min(x,0)$.
 
Unspecified limits are as \ntoo.

%$\bmin$ is minimum. 

$C, C_1,\dots$ and $c,c_1,\dots$
denote positive constants (typically with large and small values, respectively), not necessarily the same at
different places. The constants may depend on the offspring distribution
$\xi$; they may also depend on other parameters that are indicated 
as arguments.

\section{The case $\rea\le0$, convergence in probability}
\label{Sfringe}
 
\begin{proof}[Proof of \refT{Tfringe}]
  By \eqref{Xn},
recalling that $V$ is a random node in $\tn$,
  \begin{equation}\label{teffa}
\E\bigpar{f_\ga(\tnV)\mid \tn} 
=\frac{1}n \sum_{v\in\tn}|\tnv|^\ga
= \frac{1}{n} X_n(\ga),
  \end{equation}
and  consequently
  \begin{equation}\label{teffb}
\E f_\ga(\tnV) 
= \frac{1}{n} \E X_n(\ga).
  \end{equation}

The random trees defined in \refS{S:intro} may be regarded as random
elements of the countable discrete set $\xT$ of finite ordered rooted trees.
As noted just before the statement of~\refT{Tfringe} in \refS{S:intro}, 
\citet{Aldous-fringe} shows that $\tnV\dto\cT$, as
random elements of $\xT$.
If $\rea\le0$, then $f_\ga$ is a bounded function on $\xT$, trivially
continuous since $\xT$ is discrete. Hence, it follows from \eqref{teffb} that
\begin{equation}
\frac{1}n X_n(\ga)=\E f_\ga(\tnV) \to \E f_\ga(\cT)=\mu(\ga),
\end{equation}
showing \eqref{tfringe}.

Similarly, by \cite[Theorem 7.12]{SJ264}, 
the conditional distribution of $\tnV$ given $\cT_n$ converges 
(as a random element of the space of probability distributions on $\xT$)
in probability to the distribution of $\cT$, which by \eqref{teffa} yields 
part \ref{TfringeP} of \refT{Tfringe}.
\end{proof}

\section{Tightness}\label{Stight}

Recall the notation at~\eqref{Xn}--\eqref{tX} and \eqref{Yn}--\eqref{tY}.

\begin{lemma}
  \label{LL2}
  \begin{thmenumerate}
  \item \label{LL2<0}
For $\rea<0$ and all $n\ge1$,
$\E|\tX_n(\ga)|^2\le C(\ga) n$,
for some constant $C(\ga)=O\bigpar{1+|\rea|\qww}$; thus $C(\ga)$ is 
%uniformly
bounded on each proper half-space \set{\ga:\Re\ga<-\eps<0}.
  \item \label{LL2>0}
For $\rea>0$ and all $n\ge1$,
$\E|\tX_n(\ga)|^2\le C(\ga) n^{2\rea+1}$
and thus
$\E|\tY_n(\ga)|^2\le C(\ga)$,
for some constant $C(\ga)=O\bigpar{1+(\rea)\qww}$; thus $C(\ga)$ is
%uniformly
bounded on each proper half-space \set{\ga:\Re\ga>\eps>0}.
  \end{thmenumerate}
\end{lemma}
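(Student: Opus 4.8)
Since $X_n(\ga)=\sum_{v\in\tn}f_\ga(\tnv)$ is an additive functional with toll function $f_\ga(T)=|T|^\ga$, so that $|f_\ga(T)|=|T|^{\rea}$, the plan is to bound $\E|\tX_n(\ga)|^2=\Var X_n(\ga)$ by a general second--moment estimate for additive functionals on \cGWt{s}, tracking how that estimate depends on the toll function, hence on~$\rea$. Writing $N_{n,k}:=\E\#\set{v\in\tn:|\tnv|=k}$ and $M_{n,j,k}:=\E\#\set{(v,w)\in\tn^2:|\tnv|=j,\ |\cT_{n,w}|=k}$, one has $\E|\tX_n(\ga)|^2=\sum_{j,k=1}^{n}j^{\ga}k^{\overline{\ga}}\bigpar{M_{n,j,k}-N_{n,j}N_{n,k}}$. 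I would split the pairs $(v,w)$ into the cases $v=w$, one of $v,w$ a strict ancestor of the other, and $v,w$ incomparable, and in each case cut $\tn$ at the marked node(s) (at $v\land w$ in the last case). Using \eqref{ptk}, \eqref{Sn} and the cycle lemma, this expresses $N_{n,k}$ and the three parts of $M_{n,j,k}$ as ratios of point probabilities of the form $\P(S_m=m+O(1))$---for instance $N_{n,k}=n\,\P(S_{n-k}=n-k)\,\P(S_k=k-1)/[k\,\P(S_n=n-1)]$---which are then estimated with the local limit theorem \refL{LLT}. The target is a bound of the shape
\begin{equation*}
\E|\tX_n(\ga)|^2\le C\,n\Bigpar{\sum_{k=1}^{n}k^{\rea-1}}^{2},
\qquad C=C(\xi),
\end{equation*}
where $\sum_{k}k^{\rea-1}=\sum_{k}\norm{f_\ga}_k\,k\qw$ is exactly the combination of toll values that arises.

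The step I expect to be the crux is passing from these term-by-term estimates to the clean bound above. Estimating $|M_{n,j,k}-N_{n,j}N_{n,k}|$ termwise is far too lossy: it only gives $\sum_{j,k}(M_{n,j,k}+N_{n,j}N_{n,k})j^{\rea}k^{\rea}=\E X_n(\rea)^2+(\E X_n(\rea))^2$, which is of order $n^2$ when $\rea<0$, whereas the true order is $O(n)$. One genuinely needs the cancellation among the three parts of $M_{n,j,k}-N_{n,j}N_{n,k}$---for $-\tfrac12<\rea<0$, say, the ancestor and incomparable parts are each of order roughly $n^{\rea+3/2}\gg n$, and only their sum is $O(n)$---and since we assume only $0<\Var\xi<\infty$, no refined local limit theorem beyond \refL{LLT} is available to see this directly. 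This cancellation is exactly what the variance estimates of \cite[\S6]{SJ285} provide (the same machinery behind \cite[Theorem~6.7]{SJ285}, quoted later in the paper), so rather than redo it I would invoke those bounds for $f=f_\ga$ and only check that, on inserting $\norm{f_\ga}_k=k^{\rea}$, they take the displayed form with a constant depending on~$\xi$ alone. (When $\rea>\tfrac12$ the difficulty evaporates anyway: one may simply bound $\E|\tX_n(\ga)|^2\le\E X_n(\rea)^2$ and note that $\E X_n(\rea)^2=O(n^{2\rea+1})$, since in the same fringe-tree second moment the dominant contribution comes from the $O(\sqrt n)$ subtrees of size $\Theta(n)$.)

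Granting the displayed bound, the conclusion is elementary. If $\rea<0$, then comparing with an integral,
\begin{equation*}
\sum_{k=1}^{n}k^{\rea-1}\le\sum_{k=1}^{\infty}k^{\rea-1}\le 1+\int_{1}^{\infty}x^{\rea-1}\dd x=1+\frac{1}{|\rea|},
\end{equation*}
so $\E|\tX_n(\ga)|^2\le C\,n\bigpar{1+|\rea|\qw}^{2}\le 2C\,n\bigpar{1+|\rea|\qww}$, which is part~\ref{LL2<0}; the constant is bounded on each $\set{\ga:\rea<-\eps<0}$, where $|\rea|\qww<\eps\qww$. If $\rea>0$, then splitting the comparison according to $\rea\le1$ (where $\sum_{k=1}^{n}k^{\rea-1}\le 1+\int_{1}^{n}x^{\rea-1}\dd x\le 1+n^\rea/\rea$) and $\rea\ge1$ (where $\sum_{k=1}^{n}k^{\rea-1}\le n\cdot n^{\rea-1}=n^\rea$) gives $\sum_{k=1}^{n}k^{\rea-1}\le n^{\rea}\bigpar{1+\rea\qw}$ for all $n\ge1$, whence
\begin{equation*}
\E|\tX_n(\ga)|^2\le C\,n^{2\rea+1}\bigpar{1+\rea\qw}^{2}\le 2C\,n^{2\rea+1}\bigpar{1+\rea\qww};
\end{equation*}
dividing by $n^{2\rea+1}$ gives $\E|\tY_n(\ga)|^2\le 2C\bigpar{1+\rea\qww}$, and the constant is bounded on each $\set{\ga:\rea>\eps>0}$. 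This is part~\ref{LL2>0}. (For $n=1$ everything is trivial, since $\tX_n\equiv 0$.)
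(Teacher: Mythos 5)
Your overall strategy coincides with the paper's: delegate the cancellation in $M_{n,j,k}-N_{n,j}N_{n,k}$ to the variance machinery of \cite{SJ285} --- concretely, \cite[Theorem 6.7]{SJ285} applied to (the real and imaginary parts of) the truncated toll $f_\ga(T)\ett{|T|\le n}$ --- and then finish with elementary estimates of $\sumkn k^{\rea-1}$. Your diagnosis that termwise estimation is hopelessly lossy and that the ancestor and incomparable contributions must cancel is correct, and the concluding sum estimates are fine.

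There is, however, one concrete error in the step you identify as the crux. The target bound you announce,
\begin{equation*}
\E|\tX_n(\ga)|^2\le C\,n\Bigpar{\sumkn k^{\rea-1}}^{2},\qquad C=C(\xi),
\end{equation*}
is \emph{not} what \cite[Theorem 6.7]{SJ285} delivers, and it is false for large real $\ga$. The correct form carries an additional term $\sup_{k\le n}k^{\rea}$ inside the parentheses:
\begin{equation*}
\bigpar{\E|\tX_n(\ga)|^2}^{1/2}\le C\,n^{1/2}\Bigpar{\sup_{k\le n}k^{\rea}+\sumkn k^{\rea-1}}.
\end{equation*}
For $\rea<0$ the supremum equals $1$ and is absorbed by the $k=1$ term of the sum, so there your form is equivalent; but for real $\ga\ge1$ one has $\sumkn k^{\ga-1}\sim n^{\ga}/\ga$ as \ntoo, so your form would force $\limsup_n\E\tY_n(\ga)^2=O(\ga^{-2})$. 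This contradicts the truth: by \refT{TXmom} [see \eqref{mtx1} with $\ell=1,2$, valid without extra moment assumptions] the variance converges, $\E\tY_n(\ga)^2\to\gs\qww\Var Y(\ga)$, and by \eqref{evar} one computes $\Var Y(\ga)\sim(2^{-1/2}-2^{-1})\ga\qw$ as \gatoo{} --- of exact order $\ga\qw$, not $O(\ga^{-2})$. So the verification you propose (``check that, on inserting $k^{\rea}$, they take the displayed form'') would fail. The repair costs nothing: keeping the supremum, your own estimates give $\sup_{k\le n}k^{\rea}+\sumkn k^{\rea-1}\le n^{(\rea)_+}\bigpar{2+|\rea|\qw}$ in both regimes, and squaring yields exactly the constants $O\bigpar{1+|\rea|\qww}$ and $O\bigpar{1+(\rea)\qww}$ claimed in the lemma. (A smaller point: your parenthetical shortcut for $\rea>\frac12$ via $\E X_n(\rea)^2=O(n^{2\rea+1})$ does not by itself control how the implied constant depends on $\ga$, which the lemma requires uniformly on half-planes; the route through \cite{SJ285} handles this automatically.)
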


\begin{proof} \CCreset
Recall the notation $f_{\alpha}(T) := |T|^{\alpha}$.
We apply \cite[Theorem 6.7]{SJ285} to 
(the real and imaginary parts of)
the functional $f(T):=f_\ga(T)\cdot\ett{|T|\le n}$.
Since %, in the notation there,
$|f(\cT_k)|=|f_\ga(\cT_k)|=|k^\ga|=k^{\rea}$ for $k\le n$,
and $f(\cT_k)=0$ for $k>n$,
this yields
\begin{equation*}
  \begin{split}
  \bigpar{\E|\tX_n(\ga)|^2}^{1/2}
&\le \CC n\qq \Bigpar{\sup_{k\le n}k^{\rea} + \sumkn k^{\rea-1}}
\\&
\le 
\begin{cases}
\CC(\ga) n^{\xfrac12}, & \rea<0,	\CCdef{\CCneg}
\\  
\CC(\ga) n^{\rea+\frac12},	  &\rea>0,
\end{cases}
  \end{split}
\end{equation*}
with 
$\CCneg(\ga)=O\bigpar{1+|\rea|\qw}$ and
$\CCx(\ga)=O\bigpar{1+|\rea|\qw}$.
\end{proof}

\begin{lemma}
  \label{LcH}
  \begin{thmenumerate}
  \item 	\label{LcH<0}
The family of random functions $n\qqw\tX_n(\ga)$ is tight in the space
$\cH(H_-)$. 
  \item 	\label{LcH>0}
The family of random functions $\tY_n(\ga):=n^{-\ga-\frac12} \tX_n(\ga)$ is tight in the space $\cH(H_+)$.
  \end{thmenumerate}
\end{lemma}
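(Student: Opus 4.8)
The plan is to obtain \refL{LcH} as an immediate consequence of the second-moment estimates in \refL{LL2} together with the tightness criterion in \refL{LL1}. First observe that each $n\qqw\tX_n$ and each $\tY_n=n^{\wgay}\tX_n$ really is a random element of $\cH(H_-)$, resp.\ $\cH(H_+)$: as noted after \eqref{tX}, $\ga\mapsto\tX_n(\ga)$ is a random entire function for every fixed $n$, and $n^{\wgay}=e^{-(\qgay)\log n}$ is entire, so $\ga\mapsto\tY_n(\ga)$ is entire as well.

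For part \ref{LcH<0}, fix a constant $C(\ga)$ as in \refL{LL2}\ref{LL2<0}; for definiteness take $C(\ga):=\sup_{n\ge1}n\qw\E|\tX_n(\ga)|^2<\infty$, which is $O\bigpar{1+|\rea|\qww}$. Then, using $\E|Z|\le(\E|Z|^2)\qq$,
\begin{equation*}
\E\bigabs{n\qqw\tX_n(\ga)}
= n\qqw\E\bigabs{\tX_n(\ga)}
\le n\qqw\bigpar{\E\bigabs{\tX_n(\ga)}^2}\qq
\le n\qqw\bigpar{C(\ga)n}\qq
= C(\ga)\qq,
\end{equation*}
uniformly in $n$. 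Set $\gam(\ga):=C(\ga)\qq$. Every compact $K\subset H_-$ has $\sup_{\ga\in K}\rea<0$, so $K$ is contained in a half-space $\set{\ga:\rea<-\eps}$ on which $C$, and hence $\gam$, is bounded; thus $\gam:H_-\to(0,\infty)$ is bounded on compact subsets. \refL{LL1}, applied with $D=H_-$ and $X_n=n\qqw\tX_n$, now gives the tightness claimed in \ref{LcH<0}.

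Part \ref{LcH>0} is the same argument with \refL{LL2}\ref{LL2>0} in place of \ref{LL2<0}: with $C(\ga):=\sup_{n\ge1}\E|\tY_n(\ga)|^2<\infty$, which is $O\bigpar{1+(\rea)\qww}$, we get $\E|\tY_n(\ga)|\le\bigpar{\E|\tY_n(\ga)|^2}\qq\le C(\ga)\qq$ for all $n$, and $\gam(\ga):=C(\ga)\qq$ is bounded on compact subsets of $H_+$ since each such subset lies in a half-space $\set{\ga:\rea>\eps}$. \refL{LL1} with $D=H_+$ then yields \ref{LcH>0}.

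There is no genuine obstacle here: both halves reduce to \refL{LL1}, and the only point to check is that the estimates of \refL{LL2}, delivered in the form ``$C(\ga)$ bounded on proper half-spaces,'' translate into the hypothesis ``$\gam$ bounded on compact subsets of $H_\pm$'' required by \refL{LL1} — which holds simply because a compact subset of $H_\pm$ is bounded away from the imaginary axis. (Alternatively one could verify \refL{Lhd}\ref{LhdB} directly via a Poisson-kernel bound over a surrounding circle, but this is exactly the content of the proof of \refL{LL1}.)
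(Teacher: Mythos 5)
Your proof is correct and is exactly the paper's argument: the paper derives \refL{LcH} as an immediate consequence of \refLs{LL1} and \ref{LL2} together with the \CSineq, which is precisely the reduction you carry out. The only content you add is the (correct) observation that boundedness of $C(\ga)$ on proper half-spaces suffices for the hypothesis of \refL{LL1}, since compact subsets of $H_\pm$ stay away from the imaginary axis.
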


\begin{proof}
This is an immediate consequence of Lemmas \ref{LL1} and \ref{LL2}
(and the \CSineq).
\end{proof}

\section{The case $\rea<0$}\label{S<0}

\begin{proof}[Proof of \refT{T<0}]
  For a fixed real $\ga<0$, 
\cite[Theorem 1.5]{SJ285} yields \eqref{t0} with
$\hX(\ga)\sim N\bigpar{0,\gam^2(\ga)}$ for some $\gam^2(\ga)\ge0$.
Furthermore, as remarked in \cite{SJ285},
\cite[Theorem 1.5]{SJ285} extends, by the Cram\'er--Wold device,
to joint convergence for several functionals.
By considering $\Re f_\ga$ and $\Im f_\ga$, we thus obtain \eqref{t0} for
complex $\ga\in H_-$; furthermore, we obtain joint convergence for any
finite set of (real or complex) such $\ga$.
The convergence in $\cH(H_-)$ now follows from Lemmas \ref{Lsub} and
\ref{LcH}\ref{LcH<0}. 

The symmetry \eqref{t0symm} is now obvious, since the corresponding formula for
$X_n(\ga)$ follows trivially from the definition \eqref{Xn}. 
Finally, \eqref{t0cov} follows from \cite[(1.16)]{SJ285} and polarization
(\ie, considering linear combinations).
\end{proof}

\begin{remark}\label{R0cov}
Furthermore, \cite[(1.17)]{SJ285} and polarization yields a formula
for the covariance function, for $\rea,\Re\gb<0$:
{\multlinegap=0pt
\begin{multline}\label{r0cov}
\E\bigpar{\hX(\ga)\hX(\gb)}=
\E\bigpar{f_\ga(\cT)\bigpar{F_\gb(\cT)-|\cT|\mu(\gb)}}
%\\
+
\E\bigpar{f_\gb(\cT)\bigpar{F_\ga(\cT)-|\cT|\mu(\ga)}}
\\
-\mu(\ga+\gb)+\bigpar{1-\gs\qww}\mu(\ga)\mu(\gb).
\end{multline}}
\end{remark}

\section{The mean}\label{SE}

\begin{lemma}
  \label{LE}
For any complex $\ga$,
\begin{equation}
  \label{le}
\E X_n(\ga)=n\sumkn \frac{\P(S_{n-k}=n-k)\P(S_k=k-1)}{\P(S_n=n-1)}k^{\ga-1}.
\end{equation}
\end{lemma}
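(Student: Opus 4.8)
The plan is to peel off the dependence on $\ga$ at once and reduce to a combinatorial identity. Since $\tn$ takes only finitely many values, \eqref{Xn} gives
\[
\E X_n(\ga)=\sumkn k^\ga\,\nu_{n,k},\qquad \nu_{n,k}:=\E\,\#\{v\in\tn:|\tnv|=k\},
\]
where the ``fringe counts'' $\nu_{n,k}$ do not depend on $\ga$. Hence it suffices to prove
\[
\nu_{n,k}=\frac nk\cdot\frac{\P(S_{n-k}=n-k)\,\P(S_k=k-1)}{\P(S_n=n-1)},
\]
after which \eqref{le} is immediate. Writing $w(T):=\prod_{v\in T}\P(\xi=\out(v))$ for the \GW{} weight, so that $\P(\tn=T)=w(T)/\P(|\cT|=n)$ for $|T|=n$, we have $\nu_{n,k}=\P(|\cT|=n)^{-1}\sum_{|T|=n}w(T)\,c_k(T)$ with $c_k(T):=\#\{v\in T:|T_v|=k\}$.

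Next I would evaluate this sum by a grafting bijection. A pair $(T,v)$ with $|T|=n$ and $|T_v|=k$ corresponds bijectively to a pair $(T'',T')$, where $T':=T_v$ has $k$ nodes and $T''$ is obtained from $T$ by deleting all proper descendants of $v$ (so $v$ becomes a distinguished leaf of $T''$), with $|T''|=n-k+1$; the inverse map identifies the distinguished leaf of $T''$ with the root of $T'$. All out-degrees are preserved except that $v$ has out-degree $0$ in $T''$, so $w(T)=\P(\xi=0)^{-1}w(T'')w(T')$. Summing over all such pairs,
\[
\sum_{|T|=n}w(T)\,c_k(T)=\P(\xi=0)^{-1}\Bigl(\sum_{|T'|=k}w(T')\Bigr)\Bigl(\sum_{|T''|=n-k+1}\ell(T'')\,w(T'')\Bigr)=\P(\xi=0)^{-1}\,\P(|\cT|=k)\,\E\bigl[\ell(\cT)\ett{|\cT|=n-k+1}\bigr],
\]
where $\ell(T)$ denotes the number of leaves of $T$ (the number of admissible distinguished leaves above).

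The last ingredient is the identity $\E\bigl[\ell(\cT)\ett{|\cT|=m}\bigr]=\P(\xi=0)\,\P(S_{m-1}=m-1)$. I would obtain it from the {\L}ukasiewicz encoding: a plane tree of size $m$ is described by its out-degree sequence $(d_1,\dots,d_m)$ in depth-first order, which ranges bijectively over sequences in $\bbZgeo^m$ with $\sum_i d_i=m-1$ satisfying the usual partial-sum condition, with $w(T)=\prod_i\P(\xi=d_i)$ and $\ell(T)=\#\{i:d_i=0\}$. By the cycle lemma, exactly one cyclic rotation of each sequence in $\bbZgeo^m$ with $\sum_i d_i=m-1$ satisfies the partial-sum condition, and both $\prod_i\P(\xi=d_i)$ and $\#\{i:d_i=0\}$ are rotation-invariant; therefore
\[
\E\bigl[\ell(\cT)\ett{|\cT|=m}\bigr]=\frac1m\,\E\bigl[\#\{i\le m:\xi_i=0\}\ett{S_m=m-1}\bigr]=\P(\xi=0)\,\P(S_{m-1}=m-1),
\]
using exchangeability and the independence of $\xi_1$ from $\xi_2+\dots+\xi_m$. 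Taking $m=n-k+1$, combining with the previous display, and rewriting $\P(|\cT|=k)$ and $\P(|\cT|=n)$ via \eqref{ptk} yields the claimed formula for $\nu_{n,k}$, hence \eqref{le}. (One checks the boundary cases: $k=n$ gives $\nu_{n,n}=1$, and $k=1$ gives the expected number of leaves of $\tn$.)

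I expect the only genuine work to be the leaf-count identity $\E[\ell(\cT)\ett{|\cT|=m}]=\P(\xi=0)\P(S_{m-1}=m-1)$ and the careful bookkeeping of the $\P(\xi=0)$ factor in the grafting step; both are standard. An alternative that sidesteps writing out these combinatorics is to quote directly the known formula $\E\,\#\{v\in\tn:\tnv\cong T_0\}=n\,w(T_0)\,\P(S_{n-|T_0|}=n-|T_0|)/\P(S_n=n-1)$ for the expected number of occurrences of a fixed fringe tree $T_0$ (from the fringe-tree analysis underlying \cite{SJ264}) and then sum over all $T_0$ with $|T_0|=k$, using $\sum_{|T_0|=k}w(T_0)=\P(|\cT|=k)=\tfrac1k\P(S_k=k-1)$.
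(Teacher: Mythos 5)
Your argument is correct, and it takes a genuinely different route from the paper. The paper disposes of \eqref{le} in three lines by invoking a known transfer formula \cite[Lemma 5.1]{SJ285} for the expectation of an additive functional on $\tn$ in terms of the toll function evaluated on the unconditioned tree $\cT$ (applied to the truncated toll $k^\ga\ett{|T|=k}$ and summed over $k$), together with \eqref{ptk}. You instead rederive the underlying fringe-count identity
\[
\E\,\#\{v\in\tn:|\tnv|=k\}=\frac nk\cdot\frac{\P(S_{n-k}=n-k)\,\P(S_k=k-1)}{\P(S_n=n-1)}
\]
from scratch: the grafting bijection with its weight bookkeeping $w(T)=\P(\xi=0)^{-1}w(T'')w(T')$ is right (the factor $\P(\xi=0)$ is harmless since the standing assumptions $\E\xi=1$, $\Var\xi>0$ force $\P(\xi=0)>0$), and the leaf-count identity via the {\L}ukasiewicz encoding and the cycle lemma is also correct — note that a sequence summing to $m-1$ over $m$ terms cannot be periodic with period less than $m$, so the orbit-counting in your rotation argument is clean. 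What your route buys is self-containedness and combinatorial transparency (including the sanity checks at $k=n$ and $k=1$); what it costs is length, since the cited lemma in \cite{SJ285} already encapsulates exactly this computation. Your closing alternative — quoting the expected number of occurrences of a fixed fringe tree and summing over trees of size $k$ — is essentially the paper's proof.
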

\begin{proof}
By  \cite[Lemma 5.1]{SJ285}, summing over $k$,
\begin{equation}
\E X_n(\ga) = \E F_\ga(\tn)=\sumkn n \frac{\P(S_{n-k}=n-k)}{\P(S_n=n-1)}\E f_{\ga,k}(\cT)
\end{equation}
where $f_{\ga,k}(T):=f_\ga(T)\ett{|T|=k}=k^\ga\ett{|T|=k}$
and thus, using \eqref{ptk},
\begin{equation}
\E f_{\ga,k}(\cT)=k^{\ga}\P(|\cT|=k)
=k^{\ga-1}\P(S_k=k-1).  
\end{equation}
The result follows.
\end{proof}

We now prove \refT{TE}. We begin with part~\ref{TE<0}, which follows from
\cite{SJ285}, and part~\ref{TE+}, which is rather easy.

\begin{proof}[Proof of \refT{TE}\ref{TE<0}]
The estimate \eqref{te<0} is an instance of \cite[(1.13)]{SJ285}, and the
proof in \cite{SJ285} shows that the estimate holds uniformly in each
half-space $\rea<-\eps<0$.	
  \end{proof}

\begin{proof}[Proof of \refT{TE}\ref{TE+}]
We write \eqref{le} as
$\E X_n(\ga)=n^{\ga-\frac12}\sumkn \gna(k)$ where 
\begin{equation}\label{gn}
\gna(k):=\frac{\P(S_{n-k}=n-k)\P(S_k=k-1)}{\P(S_n=n-1)}k^{\ga-1}n^{-\ga+\frac32}.
\end{equation}
Thus, converting the sum in \eqref{le} to an integral by letting 
$k:=\ceil{x n}$, 
\begin{equation}\label{pyret}
n^\wgay \E X_n(\ga)
=n\qw\sumkn \gna(k)
=\intoi \gna(\ceil{xn})\dd x.
\end{equation}
Assume for simplicity $\spann\xi=1$.
[Otherwise, replace $\ceil{xn}$ by $xn$ rounded upwards to the nearest
integer $k\equiv1\pmod {\spann\xi}$, and make minor modifications.]
For any fixed $x\in(0,1)$, it then follows from \eqref{snn}
that as \ntoo,
for any fixed $\ga$ and uniformly for $\ga$ in a compact set,
\begin{equation}\label{glim}
  \gna(\ceil{xn})
\sim \frac{(n-nx)\qqw (nx)\qqw}{{\sqrt{2\pi\gss}}\,n\qqw}
(nx)^{\ga-1}n^{-\ga+\frac32}
= \frac{1}{\sqrt{2\pi\gss}}(1-x)\qqw x^{\ga-\frac{3}2}.
\end{equation}
Furthermore, \eqref{snn} similarly also implies that, for $n$ so large that
$\P(S_n=n-1)>0$, 
\begin{equation}\label{gb}
\abs{  \gna(\ceil{xn})}
\le C (1-x)\qqw x^{-(\rea-\frac{3}2)_-}
\end{equation}
for some constant $C$ 
(depending on the offspring distribution, but not on $\ga$).
Since we assume $\rea>\frac12$,
the \rhs{} of \eqref{gb} is integrable, and thus dominated convergence and
\eqref{glim} yield,  evaluating a beta integral,
\begin{align}
%n^{-\ga-\frac12}\E X_n(\ga)
%&=
\intoi \gna(\ceil{xn})\dd x
&\to
\frac{1}{\sqrt{2\pi\gss}}
\intoi
x^{\ga-3/2}(1-x)\qqw\dd x
\notag\\&
= \frac{1}{\sqrt{2\pi\gss}}B\bigpar{\ga-1/2,1/2}
= \frac{1}{\sqrt{2\pi\gss}}\frac{\gG(\ga-1/2)\gG(1/2)}{\gG(\ga)}
\notag\\&
= \frac{1}{\sqrt{2}\gs}\frac{\gG(\ga-1/2)}{\gG(\ga)}
.
\end{align}
Moreover, 
using \refL{Ldom},
this holds 
uniformly for $\ga$ in each compact subset of $\set{\ga:\rea>\frac12}$.
The result follows by \eqref{pyret}.
\end{proof}

Before completing the proof of \refT{TE}, we give another lemma with a
related estimate for $\E X_n(\ga)$.
We define, compare \eqref{mu} and \eqref{mua}, for any complex $\ga$,
\begin{equation}
  \label{mun}
\mu_n(\ga):=
\E\bigpar{|\cT|^\ga\ett{|\cT|\le n}}
=
\sumkn k^\ga\P(|\cT|=k)
=\sumkn k^{\ga-1}\P(S_k=k-1).
\end{equation}

\begin{lemma}\label{LEn}
  If\/ $\rea>-\frac12$,
then, as \ntoo,
    \begin{equation}\label{len}
	\E X_n(\ga)
	=n \mu_n(\ga)
+\frac{1}{\sqrt{2}\gs}\left[\frac{\gG(\ga-\frac12)}{\gG(\ga)}
-\frac{\pi\qqw}{\ga-\frac12}\right]
n^\qgay
+o\bigpar{n^{(\rea)_+ +\frac12}}.
  \end{equation}
Moreover, this holds uniformly 
for any compact set of $\ga$ with $\rea>-\frac12$.
\end{lemma}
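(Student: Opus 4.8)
The plan is to start from the exact formula in \refL{LE} and subtract $n\mu_n(\ga)$, using the definition \eqref{mun}; since $\mu_n(\ga)=\sumkn k^{\ga-1}\P(S_k=k-1)$, this gives
\[
\E X_n(\ga)-n\mu_n(\ga)
=n\sumkn\biggpar{\frac{\P(S_{n-k}=n-k)}{\P(S_n=n-1)}-1}k^{\ga-1}\P(S_k=k-1).
\]
Assuming for convenience $\spann\xi=1$, I would rewrite the right-hand side, divided by $n^\qgay$, as a Riemann sum $\intoi\psi_n(\ga,x)\dd x$ with $k=\ceil{xn}$, exactly as in the proof of \refT{TE}\ref{TE+}. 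For each fixed $x\in(0,1)$ the local limit theorem in the form \eqref{snn} gives $\P(S_{n-k}=n-k)\sim\bigpar{2\pi\gss n(1-x)}\qqw$, $\P(S_n=n-1)\sim(2\pi\gss n)\qqw$, and $k^{\ga-1}\P(S_k=k-1)\sim(2\pi\gss)\qqw(xn)^{\ga-3/2}$, so that $\psi_n(\ga,x)\to(2\pi\gss)\qqw\bigsqpar{(1-x)\qqw-1}x^{\ga-3/2}$, uniformly for $\ga$ in compact sets.

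The second step is to upgrade this pointwise convergence to convergence of the integral. I would split $(0,1)$ into $(0,\gd]$, $[\gd,1-\gd]$, and $[1-\gd,1)$. On the middle interval the convergence from \eqref{snn} is uniform and $\psi_n$ is uniformly bounded, so bounded convergence applies, with \refL{Ldom} and \refR{Rdom} giving the uniformity in $\ga$ and the analyticity of the limit. For $x$ near $1$ one uses the crude bounds $\P(S_m=m)\le Cm\qqw$ (valid for all $m\ge1$) and $\P(S_n=n-1)\ge cn\qqw$ to get $|\psi_n(\ga,x)|\le C(1-x)\qqw$, which is integrable, so the contribution of $[1-\gd,1)$ is $O(\gd\qq)$ uniformly in $n$. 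For $x$ near $0$ the essential point is that, because we subtracted the ``$1$'' coming from $\mu_n$, the factor $\P(S_{n-k}=n-k)/\P(S_n=n-1)-1$ is $O(k/n)$ plus a lower-order error, so $|\psi_n(\ga,x)|$ is dominated by something of size $x^{\rea-1/2}$ rather than $x^{\rea-3/2}$, and $x^{\rea-1/2}$ is integrable near $0$ \emph{precisely because} $\rea>-\tfrac12$. Letting $\gd\to0$ after \ntoo{} then yields
\[
n^{\wgay}\bigpar{\E X_n(\ga)-n\mu_n(\ga)}\to(2\pi\gss)\qqw\intoi\bigsqpar{(1-x)\qqw-1}x^{\ga-3/2}\dd x,
\]
uniformly on compact subsets of $\set{\rea>-\tfrac12}$.

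Finally I would evaluate the limiting integral. For $\rea>\tfrac12$ it splits as $B(\ga-\tfrac12,\tfrac12)-\frac{1}{\ga-\frac12}=\frac{\sqrt\pi\,\gG(\ga-\frac12)}{\gG(\ga)}-\frac{1}{\ga-\frac12}$; the right-hand side is analytic throughout $\set{\rea>-\tfrac12}$ (the apparent pole at $\ga=\tfrac12$ is removable, since $\frac{\sqrt\pi\,\gG(\ga-\frac12)}{\gG(\ga)}\sim\frac{1}{\ga-\frac12}$ there), so by analytic continuation the identity persists for all $\rea>-\tfrac12$, while the left-hand integral converges there. Since $\sqrt\pi\,(2\pi\gss)\qqw=\frac{1}{\sqrt2\,\gs}$ and $(2\pi\gss)\qqw=\frac{\pi\qqw}{\sqrt2\,\gs}$, multiplying through gives exactly the coefficient $\frac{1}{\sqrt2\gs}\bigsqpar{\frac{\gG(\ga-\frac12)}{\gG(\ga)}-\frac{\pi\qqw}{\ga-\frac12}}$ of $n^\qgay$ in \eqref{len}, and the error term $o\bigpar{n^{(\rea)_++\frac12}}$ absorbs both the $o(1)$ from the bulk and the $\gd$-dependent boundary contributions. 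I expect the main obstacle to be the $x\to0$ estimate in the second step: the bare bound $\bigabs{\P(S_{n-k}=n-k)/\P(S_n=n-1)-1}\le C$ is useless there (it would only yield an error of order $n$, not $o(n^{(\rea)_++\frac12})$), so one must genuinely exploit the cancellation $\P(S_{n-k}=n-k)/\P(S_n=n-1)-1=O(k/n)$ plus an error; and since the local limit theorem is quoted only with an $o(1)$ remainder, that error has to be controlled by a separate summation argument that uses that the weights $k^{\ga-1}\P(S_k=k-1)$ are concentrated on bounded~$k$ when $\rea<\tfrac12$. Carrying out this bookkeeping, and keeping it uniform in $\ga$, is the delicate part.
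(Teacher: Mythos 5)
Your proposal is correct and follows essentially the same route as the paper: subtract $n\mu_n(\ga)$ from \eqref{le}, pass to a Riemann sum, identify the pointwise limit $(2\pi\gss)^{-1/2}x^{\ga-3/2}[(1-x)^{-1/2}-1]$, and evaluate the limiting integral as $B(\ga-\frac12,\frac12)-(\ga-\frac12)^{-1}$ with analytic continuation from $\rea>\frac12$. You also correctly isolate the one delicate point — that for very small $k$ the $o(n^{-1/2})$ remainder in the local limit theorem is not absorbed by the $O(k/n)$ cancellation and must be summed separately using the summability of $k^{\rea-3/2}$ — which is exactly what the paper does by cutting the sum at $k=n^{1/2}$ (its estimate \eqref{anna}) and applying dominated convergence on the remaining range, rather than your fixed-$\gd$ three-way split.
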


\begin{remark}\label{Rat1/2}
  For $\ga=\frac12$, the square bracket in \eqref{len} is interpreted by
  continuity.
With $\psi(x):=\gG'(x)/\gG(x)$,
the value at $\frac12$ is easily found to be
$\pi\qqw\bigpar{\psi(1)-\psi(\frac12)}=(2\log2)\pi\qqw$,
using \cite[5.4.12--13]{NIST}.
\end{remark}

\begin{proof}
  This time we use \eqref{le} and \eqref{mun} to obtain, with $\gna(k)$ as in
  \eqref{gn}, \cf{}~\eqref{pyret},
  \begin{equation}\label{winston}
	\begin{split}
  \E X_n(\ga)-n\mu_n(\ga)
& =n^{\ga-\frac12}\sumkn
 \bigsqpar{\gna(k)-n^{\frac{3}2-\ga}k^{\ga-1}\P(S_k=k-1)},
\\&
=n^{\ga-\frac12}\sumkn \hna(k),	  
	\end{split}
  \end{equation}
where, see \eqref{gn},
\begin{equation}\label{emm}
  \begin{split}
\hna(k)&:=\gna(k)-n^{\frac{3}2-\ga}k^{\ga-1}\P(S_k=k-1)
\\&\phantom:
=n^{\frac{3}2-\ga}k^{\ga-1}\P(S_k=k-1)
\lrsqpar{\frac{\P(S_{n-k}=n-k)}{\P(S_n=n-1)}-1}.
  \end{split}
\end{equation}

We use once more \eqref{snn} and see that, 
assuming for simplicity that $\xi$ has span 1,
for any fixed $x\in(0,1)$,
for any fixed $\ga$ and uniformly for $\ga$ in a compact set,
\begin{equation}\label{hlim}
  \hna(\ceil{xn})\to
\frac{1}{\sqrt{2\pi\gss}} x^{\ga-\frac{3}2}
\lrsqpar{(1-x)\qqw-1}.
\end{equation}
Furthermore, 
by \eqref{snn},
for all $n$, $k$, and $\ga$,
\begin{equation}\label{emm1}
  n^{\frac{3}2-\ga}k^{\ga-1}\P(S_k=k-1)
=O\Bigpar{\Bigparfrac{k}{n}^{\rea-\frac{3}2}}.
\end{equation}
If $1\le k\le n/2$, then by \cite[Lemma 5.2(i)]{SJ285},
\begin{equation}\label{emm2}
  \begin{split}
{\frac{\P(S_{n-k}=n-k)}{\P(S_n=n-1)}-1}
=O\parfrac{k}{n}+o\bigpar{n\qqw}
,	
%=O\parfrac{k}{n}\qq,
  \end{split}
\end{equation}
and if $n/2<k\le n$, 
then by \cite[Lemma 5.2(ii)]{SJ285},
\begin{equation}\label{emm3}
  \begin{split}
{\frac{\P(S_{n-k}=n-k)}{\P(S_n=n-1)}-1}
%=O\lrpar{\frac{n\qq}{(n-k+1)\qq}+1}
=O\lrpar{\frac{n\qq}{(n-k+1)\qq}}.
  \end{split}
\end{equation}

For $k\ge n\qq$, the bound in \eqref{emm2} is $O(k/n)$.
Let $\hnax(k):=\hna(k)\ett{k\ge n\qq}$,
and fix $\ga$ with $\Re\ga>-\frac12$.
Then,
combining \eqref{emm} and  \eqref{emm1}--\eqref{emm3},
for all $n$ and $x\in(0,1)$,
\begin{equation}\label{emm5}
  \hnax(\ceil{xn})  
%=O\biggpar{\Bigpar{\frac{\ceil{xn}}{n}}^{\rea-\frac12}
%+\frac{n\qq}{(n-\ceil{xn}+1)\qq}}
=
O\bigpar{x^{-(\rea-\frac12)_-}+(1-x)\qqw}.
\end{equation}
This bound is integrable, and thus dominated convergence and \eqref{hlim} yield
\begin{equation}\label{emma}
  \begin{split}
%	n\qw\sum_{k= n\qq}^n 
n\qw\sum_{n^{1/2} \leq k \leq n}\hna(k)
&=\intoi \hnax(\ceil{xn})\dd x
\\&
\to \frac{1}{\sqrt{2\pi\gss}} 
\intoi 
x^{\ga-\frac{3}2}\lrsqpar{(1-x)\qqw-1} \dd x.
  \end{split}
\end{equation}
The 
integral on the \rhs{} of
\eqref{emma} converges for any $\ga$ with $\rea>-\frac12$, and defines an
analytic function in that region. If $\rea>\frac12$, we have
\begin{equation}\label{sofie}
  \begin{split}
\intoi 
x^{\ga-\frac{3}2}\lrsqpar{(1-x)\qqw-1}\dd x
&=
\intoi x^{\ga-\frac{3}2}(1-x)\qqw \dd x
-
\intoi x^{\ga-\frac{3}2} \dd x
\\&
=B\bigpar{\ga-\tfrac12,\tfrac12}-\frac{1}{\ga-\frac12}
\\&
=\frac{\gG(\ga-\tfrac12)\gG(\tfrac12)}{\gG(\ga)}-\frac{1}{\ga-\frac12}.
  \end{split}
\raisetag\baselineskip
\end{equation}
  The \rhs{} in \eqref{sofie} is analytic for $\rea>-\frac12$ (with a removable
  singularity at $\ga=\frac12$), and thus by analytic continuation,
\eqref{sofie} holds as soon as
  $\rea>-\frac12$.

By combining \eqref{winston},  \eqref{emma}, and
\eqref{sofie}, we obtain the main terms in \eqref{len}.
However, it remains to show that the terms with $k<n\qq$ in \eqref{winston}
are negligible. 
For this we use again \eqref{emm1} and \eqref{emm2} and obtain
\begin{equation}\label{anna}
  \begin{split}
\sum_{k<\sqrt n} \hna(k)
&
\le C \sum_{k<\sqrt n} \Bigparfrac{k}{n}^{\rea-\frac12}	
+o\bigpar{n\qqw}
\sum_{k<\sqrt n} \Bigparfrac{k}{n}^{-(\rea)_--\frac32}	
\\&
\le C_1(\ga) n^{\frac12(\rea+\frac12)+\frac12-\rea}
+o\bigpar{n^{1+(\rea)_-}}
\\&
=o\bigpar{n^{1+(\rea)_-}}.
  \end{split}
\end{equation}
This shows that the contribution to \eqref{winston} for $k<n\qq$ is
$o\bigpar{n^{\rea+\frac12+(\rea)_-}}=o\bigpar{n^{(\rea)_+ +\frac12}}$,
which completes the proof of \eqref{len}.

Moreover, the estimates \eqref{hlim},
\eqref{emm5}, and~\eqref{anna} hold uniformly in
any compact subset of  $\set{\ga:\rea>-\frac12}$, which 
using \refL{Ldom} gives the uniformity in \eqref{len}.
\end{proof}

\begin{proof}[Proof of \refT{TE}\ref{TE-}]
Assume again for simplicity that $\xi$ has span 1. Then
\eqref{snn} yields
\begin{equation}\label{skk}
\P(S_k=k-1)=\frac{1}{\sqrt{2\pi\gss}} k^{-\frac12}(1+\eps_k),   
\end{equation}
with $\eps_k\to0$ as \ktoo,
and thus, using dominated convergence, for \mbox{$\rea<\frac12$},
\begin{equation}\label{magnus}
  \begin{split}
n^{\frac12-\ga} \bigsqpar{\mu(\ga)-\mu_n(\ga)}
&=
n^{\frac12-\ga}  \sum_{k=n+1}^\infty k^{\ga-1}\P(S_k=k-1)
\\&
=
\frac{1}{\sqrt{2\pi\gss}}
\int_1^\infty \parfrac{\ceil{xn}}{n}^{\ga-\frac{3}2}(1+\eps_{\ceil{xn}})\dd x
\\
\\&
\to	
\frac{1}{\sqrt{2\pi\gss}}
\int_1^\infty x^{\ga-\frac{3}2}\dd x
=
\frac{1}{\sqrt{2\pi\gss}(\frac12-\ga)}.
  \end{split}
\end{equation}
Moreover, by \refL{Ldom}, \eqref{magnus} holds uniformly in every half-plane 
$\rea<b<\frac12$.
The result follows by combining \eqref{len} and \eqref{magnus}.
  \end{proof}

\begin{proof}[Proof of \refT{TE}\ref{TE=}]
By \eqref{mun} and \eqref{skk}, again assuming $\spann\xi=1$,
\begin{equation}\label{aannc}
  \mu_n(\tfrac12)
=\sumkn \frac{1}{\sqrt{2\pi\gss}} k^{-1}(1+\eps_k)
= \frac{1}{\sqrt{2\pi\gss}} \log n+o(\log n),
\end{equation}
and the result follows from \refL{LEn}.
  \end{proof}

\begin{proof}[Proof of \refT{TX<0}]
This follows, as said in the introduction,
immediately from Theorems~\ref{T<0}
and  \ref{TE}\ref{TE<0}.
  \end{proof}

\subsection{Extensions assuming higher moments} 
We first prove \refT{TEgd} where we assume $\E\xi^{2+\gd}<\infty$ for some
$\gd\in(0,1]$. 
For an example (without higher moments) where $\mu(\ga)$ \emph{cannot} be
extended analytically across the line $\ga = \frac12$, see \refT{Tbad} in
\refS{S:bad}. 

\begin{proof}[Proof of \refT{TEgd}]
Assume again for simplicity that $\spann\xi=1$.
Then 
  the assumption $\E\xi^{2+\gd}<\infty$ implies that
\eqref{snn} can be improved to 
  \begin{equation}\label{rrn}
\P(S_n=n-1)=\frac{1}{\sqrt{2\pi\gss}} n\qqw+ r(n),	
  \end{equation}
with
\begin{equation}\label{rn}
  r(n)=O\bigpar{n^{-\frac12-\frac{\gd}{2}}},
\end{equation}
see \cite[Theorem 6.1]{Ibragimov66}, 
\cite[Theorem 4.5.3 and 4.5.4]{IbragimovLinnik}.

\pfitemref{TEgdmu}
Consequently, with~$\zeta(\cdot)$ denoting the Riemann zeta function,
\eqref{mua} yields
\begin{equation}\label{puh}
  \mu(\ga)=\frac{1}{\sqrt{2\pi\gss}}\sumn n^{\ga-\frac32}+ \sumn n^{\ga-1}r(n)
= \frac{\zeta(\frac32-\ga)}{\sqrt{2\pi\gss}}+ \sumn n^{\ga-1}r(n),
\end{equation}
where the final sum by \eqref{rn} converges and is analytic 
in $\ga$
for
\mbox{$\rea<\frac12+\gdd$}. 
It is well known that the Riemann
zeta function can be extended to a meromorphic function in the complex plane,
with a single pole at~$1$ with residue~$1$.
The result follows.
[If $\spann\xi>1$, we use the Hurwitz zeta function
\cite[\S25.11]{NIST} instead of the Riemann zeta function.]

\pfitemref{TEgd-}
Let $\DEgdx:=\set{\ga\neq\frac12:-\frac12<\rea<\frac12+\gdd}$,
$\DEgdx_-:=\set{\ga\in \DEgdx:\rea<\frac12}$, and 
$\DEgdx_+:=\set{\ga\in \DEgdx:\rea>\frac12}$.
Furthermore, fix a compact subset $K$ of $\DEgdx$.
Define, for $k\ge2$ and for $k=1$ and $\rea>\frac12$,
\begin{align}
\label{aka}
  a(k,\ga)&:=k^{\ga-\frac32}
-(\ga-\tfrac12)\qw\bigsqpar{k^{\ga-\frac12}-(k-1)^{\ga-\frac12}},
\\
 b(k,\ga)&:=\pigsqqw a(k,\ga)+k^{\ga-1} r(k).\label{bka}
\end{align}
 Note that
for $k\ge2$ and $\ga\in K$,
by a Taylor expansion,
\begin{align}\label{akao}
  a(k,\ga)&=O\bigpar{k^{\rea-\frac52}},
\end{align}
where the implied constant depends only on~$K$,
and thus, using also \eqref{rn},
\begin{equation}\label{bkb}
 b(k,\ga) = O\bigpar{k^{\rea-\frac32-\gdd}}.
\end{equation}
By \eqref{rrn}, \eqref{aka}, and \eqref{bka}, 
\begin{equation}\label{sb}
k^{\ga-1}\P(S_k=k-1)=\gapigsqqw\bigsqpar{k^{\ga-\frac12}-(k-1)^{\ga-\frac12}}
+b(k,\ga), 
\end{equation}
where either $k\ge2$ or $k\ge1$ and $\ga\in \DEgdx_+$.

It follows from \eqref{akao} that $\sum_{k=2}^\infty a(k,\ga)$ converges for
$\ga\in \DEgdx$ and defines an analytic function there. Furthermore, if $\ga\in
\DEgdx_-$, then, summing the telescoping sum,
\begin{equation}
  \sum_{k=2}^\infty a(k,\ga)
=\zeta\bigpar{\tfrac32-\ga}-1+\bigpar{\ga-\tfrac12}\qw,
\end{equation}
and consequently, by \eqref{puh},
\begin{equation}\label{mux}
  \begin{split}
\mu(\ga)=\pigsqqw\lrpar{\sum_{k=2}^\infty a(k,\ga)+1-\bigpar{\ga-\tfrac12}\qw}
+\sumk k^{\ga-1}r(k).
  \end{split}
\end{equation}
Both sides of \eqref{mux} are analytic in $\DEgdx$, so by analytic continuation,
\eqref{mux} holds for all $\ga\in \DEgdx$ (and also for $\rea\le-\frac12$).
In particular, for $\ga\in \DEgdx_+$, where $a(1,\ga)$ is defined,
\begin{equation}\label{mux+}
  \begin{split}
\mu(\ga)=\pigsqqw{\sum_{k=1}^\infty a(k,\ga)}
+\sumk k^{\ga-1}r(k)
=\sum_{k=1}^\infty b(k,\ga).
  \end{split}
\end{equation}

We now analyze $\mu_n(\ga)$ further. 
First, for $\ga\in K_-:=K\cap \DEgdx_-$, 
using \eqref{sb} and \eqref{bkb},
\begin{equation}\label{elf-}
  \begin{split}
\mu(\ga)-\mu_n(\ga)
&
=\sum_{k=n+1}^\infty k^{\ga-1}\P(S_k=k-1)
%\\&
%=\sum_{k=n+1}^\infty \Bigpar{\pigsqqw k^{\ga-\frac32}+k^{\ga-1}r(k)}	
\\&
=\sum_{k=n+1}^\infty \biggpar{
\frac{(\ga-\frac12)\qw}{\pigsqq}
%\pigsqqw (\ga-\tfrac12)\qw
\bigsqpar{k^{\ga-\frac12}-(k-1)^{\ga-\frac12}}
+b(k,\ga)}
\\&
=-\frac{(\ga-\frac12)\qw}{\pigsqq} n^{\ga-\frac12}
+O\bigpar{n^{\rea-\frac{1}2-\gdd}}.
  \end{split}
\end{equation}

Next, consider $\ga\in \DEgdx_+$. By \eqref{sb} and \eqref{mux+}, for $\ga\in
K_+:=K\cap \DEgdx_+$,
\begin{equation}\label{elf+}
  \begin{split}
\mu_n(\ga)-\mu(\ga)
&=\sum_{k=1}^n \biggpar{
\frac{(\ga-\frac12)\qw}{\pigsqq}\bigsqpar{k^{\ga-\frac12}-(k-1)^{\ga-\frac12}}
+b(k,\ga)}
-\sumk b(k,\ga)
\\&
=\gapigsqqw n^{\ga-\frac12} -\sum_{k=n+1}^\infty b(k,\ga)
\\&
=\frac{(\ga-\frac12)\qw}{\pigsqq} n^{\ga-\frac12}
+O\bigpar{n^{\rea-\frac{1}2-\gdd}}.
  \end{split}
\raisetag{\baselineskip}
\end{equation}

We have obtained the same estimate for the two ranges in \eqref{elf-} and
\eqref{elf+}, and can combine them to obtain, 
for $\ga\in K_-\cup K_+$,
\begin{equation}\label{elf}
  \begin{split}
\mu_n(\ga)-\mu(\ga)
=\frac{(\ga-\frac12)\qw}{\pigsqq} n^{\ga-\frac12}
+O\bigpar{n^{\rea-\frac{1}2-\gdd}}.
  \end{split}
\end{equation}

Furthermore, for each $n$, $\mu_n(\ga)-\mu(\ga)$ is a continuous function in
$\DEgdx$, and thus \eqref{elf} holds for $\ga\in \overline{K_-\cup K_+}$ by
continuity. 
If $K$ is a closed disc, then $K=\overline{K_-\cup K_+}$, and thus
\eqref{elf} hold  for $\ga\in K$.
In general, any compact $K\subset \DEgdx$ can be covered by a finite union of
closed discs $K_i\subset \DEgdx$,
and it follows that \eqref{elf}
holds uniformly in $\ga\in K$ for each compact $K\subset \DEgdx$. 

Combining \eqref{len} and \eqref{elf}, we obtain
\eqref{te-}, uniformly on each compact $K\subset \DEgdx$.

\pfitemref{TEgd=}
By \eqref{rrn}--\eqref{rn}, \eqref{skk} holds with $\eps_k=O(k^{-\gd/2})$.
Consequently, \eqref{aannc} is improved to 
\begin{equation}\label{annac}
  \mu_n(\tfrac12)
=\sumkn \frac{1}{\sqrt{2\pi\gss}} k^{-1}(1+\eps_k)
= \frac{1}{\sqrt{2\pi\gss}} \log n+c_1+o(1),
\end{equation}
with $c_1=(2\pi\gss)\qqw\bigpar{\gamma+\sumk \eps_k/k}$.
The result \eqref{tegd=} follows from \eqref{len}.
%, with 
% $c=c_1+(2\pi\gss)\qqw (\psi(1)-\psi(\frac12))=c_1+(2\pi\gss)\qqw 2\log 2$.
\end{proof}

\begin{remark}
The proof shows, using \refR{Rat1/2}, that the constant $c$ 
in \refT{TEgd}\ref{TEgd=} is given by
\begin{equation}    \label{tegd-c}
c = \sum_{k = 1}^{\infty}\frac{1}{k} 
\bigsqpar{k^{1/2} \P(S_k = k - 1) - \frac{1}{\sqrt{2 \pi \gss}}}
- \frac{1}{\sqrt{2 \pi \gss}} \psi\Bigpar{\frac12},     
\end{equation}
where $\psi(\frac12) = - (2 \log 2 + \gam)$
\cite[5.4.13]{NIST}.
% Actually, also if h := span(\xi) > 1; then the sum does not converge
% absolutely, but it does if terms are grouped in groups of $h$.
\end{remark}

%\subsection{An extension of \refT{TEgd}} 

We next show that \refT{TEgd}\ref{TEgdmu} extends 
to the case $\gd>1$, at least if $\gd$ is an integer.

\begin{theorem}
\label{TH}
If\/ $\E\xi^k<\infty$ for an integer $k\ge3$, then
$\mu(\ga)$ can be continued as a meromorphic function in 
$\rea<\frac{k-1}2$ with 
simple poles at
$\set{\frac{1}2,\frac{3}2,\frac{5}2,\dots}$
(or possibly a subset of these points)
and no other poles.
\end{theorem}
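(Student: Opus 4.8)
The plan is to mimic the proof of \refT{TEgd}\ref{TEgdmu}, now pushing the local limit expansion for $\P(S_n=n-1)$ to higher order. Assuming for simplicity $\spann\xi=1$, the Edgeworth-type expansion in the local central limit theorem (see \cite[Theorem 6.1]{Ibragimov66} or \cite[Theorem 4.5.4]{IbragimovLinnik}) gives, since $\E\xi^k<\infty$,
\begin{equation}\label{TH-exp}
\P(S_n=n-1)=\frac{1}{\sqrt{2\pi\gss}}\,n\qqw\Bigsqpar{1+\sum_{j=1}^{k-2}\frac{q_j}{n^{j/2}}}+O\bigpar{n^{-(k-1)/2}},
\end{equation}
where the $q_j$ are constants depending on the cumulants of $\xi$ (coming from Hermite-polynomial terms evaluated at the fixed point, so the ``$x$-dependent'' part is frozen here). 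Substituting \eqref{TH-exp} into \eqref{mua}, I would write
\begin{equation}\label{TH-split}
\mu(\ga)=\frac{1}{\sqrt{2\pi\gss}}\sum_{j=0}^{k-2}q_j\sumn n^{\ga-\frac32-\frac j2}+\sumn n^{\ga-1}\tilde r(n),\qquad q_0:=1,
\end{equation}
where $\tilde r(n)=O\bigpar{n^{-(k-1)/2}}$, so that the final sum converges and is analytic for $\rea<\frac{k-1}2$. Each of the finitely many explicit sums is $\zeta(\frac32+\frac j2-\ga)$, a meromorphic function of $\ga$ with its only (simple) pole at $\ga=\frac12+\frac j2$. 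Taking the union over $0\le j\le k-2$ gives the claimed pole set $\set{\frac12,\frac32,\dots}$ intersected with $\rea<\frac{k-1}2$ (or a subset, since some $q_j$ may vanish, and indeed $q_j=0$ for odd $j$ in the span-$1$ centered case, though one need not assert this). The case $\spann\xi>1$ is handled by replacing each Riemann zeta function by an appropriate Hurwitz zeta function, exactly as in the cited proof.

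The only real work is justifying \eqref{TH-exp}: I would cite the standard local limit theorem with an asymptotic expansion under a finite $k$th-moment hypothesis, noting that the leading correction terms evaluated at the specific point $m=n-1$ (equivalently, the centered value $(m-n)/\sqrt{n\gss}=O(n\qqw)$) produce precisely a finite sum of powers $n^{-j/2}$, $j=1,\dots,k-2$, plus a remainder $O(n^{-(k-1)/2})$. This is where I expect a reader to want care, since the cited references often state the expansion in the lattice case for general $m$; collecting the contribution at $m=n-1$ and absorbing cross-terms into the remainder is routine but must be done honestly. Everything else --- interchanging summation and analytic continuation (justified since the tail sum converges locally uniformly), identifying the partial sums as (Riemann or Hurwitz) zeta values, and locating the poles --- is bookkeeping entirely parallel to \refT{TEgd}\ref{TEgdmu}. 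I would also remark that the residue at $\ga=\frac12$ remains $-1/\sqrt{2\pi\gss}$ (coming from the $j=0$ term, since $\zeta(\frac32-\ga)$ has residue $-1$ there), consistent with \refT{TEgd}.

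One small subtlety to flag: the theorem claims a meromorphic continuation to the open half-plane $\rea<\frac{k-1}2$, and the pole at $\ga=\frac{k-1}2-\frac12=\frac{k-2}2$... more precisely, the largest pole allowed is at $\ga=\frac12+\frac{j}2$ with $j\le k-2$, i.e.\ at $\ga=\frac{k-1}2$ when $k$ is odd, which sits on the boundary, not inside; so all poles strictly inside $\rea<\frac{k-1}2$ are among $\set{\frac12,\frac32,\dots}$ with $\frac12+\frac j2<\frac{k-1}2$, matching the statement. I would state this cleanly rather than leave the endpoint ambiguous. Thus the proof is: expand via \eqref{TH-exp}, substitute into \eqref{mua} to get \eqref{TH-split}, recognize the explicit sums as zeta functions and the tail as an analytic function on $\rea<\frac{k-1}2$, and read off the poles.
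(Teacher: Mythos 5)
Your overall strategy is the same as the paper's: expand $\P(S_n=n-1)$ by a higher-order local limit (Edgeworth) theorem, substitute into \eqref{mua}, identify the explicit partial sums as zeta functions, and check that the tail is analytic in $\rea<\frac{k-1}2$. However, there is a genuine gap at exactly the point that determines the pole locations. Your ansatz \eqref{TH-exp} allows \emph{all} half-integer powers $n^{-j/2}$, $j=1,\dots,k-2$, inside the bracket; the term $q_j\,n^{-\frac12-\frac j2}$ produces $\zeta(\frac32+\frac j2-\ga)$ with a pole at $\ga=\frac12+\frac j2$, which for odd $j$ is an \emph{integer}. So if any $q_j$ with $j$ odd were nonzero, the continuation would have poles at integer points, contradicting the asserted pole set $\set{\frac12,\frac32,\frac52,\dots}$. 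You remark parenthetically that ``$q_j=0$ for odd $j$ \dots though one need not assert this'' --- but this is backwards: the vanishing of the odd-index coefficients is precisely what must be asserted and proved, since without it your argument only yields poles among $\set{\frac12,1,\frac32,2,\dots}$, which is not a subset of the claimed set.

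The paper's proof supplies exactly this missing step. In the expansion the correction terms are $\tq_\nu(x)n^{-\nu/2}$ with $x=-1/(\gs\sqrt n)$, where the polynomial $\tq_\nu$ is odd for odd $\nu$ and even for even $\nu$; since $x$ is itself proportional to $n^{-1/2}$, each product $\tq_\nu(x)n^{-\nu/2}$ is a polynomial in $n^{-1}$ (and the factor $e^{-x^2/2}=e^{-1/(2\gss n)}$ likewise expands in integer powers of $n^{-1}$). Rearranging therefore yields $\P(S_n=n-1)=\sum_{j=0}^{\floor{k/2}-1}a_jn^{-j-\frac12}+r(n)$ with only integer $j$, whence the poles sit at $j+\frac12$ as claimed. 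Note also that your description of the $q_j$ as Hermite-polynomial terms ``evaluated at the fixed point, so the $x$-dependent part is frozen'' is misleading: $x$ depends on $n$, and expanding that dependence is not a side remark to be absorbed into the remainder --- it is the source of the parity cancellation. Once this step is added, the rest of your argument (zeta identification, analyticity of the tail for $\rea<\frac{k-1}2$, Hurwitz zeta for span $>1$, and the residue $-a_0=-1/\sqrt{2\pi\gss}$ at $\ga=\frac12$) goes through and coincides with the paper's proof.
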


Typically, all these points $\ell-\frac12$ 
(with $1\le \ell<k/2$) are poles; however
in special cases, $\mu(\ga)$ might be regular at some
of these points, see \refE{Eresidue0}.

\begin{proof}
Assume for simplicity that $\spann\xi=1$. 
% Otherwise use the Hurwitz zeta function
  In this case, 
see \cite[Theorem VII.13]{Petrov} (with slightly different notation),
\eqref{rrn} can be refined to
\begin{equation}\label{h1}
  \P(S_n=n-1)=\frac{e^{-x^2/2}}{\sqrt{2\pi \gss n}}
\biggsqpar{1+\sum_{\nu=1}^{k-2}\tq_\nu(x) n^{-\nu/2}} 
+o\bigpar{n^{-(k-1)/2}}
\end{equation}
where $x=-1/(\gs\sqrt n)$ and $\tq_\nu$ is a polynomial (independent of $n$)
whose coefficients depend on the cumulants of $\xi$
of order up to $\nu+2$, 
see
\cite[VI.(1.14)]{Petrov} for details. 
The polynomial $\tq_\nu$ is  odd
if $\nu$ is odd, and is even if $\nu$ is even;
hence the term $\tq_\nu(x)n^{-\nu/2}$ is a polynomial in $n\qw$
for every $\nu$, and expanding $e^{-x^2/2}=e^{-1/(2\gss n)}$ 
into its Taylor series
and
rearranging,
we obtain from \eqref{h1}
\begin{equation}\label{h2}
  \P(S_n=n-1)=
\sum_{j=0}^{\floor{k/2}-1} a_j n^{-j-\frac12}
+ r(n)
\end{equation}
with $r(n)=o\bigpar{n^{-(k-1)/2}}$,
for some coefficients $a_j$.
Consequently \eqref{mua} yields, \cf{} \eqref{puh},
\begin{equation}
  \mu(\ga)=
\sum_{j=0}^{\floor{k/2}-1} a_j \zeta\bigpar{\tfrac{3}2+j-\ga}
+\sumn n^{\ga-1}r(n),
\end{equation}
where the final sum is analytic in $\rea<(k-1)/2$, which proves the result.
\end{proof}

\begin{remark}
  The proof of \refT{TH} shows that the residue of $\mu(\ga)$ at
  $\ga=j+\frac12$
(assumed to be less than $\frac{k-1}2$)
  is $-a_j$, where $a_j$ is the coefficient in the expansion \eqref{h2}
and can be calculated from the cumulants
$\gk_2=\gss,\gk_3\dots,\gk_{2j+2}$ of $\xi$.  
For example, see \refT{TEgd}\ref{TEgdmu},
the residue at $\frac12$ is
$-a_0=-1/\sqrt{2\pi\gss}$.
As another example,
a calculation (which we omit) shows that if $k>4$, the residue
at $\frac{3}2$ is
\begin{equation}\label{noso}
  -a_1=
\frac{1}{\sqrt{2\pi\gss}}\Bigpar{
\frac{1}{2\gss} - \frac{\gk_3}{2\gs^{4}}
- \frac{\gk_4}{8\gs^{4}}
+ \frac{5\gk_3^2}{24\gs^6}}.
\end{equation}
\end{remark}

\begin{example}
  Consider the case of uniformly random labelled trees, which is given by
  $\xi\sim\Po(1)$.
In this case, 
\begin{equation}
  \begin{split}
	\P(S_n=n-1)=\P(\Po(n)=n-1)
=\frac{n^{n-1}}{(n-1)!}e^{-n}
  \end{split}
\end{equation}
which by Stirling's formula, see \eg{} \cite[5.11.1]{NIST}, has a
(divergent)
asymptotic expansion that can be written
\begin{equation}\label{bern}
  \begin{split}
\sim (2\pi n)\qqw \exp\Bigpar{-\sumk \frac{B_{2k}}{2k(2k-1)n^{2k-1}}}	
  \end{split}
\end{equation}
where $B_{2k}$ are the Bernoulli numbers.
Expanding the exponential in \eqref{bern}
(as a formal power series), 
we obtain coefficients 
$a_k$ such that for any integer~$J$ we have
\begin{equation}
  \P(S_n=n-1)=\sum_{j=0}^J a_j n^{-j-\frac12} +o\bigpar{n^{-J-\frac12}},
\end{equation}
which is the same as \eqref{h2},
and it follows by the argument above that $\mu(\ga)$ has residue
$-a_j$ at $j+\frac12$.

For example, $a_0=(2\pi)\qqw$, see \eqref{rrn} and \eqref{puh}, 
and $a_1=-\frac{1}{12}(2\pi)\qqw$, showing that $\mu(\ga)$ has a pole with
residue $\frac{1}{12}(2\pi)\qqw$ at $\frac{3}2$.
(This agrees with \eqref{noso} since $\gk_k=1$ for every $k\ge1$.)
\end{example}

\begin{example}\label{Eresidue0}
  We construct an example where $\xi$ is bounded, so \refT{TH} applies for
  every $k$ and $\mu(\ga)$ is meromorphic in the entire complex plane,
and furthermore $\mu(\ga)$ is regular at $\ga=\frac{3}{2}$.

We use three parameters $m$, $s$, and $A$, where $m\ge10$ is a fixed integer
(we may take $m=10$), $s\in[0,m)$, and $A$ is a large integer.
Let $\xi=\xi_{m,s,A}$ take the values $0,1,A,mA$ with the probabilities
\begin{align}
  \P(\xi=mA)&=\frac{s}{2m^2A},\\
  \P(\xi=A)&=\frac{1}{2A},\\
\P(\xi=1)&=\frac12-\frac{s}{2m},\\
\P(\xi=0)&=1-\P(\xi=1)-\P(\xi=A)-\P(\xi=mA).
\end{align}
Then $\E\xi=1$ and $\spann\xi=1$.
Keep $m$ and $s$ fixed, and let $A\to\infty$; then
\begin{align}
 \gss&\sim\E\xi^2\sim \frac{sA}2+\frac{A}2 = \frac{1+s}{2} A,\label{xi2}\\
 \gk_3&\sim\E\xi^3\sim \frac{smA^2}2+\frac{A^2}2 = \frac{1+sm}{2} A^2,
  \label{xi3}\\
 \gk_4&\sim\E\xi^4\sim \frac{sm^2A^3}2+\frac{A^3}2 = \frac{1+sm^2}{2} A^3.
\label{xi4}
\end{align}
Denote the parenthesized factor in \eqref{noso} by $f(m,\ga,A)$.
It follows from \eqref{xi2}--\eqref{xi4} that as $A\to\infty$ with fixed $m$
and $s$, 
\begin{align}\label{fmsa}
  f(m,s,A) = -\frac{1+sm^2}{4(1+s)^2}A + \frac{5(1+sm)^2}{12(1+s)^3}A+o(A)
=\bigpar{g(m,s)+o(1)}A,
\end{align}
where
\begin{align}\label{gms}
  g(m,s):= -\frac{1+sm^2}{4(1+s)^2} + \frac{5(1+sm)^2}{12(1+s)^3}
=
\frac{5(1+sm)^2-3(1+s)(1+sm^2)}{12(1+s)^3}.
\end{align}
For $s=0$, the final numerator in \eqref{gms} is $2>0$, and thus $g(m,0)>0$.
For $s=1$, the final numerator is $5(1+m)^2-6-6m^2<0$, and thus
$g(m,1)<0$. Hence, by \eqref{fmsa}, we may choose a large $A$ such that
$f(m,0,A)>0$ and $f(m,1,A)<0$.
Then, by continuity, these exists $s\in(0,1)$ such that $f(m,s,A)=0$,
and  \eqref{noso} shows that for the corresponding $\xi$, we have 
the residue 0 at $\frac{3}2$, \ie, there is no pole there and
$\mu(\ga)$ is regular at $\frac{3}2$.
\end{example}

\section{Brownian representations}\label{Sbrown}

We use the well-known result by \citet{AldousII,AldousIII} 
that represents a \cGWt{} asymptotically by a Brownian excursion $(\ex(t))$
in the
following way (under the conditions $\E\xi=1$ and $\gss:=\Var\xi<\infty$
that also we assume).
(See also \citet{LeGall} and \citet[Chapter 4.1]{Drmota}.)

Consider the \emph{depth-first walk} on the tree $\tn$; this is a walk
$v(1),\dots,\allowbreak
v(2n-1)$ on the nodes of $\tn$, where $v(1)=v(2n-1)$ is the
root $o$, and each time we come to a node, we proceed to the first unvisited
child of the node, if there is any, and otherwise to the parent.
For convenience, we also define $v(0)=v(2n)=o$.
We define $W_n(i):=\hh{v(i)}$, and extend $W_n$ to the interval $[0,2n]$ by
linear interpolation between the integers. Furthermore, we scale $W_n$ to a
function on $\oi$ by
\begin{equation}\label{WW}
  \WW_n(t):=\gs n\qqw W_n(2nt).
\end{equation}
Then $\WW_n$ is a random continuous funtion on \oi, and is thus a random
element of the Banach space $\coi$. One of the main results of 
\citet[Theorem 23 with Remark 2]{AldousIII}
is that, as random elements of $\coi$,
\begin{equation}\label{WW2e}
  (\WW_n(t))\dto (2\ex(t)).
\end{equation}

We can think of $W_n(t)$ as the position of a worm that crawls on the edges
of the tree, visiting each edge twice (once in each direction).

We define $v(x)$ also for non-integer $x\in[0,2n]$ as 
either $v(\floor x)$ or $v(\ceil x)$, choosing between these two
the node more distant from the root. 
Thus,
\begin{equation}\label{hhvx}
  \hh{v(x)}=\ceil{W_n(x)}.
\end{equation}

For a node $v$, let $i_v':=\min\set{i\ge1:v(i)=v}$ and 
$i_v'':=\max\set{i\le2n-1:v(i)=v}$, \ie, the first and last times that $v$ is
visited (with $i_o'=1$ and $i_o''=2n-1$). Then the subtree 
$\tnv$ is visited during the interval $[i_v',i_v'']$, and
$i_v''-i'_v=2(|\tnv|-1)$.
Let 
\begin{equation}\label{jv}
  J_v:=\set{x\in(0,2n):v(x)\succeq v}.
\end{equation}
Then $J_v=(i_v'-1,i_v''+1)$, and thus $J_v$ is an interval of length
\begin{equation}
  \label{jvt}
|J_v|=i''_v-i'_v+2
=2|\tnv|.
\end{equation}

We can now prove \refT{Tbrown}. 
When $\rea>1$, all four expressions \eqref{wa0}--\eqref{wb} are equivalent
by elementary calculus, so part \ref{Tbrown>1} follows from part \ref{Tbrown>1/2}.
Nevertheless, we begin with a straightforward proof of the
simpler part \ref{Tbrown>1}, and then show how part \ref{Tbrown>1/2} can be
proved by a similar, but more complicated, argument.
Since we have not yet proved convergence of $Y_n(\ga)$, 
we state the result as the following two lemmas.

\begin{lemma}\label{LB1}
  If\/ $\rea>1$, then $Y_n(\ga)\dto \gs\qw Y(\ga)$ as \ntoo, 
with $Y(\ga)$ given
  by \eqref{wb}. 
Moreover, this holds jointly for any finite set of such $\ga$.
\end{lemma}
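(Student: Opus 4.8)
The plan is to write $X_n(\ga)$ as an explicit functional of the depth-first walk $W_n$ and then invoke the continuous-mapping theorem together with the convergence $\WW_n\dto 2\ex$ in $\coi$ from \eqref{WW2e}.

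\emph{Step 1: an integral representation.} For $\rea>1$ and $L>0$, the substitution $u=y-x$ in the inner integral gives the elementary identity
\begin{equation*}
L^\ga=\ga(\ga-1)\iint\limits_{0<x<y<L}(y-x)^{\ga-2}\dd x\dd y,
\end{equation*}
the double integral being \emph{absolutely} convergent precisely when $\rea>1$. Apply this with $L=|J_v|=2|\tnv|$ (recall \eqref{jv}--\eqref{jvt}); writing $J_v=(a_v,b_v)$ and using the translation invariance of $(y-x)^{\ga-2}$, we get $|\tnv|^\ga=2^{-\ga}\ga(\ga-1)\iint_{a_v<x<y<b_v}(y-x)^{\ga-2}\dd x\dd y$. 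Summing over $v\in\tn$ (Fubini is justified since $\sum_v|\tnv|^{\rea}<\infty$),
\begin{equation*}
X_n(\ga)=2^{-\ga}\ga(\ga-1)\iint\limits_{0<x<y<2n}(y-x)^{\ga-2}\,N_n(x,y)\dd x\dd y,
\qquad N_n(x,y):=\#\bigcpar{v:(x,y)\subseteq J_v}.
\end{equation*}
By \eqref{jv}, the nodes $v$ with $(x,y)\subseteq J_v$ are essentially the ancestors (inclusive) of $v(x)\land v(y)$, and the minimum of the depth over the stretch $[x,y]$ of the walk equals the depth of that last common ancestor; hence, by \eqref{hhvx}, $N_n(x,y)=m(W_n;x,y)+O(1)$ with an absolute implied constant. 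Substituting $x=2ns$, $y=2nt$ and recalling \eqref{WW}, a short computation (the numerical constants collapse to $1$) yields
\begin{equation*}
Y_n(\ga)=n^{\wgay}X_n(\ga)=\gs\qw\ga(\ga-1)\iint\limits_{0<s<t<1}(t-s)^{\ga-2}\,m(\WW_n;s,t)\dd s\dd t+O\bigpar{n\qqw},
\end{equation*}
where the error is deterministic, with a constant depending on $\ga$ only through $\iint_{0<s<t<1}(t-s)^{\rea-2}\dd s\dd t$, which is finite because $\rea>1$.

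\emph{Step 2: passage to the limit.} For fixed $\ga$ with $\rea>1$, define $\Phi_\ga\colon\coi\to\bbC$ by $\Phi_\ga(f):=\iint_{0<s<t<1}(t-s)^{\ga-2}m(f;s,t)\dd s\dd t$. Since $(t-s)^{\rea-2}$ is integrable on the simplex and $|m(f;s,t)-m(g;s,t)|\le\norm{f-g}_\infty$, the map $\Phi_\ga$ is Lipschitz (hence continuous), and it is positively homogeneous. By \eqref{WW2e} and the continuous-mapping theorem, $\Phi_\ga(\WW_n)\dto\Phi_\ga(2\ex)=2\Phi_\ga(\ex)$; combining with Step~1 and Slutsky's lemma,
\begin{equation*}
Y_n(\ga)\dto\gs\qw\cdot 2\ga(\ga-1)\iint\limits_{0<s<t<1}(t-s)^{\ga-2}m(\ex;s,t)\dd s\dd t=\gs\qw Y(\ga)
\end{equation*}
by the definition \eqref{wb}. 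Joint convergence for finitely many $\ga_1,\dots,\ga_k$ with $\Re\ga_j>1$ follows by applying the continuous-mapping theorem to the continuous map $f\mapsto\bigpar{\Phi_{\ga_1}(f),\dots,\Phi_{\ga_k}(f)}$ from $\coi$ to $\bbC^k$, the deterministic $O\bigpar{n\qqw}$ errors being harmless.

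The routine parts are the one-line integral identity, the change of variables, and the continuity of $\Phi_\ga$. The one point needing care is the combinatorial identity $N_n(x,y)=m(W_n;x,y)+O(1)$: one must check that the ambiguity in the definition of $v(x)$ at integer $x$, together with the rounding in \eqref{hhvx}, contributes only a bounded error. This error is negligible after dividing by $n^{\qgay}$ \emph{exactly because} $\rea>1$ makes $\iint_{0<s<t<1}(t-s)^{\rea-2}\dd s\dd t$ finite; this is also precisely what fails for $\rea\le1$ and necessitates the more delicate representations \eqref{wa0}--\eqref{wa2}.
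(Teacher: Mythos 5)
Your proposal is correct and follows essentially the same route as the paper's proof: the same integral identity $L^\ga=\ga(\ga-1)\iint_{0<x<y<L}(y-x)^{\ga-2}\dd x\dd y$ applied to the intervals $J_v$, the same identification $\sum_v\ett{x,y\in J_v}=\hh{v(x)\land v(y)}+1=m(W_n;x,y)+O(1)$, and the same appeal to \eqref{WW2e} and the continuous mapping theorem, with joint convergence handled identically. The one point you flag as needing care (the $O(1)$ from the ceiling and the ambiguity of $v(x)$) is exactly how the paper handles it.
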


\begin{proof}%[Proof of \refT{Tbrown}\ref{Tbrown>1}]
We assume $\rea>1$, and then \eqref{jvt} implies
\begin{equation}
(2|\tnv|)^\ga 
= \iint\limits_{\substack{x,y\in J_v \\ x<y}} \ga(\ga-1)(y-x)^{\ga-2}\dd x \dd y
.
\end{equation}
Hence,
\begin{equation}\label{hamlet}
\begin{split}
2^\ga X_n(\ga)=
\sum_{v\in\tn} (2|\tnv|)^\ga & = 
\iint\limits_{0<x<y<2n} \ga(\ga-1)(y-x)^{\ga-2}
\sum_{v\in\tn} \ett{x,y\in J_v}\dd x\dd y.
\end{split}
\end{equation}
Now, by \eqref{jv} and \eqref{min},
$x,y\in J_v\iff v \preceq v(x)\land v(y)$, and thus
\begin{equation}\label{ofelia}
\sum_{v\in\tn} \ett{x,y\in J_v}
=\#\set{v:v\preceq v(x)\land v(y)}
=\hh{v(x)\land v(y)}+1.
\end{equation}
Furthermore, from the construction of the depth-first walk,
\begin{equation}\label{hhvxy}
\hh{v(x)\land v(y)}=\ceil{m(W_n;x,y)}.  
\end{equation}
recalling the notation \eqref{m}. 
[Actually, $m(W_n;x,y)$ is an integer except when 
$v(x)$ is an ancestor of $v(y)$
or conversely.]
Combining \eqref{hamlet}--\eqref{hhvxy} and \eqref{WW} yield
\begin{align}
2^\ga X_n(\ga)
& = \iint\limits_{0<x<y<2n} \ga(\ga-1)(y-x)^{\ga-2} \bigsqpar{ 
\hh{v(x) \land  v(y)}+1} \dd x\dd y
\notag\\
& = \iint\limits_{0<x<y<2n} \ga(\ga-1)(y-x)^{\ga-2} \bigsqpar{m(W_n;x,y)+O(1)}
\dd x\dd y
\notag\\
& = \ga(\ga-1) (2n)^\ga\iint\limits_{0<s<t<1} (t-s)^{\ga-2} 
\bigsqpar{m(n^{1/2}\gs\qw \WW_n;s,t)+O(1)}\dd s\dd t.
\end{align}
Since $\WW_n\dto 2\ex$ in $C[0,1]$ by \eqref{WW2e},
and the integral below defines a
continuous functional on $\coi$ because $\iint(t-s)^{\ga-2} \dd s \dd t$ converges (absolutely),
it follows that
\begin{align}
\gs n^{-\ga-\frac12}X_n(\ga)
&=
 \ga(\ga-1) \iint\limits_{0<s<t<1} (t-s)^{\ga-2} m(\WW_n;s,t)\dd s\dd t
+O\bigpar{n\qqw}
\notag\\&
\dto
 \ga(\ga-1) \iint\limits_{0<s<t<1} (t-s)^{\ga-2} 
m(2\ex;s,t)\dd s\dd t
=Y(\ga).
\end{align}
In other words, recalling \eqref{Yn},
$\sigma Y_n(\ga)\dto  Y(\ga)$.

Joint convergence for several $\ga$ follows by the same argument.
\end{proof}

\begin{lemma}\label{LB1/2}
  If\/ $\rea>\frac12$, then $Y_n(\ga)\dto \gs\qw Y(\ga)$ as \ntoo, 
with $Y(\ga)$ given
  by \eqref{wa0}--\eqref{wa2}. 
Moreover, this holds jointly for any finite set of such $\ga$.
\end{lemma}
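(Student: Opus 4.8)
The plan is to follow the proof of \refL{LB1} as closely as possible; the only real change is that for $\frac12<\rga\le1$ the double integral over $\set{0<x<y<2n}$ diverges near the diagonal, so it must be \emph{regularized by one integration by parts} before the limit $\ntoo$ is taken.

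\emph{Step 1: a regularized identity.} Starting from the identity obtained in the proof of \refL{LB1}, valid for $\rga>1$,
\[2^\ga X_n(\ga)=\iint\limits_{0<x<y<2n}\ga(\ga-1)(y-x)^{\ga-2}\bigsqpar{\hh{v(x)\land v(y)}+1}\dd x\dd y,\]
I would split the bracket as $\bigsqpar{\hh{v(y)}+1}+\bigsqpar{\hh{v(x)\land v(y)}-\hh{v(y)}}$ and carry out the inner integral $\int_0^y\ga(\ga-1)(y-x)^{\ga-2}\dd x=\ga y^{\ga-1}$ on the first summand, obtaining (still for $\rga>1$)
\[2^\ga X_n(\ga)=\ga\int_0^{2n}y^{\ga-1}\bigsqpar{\hh{v(y)}+1}\dd y+R_n(\ga),\]
where
\[R_n(\ga):=\ga(\ga-1)\iint\limits_{0<x<y<2n}(y-x)^{\ga-2}\bigsqpar{\hh{v(x)\land v(y)}-\hh{v(y)}}\dd x\dd y.\]
The point of this particular split is that $\hh{v(x)\land v(y)}-\hh{v(y)}\le0$, that it vanishes whenever $\floor x=\floor y$ (then $v(x)=v(y)$), hence in particular unless $(x,y)$ contains an integer, and that $\bigabs{\hh{v(x)\land v(y)}-\hh{v(y)}}\le\ceil{y-x}$ since the depth-first walk is $1$-Lipschitz. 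A short but slightly fussy estimate---splitting $\set{y-x<1}$ into the ``same edge'' part (where the integrand is $0$) and the thin ``straddling an integer'' part---then shows that $R_n(\ga)$ converges absolutely and, by \refR{Rdom}, defines an analytic function of $\ga$ on $\set{\rga>0}$, with $R_n(\ga)=O\bigpar{n^{\rga+1}}$ and with the part of $R_n(\ga)$ coming from $\set{y-x<1}$ only $O(n)$. As the left side is entire and the first term on the right is analytic for $\rga>0$, analytic continuation extends the identity to $\set{\rga>0}$, in particular to $\set{\rga>\frac12}$.

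\emph{Step 2: rescaling and the limit.} Using $\hh{v(y)}=\ceil{W_n(y)}=W_n(y)+O(1)$, $\hh{v(x)\land v(y)}=\ceil{m(W_n;x,y)}=m(W_n;x,y)+O(1)$ and $W_n(2nt)=\gs\qw n\qq\WW_n(t)$ (recall \eqref{WW}, \eqref{hhvx}, \eqref{hhvxy}), the change of variables $x=2ns$, $y=2nt$ gives
\[\frac{\gs\,\ga}{2^\ga n^{\ga+\frac12}}\int_0^{2n}y^{\ga-1}\bigsqpar{\hh{v(y)}+1}\dd y=\ga\int_0^1 t^{\ga-1}\WW_n(t)\dd t+O\bigpar{n\qqw},\]
\[\frac{\gs}{2^\ga n^{\ga+\frac12}}R_n(\ga)=\ga(\ga-1)\iint\limits_{0<s<t<1}(t-s)^{\ga-2}\bigsqpar{m(\WW_n;s,t)-\WW_n(t)}\dd s\dd t+\cE_n,\]
where $\cE_n$ collects the $O(1)$ rounding terms and the near-diagonal remainder; by the Step~1 estimate these contribute only $O(n)$ to $R_n(\ga)$, so $\cE_n=O\bigpar{n^{\frac12-\rga}}=o(1)$ because $\rga>\frac12$. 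The integral on the right of the first display is a continuous functional of $\WW_n\in\coi$ (as $\rga>0$), so by \eqref{WW2e} and the continuous mapping theorem it converges in distribution to $\ga\int_0^1 t^{\ga-1}2\ex(t)\dd t$, the first term of \eqref{wa0}. For the second, I would split the integral at $t-s=\eps$: the part with $t-s\ge\eps$ is again a continuous functional of $\WW_n$, hence converges to the same functional of $2\ex$, and then one lets $\eps\downto0$; using $m(2\ex;s,t)=2m(\ex;s,t)$ the limit is $-2\ga(\ga-1)\iint_{0<s<t<1}(t-s)^{\ga-2}\bigsqpar{\ex(t)-m(\ex;s,t)}\dd s\dd t$, the second term of \eqref{wa0}. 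Summing, $\gs Y_n(\ga)=\gs n^{-\ga-\frac12}X_n(\ga)\dto Y(\ga)$ with $Y(\ga)$ given by \eqref{wa0}. The representation \eqref{wa1} comes out of the mirror split $\hh{v(x)\land v(y)}+1=\bigsqpar{\hh{v(x)}+1}+\bigsqpar{\hh{v(x)\land v(y)}-\hh{v(x)}}$ (integrating in~$y$), and \eqref{wa2} is the half-sum of \eqref{wa0} and \eqref{wa1}. Joint convergence for finitely many $\ga$ is automatic, since every functional involved acts on the single random element $\WW_n$.

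\emph{The main obstacle} is making the two limit passages of Step~2 rigorous near the diagonal when $\rga\le1$: one must show that $\iint_{0<s<t<\eps}(t-s)^{\rga-2}\bigabs{m(\WW_n;s,t)-\WW_n(t)}\dd s\dd t$ is small in probability \emph{uniformly in $n$} as $\eps\downto0$, and that $\iint_{0<s<t<\eps}(t-s)^{\rga-2}\bigsqpar{\ex(t)-m(\ex;s,t)}\dd s\dd t\to0$ a.s. The latter is immediate from the a.s.\ \Holder($\gam$)-continuity of $\ex$ for each $\gam<\frac12$ already recorded in the text, since then the integrand is $O\bigpar{(t-s)^{\rga-2+\gam}}$, which is integrable precisely because $\rga>\frac12$. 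For the former, the efficient route is a uniform-in-$n$ modulus bound for the depth-first walk, of the type $\E\bigsqpar{\sup_{|u-v|\le\gd}|\WW_n(u)-\WW_n(v)|^2}\le C\gd\log(e/\gd)$, which is available from the second-moment estimates on the height/contour function underlying Aldous's tightness proof; it gives $\E\bigabs{m(\WW_n;s,t)-\WW_n(t)}\le C\bigpar{(t-s)\log\tfrac{e}{t-s}}\qq$, so the $\eps$-tail has expectation $O\bigpar{\eps^{\rga-\frac12}(\log(1/\eps))\qq}\to0$, once more exactly because $\rga>\frac12$. (An alternative route, bypassing Step~1 altogether: $\gs Y_n$ is tight in $\cH(H_+)$ by \refL{LcH}\ref{LcH>0} together with \refT{TE}, and converges in distribution on the subset $\set{\rga>1}$ by \refL{LB1}; by \refL{Lsub} it then converges in $\cH\bigpar{\set{\rga>\frac12}}$ to a random analytic function which, being the analytic continuation of \eqref{wb}, must a.s.\ coincide with \eqref{wa0}--\eqref{wa2} once one checks that these formulas indeed define such a continuation.)
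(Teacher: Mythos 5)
Your route is, up to the point of difficulty, the same as the paper's: you arrive at exactly the identity the paper derives as \eqref{polonius} (the paper gets it for all $\rga>0$ directly from a calculus identity applied to each interval $J_v$, rather than by integrating out $x$ from the \refL{LB1} identity and continuing analytically, but the formula and the subsequent truncation at $t-s=\eps$ are identical). The genuine gap is precisely at what you yourself flag as the main obstacle. Your uniform-in-$n$ control of the near-diagonal part rests on the asserted moment bound $\E\bigsqpar{\sup_{|u-v|\le\gd}|\WW_n(u)-\WW_n(v)|^2}\le C\gd\log(e/\gd)$, which you do not prove and which is not an off-the-shelf consequence of $\Var\xi<\infty$: a uniform modulus-of-continuity moment estimate for the contour function of a conditioned Galton--Watson tree under a bare second-moment hypothesis is a nontrivial result in its own right, and nothing in the paper up to this point supplies it. The paper sidesteps path regularity entirely with a one-line comparison: for real $\gb$ with $\frac12<\gb<\rga\land1$ one has $|(t-s)^{\ga-2}|\le\eps^{\rga-\gb}(t-s)^{\gb-2}$ on $\set{0<t-s<\eps}$, and since the integrand for real $\gb$ is nonnegative this gives $|\zne'(\ga)|\le\frac{|\ga(\ga-1)|}{|\gb(\gb-1)|}\,\eps^{\rga-\gb}\,\zne'(\gb)$ with $\zne'(\gb)\le Y_n(\gb)$; the only probabilistic input is then $\E Y_n(\gb)=O(1)$, already available from \refT{TE}\ref{TE+}. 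You should replace your modulus bound by this comparison (or prove the modulus bound, which would be much more work). A secondary point: obtaining \eqref{wa1} from the mirror split only yields equality \emph{in distribution} with \eqref{wa0}, whereas the lemma (as used later) needs the three formulas to define the same random variable a.s.; the paper gets this from the symmetric formula \eqref{richard}.

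Your parenthetical alternative is essentially a complete proof by a genuinely softer route, modulo one correction: $\gs Y_n$ is tight in $\cH(\set{\ga:\rga>\frac12})$, not in $\cH(H_+)$, since $\E Y_n(\ga)\to\infty$ for $\rga\le\frac12$; one combines \refL{LcH}\ref{LcH>0} with the locally uniform convergence of $\E Y_n(\ga)$ from \refT{TE}\ref{TE+}. Then \refL{LB1} and \refL{Lsub} give convergence in $\cH(\set{\ga:\rga>\frac12})$ to a random analytic function, and since each of \eqref{wa0}--\eqref{wa2} converges absolutely a.s.\ for $\rga>\frac12$ (by \Holder{} continuity of $\ex$), is a.s.\ analytic in $\ga$ there, and coincides with \eqref{wb} for $\rga>1$ by elementary calculus, uniqueness of analytic continuation identifies the limit with \eqref{wa0}--\eqref{wa2} and simultaneously shows those three expressions agree a.s. This buys a cleaner identification of the limit (and of the a.s.\ equality of the three representations) at the cost of being nonconstructive about the convergence mechanism; the paper's direct argument is preferable mainly because it produces the explicit formula \eqref{richard} used elsewhere.
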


\begin{proof}
Fix $\ga$ with $\rea>\frac12$.
We begin with a calculus fact (assuming only that $\rea > 0$).  
For any $0 < a < b < \infty$,
\begin{equation}
(b - a)^{\ga} = \ga \int^b_a\!x^{\ga - 1}\dd x 
- \ga (\ga - 1) \iint\limits_{0 < x < a < y < b} (y - x)^{\ga - 2}\dd x\dd y.  
\end{equation}
We apply this to the interval $(a, b) = J_v$ in \eqref{jv} and obtain, using
\eqref{jvt},
\begin{equation*}
  \begin{split}
	(2|\tnv|)^\ga &
= \ga \int_{x \in J_v}\!x^{\ga - 1}\dd x
- \ga (\ga - 1) \iint\limits_{0<x<y,\,x\notin J_v,\,y\in J_v} 
(y - x)^{\ga - 2}\dd x\dd y 
  \end{split}
\end{equation*}
and thus, summing over all nodes $v$ of $\tn$,
\begin{multline}
\label{1}
  2^\ga\sum_v |\tnv|^\ga
 = \ga \int^{2 n}_{0}\!x^{\ga - 1}\sum_{v\in \tn}\ett{x\in J_v}\dd x
\\
{} - \ga (\ga - 1) \iint\limits_{0 < x < y < 2 n} (y - x)^{\ga - 2}
\sum_{v\in \tn}\ett{x\notin J_v,\, y\in J_v}\dd x \dd y .
%\#\set{v: v(x) \not\succeq v\text{ and }v(y) \succeq v}\dd x\dd y.  
\end{multline}
Now, using \eqref{jv} and \eqref{hhvx},
\begin{equation}
  \begin{split}
\sum_{v\in \tn}\ett{x\in J_v}
&=
\#\set{v: v(x) \succeq v}
%\\&
= \hh{v(x)}+1
%\\&
= \ceil{W_n(x)}+1
  \end{split}
\end{equation}
and similarly, using also \eqref{min} and \eqref{hhvxy},
\begin{equation}
  \begin{split}
\sum_{v\in \tn}\ett{x\notin J_v,\, y\in J_v}
&=
\#\set{v: v(x) \not\succeq v\text{ and }v(y) \succeq v}
\\
&=	
\#\set{v: v(y) \succeq v}
-
\#\set{v: v(x)\land v(y) \succeq v}
\\&
= \hh{v(y)}-\hh{v(x)\land v(y)}
\\&
= \ceil{W_n(y)}-\ceil{m(W_n;x,y)}.  
  \end{split}
\end{equation}
Consequently, recalling the definitions \eqref{Xn} and \eqref{Yn}
of $X_n(\ga)$ and $Y_n(\ga)$,
\begin{multline}
\label{polonius}
%  2^\ga\sum_v |\tnv|^\ga
2^\ga X_n(\ga)
 = \ga \int^{2 n}_{0}\!x^{\ga - 1}\bigpar{\ceil{W_n(x)}+1}\dd x
\\
{} - \ga (\ga - 1) \iint\limits_{0 < x < y < 2 n} (y - x)^{\ga - 2}
\bigpar{\ceil{W_n(y)}-\ceil{m(W_n;x,y)}}\dd x \dd y 
\end{multline}
and thus
\begin{multline}
\label{laertes}
%  2^\ga\sum_v |\tnv|^\ga
Y_n(\ga)
 = \ga \int^{1}_{0}\!t^{\ga - 1}n\qqw \bigpar{\ceil{W_n(2nt)}+1}\dd t
\\
{} - \ga (\ga - 1) \iint\limits_{0 < s < t < 1} (t - s)^{\ga - 2}
n\qqw \bigpar{\ceil{W_n(2nt)}-\ceil{m(W_n;2ns,2nt)}}\dd s \dd t .
%\#\set{v: v(x) \not\succeq v\text{ and }v(y) \succeq v}\dd x\dd y.  
\end{multline}
The first integral in \eqref{laertes} is no problem; it converges (in
distribution) by \eqref{WW} and \eqref{WW2e},
just as the  integral at the end of the proof of \refL{LB1},
because $\int\!t^{\ga-1} \dd t$ converges (absolutely).

The second integral, however, is more difficult, since $\iint(t-s)^{\ga-2} \dd s \dd t$
diverges if $\rea\le 1$. We therefore use a truncation argument.
For $0 < \eps < 1$ we split $Y_n(\ga)=\zne(\ga) + \zne'(\ga)$, where
\begin{multline}
\zne(\ga)
:= \ga \int^{1}_{0}\!t^{\ga - 1}n\qqw \bigpar{\ceil{W_n(2nt)}+1}\dd t
\\
{} - \ga (\ga - 1) \iint\limits_{t-s>\eps} (t - s)^{\ga - 2}
n\qqw \bigpar{\ceil{W_n(2nt)}-\ceil{m(W_n;2ns,2nt)}}\dd s \dd t
\end{multline}
and
\begin{multline}\label{zne'}
  \zne'(\ga):= \\
{} - \ga (\ga - 1) \iint\limits_{ 0<t-s<\eps}\!(t - s)^{\ga - 2}
n\qqw \bigpar{\ceil{W_n(2nt)}-\ceil{m(W_n;2ns,2nt)}}\dd s \dd t .
\end{multline}
For each fixed $\ga$ with $\rea>0$ and each fixed $0 < \eps < 1$, 
\begin{multline}
\gs\zne(\ga)
= \ga \int^{1}_{0}\!t^{\ga - 1}\WW_n(t)\dd t
\\
{} - \ga (\ga - 1) \iint\limits_{t-s>\eps} (t - s)^{\ga - 2}
\bigpar{{\WW_n(t)}-{m(\WW_n;s,t)}}\dd s \dd t 
+ O\bigpar{n\qqw}
\end{multline}
and thus, by \eqref{WW2e} and the continuous mapping theorem,
\begin{multline}\label{falstaff}
\gs\zne(\ga)
\dto
\ze(\ga):=
 2\ga \int^{1}_{0}\!t^{\ga - 1}\ex(t)\dd t
\\
{} - 2\ga (\ga - 1) \iint\limits_{t-s>\eps} (t - s)^{\ga - 2}
\bigpar{{\ex(t)}-{m(\ex;s,t)}}\dd s \dd t 
.
\end{multline}

We now use the assumption $\rea>\frac12$. We define $Y(\ga)$ by \eqref{wa0},
noting that the integrals converge, as said in \refS{S:intro}, because
$\ex(t)$ is \Holder($\gam$)-continuous for every $\gam<\frac12$.
This shows that
as $\eps\to0$, 
\begin{equation}\label{york}
\ze(\ga)\to Y(\ga)  
\end{equation}
\as{} (and thus in distribution).
Furthermore, let $\gb$ be real with $\frac12<\gb<(\rea\land 1)$.
It follows from \eqref{zne'} that
\begin{equation}\label{henryv}
  |\zne'(\ga)|
\le \frac{|\ga(\ga-1)|}{|\gb(\gb-1)|}\,\eps^{\rea-\gb}\zne'(\gb).
\end{equation}
Furthermore, by \eqref{laertes}, $\zne'(\gb)\le Y_n(\gb)$, and by 
\refT{TE}\ref{TE+} we have $\E Y_n(\gb)=O(1)$.
Consequently, \eqref{henryv} implies
\begin{equation}\label{crispin}
\E|Y_n(\ga)-\zne(\ga)|=\E  |\zne'(\ga)| = O\bigpar{\eps^{\rea-\gb}}.
\end{equation}
Consequently, $\zne(\ga)\pto Y_n(\ga)$ as $\eps\to0$ uniformly in $n$, 
\ie, for any $\gd>0$,
$\sup_n\P(|Y_n(\ga)-\zne(\ga)|>\gd)\to0$.
This together with the facts \eqref{falstaff} and \eqref{york} imply
the result $\gs Y_n(\ga)\dto Y(\ga)$, 
see \eg{} \cite[Theorem 4.2]{Billingsley} 
or \cite[Theorem 4.28]{Kallenberg}.
Joint convergence for several $\ga$ follows by the same argument.

It remains to show that \eqref{wa1}--\eqref{wa2} are equal to $Y(\ga)$.
Let us temporarily denote these expressions by $Y^{(1)}(\ga)$ and
$Y^{(2)}(\ga)$. 

Note that, a.s.,
\begin{align}
(\ga - 1) \iint\limits_{t-s>\eps} (t - s)^{\ga - 2}\ex(t)\dd s \dd t 
&=
(\ga - 1) \int_\eps^1 \ex(t)\int_0^{t-\eps} (t - s)^{\ga - 2}\dd s \dd t 
\notag\\& \hskip-2em
=
 \int_\eps^1 \ex(t) \bigpar{t^{\ga - 1}-\eps^{\ga-1}} \dd t 
\notag\\& \hskip-2em
=
\int_0^1 \ex(t) \bigpar{t^{\ga - 1}-\eps^{\ga-1}} \dd t +O\bigpar{\eps^{\rea}}
\end{align}
and hence 
\begin{equation}
  \ze(\ga)=
2\ga\eps^{\ga-1}\int_0^1 \ex(t)  \dd t 
+ 2\ga (\ga - 1) \iint\limits_{t-s>\eps} (t - s)^{\ga - 2}
{m(\ex;s,t)}\dd s \dd t 
+O\bigpar{\eps^{\rea}}.
\end{equation}
Consequently, \eqref{york} yields the formula
\begin{equation}\label{richard}
Y(\ga)=
\lim_{\eps\to0}
\lrpar{
2\ga\eps^{\ga-1}\int_0^1 \ex(t)  \dd t 
+ 2\ga (\ga - 1) \iint\limits_{t-s>\eps} (t - s)^{\ga - 2}
{m(\ex;s,t)}\dd s \dd t }.
\end{equation}
If we replace $\ex(t)$ by the reflected $\ex(1-t)$, the \rhs{} of
\eqref{richard} is unchanged, while $Y(\ga)$ defined by \eqref{wa0} becomes
$Y^{(1)}(\ga)$ defined by \eqref{wa1}. Consequently,
$Y^{(1)}(\ga)=Y(\ga)$ a.s.
Furthermore, $Y^{(2)}(\ga)=[Y(\ga)+Y^{(1)}(\ga)]/2$, and thus also
$Y^{(2)}(\ga)=Y(\ga)$ a.s.
\end{proof}

\begin{remark}
  For $\ga=k\ge2$ integer, an alternative argument uses the following identity,
  obtained by extending \eqref{ofelia} to several nodes: 
  \begin{equation}
	\begin{split}
\sum_{v\in\tn}|\tnv|^k
&=\sum_{v,v_1,\dots,v_k\in\tn}\ett{v_1,\dots,v_k\succeq v}
\\&
=2^{-k}\int_0^{2n}\dotsi\int_0^{2n}\bigpar{d(v(x_1)\land\dotsm\land v(x_k))+1} \dd x_1 \dotsm \dd x_k
\\&
=2^{-k}k!\idotsint\limits_{0<x_1<\dots< x_k<2n}\ceil{m(W_n;x_1,x_k)}\dd
x_1\dotsm \dd x_k
+n^k
\\&
=n^{k}k(k-1)\iint\limits_{0<t_1<
  t_k<1}\ceil{m(W_n;2nt_1,2nt_k)}(t_k-t_1)^{k-2}\dd t_1 \dd t_k
+n^k.
\end{split}
  \end{equation}
This easily shows \refL{LB1} with \eqref{wb} in this case. 
A similar, but simpler, argument yields \eqref{y1} for $k=1$, see \eqref{xn1}.
\end{remark}

%\section{Proofs of Theorems \ref{T1} and \ref{Tbrown}}
\section{Proofs of \refT{T1} and remaining limit theorems}
\label{Spf2}

\begin{proof}[Proof of \refT{T1}]
\refT{T1} now follows from \refL{Lsub}, with $D=H_+$ and
$E=\set{\ga:\rea>1}$, using Lemmas \ref{LcH}\ref{LcH>0} and % (tightness)
\ref{LB1}. % (pointwise convergence)
\end{proof}

\begin{proof}[Proof of \refR{Rrepr}.]
This is implicit in the proof above, but we add some details.
Let $D$ and $E$ be as in the proof of \refT{T1}.
(Alternatively, take $E:=[2,3]$.)
Let $\gf:\cH(D)\to C(E)$ be
the restriction mapping $f\mapsto f|_E$,
and let $\psi:\coi\to C(E)$ be the mapping
taking $\ex \in C[0, 1]$ to the element of $C(E)$ that maps $\alpha \in E$ to
the \rhs{} of~\eqref{wb};
both~$\gf$ and~$\psi$ are continuous and thus measurable.
Let also $Y$ denote the random function $Y(\ga)\in \cH(D)$.
The proof above (in particular, \refL{Lsub})
shows that $\gf(Y)\eqd \psi(\ex)$,
and thus we may assume
\begin{align}\label{pippi}
\gf(Y) = \psi(\ex) 
\qquad\text{a.s.}   
\end{align}
(The skeptical 
reader might apply \cite[Corollary 6.11]{Kallenberg} for the
last step.)
Furthermore, $\gf$ is injective, and both $\cH(D)$ and $C(E)$ are Polish
spaces;
thus the range $R:=\gf(\cH(D))$ is a Borel set in $C(E)$, and the inverse
function $\gf\qw:R\to \cH(D)$ is measurable,
see \eg{} \cite[Theorem 8.3.7 and Proposition 8.3.5]{Cohn}.
By \eqref{pippi}, we have
$Y = \gf\qw\bigpar{\psi(\ex)}$ a.s. 
Consequently,
\eqref{rrepr} holds with
\begin{align}
  \label{emil}
\Psi(\ga,f):=
  \begin{cases}
\gf\qw\bigpar{\psi(f)}(\ga) , & \psi(f)\in R,
\\
0,& \text{otherwise}.   
  \end{cases}
\end{align}
\end{proof}

\begin{proof}[Proofs of Theorems \ref{TX} and \ref{TXgd}] 
These results
follow immediately from Theorem~\ref{T1} 
and the estimates of $\E X_n(\ga)$ in
Theorems \ref{TE} and \ref{TEgd}.
\end{proof}

\begin{proof}[Proof of \refT{Tbrown}]
\refT{Tbrown} follows from \refT{TX}\ref{TX>} and 
Lemmas \ref{LB1}--\ref{LB1/2}, comparing the limits.
More precisely, this yields equality in distribution jointly for any finite
number of $\ga$, 
which implies equality jointly
for all $\ga$ since
the distribution of $Y(\ga)$ in $\cH(H^+)$ 
is determined by the finite-dimensional distributions, 
see \refSS{SSanalytic}. 
\end{proof}

\section{The limit as $\ga\to\infty$}\label{Slim}

We introduce more notation.
As above, $\ex(t)$, $t\in\oi$, is a normalized Brownian excursion, and
$m(\ex;s,t)$ is defined by \eqref{m}. We further define
\begin{align}\label{mm'}
  m(s):=m\bigpar{\ex;s;\tfrac12},&&&
  m'(s):=m\bigpar{\ex;\tfrac12,1-s}
\end{align}
for $0\le s\le\frac12$; for convenience we extend $m$ and $m'$ to continuous
functions on $\ooo$ by defining $m(s)=m'(s):=m(\frac12)=\ex(\frac12)$ for
$s>\frac12$. 
Furthermore,
\begin{itemize}
\item
$(B(t))$ is a standard Brownian motion on $[0,\infty)$.
\item 
$(S(t):=\sup_{s\in[0,t]}B(s))$ is the corresponding supremum process. 
%as in  \eqref{S}
\item 
$(\tau(a):=\min\set{t: B(t)=a}, a\ge0)$ is the corresponding family of
hitting times.
\item
$(\bes(t))$ is a three-dimensional Bessel process on $[0,\infty)$,
i.e.,\ $(\bes(t))\eqd(|\bbb(t)|)$, where $(\bbb(t)=\bigpar{B_1(t),B_2(t),B_3(t)})$
is a three-dimensional Brownian motion (so $B_1$, $B_2$, $B_3$ are
three independent copies of $B$).
It is well known that a.s.\ $\bes(0)=0$, $\bes(s)>0$ for all $s>0$ and
$\bes(s)\to\infty$ as $s\to\infty$ \cite[\S VI.3]{RY}.
\item
$J(t):=\inf_{s\ge t} \bes(s)$, $t\ge0$, is the future minimum of $\bes$.
By Pitman's theorem \cite[VI.(3.5)]{RY}, as stochastic processes in $\coo$ we have
\begin{equation}
  \label{pitman}
  (J(t)) \eqd (S(t)).
\end{equation}
%defined in \eqref{S}.
\item 
$(J'(t), t\ge0)$ is an independent copy of the stochastic process $(J(t))$.
Similarly, $(S'(t))$ is an independent copy of $(S(t))$ and $(\tau'(a))$ is an
independent copy of $(\tau(a))$.
\end{itemize}

For notational convenience, we also define, 
using \eqref{wb},
for $r>-1$, 
\begin{equation}\label{WY}
  W_r:=\iint_{0<s<t<1}(t-s)^rm(\ex;s,t)\dd s\dd t =\frac1{2(r+1)(r+2)}Y(r+2).
\end{equation}
The assertion
$\ga\qq Y(\ga)\dto \Yoo$ in \refT{Tlol} 
is thus equivalent to $r^{5/2}W_r\dto\frac12\Yoo$ as \rtoo.

\begin{lemma}\label{Lmm}
As $r\to\infty$ we have jointly (\ie, bivariately for sequences of processes)
$\rr m(x/r)\dto J(x)$ and	
$\rr m'(x/r)\dto J'(x)$ in $\coT$, for any $T<\infty$.
\end{lemma}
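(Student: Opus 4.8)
The plan is to localise everything near the two endpoints of $\ex$: the quantity $m(x/r)=m(\ex;x/r,\tfrac12)$ records the excursion just after time $0$, $m'(x/r)=m(\ex;\tfrac12,1-x/r)$ records it just before time $1$, and on the $1/r$--time, $r^{-1/2}$--space scale the excursion near an endpoint looks like a three-dimensional Bessel process. The starting point is the classical identification of the normalized excursion $\ex$ with the three-dimensional Bessel bridge from $0$ to $0$ of duration $1$ (see \cite{RY}). This gives the explicit marginal density of $\ex(a)$; it gives (by the Markov property of the bridge) that, conditionally on $\ex(a)=y$, the post-$a$ path $(\ex(a+u))_{0\le u\le 1-a}$ is a three-dimensional Bessel bridge from $y$ to $0$ of duration $1-a$; and, together with time-reversal symmetry $\ex(1-\cdot)\eqd\ex(\cdot)$, the analogous statements near $t=1$.

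Write $Z_r(v):=r^{1/2}\ex(v/r)$ and $Z_r'(v):=r^{1/2}\ex(1-v/r)$. For $r$ so large that $T/r<\tfrac12$ and for any fixed $T''\ge T$, splitting the defining interval at $T''/r$ gives, for all $x\in[0,T]$,
\begin{equation*}
r^{1/2}m(x/r)=\min\Bigl(\inf_{v\in[x,T'']}Z_r(v),\ r^{1/2}\!\!\inf_{u\in[T''/r,1/2]}\!\!\ex(u)\Bigr),
\end{equation*}
and similarly $r^{1/2}m'(x/r)=\min\bigl(\inf_{v\in[x,T'']}Z_r'(v),\ r^{1/2}\inf_{u\in[1/2,1-T''/r]}\ex(u)\bigr)$. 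The two \emph{main} terms are handled by the scaling limit $(Z_r,Z_r')|_{[0,T'']}\dto(\bes,\bes')$ in $C[0,T'']^2$, where $\bes,\bes'$ are independent three-dimensional Bessel processes started at $0$: the near-endpoint scaling limit of a Bessel bridge is the Bessel process (the endpoint conditioning washes out), while independence follows by conditioning on $\ex|_{[1/4,3/4]}$, given which the two end-pieces $\ex|_{[0,1/4]}$ and $\ex|_{[3/4,1]}$ are conditionally independent and have limits not depending on the conditioning. By the continuous mapping theorem applied to $f\mapsto(x\mapsto\inf_{[x,T'']}f)$, the main terms converge, jointly and in $C[0,T]$, to $\inf_{[x,T'']}\bes$ and $\inf_{[x,T'']}\bes'$, and these decrease, uniformly on $[0,T]$ as $T''\to\infty$, to $J(x)=\inf_{s\ge x}\bes(s)$ and the corresponding $J'(x)$; note $J$ is a.s.\ continuous and $J\eqd S$ by Pitman's theorem \eqref{pitman}.

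The technical heart, and where I expect the real work to lie, is to show that the two \emph{tail} terms are asymptotically negligible, i.e.\ that for every $\lambda>0$,
\begin{equation*}
\limsup_{r\to\infty}\P\Bigl(r^{1/2}\!\!\inf_{u\in[T''/r,1/2]}\!\!\ex(u)\le\lambda\Bigr)\le \frac{C\lambda}{\sqrt{T''}},
\end{equation*}
with $C$ absolute (and likewise for $\inf_{[1/2,1-T''/r]}\ex$, by time-reversal); letting $T''\to\infty$ then kills the tails. The plan for this: condition on $\ex(a)$ with $a=T''/r$. On $\{\ex(a)>1\}$, whose probability tends to $0$ as $a\to0$ by the Gaussian decay of the marginal density, the contribution vanishes after $\limsup_r$. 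On $\{\ex(a)\le1\}$, use that the post-$a$ path is a Bessel bridge from $y:=\ex(a)$ to $0$ of duration $1-a\ge\tfrac12$, hence an $h$-transform of the Bessel process started at $y$ whose Radon--Nikodym density on the $\sigma$-field generated by $\ex|_{[a,1/2]}$ is uniformly bounded (by $2^{3/2}e$) when $y\le1$ and $a\le\tfrac12$; combined with the fact that a three-dimensional Bessel process started at $y$ has overall infimum uniform on $[0,y]$, this yields $\P(\inf_{[a,1/2]}\ex\le z\mid\ex(a)=y)\le C'(z/y\wedge1)$, and integrating against the marginal density---using $\E[\ex(a)^{-1}]=O\bigl((a(1-a))^{-1/2}\bigr)$, finite precisely because that density vanishes like a square at the origin---gives $\P(\inf_{[a,1/2]}\ex\le z)\le C''\bigl(z/\sqrt{a(1-a)}+\P(\ex(a)>1)\bigr)$; take $z=\lambda/r^{1/2}$ and $a=T''/r$.

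With the tail estimate in hand, a routine argument (convergence of the relevant upper-orthant probabilities at continuity points, via the split above, the continuous mapping theorem and the tail bound) gives joint finite-dimensional convergence of $r^{1/2}m(\cdot/r)$ and $r^{1/2}m'(\cdot/r)$ to $J$ and the independent $J'$. Finally, since $x\mapsto r^{1/2}m(x/r)$ and $x\mapsto r^{1/2}m'(x/r)$ are continuous and non-decreasing on $[0,T]$ with a.s.\ continuous limits, convergence at a dense set forces convergence in $C[0,T]$ (monotone functions converging pointwise on a dense set to a continuous limit converge locally uniformly; pass through Skorokhod's representation), which completes the proof.
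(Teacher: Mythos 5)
Your argument is correct, but it follows a genuinely different route from the paper's. The paper proves the lemma by invoking the pathwise identity $\ex(t)\eqd(1-t)\bes\bigl(t/(1-t)\bigr)$ and Brownian scaling, so that $\rr m(x/r)$ becomes a minimum of an explicit functional of a single Bessel path; the minimum then localizes to a compact (random) time window simply because $\bes(t)\to\infty$ a.s., and no quantitative estimate is needed. Independence of the two endpoint limits is obtained there by replacing $\bbb$ with $\bbb(\cdot)-\bbb(1)$ and using independent increments. You instead work directly from the Bessel-bridge/Markov description of $\ex$: you split the infimum at $T''/r$, treat the main part by the near-endpoint scaling limit of the bridge (which is cleanest to justify via the $h$-transform density $(1-s)^{-3/2}e^{-\ex(s)^2/(2(1-s))}$ together with Scheff\'e's lemma, giving total-variation convergence to the Bessel law on $[0,T''/r]$), kill the tail part by the quantitative bound $\P\bigl(\inf_{[a,1/2]}\ex\le z\bigr)\le C z/\sqrt{a(1-a)}+\P(\ex(a)>1)$ (your $h$-transform constant $2^{3/2}e$ and the identity $\E[\ex(a)^{-1}]=O\bigl((a(1-a))^{-1/2}\bigr)$ both check out), and get independence from the conditional independence of the two end pieces given $\ex|_{[1/4,3/4]}$ plus dominated convergence. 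The final upgrade from finite-dimensional to $\coT$ convergence via monotonicity of $x\mapsto\rr m(x/r)$ and continuity of $J$ is also sound. The trade-off: the paper's proof is shorter and entirely qualitative, leaning on one ready-made representation; yours is longer but self-contained at the level of the Markov structure and yields, as a by-product, an explicit quantitative tail estimate for the infimum of the excursion away from its endpoints that could be reused elsewhere.
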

\begin{remark}
  Convergence  in $\coT$ for every fixed $T$ is equivalent to
  convergence in $\coo$, 
see \eg{} \cite[Proposition 16.6]{Kallenberg},
so the conclusion may as well be stated as
  joint convergence in distribution in $\coo$.
\end{remark}

\begin{proof} 
Let us first consider $m$.
We use the representation, see \eg{} \cite[II.(1.5)]{Blum},
%as processes with $0\le s<1$,
\begin{equation}
  \label{e}
\ex(t)\eqd (1-t)\bes\Bigpar{\frac{t}{1-t}}
\end{equation}
as processes on $[0,1)$. Hence, using Brownian scaling, 
for $x\in[0,r)$ we have,
as processes,
\begin{equation}\label{ex}
\rr \ex(x/r)\eqd \Bigpar{1-\frac xr} \rr \bes\Bigpar{\frac{x/r}{1-(x/r)}}
\eqd \Bigpar{1-\frac xr} \bes\Bigpar{\frac{x}{1-(x/r)}}
\end{equation}
and thus, for $x\in[0,r/2]$,
\begin{equation}\label{mr}
\rr m(x/r)
\eqd \min_{x\le t\le r/2} \Bigpar{1-\frac tr} \bes\Bigpar{\frac{t}{1-(t/r)}}.
\end{equation}
Recall that \as{} $R(t)\to\infty$ as \ttoo. Hence, 
given $T$, we can choose a (random) $T_1\ge T$ 
such that $\bes(t)\ge 2\sup_{u\in[T,2T]} \bes(u)$
for all $t\ge T_1$. It follows that if $T_1\le t\le r/2$, then
\begin{equation}
\Bigpar{1-\frac tr} \bes\Bigpar{\frac{t}{1-(t/r)}}
\ge \tfrac12 \cdot2\sup_{u\in[T,2T]} \bes(u)
\ge 
%\Bigpar{1-\frac Tr} 
\bes\Bigpar{\frac{T}{1-(T/r)}}.  
\end{equation}
Hence, if $x\le T$ and $r\ge 2T_1$,
the minimum in \eqref{mr} equals the minimum over $x\le t\le T_1$. 
Furthermore, 
as \rtoo,
since $\bes$ is continuous,
\begin{equation}\label{zmr}
\min_{x\le t\le T_1} \Bigpar{1-\frac tr} \bes\Bigpar{\frac{t}{1-(t/r)}}
\to
\min_{x\le t\le T_1} \bes\xpar{t}
=\min_{x\le t<\infty}\bes(t)
=J(x)
\end{equation}
% as $r\to\infty$,
uniformly for  $x\in [0,T]$, \ie{} in $\coT$.
Consequently,
\begin{equation}%\label{mr}
\min_{x\le t\le r/2} \Bigpar{1-\frac tr} \bes\Bigpar{\frac{t}{1-(t/r)}}
\asto
J(x)
\end{equation}
in \coT,
and
\eqref{mr} implies
\begin{equation}\label{mx}
  \rr m(x/r) \dto J(x)
\qquad \text{in }
\coT,
\end{equation}
which proves the assertion about $m$.
By symmetry also
\begin{equation}\label{m'x}
  \rr m'(x/r) \dto J(x)\eqd J'(x)
\qquad \text{in }
\coT,
\end{equation}
since $(\ex(1-t)) \eqd (\ex(t))$ and
thus
$(m'(t)) \eqd (m(t))$ (as random functions in $\coo$).

It remains to prove joint convergence to independent limits.
Let
%$\hm(s):=\min_{s\le u\le r^{-2/3}} \ex(u)$
%and  $\hm'(s):=\min_{s\le u\le r^{-2/3}} \ex(1-u)$
\begin{align}\label{hm}
 \hm(s):=m(\ex; s,r^{-2/3}), &&&
\hm'(s):=m(\ex; 1-r^{-2/3},1-s) 
\end{align}
%$\hm(s):=m(\ex; s,r^{-2/3})$ and 
%$\hm'(s):=m(\ex; 1-r^{-2/3},1-s)$
(for $r$ with $s\le r^{-2/3}\le\frac12$).
We may assume that the left and right sides of \eqref{ex} are equal, 
and
then $m(x/r)=\hm(x/r)$ whenever the minimum in \eqref{mr} equals the minimum
over $t\in[x,r\qqq]$; in particular, this holds if $x\le T$ and $r\qqq\ge
T_1$ defined above. (This implies $r\ge 2r\qqq\ge 2T_1$.) Consequently,
\begin{equation}
  \P\bigpar{m(x/r)=\hm(x/r)
\text{ for all }x\in[0,T]}
\ge \P\bigpar{T_1\le r\qqq} \to1
\end{equation}
as \rtoo. By symmetry, also
\begin{equation}\label{macbeth}
  \P\bigpar{m'(x/r)=\hm'(x/r) \text{ for all }x\in[0,T]}
  \to1.
\end{equation}

Next, we may assume $\bes(t)=|\bbb(t)|$ and that equality holds 
in \eqref{e}. 
Define the modification
$\tbes(t):=|\bbb(t)-\bbb(1)|$ and the corresponding
$\tex(t):=(1-t)\tbes\bigpar{t/(1-t)}$
and 
$\thm'(s):=m(\tex; 1-r^{-2/3},1-s)$.
Then $|\tbes(t)-\bes(t)|\le |\bbb(1)|$ for all $t$, and thus
$|\tex(t)-\ex(t)|\le(1-t)|\bbb(1)|$
and
%\begin{equation}
$  |\thm'(s)-\hm'(s)|\le r^{-2/3}|\bbb(1)|.$
%\end{equation}
Consequently, assuming $r\qqq\ge T$,
\begin{equation}\label{banquo}
 \sup_{x\le T} |\rr\thm'(x/r)-\rr\hm'(x/r)|\le r^{-1/6}|\bbb(1)| \pto0.
\end{equation}
Let $\rho$ denote the metric in \coT. By \eqref{macbeth} and \eqref{banquo},
\begin{equation}\label{cawdor}
  \rho\bigpar{\rr\thm'(x/r),\rr m'(x/r)}\pto0
\end{equation}
as \rtoo. Thus by \eqref{m'x}, 
\begin{equation}\label{thm'x}
  \rr \thm'(x/r) \dto J(x)
\qquad \text{in }
\coT,
\end{equation}

Now, for $x\le T$ and large $r$, $\thm'(x/r)$ depends only on $\tex(t)$
for $t\ge\frac12$, and thus on $\tbes(t)$ for $t\ge1$. However,
$\bigpar{\tbes(t)=|\bbb(t)-\bbb(1)|,\, t\ge1}$ is independent of 
$\bigpar{\bes(t)=|\bbb(t)|,\,t\le1}$, 
and thus of 
$\bigpar{\ex(t),\,t\le\frac12}$ and of 
$\bigpar{m(s),\, s\le\frac12}$.
Consequently, we can combine \eqref{mx} and \eqref{thm'x} to 
\begin{equation*}%\label{thm'x}
\bigpar{\rr m(x/r),  \rr \thm'(x/r)} \dto \bigpar{J(x),J'(x)}
\qquad \text{in } \coT\times\coT,
\end{equation*}
with independent limits $(J(x))$ and $(J'(x))$.
Finally, the result follows by using \eqref{cawdor} again.
\end{proof}

\begin{lemma}\label{LW}
As $r\to\infty$,
\begin{equation}
  \label{a}
r^{5/2}W_r\dto \Woo:=\iint_{x,y>0} e^{-x-y}\bigpar{J(x)\bmin J'(y)}\dd x\dd y.
\end{equation}
%where $S'$ is an independent copy of $S$.
\end{lemma}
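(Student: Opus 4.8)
The plan is to deduce \eqref{a} from \refL{Lmm} by the change of variables $s=x/r$, $t=1-y/r$ in the defining double integral of $W_r$ in \eqref{WY}. Under this substitution $(t-s)^r$ becomes $\bigpar{1-(x+y)/r}^r$, which tends to $e^{-x-y}$ and (since $\log(1-u)\le-u$) is bounded by $e^{-x-y}$ whenever $x+y<r$, while $m(\ex;s,t)$ becomes $m(\ex;x/r,1-y/r)$, equal to $m(x/r)\bmin m'(y/r)$ by \eqref{mm'} once $x,y\le r/2$. Since $\dd s\,\dd t=r^{-2}\dd x\dd y$ and $r^{5/2}\cdot r^{-2}\cdot r^{-1/2}=1$, this gives
\begin{equation*}
  r^{5/2}W_r=\iint\limits_{\substack{x,y>0\\ x+y<r}}\rr\,m\Bigpar{\ex;\tfrac xr,1-\tfrac yr}\Bigpar{1-\tfrac{x+y}{r}}^{r}\dd x\dd y=:\iint\limits_{\substack{x,y>0\\ x+y<r}}F_r(x,y)\dd x\dd y,
\end{equation*}
where I set $F_r(x,y):=0$ for $x+y\ge r$. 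By \refL{Lmm} the integrand converges pointwise, $F_r(x,y)\to e^{-x-y}\bigpar{J(x)\bmin J'(y)}$, and the task is to justify passing this convergence through the integral.

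For that I would first establish a uniform $L^1$ domination. Using $m(\ex;s,t)\le\ex(s)\bmin\ex(t)\le\sqrt{\ex(s)\ex(t)}$ one gets, on $\set{x+y<r}$,
\begin{equation*}
  0\le F_r(x,y)\le e^{-x-y}\bigpar{\rr\ex(x/r)}^{1/2}\bigpar{\rr\ex(1-y/r)}^{1/2}.
\end{equation*}
The representation \eqref{e} and Brownian scaling give $\E\ex(u)=(1-u)\,\E\bes\bigpar{u/(1-u)}=C\sqrt{u(1-u)}$ for a constant $C$ (as $\E\bes(t)$ is a constant multiple of $\sqrt t$), and the symmetry $\ex(1-\cdot)\eqd\ex(\cdot)$ gives the same at $1-u$; hence $\E\bigsqpar{\rr\ex(x/r)}\le C\sqrt x$ and $\E\bigsqpar{\rr\ex(1-y/r)}\le C\sqrt y$. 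The \CSineq{} then yields
\begin{equation*}
  \E F_r(x,y)\le C\,x^{1/4}y^{1/4}e^{-x-y}\qquad\text{for all }r,\ x,y>0,
\end{equation*}
so by Tonelli $\sup_r\E\bigsqpar{r^{5/2}W_r}<\infty$ and, for every $M$,
\begin{equation*}
  \gd(M):=\sup_r\E\Bigsqpar{\iint\limits_{(0,\infty)^2\setminus[0,M]^2}F_r}\le\iint\limits_{(0,\infty)^2\setminus[0,M]^2}C\,x^{1/4}y^{1/4}e^{-x-y}\dd x\dd y\longrightarrow0
\end{equation*}
as $\mtoo$; Fatou then also gives $\E\Woo<\infty$.

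Next I would handle the main part. Fix $M$; by \refL{Lmm} and Skorokhod's representation theorem I may assume $\rr m(x/r)\to J(x)$ and $\rr m'(x/r)\to J'(x)$ a.s., uniformly on $[0,M]$, with $J$ and $J'$ independent. For $r>2M$ and $(x,y)\in[0,M]^2$ the representation of \eqref{mm'} applies, so $F_r(x,y)=\bigpar{1-(x+y)/r}^r\bigpar{\rr m(x/r)\bmin\rr m'(y/r)}\to e^{-x-y}\bigpar{J(x)\bmin J'(y)}$ a.s., uniformly on $[0,M]^2$, while $0\le F_r\le\sup_{r>2M}\sup_{z\in[0,M]}\rr m(z/r)<\infty$ a.s.\ (a uniformly convergent sequence with bounded limit is uniformly bounded). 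Bounded convergence on $[0,M]^2$ gives $A_r^M:=\iint_{[0,M]^2}F_r\dd x\dd y\asto A^M:=\iint_{[0,M]^2}e^{-x-y}\bigpar{J(x)\bmin J'(y)}\dd x\dd y$, hence $A_r^M\dto A^M$ as $\rtoo$. Since $A^M\upto\Woo$ a.s.\ and $\E[\Woo-A^M]\le\gd(M)\to0$, also $A^M\pto\Woo$ as $\mtoo$. Finally, writing $r^{5/2}W_r=A_r^M+B_r^M$ with $B_r^M\ge0$ and $\E B_r^M\le\gd(M)$, Markov's inequality gives $\lim_{\mtoo}\limsup_{\rtoo}\P\bigpar{\bigabs{r^{5/2}W_r-A_r^M}>\eta}=0$ for every $\eta>0$; combined with $A_r^M\dto A^M$ and $A^M\dto\Woo$ this yields $r^{5/2}W_r\dto\Woo$ by \cite[Theorem~4.2]{Billingsley}, which is \eqref{a}. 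The \emph{main obstacle} is precisely this domination step: the convenient identity $m(\ex;x/r,1-y/r)=m(x/r)\bmin m'(y/r)$ is valid only for $x,y\le r/2$, so a bound valid over the whole region $\set{x+y<r}$ has to be obtained instead from $m(\ex;s,t)\le\sqrt{\ex(s)\ex(t)}$ together with the endpoint moment estimate $\E\ex(u)=O\bigpar{\sqrt{u(1-u)}}$ coming from \eqref{e}; granting that, the rest is routine.
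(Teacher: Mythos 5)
Your proof is correct and follows essentially the same route as the paper: truncate to a compact region (your $[0,M]^2$ in $(x,y)$ is exactly the paper's $W_r^T$ with $T=M$), pass to the limit there via Lemma~\ref{Lmm}, dominate the tail in $L^1$ using $m(\ex;s,t)\le\ex(s)$-type bounds together with $\E\ex(u)=c\sqrt{u(1-u)}$ from \eqref{e}, and conclude with \cite[Theorem 4.2]{Billingsley}. The only (harmless) variations are that you symmetrize the tail bound via \CS{} to get $x^{1/4}y^{1/4}e^{-x-y}$ where the paper uses the one-sided bound $x^{1/2}e^{-x-y}$, and you invoke Skorokhod plus bounded convergence where the paper uses the continuous mapping theorem.
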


\begin{proof}
Note first that for some constant $c$  (in fact, $c=\E|\bbb(1)|=\sqrt{8/\pi}$),
$\E \bes(t) = ct\qq$. Hence, $\E J(x)\le\E \bes(x)= cx\qq$ and
\begin{equation*}
\E \iint_{x,y>0} e^{-x-y}\bigpar{J(x)\bmin J'(y)}\dd x\dd y
\le \iint_{x,y>0} e^{-x-y}cx\qq\dd x\dd y
<\infty.
\end{equation*}
Consequently, the double integral in \eqref{a} converges a.s.

If $s\le \frac12\le t$, then by \eqref{mm'}, $m(\ex;s,t)=m(s)\land m'(1-t)$.
Noting this, we define a truncated version of $W_r$ by,
for $r\ge 2T$ and substituting $s=x/r$ and $t=1-y/r$,
\begin{equation}\label{wrt}
  \begin{split}
W_r^T&:=\iint_{\substack{0<s<T/r\\ 1-T/r<t<1}}	(t-s)^rm(\ex; s,t)\dd s\dd t
\\
&\phantom: =r^{-2}\int_0^T\!\int_0^T\!
\Bigpar{1-\frac{y}{r}-\frac{x}{r}}^r\bigpar{m(x/r)\bmin m'(y/r)}
\dd x\dd y.
  \end{split}
\end{equation}
Since $\bigpar{1-\frac{y}{r}-\frac{x}{r}}^r\to e^{-y-x}$ uniformly for
$x,y\in[0,T]$ as \rtoo, 
it follows from \refL{Lmm} and the continuous mapping theorem that for each
  fixed $T<\infty$, as \rtoo\ we have
\begin{equation}\label{wtoo}
  \begin{split}
r^{5/2}W_r^T&
\dto \Woo^T :=\int_0^T\int_0^T
e^{-x-y}\bigpar{J(x)\bmin J'(y)} \dd x\dd y.
  \end{split}
\end{equation}
Furthermore, $\Woo^T\to \Woo$ \as{} as $T\to\infty$.

Moreover, by \eqref{e},
$\E\ex(t)=ct\qq(1-t)\qq$ and thus $\E m(\ex;s,t)\le \E \ex(s) \le c s^{1/2}$.
Hence, for $r\ge 2T>0$, and again with the substitutions $s=x/r$ and $t=1-(y/r)$, we have
\begin{equation}\label{wwrt}
  \begin{split}
\E (W_r-W_r^T)&=\iint_{\set{T/r<s<t<1}\cup\set{0<s<t<1-T/r}}	(t-s)^r
\E m(\ex;s,t)\dd s\dd t
\\
&\le r^{-2}\iint_{[0,r)^2\setminus[0,T]^2}	
\Bigpar{1-\frac{y}{r}-\frac{x}{r}}_+^r c\parfrac{x}{r}^{1/2}
\dd x\dd y
\\
&\le cr^{-5/2}\iint_{[0,\infty)^2\setminus[0,T]^2}	
e^{-x-y} x^{1/2} 
\dd x\dd y.
  \end{split}
\end{equation}
Hence, 
\begin{equation}
  \limsup_{\rtoo}\E|r^{5/2}W_r-r^{5/2}W_r^T | \to 0
\end{equation}
as $T\to\infty$. This shows, by \cite[Theorem 4.2]{Billingsley} 
or \cite[Theorem 4.28]{Kallenberg} again, 
that we can let $T\to\infty$ inside \eqref{wtoo} and obtain the conclusion
\eqref{a}. 
\end{proof}

\begin{proof}[Proof of \refT{Tlol}]
By \eqref{WY}, \refL{LW} can be written
\begin{equation}
  \ga\qq Y(\ga)\dto \Yoo:=2\Woo
\end{equation}
as \gatoo. We now give some equivalent expressions for the limit.
First, by \eqref{pitman},
\begin{equation}\label{yoo}
  \begin{split}
\Yoo\eqd 2\intoo\!\intoo\!
e^{-x-y}\bigpar{S(x)\land S'(y)} \dd x\dd y.
  \end{split}
\end{equation}

Secondly, note that $\tau(a)\le x\iff S(x)\ge a$; thus $\tau$ and $S$ are
inverses of each other. 
Similarly, we may assume that $\tau'$ is the  inverse of $S'$.
By Fubini's theorem,
\begin{align}
\Yoo
&\eqd2 \int_0^\infty\!\int_0^\infty\! e^{-x-y} \bigpar{S(x)\bmin S'(y)} \dd x\dd y
\notag\\
&= 2\iiint_{0\le s\le S(x) \land S'(y)} e^{-x-y} \dd s \dd x \dd y
\notag\\
&= 2\iiint_{ \tau(s)\le x,\,  \tau'(s)\le y} e^{-x-y} \dd x \dd y \dd s
\notag\\
&= 2\int_0^\infty e^{-\tau(s)-\tau'(s)} \dd s.
\end{align}
However, $(\tau(s))$ and $(\tau'(s))$ are independent  processes with independent
increments, and thus $(\tau(s)+\tau'(s))$ has independent increments.
Furthermore, for each fixed $s$,
$\tau(2s)-\tau(s)\eqd\tau(s)$ and is independent of $\tau(s)$, and
hence $\tau(s)+\tau'(s)\eqd\tau(2s)$.
It follows that the stochastic process $(\tau(s)+\tau'(s))$ 
equals in distribution $(\tau(2s))$.
Hence, we also have the representation
\begin{equation}\label{ytau}
  \Yoo
\eqd 2\int_0^\infty e^{-\tau(2s)} \dd s
= \int_0^\infty e^{-\tau(s)} \dd s.
\end{equation}

The same Fubini argument in the opposite direction now gives
\begin{equation}
  \begin{split}
\Yoo
&\eqd
  \int_0^\infty e^{-\tau(s)} \dd s
 =\iint_{ x \ge \tau(s)} e^{-x} \dd x \dd s
\\&
= \iint_{0\le s\le S(x)} e^{-x} \dd s \dd x
= \intoo e^{-x} S(x) \dd x.	
  \end{split}
\end{equation}
This shows \eqref{Z}.

It remains to calculate the moments of $\Yoo$. 
For integer moments we  use \eqref{ytau}. 
Recall,
see \eg{} \cite[Proposition II.3.7 and Sections III.3--4]{RY}, 
that $\tau(s)$ is a stable process with stationary independent
increments and
\begin{equation}\label{est}
\E \expx{-s \tau(t)} = \expx{-t\sqrt{2s}}, \qquad s,t\ge0.  
\end{equation}
Define $\gdtau(s,s'):=\tau(s')-\tau(s)$.
Then, by symmetry and the change of variables
$t_1=s_1$, $t_2=s_2-s_1$, \dots, $t_k=s_k-s_{k-1}$,
noting that the increments $\gdtau(s_{i-1},s_i)$ are independent
and $\gdtau(s_{i-1},s_i)\eqd\tau(t_i)$ (with $s_0=0$), we have
%$s_1=t_1$, $s_2=t_1+t_2$, \dots, $s_k=t_1+\dots+t_k$,
\begin{align}\label{zzbx}
\E \Yoo^k
&= k! \int_{0<s_1<s_2<\dots< s_k} \E e^{-\tau(s_1)-\dots-\tau(s_k)} 
\dd s_1
\dotsm \dd s_k 
\notag\\
&= k! \int_{0<s_1<s_2<\dots< s_k} 
\E e^{-k\gdtau(0,s_1)-(k-1)\gdtau(s_1,s_2)- \dots-\gdtau(s_{k-1},s_k)} 
\dd s_1 \dotsm \dd s_k 
\notag\\
%&= k! \int_{0<s_1<s_2<\dots< s_k} 
%\E e^{-\sum_{i=1}^k(k+1-i)\gdtau(s_{i-1},s_i)} 
%\dd s_1 \dotsm \dd s_k
\notag\\
&= k! \int_{t_1,\dots,t_k>0} 
\E e^{-k\tau(t_1)}\E e^{-(k-1)\tau(t_2)}\dotsm \E e^{-\tau(t_k)} 
\dd t_1\dotsm \dd t_k 
\notag\\
&= k! \int_{t_1,\dots,t_k>0} 
  e^{-t_1\sqrt{2k}-t_2\sqrt{2(k-1)}-\dots-t_k\sqrt2} 
\dd t_1\dotsm \dd t_k 
\notag\\
&= k! \prod_1^k \frac{1}{\sqrt{2j}}%(2j)^{-1/2} 
%\\&
= 2^{-k/2} k!^{1/2},
\end{align}
which is \eqref{EZk}. 

In order to extend this to non-integer moments, 
let 
\begin{equation}
  \label{ZZ}
Z:=\log\Yoo+\frac12\log2, 
\end{equation}
and let $Z'$ be an independent copy of $Z$.
Then, for integer $k\ge1$,
\begin{equation}
  \begin{split}
\E \bigpar{\bigpar{e^{Z+Z'}}^k}
=\E{e^{k(Z+Z')}}
=\bigpar{\E e^{kZ}}^2
= \bigpar{2^{k/2}\E\Yoo^k}^2=k!,
  \end{split}
\end{equation}
and thus $\EE:=e^{Z+Z'}\sim\Exp(1)$, since an exponential distribution is
determined by its moments. Hence, for any real $r>-1$,
\begin{equation}\label{EIIr}
  \begin{split}
	\bigpar{\E e^{rZ}}^2
=\E e^{{rZ+rZ'}}
%=\E\bigpar{e^{Z+Z'}}^r
=
\E \EE^r
=\intoo x^r e^{-x}\dd x=\gG(r+1),
  \end{split}
\end{equation}
and thus
$\E e^{rZ}=\sqrt{\gG(r+1)}$.
Since $e^{rZ}=2^{r/2}\Yoo^r$, \eqref{EZr} follows, for real~$r$.
Finally, \eqref{EZr} is extended to complex $r$ by analytic continuation,
or by \eqref{EIIr} again, now knowing that the expectations exist.
\end{proof}

\begin{remark}
The characteristic function $\gf_Z(t)$
of the random variable $Z$ in \eqref{ZZ}
is thus $\gG(1+\ii t)\qq$, which decreases exponentially as $t\to\pm\infty$;
hence 
$Z$ has by Fourier inversion a continuous density 
\begin{align}
  f_Z(x)=\frac{1}{2\pi}\intoooo e^{-\ii tx}\gf_Z(t)\dd t
=\frac{1}{2\pi}\intoooo e^{-\ii tx}\gG(1+\ii t)\qq\dd t
,\end{align}
see \eg{}
\cite[Theorem XV.3.3]{FellerII}; furthermore, by a standard argument, 
we may differentiate repeatedly under the integral sign, and thus
the density function  $f_Z(x)$ is infinitely differentiable.
(In fact, it follows from Stirling's formula that $\gf_Z(t)=\gG(1+\ii t)\qq$
belongs to the Schwartz class $\cS(\bbR)$
%$\cS=\set{f\in\C^\infty(\bbR): \sup_x |x|^k |f^{(m)}(x)|<\infty \text{for
%    each $k$ and $m$}}$
of infinitely differentiable functions such
that every derivative decreases faster than $|x|^{-k}$ for any $k<\infty$;
hence $f_Z\in\cS(\bbR)$, see \cite[Theorem 25.1]{Treves}.)

Consequently, also $\Yoo$ is absolutely continuous, with a density $f_Y(x)$ 
that is
infinitely differentiable on $(0,\infty)$. 
Results on the asymptotics of the density function $f_Y(x)$ of $\Yoo$ as
  $x\to0$ and $x\to\infty$ are given in \cite{FillK04}.
\end{remark}

\begin{remark}\label{Rid}
$2\qq\Yoo$ has moments $\sqrt{k!}$, and it follows that if $\Yoo'$ is
an independent copy of $\Yoo$, then $2\Yoo\Yoo'$ has moments $k!$ and
$2\Yoo\Yoo'\sim\Exp(1)$. Hence, the distribution of 
$2\qq\Yoo$ is a ``square root'' of $\Exp(1)$, in the sense of taking
products of independent variables.

Moreover, if we let $(\tau(s))$ be another stable subordinator, with
$\E e^{-s\tau(t)}=e^{-ts^\gam}$ ($0<\gam<1$) instead of \eqref{est}, 
then \eqref{ytau} defines by the same calculations a random
variable $\ygam$ with 
\begin{equation}
\E \ygam^k=(k!)^{1-\gam}.  
\end{equation}
In particular, choosing $\gam=1-(1/m)$, we obtain an $m^{\rm th}$ root of the
exponential distribution $\Exp(1)$.

Recalling that $V \sim \Exp(1)$ and taking logarithms, this shows that $\log
\EE$ is infinitely divisible, and thus the same holds for $-\log\EE$, which
has a Gumbel distribution.
This has been known for a long time, and a calculation shows that $-\log\EE$
has a \Levy{} measure with a density $\sumj e^{-jx}/x=x\qw(e^x-1)\qw$, 
$x>0$; see, \eg{}, 
\cite[Examples 11.1 and  11.10]{Steutels}. 
See also \cite[Example 7.2.3]{Bondis}.
\end{remark}

\section{Extensions to $\rea=\frac12$} \label{Smua}
\CCreset

In this section, we show the extensions to $\rea=\frac12$ claimed in
Remarks \ref{Rmua}, \ref{RTE1/2}, and \ref{RT1/2X}. These require 
different methods from the ones used above.

Let $\gf(t):=\E e^{\ii t\xi}$ be the \chf{} of the offspring distribution
$\xi$. Furthermore, let $\txi:=\xi-\E\xi=\xi-1$, and denote its \chf{} by
\begin{equation}\label{tgf}
  \tgf(t):=\E e^{\ii t\txi}=\emit\gf(t),
\qquad t\in\bbR.
\end{equation}
Since $\E\txi=0$ and $\E\txi^2=\gss$, we have
$\tgf(t)=1-\frac{\gss}2t^2+o(t^2)$; hence
\begin{equation}
  \label{pyr}
\tgf(t)=1-\frac{\gss}2t^2\bigsqpar{1+\gam(t)},
\qquad t\in\bbR,
\end{equation}
for some continuous %(complex-valued) 
function $\gam(t)$ on $\bbR$ such that $\gam(0)=0$.

We also let
\begin{equation}\label{rho}
  \rho(t):=1-\tgf(t)=\frac{\gss}2t^2\bigsqpar{1+\gam(t)}.
\end{equation}

Since $\xi$ is integer-valued, $\gf$ and $\tgf$ are $2\pi$-periodic.
 Note that 
$\P(\txi=-1)>0$ and thus
$\tgf(t)\neq1$ if $0<|t|\le\pi$ [also when
$\spann\xi>1$]; hence \eqref{pyr} and continuity imply
\begin{equation}\label{fitr}
  \Re\rho(t)=1-\Re\tgf(t)\ge\cc t^2, \ccdef\ccft
\qquad 0\le|t|\le\pi,
\end{equation}
for some $\ccx>0$.
Furthermore, if $\spann\xi=h\ge1$, then $\gf(\pm2\pi/h)=1$ but
$|\gf(t)|<1$ for $0<|t|<2\pi/h$, and it follows
similarly from \eqref{pyr} and continuity that
\begin{equation}\label{fir}
  |\tgf(t)|=|\gf(t)|\le 1-\cc t^2 \le e^{-\ccx t^2}, \ccdef\ccf
\qquad 0\le|t|\le\pi/h.
\end{equation}

\begin{lemma}
  \label{Lpyret}
If\/ $\rea<\frac12$, then
\begin{equation}\label{pyr1}
  \mu(\ga) = \frac{1}{2\pi\gG(1-\ga)}\intpi\intoo 
  x^{-\ga}\frac{\gf(t)}{e^x-\tgf(t)}\dd x\dd t,
\end{equation}
where the double integral is absolutely convergent.
\end{lemma}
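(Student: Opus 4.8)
The plan is to express $\mu(\ga)$ via the probabilities $\P(S_k=k-1)$ as in \eqref{mua}, write each such probability as a Fourier integral, and then sum the resulting geometric-type series over $k$. Concretely, since $S_k=\sum_{i=1}^k\xi_i$ is integer-valued, Fourier inversion gives
\begin{equation*}
  \P(S_k=k-1)=\intT \gf(t)^k e^{-\ii(k-1)t}\dd t
=\intT e^{\ii t}\tgf(t)^k\dd t,
\end{equation*}
using \eqref{tgf}. Next I would replace the factor $k^{\ga-1}$ by an integral: for $\rea<1$ (in particular $\rea<\frac12$),
\begin{equation*}
  k^{\ga-1}=\frac{1}{\gG(1-\ga)}\intoo x^{-\ga}e^{-kx}\dd x,
\end{equation*}
which is the standard Gamma-function identity (valid since $\Re(1-\ga)>0$). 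Substituting both into \eqref{mua} and summing the geometric series $\sumk \bigpar{e^{-x}\tgf(t)}^k=\dfrac{e^{-x}\tgf(t)}{1-e^{-x}\tgf(t)}=\dfrac{\tgf(t)}{e^x-\tgf(t)}$ formally yields
\begin{equation*}
  \mu(\ga)=\frac{1}{\gG(1-\ga)}\intT\intoo x^{-\ga}\,e^{\ii t}\,\frac{\tgf(t)}{e^x-\tgf(t)}\dd x\dd t,
\end{equation*}
and since $e^{\ii t}\tgf(t)=\gf(t)$ by \eqref{tgf}, and $\intT=\frac{1}{2\pi}\intpi$, this is exactly \eqref{pyr1}.

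The work is in justifying the interchange of the sum over $k$ with the two integrals, i.e. establishing absolute convergence of the double integral and of the iterated sum. Here I would use the bounds recorded just before the lemma. For $|t|\le\pi/h$ we have $|\tgf(t)|\le e^{-c t^2}$ by \eqref{fir}, so $|e^x-\tgf(t)|\ge e^x-e^{-ct^2}\ge \max\bigpar{c'(e^x-1),\;e^x(1-e^{-ct^2})}$, and more simply $|e^x-\tgf(t)|\ge 1-|\tgf(t)|\ge \Re\rho(t)\wedge(\text{something})$; the cleanest route is to bound below by $\Re(e^x-\tgf(t))=e^x-\Re\tgf(t)=(e^x-1)+\Re\rho(t)\ge (e^x-1)+c t^2$ using \eqref{fitr}, which holds on the whole range $0\le|t|\le\pi$ and handles $h>1$ uniformly. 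Then
\begin{equation*}
  \intpi\intoo \frac{x^{-\rea}}{(e^x-1)+c t^2}\dd x\dd t<\infty,
\end{equation*}
which I would verify by splitting the $x$-integral at $x=1$: near $x=0$, $(e^x-1)+ct^2\asymp x+ct^2$, and $\iint_{x<1,|t|<\pi} x^{-\rea}/(x+ct^2)\dd x\dd t$ converges for $\rea<\frac12$ (integrate in $x$ first to get $O(|t|^{-2\rea})$, integrable over $|t|<\pi$ since $2\rea<1$), while for $x>1$ the denominator is $\gtrsim e^x$ and $\intoo_1 x^{-\rea}e^{-x}\dd x<\infty$. This simultaneously shows the double integral in \eqref{pyr1} is absolutely convergent and, combined with Tonelli, licenses all the interchanges (apply Tonelli to the nonnegative integrand $x^{-\rea}|\tgf(t)|^k e^{-kx}$, sum and integrate in any order, then conclude the signed identity by dominated convergence).

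The main obstacle I anticipate is the uniform control of the denominator $e^x-\tgf(t)$ near the corner $x=0$, $t=0$, where both $e^x-1$ and $1-\tgf(t)$ vanish: one must be careful that $\tgf(t)$ can be complex, so that $|e^x-\tgf(t)|$ need not be comparable to $|e^x-|\tgf(t)||$; using the real-part lower bound $\Re(e^x-\tgf(t))=(e^x-1)+\Re\rho(t)\ge (e^x-1)+ct^2$ from \eqref{fitr} sidesteps this cleanly. A secondary point is the case $\spann\xi=h>1$, where $\tgf$ has modulus $1$ at $t=\pm 2\pi/h$ (and possibly other points in $[-\pi,\pi]$); there $e^x-\tgf(t)$ can be small only when additionally $x\to 0$, and the same real-part bound $\eqref{fitr}$, valid on all of $[-\pi,\pi]$, again gives $\Re(e^x-\tgf(t))\ge (e^x-1)+ct^2>0$, so no separate treatment is needed. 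I would remark that this is consistent with, and in fact a precursor to, the $\rea=\frac12$ analysis: the integrand has an integrable singularity up to but not including the critical line, matching \eqref{aaa}.
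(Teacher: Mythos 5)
Your proposal is correct and follows essentially the same route as the paper: Fourier inversion of $\P(S_n=n-1)$, the Gamma-integral representation of $n^{\ga-1}$, summation of the geometric series, and interchange of sum and integrals justified by the Gaussian decay of $|\tgf(t)|^n$ — the paper performs the two interchanges successively (passing through the polylogarithm $\sumn n^{\ga-1}\tgf(t)^n$ and using \eqref{pq}), whereas you do a single Tonelli step; your lower bound $|e^x-\tgf(t)|\ge\Re\bigpar{e^x-\tgf(t)}\ge (e^x-1)+ct^2$ is precisely the paper's estimate \eqref{pl}, used there only later in \refL{LP2}, and it gives a cleaner verification of the absolute convergence asserted in the lemma. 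One small caveat: for the Tonelli majorization the quantity obtained by summing $x^{-\rea}|\tgf(t)|^k e^{-kx}$ over $k$ is $x^{-\rea}|\tgf(t)|/\bigpar{e^x-|\tgf(t)|}$, whose denominator involves $1-|\tgf(t)|$ rather than $\Re\rho(t)$; when $\spann\xi=h>1$ this vanishes at $t=\pm2\pi/h,\dots$ (where $\Re\rho(t)$ is bounded away from $0$), so your real-part bound does not dominate this intermediate expression there, and you need either the $2\pi/h$-periodicity of $|\gf|$ together with \eqref{fir} (giving $1-|\gf(t)|\ge c\min_j(t-2\pi j/h)^2$) or, as in the paper, the direct estimate $\intpi|\tgf(t)|^k\dd t=O(k^{-1/2})$ of \eqref{pq}.
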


\begin{proof}
Let $\rea<\frac12$.
  Fourier inversion and \eqref{mua} yield
  \begin{equation}\label{pyr2}
	\mu(\ga) 
= \sumn n^{\ga-1} \intT e^{-\ii(n-1)t}\gf(t)^n\dd t
= \sumn n^{\ga-1} \intT e^{\ii t}\tgf(t)^n\dd t.
  \end{equation}
Let $\spann\xi=h\ge1$.
It follows from the estimate \eqref{fir} that
\begin{equation}\label{pq}
  \intpi|\tgf(t)|^n\dd t 
={h} \int_{-\pi/h}^{\pi/h}|\tgf(t)|^n\dd t 
\le {h} \intoooo e^{-\ccf n t^2}\dd t 
= \CC n\qqw.\CCdef\CCpq
\end{equation}
%\marginal{Here $\CCx = h \sqrt{\pi / c_2}$.}
Hence, 
\begin{equation}\label{pq1}
\sumn   \intpi \bigabs{n^{\ga-1}\eit \tgf(t)^n}\dd t 
\le\CCpq\sumn n^{\rea-\frac32}<\infty.
\end{equation}
Thus we may interchange the order of summation and integration in \eqref{pyr2}
and obtain
  \begin{equation}\label{pyr3}
	\mu(\ga) 
= \intT\eit\sumn n^{\ga-1} \tgf(t)^n\dd t.
  \end{equation}
The sum $\sumn n^{\ga-1} \tgf(t)^n$ is known as the polylogarithm
$\Li_{1-\ga}(\tgf(t))$ \cite[\S25.12(ii)]{NIST}.
It can be expressed as an
integral  \cite[25.12.11]{NIST} 
by a standard argument, which we adapt as follows:
Since $\rea<\frac12<1$,
we have
$ %\begin{equation}
  n^{\ga-1}\gG(1-\ga)
= \intoo x^{-\ga} e^{-nx}\dd x
$ %\end{equation}
and thus \eqref{pyr3} yields
\begin{equation}
  \gG(1-\ga)\mu(\ga)
= \intT\sumn\intoo x^{-\ga}e^{-nx} \tgf(t)^n\eit \dd x\dd t.
\end{equation}
Again, this expression is absolutely convergent as a consequence of
\eqref{pq} and \eqref{pq1},
and thus we may again interchange the order of summation and integration
and obtain
\begin{equation}\label{pyr18}
  \begin{split}
  \gG(1-\ga)\mu(\ga)
= \intT\intoo x^{-\ga}\sumn e^{-nx} \tgf(t)^n\eit \dd x\dd t
\\
= \intT\intoo x^{-\ga}\frac{e^{-x}\tgf(t)}{1- e^{-x} \tgf(t)}\eit \dd x\dd t.	
  \end{split}
\end{equation}
This yields \eqref{pyr1}, with absolute convergence.
\end{proof}

We next modify \eqref{pyr1} by ignoring terms that are analytic at
$\rea=\frac12$; more precisely, we ignore terms that are analytic in
$\doi:=\set{\ga:0<\Re \ga<1}$.

\begin{lemma}
  \label{LP2}
%For any $0<t_0\le\pi$,
There exists a function $h(\ga)\in\cH(\doi)$ such that 
if $0<\rea<\frac12$, then
\begin{equation}\label{pyrr}
  \mu(\ga)=
\frac{\gG(\ga)}{2\pi}\intpi \rho(t)^{-\ga}\dd t
+h(\ga).
\end{equation}
\end{lemma}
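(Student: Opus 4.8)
The plan is to start from the integral representation \eqref{pyr1} of \refL{Lpyret} and replace the denominator $e^x-\tgf(t)=(e^x-1)+\rho(t)$ by its linearization $x+\rho(t)$, showing that the replacement costs only a function analytic on $\doi$. Throughout, for $t\in[-\pi,\pi]\setminus\set0$ we have $\Re\rho(t)>0$ by \eqref{fitr}, so $\rho(t)$ lies in the open right half-plane and we may use the principal branch to define $\rho(t)^{-\ga}$, which for fixed such $t$ is entire in $\ga$. Write (suppressing the $\ga$-dependence)
\begin{equation*}
 I(t):=\intoo\frac{x^{-\ga}}{(e^x-1)+\rho(t)}\dd x,
 \qquad
 J(t):=\intoo\frac{x^{-\ga}}{x+\rho(t)}\dd x,
\end{equation*}
so that \eqref{pyr1} reads $\mu(\ga)=\frac{1}{2\pi\gG(1-\ga)}\intpi\gf(t)\,I(t)\dd t$.

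The second step is to evaluate $J(t)$. By the standard Beta-type integral $\intoo\frac{x^{-\ga}}{x+\rho}\dd x=\rho^{-\ga}\,\pi/\sin\pi\ga$ (valid for $0<\rea<1$ and $\rho$ off the negative real axis, via the substitution $x=\rho u$) and the reflection formula $\gG(\ga)\gG(1-\ga)=\pi/\sin\pi\ga$, we get $\gG(1-\ga)^{-1}J(t)=\gG(\ga)\rho(t)^{-\ga}$. Substituting $I(t)=J(t)+\bigpar{I(t)-J(t)}$ and then writing $\gf(t)=1+\bigpar{\gf(t)-1}$ in the $J$-part gives
\begin{equation*}
 \mu(\ga)=\frac{\gG(\ga)}{2\pi}\intpi\rho(t)^{-\ga}\dd t+h(\ga),
 \qquad
 h(\ga):=\frac{\gG(\ga)}{2\pi}\intpi\bigpar{\gf(t)-1}\rho(t)^{-\ga}\dd t
 +\frac{1}{2\pi\gG(1-\ga)}\intpi\gf(t)\bigpar{I(t)-J(t)}\dd t,
\end{equation*}
which is the asserted decomposition \eqref{pyrr}; note the equality itself only needs $0<\rea<\tfrac12$, since that is the range in which $\intpi\rho(t)^{-\ga}\dd t$ converges.

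The third step is to verify $h\in\cHoi$. For the first integral in $h$: $|\gf(t)-1|\le|t|\,\E\xi=|t|$ (using $\E\xi=1$), while $|\rho(t)^{-\ga}|\le C(\ga)|t|^{-2\rea}$ because $|\rho(t)|\ge\Re\rho(t)\ge c\,t^2$ by \eqref{fitr}, \eqref{rho} and $\arg\rho(t)$ is bounded; hence the integrand is $O(|t|^{1-2\rea})$, integrable on $[-\pi,\pi]$ precisely when $\rea<1$, and locally uniformly so on $\doi$. As it is analytic in $\ga$ for each fixed $t$ and jointly measurable, \refR{Rdom} makes the integral analytic on $\doi$, and multiplying by $\gG(\ga)\in\cHoi$ preserves this. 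For the second integral, write $I(t)-J(t)=\intoo x^{-\ga}\dfrac{x+1-e^x}{[(e^x-1)+\rho(t)][x+\rho(t)]}\dd x$, and use that, since $e^x-1\ge0$ and $\Re\rho(t)\ge0$, one has $|(e^x-1)+\rho(t)|\ge\max(e^x-1,|\rho(t)|)$ and $|x+\rho(t)|\ge\max(x,|\rho(t)|)$. Splitting at $x=1$: for $x\le1$ use $|x+1-e^x|\le Cx^2$ and $e^x-1\ge x$ to bound the integrand by $C\min(1,x^2/|\rho(t)|^2)x^{-\rea}$, whose integral is $O(|\rho(t)|^{1-\rea})=O(|t|^{2-2\rea})$; for $x\ge1$ the integrand is $O(x^{-\rea-1})$, contributing a bound uniform in $t$. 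Thus $|I(t)-J(t)|\le C(|t|^{2-2\rea}+1)$, integrable on $[-\pi,\pi]$ for $\rea<\tfrac32$, locally uniformly on $\doi$; together with $|\gf(t)|\le1$, analyticity in $\ga$ for fixed $t$, \refR{Rdom}, and the fact that $1/\gG(1-\ga)$ is entire, this gives the second term of $h$ in $\cHoi$ as well.

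The main obstacle is the estimate of $I(t)-J(t)$: one must control it \emph{uniformly in} $t\in[-\pi,\pi]$, including the regime $t\to0$ where $\rho(t)\to0$ and where $I$ and $J$ individually blow up like $|t|^{-2\rea}$. The cancellation is made quantitative by combining the elementary inequality $|a+\rho|\ge\max(a,|\rho|)$ (for $a\ge0$, $\Re\rho\ge0$) with the quadratic vanishing $x+1-e^x=O(x^2)$ near $x=0$; this is exactly what turns the naive bound into $O(|t|^{2-2\rea})$, which is comfortably integrable on $\doi$. The first integral in $h$ is easier, the only delicate point there being that $\gf(t)-1$ vanishes to first order at $t=0$ (again via $\E\xi=1$), which just suffices to offset the $|t|^{-2\rea}$ growth of $\rho(t)^{-\ga}$ throughout $\rea<1$.
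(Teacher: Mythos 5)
Your proposal is correct and follows essentially the same route as the paper's proof: starting from \eqref{pyr1}, linearize the denominator $e^x-\tgf(t)$ to $x+\rho(t)$, drop the factor $\gf(t)$, control both errors via $e^x-1-x=O(x^2)$, $|\gf(t)-1|=O(|t|)$ and the coercivity $|a+\rho(t)|\ge\max\bigpar{a,|\rho(t)|}$ with $|\rho(t)|\ge c t^2$, and finish with the beta integral (the paper merely performs these replacements in a different order and splits off $x\ge1$ first). The only, harmless, slip is the intermediate claim that $\int_0^1 x^{-\rea}\min\bigpar{1,x^2/|\rho(t)|^2}\dd x=O\bigpar{|\rho(t)|^{1-\rea}}$; this integral tends to $1/(1-\rea)$ as $\rho(t)\to0$ and so is only $O(1)$, which is exactly what your final bound $|I(t)-J(t)|\le C\bigpar{|t|^{2-2\rea}+1}$ actually uses.
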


\begin{remark}\label{RP2} 
  Since $\rho(t)\neq0$ for $0<|t|\le\pi$, the integral 
$\int_{t_0\le |t|\le\pi}\rho(t)^{-\ga}\dd t$ is an entire function of $\ga$
for any
  $t_0\in(0,\pi]$, and thus the integral in \eqref{pyrr} can be replaced by
the integral over $|t|\le t_0$ for any such $t_0$.
\end{remark}

\begin{proof}[Proof of \refL{LP2}]
  First, for $x\ge1$ and $\rea>0$, the integrand in \eqref{pyr1} is
  $O(e^{-x})$ so the double integral over $\set{x\ge1,\,t\in(-\pi,\pi)}$
converges and defines an analytic function $h_1\in\cHoi$.
We may thus consider the integral for $0<x<1$ only.

Next, using \eqref{rho} and \eqref{fitr}, for $x>0$ we have 
  \begin{equation}\label{pl}
	\bigabs{e^x-\tgf(t)}
\ge \Re \bigpar{e^x-\tgf(t)}
=e^x-1+\Re\rho(t)
\ge x+\ccft t^2.
  \end{equation}
Hence, using $|\gf(t)-1|\le \CC t$ 
(since $\E\xi<\infty$),
\begin{equation}\label{pk}
\intpi\intoi
\Bigabs{ x^{-\ga}\frac{\gf(t)-1}{e^x-\tgf(t)}}\dd x\dd t
\le
\CC
\intpi\intoi
 x^{-\rea}\frac{|t|}{x+t^2}\dd x\dd t.
\end{equation}
Now, for $0<x<1$,
\begin{equation}\label{pkb}
  \int_0^\pi \frac{t}{x+t^2}\dd t
\le \int_0^{\sqrt x}\frac{t}x\dd t +\int_{\sqrt x}^\pi\frac{t}{t^2}\dd t
=\frac12+\log\pi-\log\sqrt x =O(1+|\log x|)
\end{equation}
and thus \eqref{pk} converges for $\rea<1$.
It follows that if we replace the numerator $\gf(t)$ by 1 in \eqref{pyr1}
(with $x<1$ only), then the difference is in $\cHoi$.

Similarly, for $0<x<1$ and $|t|\le\pi$,
\begin{equation}\label{fita}
  \Bigabs{\frac{1}{e^x-\tgf(t)}-\frac{1}{x+1-\tgf(t)}}
\le \frac{e^x-1-x}{(x+\ccft t^2)^2}
\le 1,
\end{equation}
and we may thus also replace the denominator $e^x-\tgf(t)$ by 
$x+1-\tgf(t)=x+\rho(t)$.

This yields
\begin{equation}%\label{pyr1}
  \mu(\ga) = \frac{1}{2\pi\gG(1-\ga)}\intpi\intoi
  x^{-\ga}\frac{1}{x+\rho(t)}\dd x\dd t+h_2(\ga).
\end{equation}
with $h_2\in\cHoi$.
We now reintroduce $x\ge1$, noting that $\Re\rho(t)\ge0$ and thus,
for $\rea>0$,
\begin{equation}
  \intpi\int_1^\infty \Bigabs{\frac{x^{-\ga}}{x+\rho(t)}}\dd x\dd t
\le 2\pi \int_1^\infty x^{-\rea-1}\dd x<\infty.
\end{equation}
Hence, for $\ga\in\doi$,
\begin{equation}%\label{pyr1}
  \mu(\ga) = \frac{1}{2\pi\gG(1-\ga)}\intpi\intoo
 \frac{x^{-\ga}}{x+\rho(t)}\dd x\dd t+h(\ga),
\end{equation}
with $h\in\cHoi$, and \eqref{pyrr} follows by a standard beta integral:\ for 
$0<\rea<1$ and $\rho\notin(-\infty,0]$ we have
\begin{equation}
  \begin{split}
  \intoo \frac{x^{-\ga}}{x+\rho}\dd x
&=\rho^{-\ga}  \intoo \frac{x^{-\ga}}{x+1}\dd x
=\rho^{-\ga}B(1-\ga,\ga)
\\&
=\rho^{-\ga}\gG(1-\ga)\gG(\ga),	
  \end{split}
\end{equation}
where the first equality holds for all $\rho>0$ by a change of variables and
therefore for all $\rho\notin(-\infty,0]$ by analytic continuation.
\end{proof}

Recall the function $\gam(\ga)$ defined by \eqref{pyr}.

\begin{lemma}
  \label{LG}
For any $a>0$ we have
\begin{equation}
  \intoo \frac{|\gam(at)-\gam(t)|}t\dd t<\infty.
\end{equation}
\end{lemma}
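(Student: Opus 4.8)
The plan is to split $\intoo=\int_0^1+\int_1^\infty$. The tail is easy: since $\gam(t)=2\rho(t)/(\gss t^2)-1$ by \eqref{rho} and $\abs{\rho(t)}=\abs{1-\tgf(t)}\le2$, we get $\abs{\gam(at)-\gam(t)}=\frac2\gss\bigabs{\frac{\rho(at)}{a^2t^2}-\frac{\rho(t)}{t^2}}\le\frac{4(1+a\qww)}{\gss t^2}$, so $\int_1^\infty t\qw\abs{\gam(at)-\gam(t)}\dd t<\infty$. All the work is near $t=0$.

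For $\int_0^1$ the naive route fails, and seeing why dictates the whole argument: bounding $\abs{\gam(at)-\gam(t)}$ pointwise through $\abs{e^{iu}-1}\le\abs u\wedge2$ and pulling the expectation over $\xi$ outside produces a spurious logarithm and would force a moment assumption like $\E\bigsqpar{\txi^2\log(1+\abs\txi)}<\infty$, which we do not have. Instead one keeps the exact kernel. Writing $\rho(t)=\E\bigsqpar{1-e^{it\txi}+it\txi}$ (legitimate since $\E\txi=0$) and using the pointwise identity $1-e^{iu}+iu=u^2\chi(u)$ with $\chi(u):=\int_0^1(1-r)e^{iur}\dd r=(1+iu-e^{iu})/u^2$ and $\chi(0):=\tfrac12$, one gets $\rho(t)/t^2=\E\bigsqpar{\txi^2\chi(t\txi)}$; since $\gss/2=\chi(0)\E\txi^2$ this yields
\[
\tfrac{\gss}2\bigpar{\gam(at)-\gam(t)}=\E\bigsqpar{\txi^2\bigpar{\chi(at\txi)-\chi(t\txi)}}.
\]
Two elementary facts about $\chi$ are needed: $\norm{\chi'}_\infty\le\tfrac16$ (from the integral formula) and, crucially, $\abs{\chi(u)}\le3/\abs u$ for $\abs u\ge1$ (from $\chi(u)=u\qww+iu\qw-e^{iu}u\qww$). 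It is this $O(1/\abs u)$ decay at infinity, rather than mere boundedness of $\chi$, that recovers the logarithm lost in the naive bound.

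Now fix $a\neq1$, set $b:=\min(1,a)$, and bound $\abs{\chi(at\txi)-\chi(t\txi)}$ in two regimes: for $t\abs\txi\le1/b$ the mean value theorem gives $\le\tfrac16\abs{a-1}\,t\abs\txi$; for $t\abs\txi>1/b$ both arguments have modulus $\ge1$, so the decay bound gives $\le3/(at\abs\txi)+3/(t\abs\txi)\le6/(bt\abs\txi)$. By Tonelli,
\[
\int_0^1\frac{\abs{\gam(at)-\gam(t)}}{t}\dd t\le\frac2\gss\,\E\Bigsqpar{\txi^2\int_0^1\frac{\abs{\chi(at\txi)-\chi(t\txi)}}{t}\dd t},
\]
and for each fixed value of $\txi\neq0$ one splits the inner integral at $t_0:=(b\abs\txi)\qw\wedge1$: the first regime contributes at most $\tfrac16\abs{a-1}\abs\txi\,t_0$ and the second at most $\tfrac6{b\abs\txi}(b\abs\txi-1)_+\le6$ (the $t\qww$-integral telescopes). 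Multiplying by $\txi^2$ and taking expectations, every resulting term is at most a constant depending only on $a$ times $\E\txi^2=\gss<\infty$; the one point to check is $\abs\txi^3t_0\le\abs\txi^2/b$, which holds because $t_0=1$ forces $\abs\txi\le1/b$ while $t_0=(b\abs\txi)\qw$ makes it an equality. This completes the proof.

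The step I expect to be the main obstacle is the one flagged in the second paragraph: realizing that a crude triangle-inequality estimate overshoots by a logarithmic factor and requires a moment strictly beyond the second, and therefore organizing the argument around the exact decay rate of the kernel $\chi$ at infinity, which is precisely what makes the double integral converge under the sole standing hypothesis $0<\Var\xi<\infty$.
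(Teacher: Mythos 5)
Your proof is correct and is essentially the paper's argument in a different packaging: your kernel $\chi(u)=-\psi_1(u)/u^2$ and the two regimes (Lipschitz bound near $0$, $O(1/|u|)$ decay at infinity) reproduce exactly the paper's key estimate $|\psi_2(x)-a^{-2}\psi_2(ax)|\le C(|x|\wedge|x|^3)$, with the same essential cancellation of the quadratic term that lets everything close under only $\E\txi^2<\infty$. The remaining Tonelli argument and split of the $t$-integral likewise match the paper's computation, so no changes are needed.
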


\begin{proof}
  By \eqref{pyr}, recalling $\E\txi=0$ and $\E\txi^2=\gss$, we have
  \begin{equation}\label{fitb}
	-\frac{\gss t^2}2\gam(t) = \tgf(t)-1+\frac{\gss t^2}2
=\E e^{\ii t\txi}-1-\E(\ii t\txi)-\frac12\E\xpar{\ii t\txi}^2.
  \end{equation}
Define
\begin{align}
\psi_1(x)&:=e^{\ii x}-1-\ii x, \label{psi1}
\\
\psi_2(x)&:=e^{\ii x}-1-\ii x-\tfrac12(\ii x)^2.\label{psi2}
\end{align}
Then \eqref{fitb} implies
\begin{equation}
  \gam(t)=-\frac{2}{\gss t^2}\E \psi_2(t\txi)
\end{equation}
and thus
\begin{align}
\gam(at)- \gam(t)
=\frac{2}{\gss t^2}\E \Bigsqpar{\psi_2(t\txi)-\frac{1}{a^2}\psi_2(at\txi)}.
\label{win1}
%=\frac{2}{\gss t^2}\E \Bigpar{\psi_1(t\txi)-\frac{1}{a^2}\psi_1(at\txi)}.
%\label{win11}
\end{align}

Fix $a>0$. Taylor's formula yields the standard estimate
$|\psi_2(x)|\le |x|^3$, and thus
\begin{equation}
|\psi_2(x)-a\qww\psi_2(ax)|\le C|x|^3.  
\end{equation}
Furthermore, 
$\psi_2(x)-a\qww\psi_2(ax)=\psi_1(x)-a\qww\psi_1(ax)$
by cancellation, and
$|\psi_1(x)|\le2|x|$ and thus
\begin{equation}
|\psi_1(x)-a\qww\psi_1(ax)|\le C|x|.  
\end{equation}
Consequently,
\begin{equation}\label{win2}
|\psi_2(x)-a\qww\psi_2(ax)|\le C\bigpar{|x|\land|x|^3}.  
\end{equation}

Combining \eqref{win1} and \eqref{win2} we obtain, 
%\marginal{(The~$C$ in this equation is $2 / \gss$ times the~$C$ in \eqref{win2}.)}
for $t\neq0$,
\begin{equation}%\label{win2}
|\gam(at)-\gam(t)|\le Ct^{-2} \E\bigpar{|t\txi|\land|t\txi|^3}.  
\end{equation}
Hence,
\begin{equation*}
  \begin{split}
\intoo\frac{|\gam(at)-\gam(t)|}t\dd t
&\le C \intoo \E\bigpar{|t\qww\txi|\land|\txi|^3}\dd t
\\&
= C \biggpar{\int_0^{|\txi|\qw}|\txi|^3\dd t
+ \int_{|\txi|\qw}^\infty t^{-2}|\txi|\dd t}
\\&
= C\E\bigpar{|\txi|^2+|\txi|^2} = 2 C \gss <\infty.	
  \end{split}
\qedhere
\end{equation*}
\end{proof}

\begin{remark}
  \refL{LG} and its proof hold
  with $\txi$ replaced by  
  any random variable $X$ with $\E X=0$ and
  $\E X^2<\infty$.
\end{remark}

\begin{remark}
  Note, in contrast, that the integral $\intoi |\gam(t)|t\qw \dd t$ may
  diverge; hence some cancellation is essential in \refL{LG}.
In fact, it is not difficult to show, using similar arguments, that 
$\intoi |\gam(t)|t\qw \dd t<\infty$ if and only if $\E \txi^2 {\log|\txi|}<\infty$.
(Since $\gam(t)\to-1$ as $t\to\infty$, we cannot here integrate to $\infty$.)
\end{remark}

The function $\mu(\ga)$ is defined by \eqref{mu} for $\rea<\frac12$. As
noted at~\eqref{aaa},
$\mu(\ga)\to\infty$ as $\ga\upto\frac12$. However, $\mu(\ga)$ has a
continuous extension to all other points on the line $\rea=\frac12$.

\begin{theorem}
  \label{TM}
The function $\mu(\ga)$ has a continuous extension to the set
$\set{\ga:\rea\le\frac12}\setminus\set{\frac12}$. 
\end{theorem}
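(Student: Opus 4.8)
The plan is to reduce Theorem~\ref{TM} to the continuity of a single one‑variable integral near the critical line, and then to analyze that integral by a dilation (telescoping) trick that converts $\gam(t)$ into the difference $\gam(\lambda t)-\gam(t)$, for which the cancellation in \refL{LG} is available.

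\textbf{Reduction.} By \refL{LP2} and \refR{RP2}, for $0<\rea<\tfrac12$ and any fixed small $t_0\in(0,1]$ we have $\mu(\ga)=\frac{\gG(\ga)}{2\pi}\int_{|t|\le t_0}\rho(t)^{-\ga}\dd t+\tilde h(\ga)$ with $\tilde h\in\cH(\doi)$ (the tail $\int_{t_0<|t|\le\pi}\rho(t)^{-\ga}\dd t$ is entire in $\ga$, since there $\rho(t)$ stays in the open right half-plane by \eqref{fitr} and is bounded, and it has been absorbed into $\tilde h$). Since $\txi$ is real-valued, $\tgf(-t)=\overline{\tgf(t)}$, hence $\rho(-t)=\overline{\rho(t)}$; as $\rho(t)$ lies in the open right half-plane for $0<|t|\le\pi$, principal branches give $\rho(-t)^{-\ga}=\overline{\rho(t)^{-\bar\ga}}$, so that $\int_{|t|\le t_0}\rho(t)^{-\ga}\dd t=I(\ga)+\overline{I(\bar\ga)}$ with $I(\ga):=\int_0^{t_0}\rho(t)^{-\ga}\dd t$. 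The map $\ga\mapsto\bar\ga$ preserves $\set{\rea\le\tfrac12}\setminus\set{\tfrac12}$, the functions $\tilde h$ and $\gG$ are continuous on the line $\rea=\tfrac12$ (which lies inside $\doi$), and $\mu$ is already analytic for $\rea<\tfrac12$ by \eqref{mu}. Hence it suffices to show that $I$ has a continuous extension to $\set{\rea\le\tfrac12}\setminus\set{\tfrac12}$.

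\textbf{The dilation identity.} Choosing $t_0$ small enough that $|\gam(t)|<\tfrac12$ on $[0,t_0]$ (possible by \eqref{rho} and continuity of $\gam$), we may write $\rho(t)^{-\ga}=(\gss/2)^{-\ga}t^{-2\ga}H_\ga(t)$ with $H_\ga(t):=(1+\gam(t))^{-\ga}$ (principal branch); here $H_\ga(t)$ is continuous in $t$ on $[0,t_0]$, entire in $\ga$ for each fixed $t$, and $H_\ga(0)=1$. Fix $\lambda>1$. Substituting $t=s/\lambda$ in $\int_0^{t_0}t^{-2\ga}H_\ga(t)\dd t$ (valid for $\rea<\tfrac12$, where this integral converges) and splitting the $s$-range at $t_0$ gives, after rearranging,
\begin{equation*}
  (1-\lambda^{1-2\ga})\,(\gss/2)^{\ga}\,I(\ga)=G_\lambda(\ga)-E_\lambda(\ga),
\end{equation*}
where $G_\lambda(\ga):=\int_0^{t_0}s^{-2\ga}\bigsqpar{H_\ga(s)-H_\ga(s/\lambda)}\dd s$ and $E_\lambda(\ga):=\int_{t_0}^{\lambda t_0}s^{-2\ga}H_\ga(s/\lambda)\dd s$. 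The function $E_\lambda$ is entire in $\ga$, since on $s\in[t_0,\lambda t_0]$ its integrand is bounded, continuous in $s$, and analytic in $\ga$. For $a,b$ within distance $\tfrac12$ of $1$ one has $|a^{-\ga}-b^{-\ga}|\le C_K|a-b|$ for $\ga$ in a compact $K\subset\bbC$ (integrate $w^{-\ga-1}$ along the segment from $b$ to $a$), hence $|H_\ga(s)-H_\ga(s/\lambda)|\le C_K|\gam(s)-\gam(s/\lambda)|$. Since $0<s\le t_0\le1$ and $\rea\le\tfrac12$ force $s^{-2\rea}\le s\qw$, the integrand of $G_\lambda$ is dominated on $K\cap\set{\rea\le\tfrac12}$ by $C_K\,s\qw|\gam(s)-\gam(s/\lambda)|$, which is integrable on $(0,t_0)$ because, by the substitution $u=s/\lambda$ and \refL{LG} (with $a=\lambda$), $\int_0^{t_0}s\qw|\gam(s)-\gam(s/\lambda)|\dd s=\int_0^{t_0/\lambda}u\qw|\gam(\lambda u)-\gam(u)|\dd u<\infty$. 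As the integrand is continuous in $\ga$ for each fixed $s>0$, dominated convergence (\refL{Ldom}) shows $G_\lambda$ is continuous on $\set{\rea\le\tfrac12}$.

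\textbf{Removing the zeros and patching.} On the line $\rea=\tfrac12$ the zeros of $1-\lambda^{1-2\ga}$ are $\ga=\tfrac12+\pi\ii k/\log\lambda$, $k\in\bbZ$; taking $\lambda=2$ and $\lambda=3$, the two zero-sets meet only at $\ga=\tfrac12$ (since $2^k=3^j$ forces $k=j=0$). Thus for each $\ga_0$ with $\Re\ga_0=\tfrac12$, $\ga_0\neq\tfrac12$, we may pick $\lambda\in\set{2,3}$ with $1-\lambda^{1-2\ga_0}\neq0$; then $(\gss/2)^{-\ga}\bigpar{G_\lambda(\ga)-E_\lambda(\ga)}/(1-\lambda^{1-2\ga})$ is continuous on a neighbourhood of $\ga_0$ and, by the dilation identity, equals $I(\ga)$ for $\rea<\tfrac12$. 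Two such local extensions of $I$ agree on overlaps (they coincide on the dense set $\set{\rea<\tfrac12}$ and are continuous), so they patch, together with $I$ itself on $\set{\rea<\tfrac12}$, to a continuous extension of $I$ to $\set{\rea\le\tfrac12}\setminus\set{\tfrac12}$. By the reduction step this proves Theorem~\ref{TM}. The main obstacle is that the naive splitting $\rho(t)^{-\ga}=(\gss/2)^{-\ga}\bigpar{t^{-2\ga}+t^{-2\ga}(H_\ga(t)-1)}$ fails: on $\rea=\tfrac12$ it would require $\int_0^{t_0}t\qw|\gam(t)|\dd t<\infty$, which can genuinely diverge (see the remark after \refL{LG}), consistent with $\mu(\ga)$ generally having no analytic continuation across the line (\refT{Tbad}). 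The dilation identity is what rescues the argument, since it replaces $\gam(t)$ by the difference $\gam(\lambda t)-\gam(t)$ controlled by \refL{LG}; dividing out $1-\lambda^{1-2\ga}$ via two values of $\lambda$ is the only further technical point.
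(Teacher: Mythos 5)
Your proof is correct and follows essentially the same route as the paper's: the dilation $t\mapsto t/\lambda$ converts $\gam$ into the difference $\gam(\lambda t)-\gam(t)$, which \refL{LG} makes integrable against $t^{-1}$, and one then divides by $1-\lambda^{1-2\ga}$ while avoiding its zeros on the line $\rea=\frac12$. The only differences are in packaging: you derive an exact identity and patch using the two moduli $\lambda=2,3$, whereas the paper passes to the limit $s\to0$ of entire truncations (a Cauchy-sequence argument in $C(\bDB)$) and uses a single $B$-dependent factor $a=e^{1/B}$; both variants are valid.
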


\begin{proof}
  For $0<s\le\pi$ and $\rea<\frac12$, let
  \begin{equation}\label{f}
	f_s(\ga):=\Bigpar{\frac{\gss}2}^\ga\int_{-s}^s\rho(t)^{-\ga}\dd t
=\int_{-s}^s t^{-2\ga}\bigsqpar{1+\gam(t)}^{-\ga}\dd t.
  \end{equation}

Let $a>0$ and let $s_0:=\pi/(1\vee a)$.
Then, for $0<s\le s_0$, we have
\begin{equation}\label{fa}
  f_{a s}(\ga) 
= \int_{-as}^{as} t^{-2\ga}\bigsqpar{1+\gam(t)}^{-\ga}\dd t
= a^{1-2\ga}\int_{-s}^{s} t^{-2\ga}\bigsqpar{1+\gam(at)}^{-\ga}\dd t.
\end{equation}
Fix $B<\infty$ and let $\DB:=\set{\ga:0\le\rea<\frac12,\,|\Im\ga|\le B}$.
By \eqref{f} and \eqref{fa}, 
uniformly for $\ga\in \DB$, noting that $1+\gam(t)\neq0$ for $0 < |t|\le\pi$ by
\eqref{pyr}, we have 
\begin{equation}\label{winn}
  \begin{split}
\bigabs{a^{2\ga-1}f_{as}(\ga)-f_s(\ga)}
&\le
\int_{-s}^s\bigabs{(1+\gam(at))^{-\ga}-(1+\gam(t))^{-\ga}}	|t^{-2\ga}|\dd t
\\&
\le C\int_{-s}^s|\gam(at)-\gam(t)| t^{-2\rea}\dd t
\\&
\le C\int_{0}^s|\gam(at)-\gam(t)| t^{-1}\dd t,
  \end{split}
\end{equation}
which tends to 0 as $s\to0$ by \refL{LG}.

Let
\begin{equation}
  F_s(\ga):=a^{2\ga-1}\bigpar{f_\pi(\ga)-f_{as}(\ga)}
-\bigpar{f_\pi(\ga)-f_s(\ga)}.
\end{equation}
We have just shown in \eqref{winn} that as $s\to0$ we have
\begin{equation}\label{Fs}
  F_s(\ga)\to\bigpar{a^{2\ga-1}-1}f_\pi(\ga)
\end{equation}
uniformly in $\DB$.
For $s\in (0,s_0]$, $F_s(\ga)$ is an entire function, see \refR{RP2}, and
  in particular continuous on $\bDB$.
Hence, the sequence $F_{1/n}(\ga)$, which is uniformly convergent on $\DB$
by \eqref{Fs}, is a Cauchy sequence in $C(\bDB)$, and thus converges
uniformly on $\bDB$ to some continuous limit.
Together with \eqref{Fs} again, this shows that 
$\xpar{a^{2\ga-1}-1}f_\pi(\ga)$ has a continuous extension to $\bDB$.

This holds for any $a>0$. We now choose $a=e^{1/B}$; then $a^{2\ga-1}\neq1$
in $\bDB\xq$, and thus $f_{\pi}(\ga)$ has a continuous extension to $\bDB\xq$.
Since $B$ is arbitrary, this shows that
$f_\pi(\ga)$ has a continuous extension to
  $\set{\ga:0\le\rea\le\frac12}\xq$.

Finally, the definition \eqref{f} shows that the same holds for 
$\intpi \rho(t)^{-\ga}\dd t$, and the result follows by
%\eqref{pyrr}.
\refL{LP2}.
\end{proof}

In the sequel, $\mu(\ga)$ is defined for $\rea=\frac12$, $\ga\neq\frac12$, as
this continuous extension.

\begin{theorem}
  \label{T1/2}
  \begin{thmenumerate}
  \item \label{T1/2E}
The estimate \eqref{te-} in \refT{TE}\ref{TE-} holds also for
$\ga=\frac12+\ii y$, $y\neq0$.
Moreover, \eqref{te-} holds uniformly on compact subsets of 
$\set{\ga:-\frac12<\rea\le\frac12}\xq$.
  \item \label{T1/2X}
The limit result \eqref{tx<} in \refT{TX}\ref{TX<} holds also for
$\ga=\frac12+\ii y$, $y\neq0$.
Moreover, \eqref{tx<} holds 
in the space $C(\dbmx)$ of continuous functions on
the set 
$\dbmx:=\set{\ga:0<\rea\le\frac12}\xq$.
  \end{thmenumerate}
\end{theorem}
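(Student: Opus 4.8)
The plan is to reduce both parts to results already in hand for $\rea<\frac12$ (Theorems \ref{T1}, \ref{TE}, \ref{TM} and \refL{LEn}), the only genuinely new ingredient being an extension of the mean estimate \eqref{te-} up to the critical line. For \ref{T1/2E}, since \refL{LEn} holds, with its stated uniformity, on compact subsets of $\set{\ga:\rea>-\frac12}$ — in particular on sets reaching the line $\rea=\frac12$ — the estimate \eqref{te-} is equivalent to
\begin{equation}\label{mnmu}
\mu_n(\ga)-\mu(\ga)=\pigsqqw\,\frac{n^{\ga-\frac12}}{\ga-\frac12}+o\bigpar{n^{(\rea)_+-\frac12}},
\end{equation}
uniformly on compact subsets of $\set{\ga:-\frac12<\rea\le\frac12}\xq$, where for $\rea=\frac12$ the value $\mu(\ga)$ is the continuous extension of \refT{TM}. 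For $\rea<\frac12$ this is exactly \eqref{magnus} together with the \refL{Ldom}-uniformity noted in its proof; so the new content is $\rea=\frac12$, $\ga\neq\frac12$, and since a compact $K\subset\set{\ga:-\frac12<\rea\le\frac12}\xq$ is covered by an interior compact plus a set of the form $\set{\frac12-\eps\le\rea\le\frac12,\ |\Im\ga|\le B}$ with a small disc about $\frac12$ deleted, it suffices to prove \eqref{mnmu} uniformly on the latter. The difficulty is that the tail sum $\sum_{k>n}k^{\ga-1}\P(S_k=k-1)$ is only conditionally convergent (and diverges on the line itself), so \eqref{magnus} cannot be read off directly — this is the same singular behaviour at $\frac12$ that forced the delicate argument of \refT{TM}.

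To handle this I would pass to the Fourier representation, writing (as in \eqref{pyr2}) $\mu_n(\ga)=\tfrac{1}{2\pi}\intpi e^{\ii t}\sum_{k=1}^n k^{\ga-1}\tgf(t)^k\dd t$ and, via \refL{Lpyret} and \refL{LP2}, the analogous representation of $\mu(\ga)$, so that $\mu_n(\ga)-\mu(\ga)$ becomes a double integral. I would then isolate the contribution of $\set{|t|\le\delta}$ and replace $\tgf(t)$ there by $1-\frac{\gss}{2}t^2$ (equivalently, $\tgf(t)^k$ by $e^{-\gss t^2 k/2}$); after the substitutions $k\mapsto un$, $t\mapsto s/\sqrt n$, this rescaled model contributes a term of the form $c(\ga)\,n^{\ga-\frac12}$, and by continuous matching with \eqref{magnus} on $\rea<\frac12$ the constant $c(\ga)$ must equal $\pigsqqw/(\ga-\frac12)$. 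The remaining error would be bounded uniformly in $\ga\in K$ by a cancellation estimate of exactly the type of \refL{LG} (applied to the relevant difference of the function $\gam(\cdot)$ of \eqref{pyr}), together with \refL{Ldom} and the lower bounds \eqref{fitr}--\eqref{fir} on $|e^x-\tgf(t)|$ and $|\tgf(t)|$. It is precisely this subtraction of the singular main term that restores integrability at $\rea=\frac12$, mirroring the role of the entire functions $F_s(\ga)$ in the proof of \refT{TM}. Feeding \eqref{mnmu} back into \refL{LEn} yields \eqref{te-} on all of $K$, which is \ref{T1/2E}.

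For \ref{T1/2X}, note first that $\dbmx\subset H_+$, and that for each fixed $n$ the random function $Z_n(\ga):=n^{\wgay}\bigsqpar{X_n(\ga)-n\mu(\ga)}$ is continuous on $\dbmx$ (entire off the line, and $\mu$ continuous on $\dbmx$ by \refT{TM}), hence a random element of the Polish space $C(\dbmx)$ with its topology of uniform convergence on compacts. Write $Z_n(\ga)=\gs\qw\bigpar{\gs\tY_n(\ga)}+d_n(\ga)$, with $d_n(\ga):=n^{\wgay}\bigsqpar{\E X_n(\ga)-n\mu(\ga)}$. By \ref{T1/2E}, and since $(\rea)_+=\rea$ on $\dbmx$, we have $d_n(\ga)\to\gs\qw\frac{1}{\sqrt2}\frac{\gG(\ga-\frac12)}{\gG(\ga)}$ uniformly on compact subsets of $\dbmx$, so the deterministic functions $d_n$ converge in $C(\dbmx)$. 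By \refT{T1}, $\gs\tY_n\dto\tY$ in $\cH(H_+)$; since compact subsets of $\dbmx$ are compact in $H_+$, restriction is a continuous map $\cH(H_+)\to C(\dbmx)$, so $\set{\gs\tY_n|_{\dbmx}}$ is tight in $C(\dbmx)$, and therefore so is $\set{Z_n}$. The $\cH(H_+)$-convergence also gives joint convergence of the evaluations $(\gs\tY_n(\ga_i))_i$ for any finite set $\ga_i\in\dbmx$; adding the convergent deterministic shift $d_n$, Slutsky's lemma yields $(Z_n(\ga_i))_i\dto(\gs\qw Y(\ga_i))_i$ with $Y$ as in \eqref{Y}. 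Finally, the $C(\dbmx)$-analogue of \refL{Lsub} — valid because a continuous function on $\dbmx$ is determined by its restriction to a countable dense subset, so the relevant restriction map is injective and \cite[Lemma 7.1]{SJ185} applies verbatim — upgrades this to $Z_n\dto\gs\qw Y$ in $C(\dbmx)$, which is \eqref{tx<} in the asserted mode of convergence.

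The main obstacle is the uniform extension of the mean estimate to the critical line in \ref{T1/2E}: everything else is a recombination of Theorems \ref{T1} and \ref{TM} with \refL{LEn}, whereas \eqref{mnmu} at $\rea=\frac12$ genuinely requires the Fourier-analytic apparatus of \refS{Smua} and a \refL{LG}-style cancellation, precisely because the quantity in question sits at the boundary of absolute convergence.
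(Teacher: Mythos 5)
Your overall route coincides with the paper's: part (ii) is deduced from part (i) together with \refT{T1} essentially as the paper does it, and part (i) is reduced via \refL{LEn} to the tail estimate \eqref{magnus} on the closed strip, which is then attacked through the Fourier representation of \refL{Lpyret}/\refL{LP2} with a \refL{LG}-type cancellation and an identification of the constant by continuation from $\rea<\frac12$. Your treatment of part (ii) is complete and correct.

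The gap is in the central step of part (i). After rescaling you propose to ``replace $\tgf(t)$ by $1-\frac{\gss}{2}t^2$ (equivalently, $\tgf(t)^k$ by $e^{-\gss t^2k/2}$)'' and to control the resulting error by a cancellation estimate ``of exactly the type of \refL{LG}''. But the error of \emph{that} replacement is governed by $\gam(t)$ itself, not by a difference $\gam(at)-\gam(t)$: schematically it is of size
\begin{equation*}
\int_{0}^{1}\bigabs{\gam(\tqn)}\,t\qw\dd t=\int_{0}^{1/\sqrt n}\abs{\gam(s)}\,s\qw\dd s,
\end{equation*}
and, as the remark following \refL{LG} points out, $\intoi\abs{\gam(t)}t\qw\dd t<\infty$ if and only if $\E\bigsqpar{\txi^2\log\abs{\txi}}<\infty$. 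Under the standing hypothesis $\Var\xi<\infty$ alone this integral may diverge, so the replacement cannot be justified, and \refL{LG} does not apply to the error you actually generate (your ``relevant difference'' would be $\gam(t)-\gam(0)=\gam(t)$, which is precisely the case the lemma does not cover). The missing idea is that one must never pass to the Gaussian model at all: the paper compares the integral $F_{n,1}(\ga)$ with its dilated version $F_{n,a}(\ga)$ of \eqref{pyrw}, so that only $\gam(a\tqn)-\gam(\tqn)$ appears --- which \refL{LG} does control --- concludes that $(a^{2\ga-1}-1)f_n(\ga)$ converges uniformly to an analytic function, identifies that limit by analytic continuation from \eqref{magnus}, and only then divides by $a^{2\ga-1}-1$ after choosing $a>1$ so that this factor is nonvanishing on the given compact set. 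Your sketch gestures at this structure (``mirroring the role of $F_s(\ga)$ in the proof of \refT{TM}''), but as written the limit of the tail is extracted from a replacement step that fails in general; the dilation--division mechanism is not an optional refinement, it is the whole point of the argument on the line $\rea=\frac12$.
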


The topology in $C(\dbmx)$ is defined by uniform convergence on compact
subsets of $\dbmx$. 

\begin{proof}
  Part \ref{T1/2X} follows by \refT{T1} and \ref{T1/2E}, so it suffices to
  prove \ref{T1/2E}.

In this proof, let
$\db:=\set{\ga:0<\rea<\frac{3}4}$,
$\dbm:=\set{\ga:0<\rea<\frac12}$,
%and $\dbmx:=\set{\ga:0<\rea\le\frac12}$.
and, for $B>0$,
$\dbmB:=\set{\ga\in\dbm:|\Im\ga|\le B}$,
$\dbmxB:=\set{\ga\in\dbmx:|\Im\ga|\le B}$.

By \eqref{mua} and \eqref{mun}, for $\rea<\frac12$ we have
\begin{equation}
  \mu(\ga)-\mu_n(\ga)=\sum_{k=n+1}^\infty k^{\ga-1}\P(S_k=k-1).
\end{equation}
Imitating the proof of \refL{Lpyret} we obtain, \cf{} \eqref{pyr18}, for
$\ga\in\dbm$,
\begin{equation}\label{pyrx0}
  \begin{split}
\gG&(1-\ga)\bigpar{\mu(\ga)-\mu_n(\ga)}
= \intT\intoo x^{-\ga}\sum_{k=n+1}^\infty e^{-kx} \tgf(t)^k\eit \dd x\dd t
\\&
= \intT\intoo x^{-\ga}\frac{e^{-(n+1)x}\tgf(t)^{n+1}}{1- e^{-x} \tgf(t)}\eit 
\dd x\dd t
\\&
= \intT\intoo x^{-\ga}\frac{e^{-nx}\tgf(t)^{n}}{e^{x}- \tgf(t)}\gf(t)
\dd x\dd t
  \end{split}
\raisetag\baselineskip
\end{equation}
and thus, by the change of variables $x\mapsto x/n$, $t\mapsto\tqn$, we have
\begin{align}\label{pyrx1}
f_n(\ga)&:=
n^{\frac12-\ga}
\gG(1-\ga)\bigpar{\mu(\ga)-\mu_n(\ga)}
\\&\phantom:
=\frac{1 }{2\pi}
\intpm{\pi\sqrt n}\intoo x^{-\ga}
 \frac{e^{-x}\tgf(t/\sqrt n)^{n}}{n[e^{x/n}- \tgf(\tqn)]} \gf(\tqn)\dd x\dd t
\label{pyrx2}
\end{align}
Denote the  integrand in \eqref{pyrx2} by $\gnaxt$, and let
this define $\gnaxt$ for any $\ga\in\db$.
Note that for any fixed $\ga\in\db$, $x>0$, and $t\in\bbR$,
by \eqref{pyr},
\begin{equation}\label{kdu}
\gnaxt\to 
 x^{-\ga} \frac{e^{-x-\gssx t^2}}{x+\gssx t^2}
=:\gaxt.
\end{equation}
Furthermore, \eqref{kdu} trivially holds uniformly for $\ga\in \db$.
Note also that, by \eqref{fitr},
\begin{equation}\label{fitt}
  \begin{split}
  \bigabs{n\bigpar{e^{x/n}-\tgf(\tqn)}}
&\ge\Re \bigpar{n\bigpar{e^{x/n}-\tgf(\tqn)}}
\ge x+n\Re\bigpar{1-\tgf(\tqn)}
\\&\ge x+\ccft t^2.	
  \end{split}
\raisetag\baselineskip
\end{equation}

Let $h:=\spann\xi$.
If $h>1$, 
consider first $t$ with $\pi\sqrt n/h <|t|\le\pi\sqrt n$.
For such $t$, \eqref{fitt} implies $\bigabs{e^{x/n}-\tgf(\tqn)}\ge c$,
and thus $|\gnaxt|\le C n\qw x^{-\rea}e^{-x}$.
Hence, the integral \eqref{pyrx2} restricted to $|t|>\pi\sqrt n/h$ is
$O\bigpar{n\qqw}$, uniformly in $\db$.

Next (for any $h$),
for $\ga\in\db$ and $|t|\le\pi\sqrt n/h$, \eqref{fir} and \eqref{fitt} yield
\begin{equation}
  |\gnaxt|\le x^{-\rea}\frac{e^{-x-ct^2}}{x+ct^2}
\le \bigpar{1+x^{-3/4}}\frac{e^{-x-ct^2}}{x+ct^2}.
\end{equation}
The \rhs{} is integrable over
$(x,t)\in(0,\infty)\times((-\infty,-1)\cup(1,\infty))$; hence 
the integral \eqref{pyrx2} restricted to 
$1<|t|\le\pi\sqrt n/h$ converges by \refL{Ldom} uniformly on $\db$ to the
corresponding integral of $\gaxt$, which is an analytic function
$h_1(\ga)\in\cH(\db)$ by \refR{Rdom}.

Similarly, for $x\ge1$, using \eqref{fitt} again,
\begin{equation}
  |\gnaxt|
\le x^{-\rea}\frac{e^{-x}}{x + c_1 t^2}
\le %x^{-\rea}
e^{-x}
\end{equation}
and it follows by \refL{Ldom} and \refR{Rdom} that the integral \eqref{pyrx2}
restricted to $(x,t)\in(1,\infty)\times(-1,1)$ converges uniformly to an
analytic function $h_2(\ga)\in\cH(\db)$.

It remains to consider the integral in \eqref{pyrx2} over
$(x,t) \in Q:=(0, 1) \times(-1,1)$.
We modify this integral in several steps.

We first replace $e^{-x}$ by 1 in the numerator of $\gnaxt$; the absolute value
of the difference is bounded, using \eqref{fitt} again, by
\begin{equation}
  x^{-\rea}\frac{1-e^{-x}}{x+ c_1 t^2}\le x^{-\rea}\le 1+x^{-3/4}
\end{equation}
and thus \refL{Ldom} and \refR{Rdom} show that the integral of the
difference over $(x,t)\in Q$ converges uniformly to an 
analytic function $h_3(\ga)\in\cH(\db)$.

Similarly, we then replace $\tgf(\tqn)^n$ by 1
in the resulting integral; 
the difference is by \eqref{fitt} and
\eqref{pyr}, using $|1-\tgf(\tqn)^n|\le n|1-\tgf(\tqn)|$,
bounded by
\begin{equation}
  x^{-\rea}\frac{Ct^2}{x+c_1 t^2}\le Cx^{-\rea}\le C(1+x^{-3/4})
\end{equation}
and again the integral of the
difference over $Q$ converges uniformly to an 
analytic function $h_4(\ga)\in\cH(\db)$.

Next, we replace in the denominator 
$e^{x/n}-\tgf(\tqn)$ by $(x/n)+\rho(\tqn)$.
The resulting error is by \eqref{fita} bounded by
$%\begin{equation}
x^{-\rea}\frac{1}n  
$ %\end{equation}
so the error in the integral 
over $Q$
is $O(n\qw)$, uniformly in $\ga\in\db$.

Similarly, $\gf(\tqn)=1+O(\tqn)$, so replacing the factor $\gf(\tqn)$ by 1
yields an error in the integral over $Q$ that is bounded, for $\ga\in\db$, by
\begin{equation}
%  \frac{C}{\sqrt n}\intii\intoi x^{-\rea}\frac{|t|}{x+t^2}\dd x\dd t 
%\le
  \frac{C}{\sqrt n}\intii\intoi x^{-3/4}\frac{|t|}{x+t^2}\dd x\dd t
=O\bigpar{n\qqw},
\end{equation}
since the integral converges 
by \eqref{pkb}.

Summarizing the development so far, we have shown that
\begin{equation}\label{pyrz}
  \begin{split}
f_n(\ga)&
%:=n^{\frac12-\ga}\gG(1-\ga)\bigpar{\mu(\ga)-\mu_n(\ga)}
%\\&
=\frac{1 }{2\pi}
\intii\intoi x^{-\ga}
 \frac{1}{x+n\rho(\tqn)}\dd x\dd t +h_5(\ga)+o(1),
 \end{split}
\end{equation}
uniformly in $\dbm$, for some $h_5(\ga)\in\cH(\db)$.

Define, for $a>0$ and $\ga\in\dbm$,
\begin{equation}\label{pyrw}
  \begin{split}
F_{n,a}(\ga)
&:=
\intpm{1}\int_0^{1} 
 \frac{x^{-\ga}}{x+na\qww\rho(a\tqn)}\dd x\dd t 
\\&\phantom:
=\intpm{1}\int_0^{1} 
\frac{x^{-\ga}}{x+\gssx t^2\bigsqpar{1+\gam(a\tqn)}}\dd x\dd t ,
 \end{split}
\end{equation}
noting that the integrals converge by \eqref{fitr} and the fact that
\begin{align}
  \intpm{1}\int_0^{1} \frac{|x^{-\ga}|}{x+ t^2}\dd x\dd t
\le \pi \int_0^{1}\!x^{-\rea-\frac12}\dd x<\infty.
\end{align}
Thus, \eqref{pyrz} can be written,  uniformly in $\dbm$,
\begin{equation}\label{pyro}
f_n(\ga)
=\frac{1 }{2\pi}F_{n,1}(\ga) +h_5(\ga)+o(1).
\end{equation}

Fix $a>1$.
Then,
for  $\ga\in\dbm$ (and $n\ge a^2$, say),
using \refL{LG} we have
\begin{equation}%\label{pyrw}
  \begin{split}
\bigabs{F_{n,a}(\ga)-F_{n,1}(\ga)}
&
\le
\intpm{1}\intoi|x^{-\ga}|\frac{Ct^2\bigabs{\gam(a\tqn)-\gam(\tqn)}} 
{(x+ct^2)^2}
\ddx x\dd t 
\\&\le
C
\intpm{1}\bigabs{\gam(a\tqn)-\gam(\tqn)}\int_0^{1} x^{-1/2}
 \frac{\ddx x}{x+t^2}\dd t 
\\&
\le
C
\intpm{1}\frac{\bigabs{\gam(a\tqn)-\gam(\tqn)}}{|t|}\dd t 
\\&
=
C
\intpm{1/\sqrt n}\frac{\bigabs{\gam(at)-\gam(t)}}{|t|}\dd t 
\to0,
 \end{split}
\raisetag{1.2\baselineskip}
\end{equation}
as \ntoo.
Moreover, by the change of variables 
$x\mapsto a\qww x$, $t\mapsto a\qw t$,
\begin{equation}%\label{pyrw}
  \begin{split}
F_{n,a}(\ga)
=
a^{2\ga-1}
\intpm{a}\int_0^{a^2} 
 \frac{x^{-\ga}}{x+n\rho(\tqn)}\dd x\dd t ,
 \end{split}
\end{equation}
which differs from $a^{2\ga-1}F_{n,1}(\ga)$ by an integral which,
using \refL{Ldom} and \refR{Rdom} again, converges uniformly to some
function $h_6(\ga)\in\cH(\db)$.

It follows that, uniformly for $\ga\in\dbm$,
\begin{equation}
\bigpar{a^{2\ga-1}-1}F_{n,1}(\ga)
=F_{n,a}(\ga)-F_{n,1}(\ga) -\bigpar{F_{n,a}(\ga)-a^{2\ga-1}F_{n,1}(\ga)}
\to -h_6(\ga)
.
\end{equation}
Consequently, \eqref{pyro} shows that
$\bigpar{a^{2\ga-1}-1}f_{n}(\ga)$ converges uniformly in $\dbm$
to some function
$h_7(\ga)\in\cH(\db)$, which, recalling the definition \eqref{pyrx1} of
$f_n(\ga)$, 
shows that
\begin{equation}\label{fitw}
  \bigpar{a^{2\ga-1}-1}n^{\frac12-\ga}\bigpar{\mu(\ga)-\mu_n(\ga)}
=h_8(\ga)+o(1),
\end{equation}
uniformly in $\dbmB$, for some function $h_8(\ga)\in\cH(\db)$ 
and every $B>0$.
By \eqref{magnus}, 
\begin{equation}\label{fitz}
  h_8(\ga)=\bigpar{a^{2\ga-1}-1}\frac{1}{\sqrt{2\pi\gss}(\frac12-\ga)}
\end{equation}
for $\ga\in\dbm$, and thus 
by analytic continuation
for $\ga\in \db\xq$.

By \refT{TM}, $\mu(\ga)$ is continuous on $\dbmx$, and so are $\mu_n(\ga)$
(which is an entire function) and $h_8(\ga)$.
Hence, by continuity, \eqref{fitw} holds
uniformly in every $\dbmxB$.

Finally, for any compact set $K\subset\dbmx$, we can choose $a>1$ such
that $a^{2\ga-1}\neq1$ on $K$, and then \eqref{fitw} and \eqref{fitz} show
that, uniformly for $\ga\in K$,
\begin{equation}
  \label{magnusx}
n^{\frac12-\ga}\bigsqpar{\mu(\ga)-\mu_n(\ga)}=
\frac{1}{\sqrt{2\pi\gss}(\frac12-\ga)}+o(1)
\end{equation}
as \ntoo.
The result \eqref{te-}, uniformly on $K$, follows from \eqref{magnusx} and
\refL{LEn}.

This shows that \eqref{te-} holds uniformly on any compact subset of
$\dbmx$, and in particular on any compact subset of 
$\set{\ga:\frac{1}{4}\le\rea\le\frac12}\xq$. Since \refT{TE}\ref{TE-} implies that
\eqref{te-} holds uniformly on any compact subset of
$\set{\ga:-\frac{1}{2}<\rea\le\frac14}$, it follows that it holds uniformly
on any compact subset of
$\set{\ga:-\frac{1}{2}<\rea\le\frac12}\xq$.
\end{proof}

\section{An example where $\mu(\ga)$ has no analytic extension} \label{S:bad}

\refT{TM} shows that $\mu(\ga)$ has a continuous extension to the line
$\rea=\frac12$, except at $\ga=\frac12$.
However, in general, $\mu$ cannot be extended analytically across this line;
in fact the derivative $\mu'(\ga)$
may diverge as $\ga$ approaches this line.
In particular, \refT{TEgd}\ref{TEgdmu} does not hold (in general) without
the extra moment assumption there.

\begin{theorem}
  \label{Tbad}
There exists $\xi$ with $\E\xi=1$ and $0<\Var\xi<\infty$ such that for any
$\gao$ with $\Re\gao=\frac12$,
$
\limsup_{\ga\to\gao,\,\rea<\frac12}|\mu'(\ga)|=\infty$.
In particular, $\mu(\ga)$ has no analytic extension in a neighborhood of
any such $\gao$. In other words, the line $\rea=\frac12$ is a natural
boundary for $\mu(\ga)$.
\end{theorem}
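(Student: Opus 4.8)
The plan is to work from the representation in \refL{LP2}. By \refL{LP2} and \refR{RP2}, for $0<\rea<\frac12$ and any fixed $s\in(0,\pi]$,
\[
\mu(\ga)=\frac{\gG(\ga)(\gss/2)^{-\ga}}{2\pi}\,f_s(\ga)+\tilde h(\ga),\qquad
f_s(\ga)=\int_{-s}^s|t|^{-2\ga}\bigsqpar{1+\gam(t)}^{-\ga}\dd t,
\]
where $\tilde h\in\cH(\doi)$ is analytic on all of $\doi$ (hence near the whole line $\rea=\tfrac12$) and the prefactor is analytic and zero-free there; so it suffices to produce $\xi$ for which $f_s$ has a singularity in every neighbourhood of every $\gao$ with $\Re\gao=\tfrac12$. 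Expanding $\bigsqpar{1+\gam(t)}^{-\ga}=1-\ga\gam(t)+O(\gam(t)^2)$ near $t=0$ (where $\gam(t)\to0$), the constant term contributes $\tfrac{2s^{1-2\ga}}{1-2\ga}$, analytic off $\ga=\tfrac12$, so the relevant term is $-\ga\int_{-s}^s|t|^{-2\ga}\gam(t)\dd t$; the substitution $|t|=e^{-u}$ turns it into a Laplace-type transform of $u\mapsto\gam(e^{-u})$, whose only singularities on $\rea=\tfrac12$ are at $\ga=\tfrac12+\tfrac{\ii\nu}{2}$ for $\nu$ in the ``logarithmic spectrum'' of $\gam$. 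Since $\gam(0)=0$ is forced by $\E\txi^2<\infty$ (see \eqref{pyr}), $\gam$ cannot carry an undamped oscillation, so the idea is to build $\xi$ so that $\gam$ carries \emph{slowly damped} log-oscillations $\gam(t)\approx c_\nu|t|^{\ii\nu}\bigpar{\log(1/|t|)}^{-\eps}$, which still produce genuine branch points: for $0<\eps<1$,
\[
\int_{u_0}^\infty u^{-\eps}e^{-(1-2\ga+\ii\nu)u}\dd u=\gG(1-\eps)(1-2\ga+\ii\nu)^{\eps-1}+g(\ga)
\]
with $g$ entire, so $f_s$ has a branch point at $\ga=\tfrac12+\tfrac{\ii\nu}{2}$ and $\mu'(\ga)\to\infty$ as $\ga$ tends to it from $\set{\rea<\tfrac12}$.

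For the construction, fix $\eps\in(0,1)$ and a countable dense set $\set{\lambda_j}_{j\ge1}\subset(1,\infty)$, and let $\xi$ equal $0$ with probability $p$, $1$ with probability $q$, and $1+\floor{\lambda_j^k}$ with probability $c_{j,k}$ of order $2^{-j}\lambda_j^{-2k}k^{-1-\eps}$ ($j,k\ge1$), where $p,q$ and an overall small factor are chosen so that $\E\xi=1$, $0<\Var\xi<\infty$, and $p,q\in(0,1)$; this is possible because $\sum_k\lambda_j^{2k}c_{j,k}$ is of order $2^{-j}\sum_kk^{-1-\eps}<\infty$ (so $\Var\xi=\sum_{j,k}c_{j,k}\floor{\lambda_j^k}^2+O(1)<\infty$), while $\sum_k\lambda_j^kc_{j,k}<\infty$ keeps $\E\xi$ finite. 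By the identity $\gam(t)=-\tfrac{2}{\gss t^2}\E\,\psi_2(t\txi)$ from the proof of \refL{LG} (with $\psi_2$ as in \eqref{psi2}), the mass at $0$ contributes only $O(t)$ and
\[
\gam(t)=-\frac{2}{\gss t^2}\sum_{j,k}c_{j,k}\,\psi_2\bigpar{t\floor{\lambda_j^k}}+O(t)\qquad(t\to0).
\]
For fixed $j$, reindexing the inner sum about $k\approx\log_{\lambda_j}(1/t)$ and using $\psi_2(\theta)=O(|\theta|^3)$ for small $\theta$ and $\psi_2(\theta)=\tfrac12\theta^2+O(|\theta|)$ for large $|\theta|$, one shows that the $j$-block equals $\bigpar{\log(1/t)}^{-\eps}A_j\bigpar{\set{\log_{\lambda_j}(1/t)}}\bigpar{1+o(1)}$ as $t\to0$ for a bounded $1$-periodic function $A_j$, and one checks that $A_j$ is \emph{non-constant} (the genuinely complex subleading part of $\psi_2$ forbids $A_j$ from being a constant multiple of the even quadratic part), so $A_j$ has a nonzero Fourier coefficient at some $n_j\ge1$; thus $\gam$ carries a damped log-oscillation of frequency $2\pi n_j/\ln\lambda_j$. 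If one prefers this step fully explicit, multiply $c_{j,k}$ by a small factor $1+b_j\cos(\omega_jk)$ with $\omega_j/\pi$ irrational and $b_j\downarrow0$ summably, which forces the oscillation.

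Finally, since $\sum_j2^{-j}<\infty$, summing the blocks gives $\bigpar{\log(1/t)}^{\eps}\gam(t)\to$ a bounded log-almost-periodic function whose frequency set contains $\bigcup_j\set{\pm2\pi n_j/\ln\lambda_j}$, which is dense in $\bbR$ because $\set{\lambda_j}$ is dense in $(1,\infty)$; by the Laplace-transform computation above, $f_s$, and hence $\mu$, then has a branch point at $\tfrac12\pm\tfrac{\ii\pi n_j}{\ln\lambda_j}$ for every $j$, and these together with the pole at $\ga=\tfrac12$ are dense in the line $\rea=\tfrac12$. Consequently $\mu$ is analytic near no $\gao$ with $\Re\gao=\tfrac12$, and choosing, for given such $\gao$, a branch point $\gao'$ with $|\gao'-\gao|$ small and then $\ga$ with $\rea<\tfrac12$ and $|\ga-\gao'|$ small enough, forces $|\mu'(\ga)|$ arbitrarily large while $|\ga-\gao|$ stays small; so $\limsup_{\ga\to\gao,\,\rea<\frac12}|\mu'(\ga)|=\infty$ and $\rea=\tfrac12$ is a natural boundary. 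The main obstacle is the asymptotic analysis of $\gam$ near $0$: one must verify that the damped log-oscillations are genuinely present with nonzero amplitude and are not cancelled either by the $O(\gam^2)$ terms in $(1+\gam)^{-\ga}$ or by the error terms in the block estimate --- only terms singular at the \emph{same} point $\tfrac12+\tfrac{\ii\nu}{2}$ could cancel a given branch point, and taking the $b_j$ (or the implicit amplitudes) small and summable arranges that each frequency dominates in its own small neighbourhood.
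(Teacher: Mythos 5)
Your strategy is genuinely different from the paper's: you attempt one explicit construction (lacunary atoms producing log-periodic oscillations in $\gam$, analyzed via a Laplace/Mellin transform), whereas the paper argues softly, via a one-point blow-up lemma (\refL{LD}), continuity of $F(\xi;\cdot)$ under convergence in distribution (\refL{LY}), and a diagonal construction converging in total variation. The soft route exists precisely to avoid the delicate asymptotics you are attempting, and unfortunately your central asymptotic claim is wrong. Write $m_{j,k}:=\floor{\lambda_j^k}$ and $K:=\log_{\lambda_j}(1/t)$. Since $\psi_2(x)=\tfrac12 x^2+O(|x|)$ for large $|x|$ and $\psi_2(x)=O(|x|^3)$ for small $|x|$, the $j$-block of $\gam$ satisfies
\begin{equation*}
-\frac{2}{\gss t^2}\sum_k c_{j,k}\psi_2(t m_{j,k})
=-\frac1{\gss}\sum_{k>K}c_{j,k}m_{j,k}^2+O\bigpar{K^{-1-\eps}}
=-\frac{2^{-j}}{\gss\,\eps}\,K^{-\eps}+O\bigpar{K^{-1-\eps}},
\end{equation*}
where the $K^{-\eps}$ term is a \emph{smooth} function of $\log(1/t)$ with no dependence on $\frax{K}$; all oscillation sits in the $O(K^{-1-\eps})$ remainder, together with all the error terms (transition region $tm_{j,k}=\Theta(1)$, the $O(tm_{j,k})$ and $O(t^3m_{j,k}^3)$ tails, and the effect of the floor function). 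So at the order you claim, your $A_j$ is constant. In fact the form $\gam(t)\approx c_\nu|t|^{\ii\nu}(\log(1/|t|))^{-\eps}$ with $c_\nu\neq0$, $\nu\neq0$, $0<\eps\le1$ is impossible for \emph{any} admissible $\xi$: it forces $\int_0^1|\gam(at)-\gam(t)|\,t\qw\dd t=\infty$ for suitable $a$, contradicting \refL{LG}, and by your own Laplace computation it would make $\mu(\ga)$ itself (not just $\mu'$) blow up at $\frac12+\frac{\ii\nu}{2}$, contradicting \refT{TM}. The obstruction is structural: $\Var\xi<\infty$ forces the weights $c_{j,k}m_{j,k}^2$ to be non-negative and summable, which pins the oscillating component of the tail sums one logarithmic order below the smooth part.

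The plan is not beyond repair: an oscillation of amplitude $K^{-1-\eps}$ yields a branch point of type $(1-2\ga+\ii\nu)^{\eps}$, which is continuous with divergent derivative --- exactly what \refT{Tbad} needs and consistent with \refT{TM}. But then the entire analysis must be carried out exactly at order $K^{-1-\eps}$, where the oscillation competes with every error term, and the crux --- that the resulting $1$-periodic function of $\frax{K}$ has a nonzero Fourier coefficient, essentially the non-vanishing of a Mellin transform of $\psi_2(x)/x^2$ on a vertical line --- is asserted in your write-up rather than proved (your parenthetical about the ``genuinely complex subleading part'' is not an argument, and the $1+b_j\cos(\omega_j k)$ modulation does not change the order of the oscillation). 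You would also need to control the accumulation of singularities from the other blocks near each $\frac12+\ii\nu_{j_0}/2$, since the frequency set is dense. As it stands, the proof has a genuine gap at its key step.
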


We shall first prove three
lemmas.
Instead of working with $\mu(\ga)$ directly, we shall use \refL{LP2} (and,
for convenience, \refR{RP2}).
We define, for any function $\rho(t)$ and a complex $\ga$,
\begin{equation}\label{FF}
  F(\rho;\ga):=\intii\rho(t)^{-\ga}\dd t.
\end{equation}
Note that if $\rho(t)\ge ct^2$ (as will be the case below), then this
integral is 
finite for $\rea<\frac12$, at least, and defines an analytic function
there. 
If $F(\rho;\ga)$ extends analytically to a larger domain, we will use the
same notation for the extension (even if the integral \eqref{FF} diverges).

If $\rho(t)=1-\E e^{\ii t\txi}$ as in \eqref{rho}, we also write $F(\xi;\ga)$.

By \refL{LP2} and \refR{RP2}, \refT{Tbad} follows if we prove the statement
with $\mu(\ga)$ replaced by $F(\xi;\ga)$.

We define in this section the domains 
$\Dbad:=\set{\ga:\frac{1}4<\rea<\frac{3}4}$, 
$\Dbadm:=\set{\ga:\frac{1}4<\rea<\frac{1}2}$
and
$\Dbadx:=\Dbad\xq$. 
(These choices are partly for convenience; we could take $\Dbad$ larger.)

% We write $f_n(\ga)=\OHD(g_n(\ga))$, for sequences of functions $f_n$ and
% $g_n$, if $f_n\in\cH(D)$, $g_n$ is a function on $\cH(D)$, and
% $f_n(\ga)/g_n(\ga)$ is uniformly bounded on each compact $K\subset D$. 
%
If $(g_N)$ is a sequence of functions in a domain~$D$,
we write $O_{\cH(D)}(g_N(\ga))$ for any sequence of functions 
$f_N\in\cH(D)$ such that
$f_N(\ga)/g_N(\ga)$ is bounded on each compact $K\subset D$, uniformly in $N$.
(Often, $g_N(\ga)$ will not depend on $\ga$.)
We extend the definition to functions $g_{N,t}(\ga)$ and $f_{N,t}(\ga)$
depending also on an additional parameter~$t$, 
requiring uniformity also in $t$.

It will be convenient to work with a restricted set of offspring
distributions $\xi$. 
Let $\ppp$ be the set of all probability distributions $(p_k)_0^\infty$
on \set{0,1,2,\dots} such that
$p_0,p_1,p_2>0.1$, and if $\xi$ has the distribution $(p_k)_0^\infty$, then
$\E\xi=\sum_k kp_k=1$,
$\Var\xi=\sum_k (k-1)^2p_k=2$ and
$\E\xi^3=\sum_k k^3p_k<\infty$.
(The set $\ppp$ is clearly non-empty. A concrete example is
$(0.52,0.2,0.2,0,0,0.08)$.)
We write $\xi\in\ppp$ for $\cL(\xi)\in\ppp$.

If $\xi\in\ppp$, then $\gss=2$ and $\E\xi^3<\infty$, and thus
$\tgf(t)=1-t^2+O(t^3)$; hence $\rho(t)=t^2+O(t^3)$.
Moreover, since $\P(\txi=j)>0.1$ for $j = \pm 1$, we have
%\marginal{Here $c = 0.4 / \pi^2$.} 
\begin{equation}\label{rer}
  \Re\rho(t)=\Re\bigpar{1-\E e^{\ii t\txi}}
=\E\bigpar{1-\cos t\txi}
\ge 0.2(1-\cos t) \ge ct^2,
\end{equation}
for $|t|\le\pi$,
uniformly for all $\xi\in\ppp$.

\begin{lemma}
  \label{Ldx}
 If $\xi\in\ppp$, then 
$F(\xi;\ga)$ extends to a function in $\cH(\Dbadx)$.
\end{lemma}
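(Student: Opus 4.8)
The plan is to isolate the $\ga=\tfrac12$ singularity of $F(\xi;\ga)=F(\rho;\ga)=\intii\rho(t)^{-\ga}\dd t$ inside an explicit elementary term and to show that what remains converges, and is analytic, on a half-plane reaching well past $\rea=\tfrac12$. First I would record the regularity of $\rho$: since $\xi\in\ppp$ has $\E\xi^3<\infty$, differentiating under the expectation shows $\rho(t)=1-\E e^{\ii t\txi}$ is in $C^3(\bbR)$ with $\rho(0)=\rho'(0)=0$ and $\rho''(0)=\gss=2$, so Taylor's theorem with Lagrange remainder gives $|\rho(t)-t^2|\le C|t|^3$ on $[-1,1]$. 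I would then set $g(t):=\rho(t)/t^2$ for $0<|t|\le1$ and $g(0):=1$; then $g$ is continuous on $[-1,1]$ with $|g(t)-1|\le C|t|$, and by \eqref{rer} we have $\Re g(t)=\Re\rho(t)/t^2\ge c>0$ there, so $g$ maps $[-1,1]$ into a compact subset $\Omega$ of the open right half-plane (and $\rho(t)\neq0$ for $0<|t|\le1$). Because $t^2>0$ and $\Re\rho(t)>0$, the principal branches satisfy $\rho(t)^{-\ga}=|t|^{-2\ga}g(t)^{-\ga}$; checking this branch identity carefully is the one genuinely fiddly point, but it is immediate once one notes that everything ($t^2$, $g(t)$, and hence $\rho(t)$) stays in the slit-free right half-plane, so the principal logarithms add.

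Next I would split, for $\rea<\tfrac12$,
\[
F(\xi;\ga)=\intii|t|^{-2\ga}\dd t+\intii|t|^{-2\ga}\bigpar{g(t)^{-\ga}-1}\dd t=:I_1(\ga)+I_2(\ga).
\]
Here $I_1(\ga)=2\intoi t^{-2\ga}\dd t=\tfrac{2}{1-2\ga}$, which continues to a meromorphic function on $\bbC$ whose only pole is at $\ga=\tfrac12$; in particular $I_1\in\cH(\Dbadx)$. For $I_2$, the map $z\mapsto z^{-\ga}$ is holomorphic, bounded, and Lipschitz on the convex hull of $\Omega\cup\set1$, uniformly for $\ga$ in a compact set $K$; combined with $g(0)=1$ and $|g(t)-1|\le C|t|$ this yields $|g(t)^{-\ga}-1|\le C_K|t|$ for $\ga\in K$, so the integrand of $I_2$ is dominated by $C_K|t|^{1-2\rea}$, which is integrable on $[-1,1]$ precisely when $\rea<1$. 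Since the integrand is analytic in $\ga$ for each fixed $t$ and jointly measurable, the dominated-convergence/Morera argument of \refR{Rdom} gives $I_2\in\cH(\set{\rea<1})$ (equivalently, $I_2$ is an $O_{\cH(\Dbad)}(1)$ term in the notation above).

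Finally I would conclude: $I_1+I_2$ is analytic on $\set{\rea<1}\setminus\set{\tfrac12}$, and it agrees with the integral \eqref{FF} on $\set{\rea<\tfrac12}$ by construction, hence it is the asserted analytic continuation of $F(\xi;\ga)$; restricting to $\Dbadx\subset\set{\rea<1}\setminus\set{\tfrac12}$ gives the desired element of $\cH(\Dbadx)$. I expect the only real obstacle to be the bookkeeping in the first step --- producing $g$ with $g(0)=1$, $|g(t)-1|=O(|t|)$, and values in a compact subset of the right half-plane, and then justifying $\rho(t)^{-\ga}=|t|^{-2\ga}g(t)^{-\ga}$ --- after which the analyticity and integrability estimates are entirely routine.
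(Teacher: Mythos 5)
Your proof is correct, but it takes a different route from the paper. The paper disposes of this lemma in two lines: since $\xi\in\ppp$ implies $\E\xi^3<\infty$, \refT{TEgd}\ref{TEgdmu} (with $\gd=1$) already gives the meromorphic continuation of $\mu(\ga)$ to $\rea<1$ with its only pole at $\ga=\frac12$, and then \refL{LP2} together with \refR{RP2} transfers this continuation to $F(\xi;\ga)=\frac{2\pi}{\gG(\ga)}\bigl(\mu(\ga)-h(\ga)\bigr)$ up to an entire correction, since $1/\gG(\ga)$ is entire and nonvanishing issues do not arise on $\Dbad$. You instead work directly with the integral \eqref{FF}: factoring $\rho(t)^{-\ga}=|t|^{-2\ga}g(t)^{-\ga}$ with $g(t)=\rho(t)/t^2$ valued in a compact subset of the right half-plane, peeling off the explicit meromorphic term $I_1(\ga)=2/(1-2\ga)$, and showing the remainder $I_2$ converges and is analytic for $\rea<1$ via the Lipschitz bound $|g(t)^{-\ga}-1|=O_K(|t|)$ (which is where $\E\xi^3<\infty$ enters, through $\rho(t)=t^2+O(|t|^3)$). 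Your branch-identity and domination arguments are sound, and the Morera/dominated-convergence step is exactly \refR{Rdom}. What your approach buys is self-containedness — it does not invoke the local-limit-theorem refinement \eqref{rrn}--\eqref{rn} and the zeta-function argument underlying \refT{TEgd} — and it is closer in spirit to the $\rho_0(t):=t^2$ comparison the paper itself uses later in the proof of \refL{LD}; what the paper's route buys is brevity, since all the machinery is already in place.
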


\begin{proof}
  $\mu(\ga)\in\cH(\Dbadx)$ by
\refT{TEgd}\ref{TEgdmu} (or \refT{TH}), and the result follows
 by \refL{LP2} and \refR{RP2}.
\end{proof}

\begin{lemma}
  \label{LY}
If $\xi_N\in\ppp$ for $N \ge 1$ and $\xi_N\dto\xi$, then $F(\xi_N;\ga)\to
F(\xi;\ga)$ in $\cH(\Dbadm)$.
\end{lemma}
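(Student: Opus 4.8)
The plan is to prove the convergence directly at the level of the defining integral \eqref{FF}. Write $\rho_N$ for the function $\rho$ of \eqref{rho} attached to $\xi_N$; by \eqref{rer} we have $\Re\rho_N(t)\ge ct^2$ for $|t|\le\pi$ with a constant $c>0$ that is uniform over $\ppp$, so $F(\xi_N;\ga)=\intii\rho_N(t)^{-\ga}\dd t$ is an absolutely convergent integral whenever $\rea<\tfrac12$, hence a well-defined element of $\cH(\Dbadm)$ (this is the function furnished by \refL{Ldx}). Thus it suffices to show that $\intii\rho_N(t)^{-\ga}\dd t\to\intii\rho(t)^{-\ga}\dd t$ uniformly for $\ga$ in each compact $K\subset\Dbadm$, and I would obtain this from the dominated-convergence principle \refL{Ldom} (with index set $K$ and Lebesgue measure on $[-1,1]$), using \refR{Rdom} for analyticity of the limit.

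First I would pass the weak convergence to the characteristic functions. Since $\E\xi_N=1$ and $\Var\xi_N=2$, we have $\sup_N\E\xi_N^2=3<\infty$, so $\{\xi_N\}$ is uniformly integrable; hence $\xi_N\dto\xi$ forces $\E\xi=1$, and therefore $\txi_N=\xi_N-1\dto\xi-1=\txi$, so $\tgf_N(t)\to\tgf(t)$ and $\rho_N(t)\to\rho(t)$ for every fixed $t$. Passing to the limit in $\P(\txi_N=\pm1)>0.1$ gives $\P(\txi=\pm1)\ge0.1$, so the computation behind \eqref{rer} applies verbatim to $\xi$ and gives $\Re\rho(t)\ge ct^2$ on $|t|\le\pi$ with the same $c$; in particular $\rho(t)\ne0$ there, and since $|\rho(t)^{-\ga}|$ is dominated on compact subsets of $\Dbadm$ by an integrable function of $t$ (next paragraph), $F(\xi;\cdot)$ is indeed a well-defined member of $\cH(\Dbadm)$ by \refR{Rdom}, so the assertion is meaningful.

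For the application of \refL{Ldom} I need a dominating function uniform in $N$ and $\ga$, together with pointwise convergence uniform in $\ga$. For the domination: since $\Re\rho_N(t)>0$ we have $\arg\rho_N(t)\in(-\tfrac\pi2,\tfrac\pi2)$, so $|\rho_N(t)^{-\ga}|=|\rho_N(t)|^{-\rea}e^{\Im\ga\,\arg\rho_N(t)}\le|\rho_N(t)|^{-\rea}e^{\pi|\Im\ga|/2}$, and with $|\rho_N(t)|\ge\Re\rho_N(t)\ge ct^2$ and $b:=\sup_{\ga\in K}\rea<\tfrac12$ this yields $|\rho_N(t)^{-\ga}|\le C_K|t|^{-2b}$, which is integrable on $[-1,1]$ and free of $N$ and of $\ga\in K$ (the same bound holds for $\rho(t)^{-\ga}$). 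For the pointwise convergence: for each fixed $t\ne0$ all the numbers $\rho_N(t)$ and $\rho(t)$ lie in the single compact set $\{z:\Re z\ge ct^2,\ |z|\le2\}$, on which $(z,\ga)\mapsto z^{-\ga}$ is uniformly continuous for $\ga\in K$, so $\rho_N(t)\to\rho(t)$ gives $\sup_{\ga\in K}|\rho_N(t)^{-\ga}-\rho(t)^{-\ga}|\to0$. Then \refL{Ldom} yields $F(\xi_N;\ga)\to F(\xi;\ga)$ uniformly on $K$, and since $K$ was an arbitrary compact subset of $\Dbadm$, this is exactly convergence in $\cH(\Dbadm)$.

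I expect the only step that is not automatic to be the passage $\E\xi=1$: without it the centerings in $\txi_N=\xi_N-1$ and $\txi=\xi-\E\xi$ do not match and $\rho_N(t)\to\rho(t)$ can fail, which is why the uniform-integrability observation is needed. The remaining work is a repackaging of \eqref{rer} and \refL{Ldom}, the one delicate point being that the dominating bound be simultaneously uniform in $N$ and in $\ga$ — which holds precisely because the constant $c$ in \eqref{rer} does not depend on the particular $\xi\in\ppp$.
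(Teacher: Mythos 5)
Your proposal is correct and follows essentially the same route as the paper's proof: pointwise convergence $\rho_N(t)\to\rho(t)$ from $\xi_N\dto\xi$, the uniform lower bound \eqref{rer} over $\ppp$ to supply an integrable dominating function $C_K|t|^{-2\sup_K\rea}$, and then \refL{Ldom} to get uniform convergence on compact subsets of $\Dbadm$. The additional care you take (uniform integrability forcing $\E\xi=1$, and the uniform continuity of $(z,\ga)\mapsto z^{-\ga}$ on a compact set away from $0$) fills in details the paper leaves implicit, but does not change the argument.
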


Note that we do not assume $\xi\in\ppp$. 
(In fact, it is easy to see that the lemma extends to arbitrary $\xi_N$ and
$\xi$ with expectation 1 and finite, non-zero variance.)

\begin{proof}
  Let $\rho(t):=1-\E e^{\ii t \txi}$ and $\rho_N(t):=1-\E e^{\ii t \txi_N}$,
where as usual $\txi:=\xi-1$ and $\txi_N:=\xi_N-1$.
Since $\xi_N\dto\xi$, $\rho_N(t)\to\rho(t)$ for every $t$.
\refL{Ldom} together with the estimate \eqref{rer} show that
$F(\xi_N;\ga)=F(\rho_N;\ga)\to F(\xi;\ga)$ uniformly on every compact subset
of $\Dbadm$.
\end{proof}

\begin{lemma}\label{LD}
  If $\xi\in\ppp$ and $y\in\bbR\xo$, 
then there exists a sequence $\xi_N\in\ppp$, $N\ge1$, 
such that, as \Ntoo,
$\xi_N\dto\xi$
%$F(\xi_N;\ga)\to F(\xi;\ga)$ in $\cH(\Dbadm)$, 
and
$\bigabs{\frac{\ddx}{\ddx \ga}F(\xi_N;\ga)}_{\ga=\frac12+\ii y}\to\infty$
for any fixed real $y\neq0$.
\end{lemma}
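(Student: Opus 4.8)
The plan is to construct, for a given base distribution $\xi\in\ppp$, a perturbation that adds a tiny amount of mass far out in the tail, in such a way that the characteristic function $\tgf_N$ picks up an oscillatory term of frequency $\sim N$ and small amplitude, and that this oscillation, when integrated against $\rho_N(t)^{-\ga}$ near $t=0$, produces a contribution to $\frac{\ddx}{\ddx\ga}F(\xi_N;\ga)$ that blows up while still allowing $\xi_N\dto\xi$. Concretely, I would take $\xi_N$ to agree with $\xi$ except that a mass $\eps_N\to0$ is moved onto a value $a_N\to\infty$ (adjusting $p_0$ or $p_1$ by $O(\eps_N)$ and $O(\eps_N a_N)$ respectively to keep $\E\xi_N=1$ and then further adjusting to keep $\Var\xi_N=2$ and $\E\xi_N^3<\infty$; since $\xi$ has all relevant moments strictly in the interior of the constraints defining $\ppp$, these $O(\eps_N)$-size corrections keep $\xi_N\in\ppp$ provided $\eps_N$ is small, and one needs $\eps_N a_N^2\to0$ so the variance correction is affordable). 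This forces $\xi_N\dto\xi$ since $\eps_N\to0$.

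Next I would compute $\rho_N(t)=1-\E e^{\ii t\txi_N}=\rho(t)+\eps_N\bigpar{e^{\ii t(a_N-1)}-(\text{terms from the compensating mass shift})}+O(\eps_N^2)$. The key point is that for $t$ of order $1/\sqrt N$ (the relevant scale near the putative singularity, by the change of variables in \refT{T1/2}), the phase $t(a_N-1)$ is of order $a_N/\sqrt N$; choosing $a_N$ so that $a_N/\sqrt N$ ranges over an interval forces $\rho_N(t)^{-\ga}$ to oscillate. Differentiating $F(\xi_N;\ga)=\intii\rho_N(t)^{-\ga}\dd t$ in $\ga$ brings down a factor $-\log\rho_N(t)\sim -2\log|t|$, which near $t=0$ is integrable against the main term $t^{-2\ga}$ only for $\rea<\frac12$, and the point is that the perturbation term, carrying the oscillation at frequency $\sim a_N$, contributes an integral whose size grows like a positive power of $a_N$ (equivalently like $N^{\text{const}}$) when $\rea$ is slightly below $\frac12$ — this is what gives $|\partial_\ga F(\xi_N;\ga)|\to\infty$ at $\ga=\frac12+\ii y$. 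Here I would use \refL{Ldx} to know $F(\xi_N;\cdot)\in\cH(\Dbadx)$ so the derivative at $\frac12+\ii y$ makes sense, and \refR{RP2} to restrict all integrals to $|t|\le t_0$.

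The cleanest way to organize the estimate is: write $F(\xi_N;\ga)=F(\xi;\ga)+G_N(\ga)+(\text{error})$ where $G_N(\ga)=\eps_N\intii\bigsqpar{(\rho(t)+\eps_N\psi_N(t))^{-\ga}-\rho(t)^{-\ga}}\eps_N\qw\dd t$ captures the linear-in-$\eps_N$ part, expand $(\rho+\eps_N\psi_N)^{-\ga}=\rho^{-\ga}-\ga\eps_N\rho^{-\ga-1}\psi_N+O(\eps_N^2\rho^{-\ga-2}\psi_N^2)$, so that to leading order $G_N(\ga)\approx -\ga\eps_N\intii\rho(t)^{-\ga-1}\psi_N(t)\dd t$ where $\psi_N(t)=e^{\ii t(a_N-1)}+\cdots$ carries the high frequency. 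Then $\partial_\ga G_N$ has a dominant piece $\eps_N\intii\rho(t)^{-\ga-1}\psi_N(t)\log\rho(t)\dd t$; substituting $t=s/a_N$ and using $\rho(t)\sim t^2$ near $0$ shows this is $\eps_N a_N^{2\rga+1}$ times an integral $\int s^{-2\ga-2}e^{\ii s}\log(s/a_N)\dd s$ which, after handling the $\log a_N$ factor separately, converges to a nonzero constant (an integral of the form $\int_0^\infty s^{-2\ga-2}e^{\ii s}\dd s$, finite and nonzero for $\frac14<\rga<\frac12$ after regularizing the endpoint, or more carefully using that the $\log$ makes it converge at $s=0$). So the size of $\partial_\ga F(\xi_N;\frac12+\ii y)$ is of order $\eps_N a_N^{2}\log a_N$ up to constants; choosing, say, $a_N=2^N$ and $\eps_N=a_N^{-3/2}$ gives $\eps_N a_N^2\log a_N=a_N^{1/2}\log a_N\to\infty$ while $\eps_N a_N^2=a_N^{1/2}$ is \emph{not} $\to0$ — so I would instead take $\eps_N=a_N^{-2}\log^{1/2} a_N$, checking it still satisfies $\eps_N a_N^2\to\infty$ (it does, $=\log^{1/2}a_N$) yet $\eps_N\to0$ and $\eps_N a_N^{3}$ controls the third-moment correction; the constraints $\eps_N\to0$, $\eps_N a_N^2\to\infty$ force $a_N\to\infty$ faster, e.g.\ $a_N=2^N$, $\eps_N=2^{-N}N$, giving $\eps_N a_N^2=2^N N\to\infty$ and $\eps_N=2^{-N}N\to0$. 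One must double-check that the $O(\eps_N^2)$ remainder term's $\ga$-derivative is $o$ of the main term, which holds since it is $O(\eps_N^2 a_N^{4}\log a_N)=O(\eps_N\cdot\eps_N a_N^4\log a_N)$ and with the compensating mass placed carefully $\psi_N$ can be kept bounded so the square term only costs $\eps_N^2\int\rho^{-\rga-2}$ near $t\sim 1/a_N$, i.e.\ $\eps_N^2 a_N^{2\rga+3}$, which is $o(\eps_N a_N^{2\rga+1}\log a_N)$ iff $\eps_N a_N^2=o(\log a_N)$ — so this pushes in the \emph{opposite} direction and I would need to be more delicate.

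The main obstacle, then, is the tension in the choice of $\eps_N$ and $a_N$: I need the linear oscillatory term to dominate both the quadratic remainder and the variance-correction terms, while keeping $\xi_N\in\ppp$ (so the third moment stays finite and the variance stays exactly $2$). The resolution I expect to use is to not move mass to a single point $a_N$ but to spread a mass $\eps_N$ over an arithmetic progression of values near $a_N$ chosen so that the compensation needed for $\E\xi_N=1$ and $\Var\xi_N=2$ is automatically built in (e.g.\ a small signed measure supported on $\{0,1,a_N-1,a_N,a_N+1\}$ with prescribed first two moments zero), so that $\psi_N(t)$ is genuinely $O(|t|^2 a_N^2)$-small for $|t|\lesssim 1/a_N$ while still oscillating at frequency $a_N$ — this kills the dangerous low-order terms in the remainder and lets a modest choice like $a_N=N$, $\eps_N=N^{-3}$ work, since then $\eps_N a_N^{2}\log a_N = N^{-1}\log N\to 0$ is \emph{still wrong}. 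Realistically I would instead aim to show the weaker-looking but sufficient statement that $|\partial_\ga F(\xi_N;\frac12+\ii y)|$ is \emph{not} bounded, by choosing along a subsequence $a_N\to\infty$ and $\eps_N=a_N^{-2}(\log a_N)^{-1}\cdot N$ (so $\eps_N a_N^2=N/\log a_N\to\infty$ and $\eps_N\to 0$ provided $a_N$ grows, say, like $e^{N^2}$), and verifying that with the mean-and-variance-compensated perturbation the quadratic error is $O(\eps_N^2 a_N^{3}) = o(\eps_N a_N^{2}\log a_N)$ because $\eps_N a_N = N/(a_N\log a_N)\to 0$. With $\gao=\frac12+\ii y$ the real part of the exponent is exactly $\frac12$, but one evaluates the derivative as a limit from $\rea<\frac12$ using \refL{Ldx} (analytic extension) and the uniform convergence from \refL{LY}, so the divergence is detected at $\gao$ itself; combining this with \refL{Ldx}, \refL{LY}, \refL{LP2} and \refR{RP2} then yields \refT{Tbad}, since a function analytic in a neighborhood of $\gao$ would have bounded derivative there, contradicting the constructed sequence together with the continuity/convergence of $F(\xi_N;\cdot)$ to $F(\xi;\cdot)$.
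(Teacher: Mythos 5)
Your construction is, in essence, the one the paper uses to prove \refL{LD}: perturb $\cL(\xi)$ by a small signed measure whose far component sits at a single distant point and whose near components are chosen so that the first two moments are preserved; linearize $F(\rho_N;\ga)-F(\rho;\ga)$ in the perturbation; rescale $t\mapsto t/(\text{location})$ so the main linear term becomes $(\text{amplitude})\cdot(\text{location})^{2\ga-1}$ times a fixed oscillatory integral; and tune the amplitude so that differentiation in $\ga$, which brings down a factor $\log(\text{location})$, makes the main term diverge while the quadratic remainder stays bounded. The paper does this with amplitude $(\log N)\qqw$ at location $N$, so the derivative's main term is of order $(\log N)\qq$ and the quadratic error is $O(1)$. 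Your identification of the need for two-moment compensation (so that the perturbation of $\rho$ is $O(t^2)\times$amplitude near $t=0$, which is what makes the rescaled integral converge at the origin) is exactly the right idea.

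Two points need repair. First, the limiting oscillatory integral must actually be computed and shown to be nonzero at $\ga=\tfrac12+\ii y$; you assert this but do not check it, and you only look at $t>0$. The contributions from $t>0$ and $t<0$ are complex conjugate in structure and combine to a factor $\ii^{2\ga+1}+(-\ii)^{2\ga+1}=-2\sin(\pi\ga)$ times $2\ga\,\gG(-2\ga-1)$; a cancellation between the two half-lines is conceivable a priori and is ruled out only because $\sin\bigpar{\pi(\tfrac12+\ii y)}=\cosh(\pi y)\neq0$ and $-2-2\ii y$ is not a pole of $\gG$ when $y\neq0$. Without this verification the argument does not close. Second, your parameter bookkeeping is internally inconsistent: several intermediate choices are wrong (as you note yourself), and even your final choice is justified by the claim $\eps_N a_N^2=N/\log a_N\to\infty$, which is false for $a_N=e^{N^2}$ (it tends to $0$). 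What actually matters is only that $(\eps_N a_N^2)\log a_N\to\infty$ while $(\eps_N a_N^2)^2\log a_N=O(1)$, i.e.\ the relative amplitude $\eps_N a_N^2$ should tend to $0$ just slower than $(\log a_N)\qw$ but faster than $(\log a_N)\qqw$ fails — your final choice happens to satisfy this, but you should state the criterion correctly rather than the spurious one. Minor remaining items, all routine: the replacement of $\rho(t)$ by $t^2$ in the linear term costs $O(\text{amplitude})$ uniformly (using $\E\xi^3<\infty$ from the definition of $\ppp$, giving $\rho(t)-t^2=O(t^3)$); differentiating the $O(\cdot)$ error terms in $\ga$ requires Cauchy's estimate on a slightly larger domain; and membership of $\xi_N$ in $\ppp$ for large $N$ uses that the defining inequalities for $\xi$ are strict.
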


\begin{proof}
  Let $a_N:=(\log N)\qqw$ and let $\xi_N$ have the distribution
  \begin{equation}
	\cL(\xi_N)=\cL(\xi)
+a_N\Bigsqpar{\frac{2}{N^2}\bigpar{\gd_N-N\gd_1+(N-1)\gd_0}
-\frac{N-1}N\bigpar{\gd_2-2\gd_1+\gd_0}}
  \end{equation}
where $\gd_j$ is unit mass at $j$. Since $a_N\to0$, and 
$\P(\xi=j)>0.1>0$ for $j=0,1,2$,
this is clearly a probability distribution if $N$ is large enough.
Furthermore,  $\xi_N\dto\xi$ as \Ntoo, and 
$\E\xi_N=\E\xi$, $\E\xi_N^2=\E\xi^2$ and $\E\xi_N^3<\infty$, 
and thus $\xi_N\in\ppp$, provided $N$ is large enough.
(We assume in the rest of this proof that $N$ is large enough whenever
necessary, without further mention. 
We can define $\xi_N$ arbitrarily for small $N$.)

Let $\gf_N(t):=\E e^{\ii t\xi_N}$
and, recalling \eqref{psi1}--\eqref{psi2},
\begin{equation}\label{gD1}
  \begin{split}
\gD_N(t)
&
:=\gf_N(t)-\gf(t)
=\E e^{\ii t\xi_N} -\E e^{\ii t\xi}
\\&
=a_N\Bigsqpar{\frac{2}{N^2}\bigpar{e^{\ii Nt}-N\eit+N-1}
-\frac{N-1}N\bigpar{e^{2\ii t}-2\eit+1}}
\\&
=a_N\Bigsqpar{\frac{2}{N^2}\bigpar{\psi_1(Nt)-N\psi_1(t)}
-\frac{N-1}N\bigpar{(\ii t)^2+O(t^3)}}
\\&
=2a_NN\qww\bigpar{\psi_2(Nt)-N\psi_2(t)}
+O(a_Nt^3)
\\&
=2a_NN\qww\psi_2(Nt)+O(a_Nt^3),
  \end{split}
\end{equation}
since $\psi_2(x)=O(x^3)$.
We further define
\begin{equation}\label{tpsi}
  \tpsi(t):=2\frac{\psi_2(t)}{t^2}
=\frac{2e^{\ii t}-2-2\ii t+t^2}{t^2}
=2\frac{\psi_1(t)}{t^2}+1.
\end{equation}
Then $\tpsi$ is bounded and continuous on $\bbR$, $\tpsi(t)=O(t)$ and
$\tpsi(t)=1+O(t\qw)$. Furthermore, \eqref{gD1} yields
\begin{equation}\label{gD2}
  \gD_N(t)=a_Nt^2\tpsi(Nt)+O(a_Nt^3).
\end{equation}
In particular, $\gD_N(t)=O(a_Nt^2)$ for $|t|\le\pi$.

We further let $\tgf_N(t):=\E e^{\ii t\txi_N}$,
$\rho_N(t):=1-\tgf_N(t)$
and, using \eqref{gD2},
\begin{equation}\label{tgD}
  \begin{split}
\tgD_N(t):=\rho_N(t)-\rho(t)=-\emit\gD_N(t)
=
-a_Nt^2\tpsi(Nt)+O(a_Nt^3).
  \end{split}
\end{equation}
In particular, 
\begin{equation}
  \label{tgD2}
\tgD_N(t)=O(a_Nt^2),
\qquad |t|\le\pi.
\end{equation}

Let $\rho_0(t):=t^2$, and let $\gd(t):=\rho(t)-\rho_0(t)$.
Then $\gd(t)=O(t^3)$, since $\Var\xi=2$ and $\E\xi^3<\infty$.
The general formula, for any twice continuously differentiable function $f$,
\begin{equation*}
f(x+y+z)-f(x+y)-f(x+z)+f(x)
=
yz\intoi\intoi f''(x+sy+tz)\dd s\dd t  
\end{equation*}
implies together with \eqref{rer} and \eqref{tgD2},
for $\ga\in \Dbad$,
\begin{equation}
  \begin{split}
  \bigabs{(\rho(t)+\tgD_N(t))^{-\ga}-\rho(t)^{-\ga}
&-\bigpar{(\rho_0(t)+\tgD_N(t))^{-\ga}-\rho_0(t)^{-\ga}}
}
\\&%\qquad\qquad
\le
C|\tgD_N(t)|\, |\gd(t)|\, |\ga|\, |\ga+1|\, |t|^{-2(\rea+2)}	
\\&
%=O\bigpar{a_N (1+|\ga|)^2|t|^{2+3-2\rea-4}}
=O\bigpar{a_N |\ga|^2|t|^{1-2\rea}}.
  \end{split}
\end{equation}
Hence, integrating over $t$ and recalling \eqref{FF},
\begin{equation}\label{DF1}
  F(\rho+\tgD_N;\ga)-  F(\rho;\ga)
-\bigpar{  F(\rho_0+\tgD_N;\ga)-  F(\rho_0;\ga)}
=\OHD(a_N).
\end{equation}

Next, let $\xgdn(t):=-a_Nt^2\tpsi(Nt)$. Then $\tgdn(t)-\xgdn(t)=O(a_Nt^3)$ by
\eqref{tgD}, and thus, by the mean value theorem and \eqref{tgD2},
for $|t|\le\pi$,
\begin{equation*}
  \begin{split}
  \bigabs{
(\rho_0(t)+\tgD_N(t))^{-\ga}-(\rho_0(t)+\xgdn(t))^{-\ga}
}
%\\&
&\le
C |\tgD_N(t)-\xgdn(t)|\, |\ga|\, |t|^{-2(\rea+1)}	
\\&
%=O\bigpar{a_N (1+|\ga|)^2|t|^{2+3-2\rea-4}}
=O\bigpar{a_N |\ga||t|^{1-2\rea}}.
  \end{split}
\end{equation*}
Hence, by an integration,
\begin{equation}\label{DF2}
F(\rho_0+\tgD_N;\ga)-  F(\rho_0+\xgdn;\ga)
=\OHD(a_N).
\end{equation}

Now consider
$F(\rho_0+\xgdn;\ga)-  F(\rho_0;\ga)$.
Let
$\chi(t):=\ett{|t|>1}$.
Then, considering first $t>0$,
for $\ga\in \Dbadm$,
\begin{equation}\label{eleonora}
  \begin{split}
\intoi
&\bigsqpar{(\rho_0(t)+\xgdn(t))^{-\ga}-\rho_0(t)^{-\ga}}\dd t
=
\intoi t^{-2\ga}\bigsqpar{\xpar{1-a_N\tpsi(Nt)}^{-\ga}-1}\dd t
\\&
=
\intoi t^{-2\ga}\bigsqpar{(1-a_N\tpsi(Nt))^{-\ga}-(1-a_N\chi(Nt))^{-\ga}}\dd
t
\\&\qquad\qquad{}
+\bigsqpar{(1-a_N)^{-\ga}-1}\int_{1/N}^1 t^{-2\ga}\dd t
\\&
=
N^{2\ga-1}
\int_0^N t^{-2\ga}\bigsqpar{(1-a_N\tpsi(t))^{-\ga}-(1-a_N\chi(t))^{-\ga}}\dd t
\\&\qquad\qquad{}
+\bigsqpar{(1-a_N)^{-\ga}-1}\frac1{1-2\ga}
\bigpar{1-N^{2\ga-1}}.
  \end{split}
\raisetag{\baselineskip}
\end{equation}
Since $\tpsi(t)-\chi(t)=O\bigpar{|t|\land|t\qw|}$, 
and $a_N\chi(t)=O(a_N)=o(1)$, with $\chi(t)=0$ for $0 \leq t<1$,
a Taylor expansion yields,
uniformly for $t\in\bbR$,
\begin{align}\label{D13}
%\begin{multline}
&(1-a_N\tpsi(t))^{-\ga}-(1-a_N\chi(t))^{-\ga}
%\\
%=\ga a_N\bigpar{\tpsi(t)-\chi(t)}\bigpar{1+\OHD(a_N)}
\notag\\
&\hskip2em
=\ga a_N\bigpar{\tpsi(t)-\chi(t)}
  +\OHD\bigpar{a_N|\tpsi(t)-\chi(t)|(a_N|\tpsi(t)|+a_N\chi(t))}  
\notag\\
&\hskip2em
=\ga a_N\bigpar{\tpsi(t)-\chi(t)}+\OHD\bigpar{a_N^2(|t|^2\land|t|\qw)}.  
%\end{multline}
\end{align}
Using \eqref{D13}
and a Taylor expansion of $(1-a_N)^{-\ga}$ in \eqref{eleonora},
we obtain
for $\ga\in \Dbadm$,
%\begin{equation}
%  \begin{split}
	\begin{multline}\label{erika}
\intoi
\bigsqpar{(\rho_0(t)+\xgdn(t))^{-\ga}-\rho_0(t)^{-\ga}}\dd t
\\
=
N^{2\ga-1}
\int_0^N t^{-2\ga}\ga a_N\bigpar{\tpsi(t)-\chi(t)}\dd t
-\frac{\ga a_N}{1-2\ga} N^{2\ga-1}
\\
+\OHDx\bigpar{a_N^2 N^{2\ga-1}}
+\OHDx(a_N).	  
	\end{multline}
%  \end{split}
%\end{equation}
Furthermore,
using again $\tpsi(t)-\chi(t)=O\bigpar{|t\qw|}$, 
\begin{equation}\label{D14}
  \int_N^\infty\!t^{-2\ga}\bigpar{\tpsi(t)-\chi(t)}\dd t
=O\bigpar{N^{-2\rea}},
\end{equation}
so we may as well integrate to $\infty$ on the \rhs{} of \eqref{erika}.

For $\ga\in \Dbadm$, recalling \eqref{tpsi},
\begin{equation}\label{matt}
  \begin{split}
\intoo\!t^{-2\ga}\bigpar{\tpsi(t)-\chi(t)}\dd t
&=	
2\intoo \psi_1(t)t^{-2\ga-2}\dd t
+\intoi t^{-2\ga}\dd t
%\\&
%=	
%2\intoo \psi_1(t)t^{-2\ga-2}\dd t
%+\frac{1}{1-2\ga}.
  \end{split}
\end{equation}
Furthermore, if $\ga\in \Dbadm$ and $\Re\zeta\ge0$,
then
\begin{equation}\label{per}
\intoo\!\bigpar{e^{-\zeta t}-1+\zeta t}t^{-2\ga-2} \dd t =
\zeta^{2\ga+1}\gG(-2\ga-1);
\end{equation}
the case $\zeta=1$ is well known \cite[5.9.5]{NIST},
the case $\zeta>0$ follows by a change of variables,
the case $\Re\zeta>0$ follows by analytic continuation,
and the case $\Re\zeta\ge0$ follows by continuity.
Recalling \eqref{psi1}, we take
$\zeta=-\ii$ in \eqref{per}, and obtain from \eqref{matt},
for $\ga\in \Dbadm$.
\begin{equation}\label{matta}
  \begin{split}
\intoo\!t^{-2\ga}\bigpar{\tpsi(t)-\chi(t)}\dd t
&=	
2(-\ii)^{2\ga+1}\gG(-2\ga-1)
+\frac{1}{1-2\ga}.
  \end{split}
\end{equation}

Combining \eqref{erika}--\eqref{D14} and \eqref{matta},
we obtain (for $\alpha \in \Dbadm$)
%\begin{equation}
%  \begin{split}
	\begin{multline}\label{ahm}
\intoi
\bigsqpar{(\rho_0(t)+\xgdn(t))^{-\ga}-\rho_0(t)^{-\ga}}\dd t
=
2\ga (-\ii)^{2\ga+1}\gG(-2\ga-1) a_N N^{2\ga-1}
\\
+\OHDx\bigpar{a_N^2 N^{2\ga-1}}
+\OHDx(a_N).	  
	\end{multline}
%  \end{split}
%\end{equation}
The integral over $(-1,0)$ yields the same result with 
$(-\ii)^{2\ga+1}$ replaced by $\ii^{2\ga+1}$, \eg{} by conjugating \eqref{ahm}
  and $\ga$.
Consequently,
	\begin{multline}\label{DF3}
F(\rho_0+\xgdn;\ga)-F(\rho_0;\ga)
=
2\ga \bigpar{\ii^{2\ga+1}+(-\ii)^{2\ga+1}}\gG(-2\ga-1) a_N N^{2\ga-1}
\\
+\OHDx\bigpar{a_N^2 N^{2\ga-1}}
+\OHDx(a_N).	  
	\end{multline}
For convenience, we write
\begin{equation}
  G(\ga):=2\ga\bigpar{\ii^{2\ga+1}+(-\ii)^{2\ga+1}}\gG(-2\ga-1)
=2\ga\bigpar{\ii e^{\ii\pi\ga}-\ii e^{-\ii\pi\ga}}\gG(-2\ga-1).
\end{equation}

Combining \eqref{DF1}, \eqref{DF2}, and \eqref{DF3} yield,
for $\ga\in \Dbadm$,
\begin{equation}\label{sw}
  \begin{split}
F(\rho_N;\ga)	
=
F(\rho+\tgD_N;\ga)	
%\\
&
=F(\rho;\ga)
+ a_N G(\ga) N^{2\ga-1}
\\&\qquad \qquad
+\OHDx\bigpar{a_N^2 N^{2\ga-1}}
+\OHDx(a_N)	  .
  \end{split}
\end{equation}
By \refL{Ldx}, 
all terms in \eqref{sw} are analytic in $\Dbadx$, and thus \eqref{sw} holds for
$\ga\in \Dbadx$.

Note that if $f_N$ and $g_N$ are functions such that
$f_N(\ga)=\OHDx(g_N(\ga))$,
then $f_N(\ga)=g_N(\ga)h_N(\ga)$ with $h_N(\ga)=\OHDx(1)$. By Cauchy's
estimate, $h_N'(\ga)=\OHDx(1)$, and it follows that
$f'_N(\ga)=\OHDx(g_N(\ga))+\OHDx(g_N'(\ga))$.
Hence, taking derivatives in \eqref{sw} and then putting $\ga=\frac12+\ii y$
for a fixed $y\neq0$ yields
\begin{align}\label{sjw}
%F'(\xi_N;\ga)=	
F'(\rho_N;\ga)	
&
=F'(\rho;\ga)
+2 (\log N) a_N G(\ga) N^{2\ga-1}
%\\&\qquad
+O\bigpar{a_N^2 \log N}
+O(a_N)
\notag\\&
=
2 G(\ga) (\log N) a_N  N^{2\ii y}+O(1)
.
\end{align}
Since $G(\ga)=-4\ga (\cosh \pi y) \gG(-2-2y\ii)\neq0$, $|N^{2\ii y}|=1$ and
$a_N \log N=(\log N)\qq\to\infty$, 
\eqref{sjw} shows that 
$|F'(\xi_N;\frac12+\ii y)|=|F'(\rho_N;\frac12+\ii y)|\to\infty$ 
as \Ntoo.
\end{proof}

\begin{proof}[Proof of \refT{Tbad}]
  Let $(y_n)_1^\infty$ be an enumeration of all non-zero rational numbers.
We shall construct sequences $x_n\in(\frac{1}4,\frac12)$ and $\xi_n\in\ppp$,
$n=1,2,\dots$,
such that, with $z_n:=x_n+\ii y_n\in \Dbadm$, 
\begin{equation}\label{ind}
  |F'(\xi_n,z_k)|>k, \qquad k=1,\dots,n,
\end{equation}
and, 
furthermore, the total variation distance 
\begin{equation}
  \label{dtv}
\dtv(\xi_n,\xi_{n-1})<2^{-n}.
\end{equation}

We construct the sequences inductively. Suppose that $\xi_{n-1}$ is
constructed. (For $n=1$, we let $\xi_0$ be any element of $\ppp$.)
%Let $\zeta:=\frac12+\ii y_n$.
By \refL{LD}, there exists a sequence $\xi_{n-1,N}\in\ppp$ such that,
as \Ntoo,
$\xinn\dto\xi_{n-1}$ and $|F'(\xinn;\zzn)|\to\infty$.
By \refL{LY}, then $F(\xinn;\ga)\to F(\xi_{n-1};\ga)$ in $\cH(\Dbadm)$.
This implies
$F'(\xinn;\ga)\to F'(\xi_{n-1};\ga)$ in $\cH(\Dbadm)$, and in particular,
$F'(\xinn;z_k)\to F'(\xi_{n-1};z_k)$ for $1\le k\le n-1$.
Since \eqref{ind} holds for $n-1$ by the induction hypothesis,
it follows that
$|F'(\xinn;z_k)|>k$ for $1\le k\le n-1$ for all large $N$.
Furthermore, if we choose $N$ large enough, $|F'(\xinn;\zzn)|>n$ and
$\dtv(\xinn,\xi_{n-1})<2^{-n}$. 

We choose a large $N$ such that these properties hold and let
$\xi_n:=\xinn$.
Then \eqref{ind} holds for $k=1,\dots,n-1$. Furthermore, since
$\xi_n\in\ppp$, $F(\xi_n;\ga)\in\cH(\Dbadx)$, 
and thus $F'(\xi_n;\ga)$ is
continuous in $\Dbadx$. Hence $|F'(\xi_n;x+\ii y_n)|\to |F'(\xi_n;\zzn)|$ as
$x\to\frac12$, and we can choose $x_n\in(\frac{1}4,\frac12)$ with
  $\frac12-x_n<\frac{1}n$ such that 
$|F'(\xi_n;x+\ii y_n)|>n$.

This completes the construction of $x_n$ and $\xi_n$.
By \eqref{dtv}, the distributions $\cL(\xi_n)$ form a Cauchy sequence in
total variation distance, so there exists a random variable $\xi$ with
$\xi_n\dto\xi$. Clearly, $\xi$ is non-negative and integer-valued.
Moreover, since $\xi_n\in\ppp$ we have $\E\xi_n^2=\Var\xi_n+(\E\xi_n)^2=3$,
for every $n$, and thus the sequence $\xi_n$ is uniformly integrable, so
$\E\xi=\lim_\ntoo\E\xi_n=1$. Furthermore, by Fatou's lemma,
$\E\xi^2 \leq 3 <\infty$. 
Note that $\xi$ does not necessarily belong to $\ppp$; in
fact, it is easily seen from \eqref{Finf} below that $\xi\notin\ppp$.
Nevertheless \eqref{rer} holds for every $\xi_n$ (with the same $c$) and thus 
\eqref{rer} holds for $\xi$ too. In particular $\P(\xi\neq1)>0$ so $\Var\xi>0$.

\refL{LY} shows that $F(\xi_n;\ga)\to F(\xi;\ga)$ in $\cH(\Dbadm)$, and thus
$F'(\xi_n;\ga)\to F'(\xi;\ga)$ for every $\ga\in \Dbadm$.
Hence, \eqref{ind} implies
\begin{equation}\label{Finf}
  |F'(\xi;z_k)|\ge k
\end{equation}
for every $k$.
Thus, $|F'(\xi;z_n)|\to\infty$ as \ntoo.

Now take any $y\in\bbR$ and let $\ga_0:=\frac12+\ii y$. There is an infinite
number of points $y_n$ in each neighborhood of $y$, so we can find a
subsequence 
converging to $y$. Since $x_n\to\frac12$, it
follows that there is a subsequence of $z_n=x_n+\ii y_n$ 
that converges to $\ga_0$. 
Suppose first that $y\neq0$, so $\ga_0\neq\frac12$.
Then it follows from \refL{LP2} (with \refR{RP2}) and
\refT{TM} that, as \ntoo{} along the subsequence,
\begin{equation}
  \mu'(z_n)=\frac{1}{2\pi}\gG(z_n)F'(\xi;z_n)+O(1)
\end{equation}
and thus, by \eqref{Finf},  $|\mu'(z_n)|\to\infty$. 

This proves the claim in
\refT{Tbad} for every $\ga_0$ with $\Re\ga_0=\frac12$ and
$\ga_0\neq\frac12$.
The case $\ga_0=\frac12$ follows easily, 
either by noting that the set of $\ga_0$ for which the
claim holds is closed, or simply by \eqref{aaa}. 
\end{proof}

\section{Moments}\label{Smom} 
In this section we prove \refTs{T1mom} and \ref{TXmom} on
moments of $X_n(\ga)$ and of the limits $Y(\ga)$.
The section is largely based on \citet{FillK03} and \cite{FillK04}, and uses 
the methods of \cite{FillFK}, also presented in \cite[Section VI.10]{FS}.

We assume for simplicity throughout this section that $\xi$ has span 1.
The general case follows by minor modifications of standard type.

\subsection{More notation and preliminaries}
\label{S:more_notation}
Recall that $\cT$ is the random \GWt{} defined by the offspring distribution
$\xi$. 
Let $p_k:=\P(\xi=k)$ denote the values of the probability mass function
for~$\xi$,
%point probabilities of 
%%the offspring distribution 
%$\xi$,  
and let $\Phi$ be its \pgf:
\begin{align}\label{Phi}
  \Phi(z):=\E z^\xi=\sumko p_k z^k.
\end{align}
Similarly, let $q_n:=\P(|\cT|=n)$, and let~$y$ denote the corresponding
\pgf:
\begin{align}\label{yz}
  y(z):=\E z^{|\cT|} = \sumn \P\bigpar{|\cT|=n}z^n
=\sumn q_n z^n.
\end{align}
If $\cT$ has root degree $k$, denote the subtrees rooted at the children of
the root by $\cT_1,\dots,\cT_k$; note that, conditioned on $k$, these are
independent copies of $\cT$.
By conditioning on the root degree, we thus obtain the standard formula
\begin{align}\label{ma}
  y(z) &
= \sumko p_k \E [z^{1+|\cT_1|+\dotsm+|\cT_k|}]
=\sumko p_k z \bigpar{\E [z^{|\cT|}]}^k
=z\sumko p_k  {y(z)}^k
\notag\\&\phantom:
=z\Phi\yz
.\end{align}

A \GDD{} is a complex domain of the type
\begin{align}\label{GDD}
\set{z:|z|<R,\, z\neq1,\,|\arg(z-1)|>\gth}  
\end{align}
where $R>1$ and $0<\gth<\pi/2$, see \cite[Section VI.3]{FS}.
A function is \emph{\gda} if it is analytic in some \GDD{}
(or can be analytically continued to such a domain). 
Under our standing assumptions $\E\xi=1$ and $0<\Var\xi<\infty$,
the generating function $y(z)$ is \gda; moreover, 
as $z\to1$ 
in some  \GDD,
\begin{align}\label{yz1}
  y(z)=1-\sqrt2 \gs\qw(1-z)\qq+o\bigpar{|1-z|\qq}
%\qquad z\to1
,\end{align}
see \cite[Lemma A.2]{SJ167}.
This is perhaps more well-known if $\xi$ has some exponential moment, and then 
\eqref{yz1} may be improved to a full asymptotic expansion, and in particular
\begin{align}\label{yz2}
  y(z)=1-\sqrt2 \gs\qw(1-z)\qq+\Oz{},
\end{align}
see \eg{} 
%\cite[Theorem VI.6 page 404]{FS}.
\cite[Theorem VI.6]{FS}.
In fact, \eqref{yz2} holds provided only $\E\xi^3<\infty$.  This
follows easily from \eqref{ma}, see \refL{Lgdy}.

In the present section, asymptotic estimates similar to \eqref{yz1} and
\eqref{yz2} should always be interpreted as holding when 
$z\to1$ in a suitable \GDD, even when not said so explicitly; the domain may be different each time.

\begin{remark}
In most parts of the present section, we will only use the assumption
$\E\xi^2<\infty$ and the general \eqref{yz1}.
If we assume the $\E\xi^3<\infty$, and thus \eqref{yz2}
holds, then the error estimates below can be improved, 
and explicit error estimates can be obtained in \refT{TXmom}; see \cite{FillK04} 
where this is done in detail for a special $\xi$ using similar arguments.
In fact, it can be checked that if $\E\xi^3<\infty$, then
all $o$ terms in the proof below can be  shown to be of (at most)  
the same order as the bounds given in \cite{FillK04} for
the corresponding terms. 
Further, when $\xi$ has an exponential moment, 
a full asymptotic expansion of the mean is derived in 
\cite[Section 5.2]{FillFK}; 
it seems possible that this can be extended to higher moments, but we have
not pursued this.
\end{remark}

In some formulas below, certain unspecified polynomials appear as 
``error terms''.
(These are best regarded as polynomials in $1-z$.)
Let $\cP$ be the set of all polynomials, and, for any real $a$, let
\begin{align}\label{Pa}
\cP_a:=\set{P(z)\in\cP:\deg(P(z))<a}.   
\end{align}
Note that if $a\le0$, then $\cP_a=\set0$, and thus terms in $\cP_a$ vanish
and can be ignored.
In the formulas below, a restriction of the type $P(z)\in\cP_{a}$, i.e.,
$\deg(P(z))<a$, will always be a triviality, 
since higher powers of $1-z$ can be absorbed in an $O$ or $o$ term.

Recall that
the polylogarithm function is defined, for $\ga\in\bbC$, by
\begin{align}\label{Li}
  \Li_\ga(z):=\sumn n^{-\ga}z^n,
\qquad |z|<1;
\end{align}
see \cite[Section VI.8]{FS}, % stated for real alpha (Z is a typo)
\cite[\S25.12]{NIST}, % general complex
or \refApp{Apoly}.
It is well known that $\Li_{\ga}(z)$ is \gda; in fact, it can be 
analytically continued
to $\bbC\setminus[1,\infty)$. Moreover, 
if $\ga\notin\set{1,2,\dots}$, then,
as $z\to1$,
\begin{align}\label{li}
\Li_{\ga}(z) = \gG(1-\ga)(1-z)^{\ga-1} +P(z)+ O\bigpar{|1-z|^{\rga}},
\qquad P(z)\in\cP_{\rga},
\end{align}
%for some polynomial $P(z)$ of degree $<\rga$;
see
%\cite[Theorem VI.7 p.~408]{FS}
\cite[Theorem VI.7]{FS} 
or \cite{Flajolet1999}, 
where a complete asymptotic expansion is given;
see also \refApp{Apoly}.
In particular, if $\rga\le0$, then $P(z)$ vanishes and so \eqref{li}
simplifies. 

Recall also that the \emph{Hadamard product} $A(z)\odot B(z)$
of two power series
$A(z)=\sumno a_n z^n$ and $B(z)=\sumno b_n z^n$
is defined by 
\begin{align}\label{hadamard}
  A(z)\odot B(z) := \sumno a_n b_n z^n.
\end{align}
As a simple example, for any complex $\ga$ and $\gb$,
\begin{align}\label{lili}
  \Li_\ga(z)\odot\Li_\gb(z)=\Li_{\ga+\gb}(z).
\end{align}

We will use some results on Hadamard products, 
essentially taken from \cite{FillFK}.
In the next lemma,
Part \ref{LFFKO} %the $O$ estimate 
is \cite[Propositions 9 and 10(i)]{FillFK}, and
\ref{LFFKo} %the $o$ estimate 
follows by the same arguments;
the 
proof of $\gD$-analyticity of the Hadamard product
given for 
\cite[Proposition 9]{FillFK} holds for any \gda{} functions.
(For  the case $a+b+1\in\NNo$, see \cite{FillFK} and \cite{FS}.)

\begin{lemma}[\cite{FillFK}]\label{LFFK}
  If $g(z)$ and $h(z)$ are \gda{}, then $g(z)\odot h(z)$ is \gda.
Moreover, suppose that $a$ and $b$ are real with $a+b+1\notin\NNo$; then the
following holds, as $z\to1$ in a suitable \GDD.
\begin{romenumerate}
  
\item \label{LFFKO}
If $g(z)=O(|1-z|^a)$ and $h(z)=O(|1-z|^b)$, then
\begin{align}
  \label{lffkO}
g(z)\odot h(z)=P(z)+\Oz{a+b+1},
\qquad P(z)\in\cP_{a+b+1}.
\end{align}

\item \label{LFFKo}
If $g(z)=O(|1-z|^a)$ and $h(z)=o(|1-z|^b)$, then
\begin{align}
  \label{lffko}
g(z)\odot h(z)=P(z)+\oz{a+b+1},
\qquad P(z)\in\cP_{a+b+1}.
\end{align}
\end{romenumerate}
\end{lemma}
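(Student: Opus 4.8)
The plan is to derive part~\ref{LFFKO} directly from \citet{FillFK} and to observe that part~\ref{LFFKo} needs only a cosmetic modification of their argument. Recall the integral representation of the Hadamard product: if $g$ and $h$ are analytic near~$0$, then for $z$ in a small punctured neighbourhood of~$0$,
\begin{equation*}
  (g\odot h)(z)=\frac{1}{2\pi\ii}\oint_\gamma g(w)\,h\Bigpar{\frac zw}\,\frac{\dd w}{w},
\end{equation*}
with $\gamma$ a small positively oriented circle about~$0$. When $g$ and $h$ are \gda, say analytic in \GDD{}s $\gD_g$ and $\gD_h$, one deforms $\gamma$ --- keeping $w$ inside $\gD_g$ and $z/w$ inside $\gD_h$ --- into the ``Hadamard contour'' used in the proof of \cite[Proposition~9]{FillFK}, which stays bounded away from the two candidate singular points $w=1$ and $w=z$. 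The deformation exhibits the right-hand side as an analytic function of~$z$ in some \GDD; since this step uses nothing about $g$ and $h$ beyond $\gD$-analyticity, it gives the first assertion of the lemma in the stated generality.

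For the singular expansion I would split the Hadamard contour into three pieces: an arc local to $w=1$, an arc local to $w=z$, and a central arc bounded away from both. On the central arc the integrand is analytic and uniformly bounded, so it contributes an analytic function of~$z$ --- hence, after Taylor expansion at $z=1$, a polynomial term plus a remainder of arbitrarily high order. On the arc near $w=1$ one uses $g(w)=O(|1-w|^a)$ together with the analyticity of $h(z/w)$ there; symmetrically, on the arc near $w=z$ one uses $h(z/w)=O(|1-z/w|^b)$. Assembling the two local contributions is precisely the ``Hadamard transfer'' of \cite{FillFK}: it produces a term of order $(1-z)^{a+b+1}$ together with a polynomial of degree $<a+b+1$, and the hypothesis $a+b+1\notin\NNo$ ensures this exponent is not a non-negative integer, so that no $\log(1-z)$ appears and the singular part is a clean power of $1-z$. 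This is the content of \cite[Proposition~10(i)]{FillFK} and yields \eqref{lffkO}.

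For \eqref{lffko} I would rerun the same computation, changing only the local estimate on the arc near $w=z$: there, replace $h(z/w)=O(|1-z/w|^b)$ by $h(z/w)=o(|1-z/w|^b)$, i.e.\ use that for every $\eps>0$ one has $|h(z/w)|\le\eps\,|1-z/w|^b$ once $z/w$ is close enough to~$1$. Nothing changes on the other two arcs, since there $h$ enters only through its analyticity or through an $O$-bound while $g$ still contributes $\Oz a$; carrying the $\eps$ through the local estimate on the arc near $w=z$ downgrades the $\Oz{a+b+1}$ remainder from that piece to $\oz{a+b+1}$, leaving the polynomial part untouched. Hence \eqref{lffko} follows. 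The main obstacle --- and the reason to quote \cite{FillFK} (equivalently \cite[Section~VI.10]{FS}) rather than redo it --- is the bookkeeping: verifying that the three pieces assemble into exactly a $(1-z)^{a+b+1}$ singular term plus a polynomial in $\cP_{a+b+1}$, with all remainders of strictly smaller order uniformly as $z\to1$ in the $\gD$-domain, and confirming that the excluded case $a+b+1\in\NNo$ is the sole source of logarithmic terms. The rest is routine contour manipulation.
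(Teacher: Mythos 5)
Your proposal is correct and matches the paper's treatment: the paper simply cites \cite{FillFK} (Propositions 9 and 10(i)) for part (i) and notes that part (ii) follows by the same contour argument with the $O$-bound replaced by an $o$-bound on the relevant piece, exactly as you describe. One small imprecision in your sketch: on the arc of the Hadamard contour near $w=1$ the point $z/w$ is also close to $1$ (the contour is arranged so that $|1-w|+|1-z/w|\gtrsim|1-z|$), so one needs the growth bound on $h$ there as well, not merely its analyticity --- but this is part of the bookkeeping you rightly defer to \cite{FillFK}.
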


The next lemma is a simplified version of \cite[Proposition 8]{FillFK};
that proposition
gives 
 (when $\ga,\gb,\ga+\gb\notin\bbZ$)
a complete asymptotic expansion,
and in particular a more explicit error term for our~\eqref{lih2}.

\begin{lemma}[\cite{FillFK}]\label{LIH2}
  Suppose that $\rga+\Re\gb+1\notin\NNo$. Then, as $z\to1$ in a suitable \GDD,
\begin{multline}\label{lih2}
(1-z)^\ga\odot(1-z)^\gb 
\\
= \frac{\gG(-\ga-\gb-1)}{\gG(-\ga)\gG(-\gb)}  (1-z)^{\ga+\gb+1}
+P(z)
+ o\bigpar{|1-z|^{\Re\ga+\Re\gb+1}},
\\
P(z)\in\cP_{\rga+\rgb+1}
.\end{multline}
\end{lemma}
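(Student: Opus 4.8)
The $\gD$-analyticity of $(1-z)^\ga\odot(1-z)^\gb$ is immediate from \refL{LFFK}, since $(1-z)^\ga$ and $(1-z)^\gb$ are \gda; it will also reappear below. For the asymptotics the quickest self-contained route is to recognize the Hadamard product as a Gauss hypergeometric function. Since $[z^n](1-z)^{-s}=\binom{n+s-1}{n}=(s)_n/n!$ (Pochhammer rising factorial), we have $[z^n](1-z)^\ga=(-\ga)_n/n!$, and hence
\begin{equation*}
(1-z)^\ga\odot(1-z)^\gb=\sumno\frac{(-\ga)_n(-\gb)_n}{(1)_n}\,\frac{z^n}{n!}={}_2F_1(-\ga,-\gb;1;z),
\end{equation*}
an identity of analytic functions for $|z|<1$, and then on $\bbC\setminus[1,\infty)$ by analytic continuation (which re-proves the $\gD$-analyticity).

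The plan is then to apply the classical connection formula for ${}_2F_1$ at its singularity $z=1$ (DLMF \cite[15.8.4]{NIST}), valid whenever $c-a-b\notin\bbZ$, i.e.\ here whenever $\ga+\gb+1\notin\bbZ$. With $a=-\ga$, $b=-\gb$, $c=1$ it reads
\begin{equation*}
{}_2F_1(-\ga,-\gb;1;z)
=\frac{\gG(1+\ga+\gb)}{\gG(1+\ga)\gG(1+\gb)}\,{}_2F_1(-\ga,-\gb;-\ga-\gb;1-z)
+\frac{\gG(-\ga-\gb-1)}{\gG(-\ga)\gG(-\gb)}\,(1-z)^{\ga+\gb+1}\,{}_2F_1(1+\ga,1+\gb;2+\ga+\gb;1-z).
\end{equation*}
In the first summand the hypergeometric factor is a power series in $1-z$ convergent near $z=1$ (its third parameter $-\ga-\gb$ is not a non-positive integer, again because $\rga+\rgb+1\notin\NNo$); truncating that power series at degrees below $\rga+\rgb+1$ produces a polynomial $P(z)\in\cP_{\rga+\rgb+1}$, and the remaining tail, whose lowest power exceeds $\rga+\rgb+1$, is $\oz{\rga+\rgb+1}$. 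In the second summand the hypergeometric factor is $1+\Oz1$, so that summand equals $\frac{\gG(-\ga-\gb-1)}{\gG(-\ga)\gG(-\gb)}(1-z)^{\ga+\gb+1}+\Oz{\rga+\rgb+2}$, and $\Oz{\rga+\rgb+2}=\oz{\rga+\rgb+1}$. Collecting the two pieces gives \eqref{lih2}. Since the connection formula and all hypergeometric factors are single-valued analytic on a slit neighbourhood of $1$, the estimate holds as $z\to1$ throughout a suitable \GDD, with $(1-z)^{\ga+\gb+1}$ taken in the principal branch.

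The one place needing extra care — the main (minor) obstacle — is the degenerate parameter set. The hypothesis $\rga+\rgb+1\notin\NNo$ rules out $\ga+\gb+1$ being a nonnegative integer, but not a negative one, i.e.\ $\ga+\gb\in\set{-2,-3,\dots}$, where the connection formula above degenerates (a logarithmic form is needed). In that regime, however, $\rga+\rgb+1<0$, so $\cP_{\rga+\rgb+1}=\set0$ and \eqref{lih2} reduces to $(1-z)^\ga\odot(1-z)^\gb=\frac{\gG(-\ga-\gb-1)}{\gG(-\ga)\gG(-\gb)}(1-z)^{\ga+\gb+1}+\oz{\rga+\rgb+1}$; this I would obtain by perturbing, say $\ga\mapsto\ga+\ep$, applying the non-degenerate case, and letting $\ep\to0$, using continuity in $\ga$ of both sides after subtracting the singular term. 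As an alternative to the whole argument, one may simply cite \cite[Proposition~8]{FillFK}, of which \eqref{lih2} is by design the simplification: there a complete asymptotic expansion in descending powers of $1-z$ is given (extracted from Stirling's formula plus term-by-term singularity analysis, exactly as with \refL{LFFK} and \eqref{li}, and using \eqref{lili}), and retaining only the leading singular term together with the finitely many genuine polynomial terms as $P(z)$ yields \eqref{lih2} at once — the bookkeeping of that polynomial part being again the only subtlety.
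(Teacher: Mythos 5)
Your main route---identifying $(1-z)^\ga\odot(1-z)^\gb$ with ${}_2F_1(-\ga,-\gb;1;z)$ and invoking the classical connection formula at $z=1$---is genuinely different from the paper's proof, which instead converts each factor to a polylogarithm via \eqref{li}, multiplies using the exact identity \eqref{lili}, controls cross terms with \refL{LFFK}, and converts back (treating $\ga\in\NNo$ or $\gb\in\NNo$ separately, where the left-hand side is a polynomial and the leading coefficient vanishes because $1/\gG(-\ga)=0$). Your approach is self-contained modulo classical hypergeometric facts and avoids the Hadamard-product machinery of \cite{FillFK}; in the non-degenerate regime $\ga+\gb+1\notin\bbZ$ it is correct, and you rightly check that the hypothesis forces the third parameter $-\ga-\gb$ of the first ${}_2F_1$ to avoid the non-positive integers. (One small caveat on your fallback: \cite[Proposition~8]{FillFK} as stated covers only the case where none of $\ga,\gb,\ga+\gb$ is an integer, which is exactly why the paper supplements it.)

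The genuine gap is the degenerate case $\ga+\gb\in\set{-2,-3,\dots}$, which you correctly isolate but do not correctly dispose of. Perturbing $\ga\mapsto\ga+\ep$ and ``letting $\ep\to0$'' does not work term by term: as $\ep\to0$ the two summands of the connection formula blow up individually---the prefactor $\gG(1+\ga+\gb+\ep)$ of the first summand has a simple pole, and the Taylor coefficients of $F(1+\ga+\ep,1+\gb;2+\ga+\gb+\ep;1-z)$ from order $-\ga-\gb-1$ onwards have poles in $\ep$---and these singularities cancel only after the summands are combined, producing a logarithmic term in the limit. Hence the little-$o$ estimate you get for each fixed $\ep\neq0$ is not uniform in $\ep$, and pointwise continuity of the two sides (after subtracting the singular term) does not transfer the asymptotic statement to $\ep=0$; the uniformity is precisely what is at stake. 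Two clean repairs: invoke the logarithmic connection formula \cite[15.8.10--15.8.12]{NIST} directly, noting that in this regime $\rga+\rgb+1<0$ so the extra terms, including the one involving $\log(1-z)$, are $O\bigpar{1+|\log|1-z||}=\oz{\rga+\rgb+1}$; or follow the paper's polylogarithm route, where \eqref{li} applies to $\Li_{\ga+\gb+2}$ without degeneracy precisely because the hypothesis excludes $\ga+\gb+2\in\set{1,2,\dots}$.
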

\begin{proof}
  The case when none of $\ga,\gb,\ga+\gb$ is an integer is part of
\cite[Proposition 8]{FillFK}.
% (where a complete asymptotic expansion is given,
%and in particular a more explicit error term for \eqref{lih2}).

In general, we use arguments from \cite{FillFK}.
If neither $\ga$ nor $\gb$ is a non-negative integer,
%and $\Re(\ga+\gb+1)<0$,
the result follows easily from \eqref{li}, \eqref{lili}, and \refL{LFFK}, 
which then imply that 
\begin{align}
&  \gG(-\ga)(1-z)^\ga\odot\gG(-\gb)(1-z)^\gb 
\notag\\&
=\bigpar{\Li_{\ga+1}(z)+ P_1(z)+ \oz{\rga}}
\odot
\bigpar{\Li_{\gb+1}(z)+ P_2(z)+ \oz{\rgb}}
\notag\\&
=\Li_{\ga+\gb+2}(z)+P_3(z)+\oz{\rga+\rgb+1}
\notag\\&
=\gG(-\ga-\gb-1)(1-z)^{\ga+\gb+1}+P_4(z)+\oz{\rga+\rgb+1},
\end{align}
where $P_i(z)$ are polynomials. 
[Note that $P(z)\odot f(z)$ is a polynomial for any polynomial $P$ and
analytic $f$, and that we may assume $\deg(P_4(z))<\rga+\rgb+1$
by the comment after \eqref{Pa}.]

Finally, if $\ga$ is a non-negative integer, then $(1-z)^\ga$ is a polynomial
and thus the \lhs{} of \eqref{lih2} is a polynomial, so
\eqref{lih2} holds trivially [with $1/\gG(-\ga)=0$].
The same holds if $\gb$ is a non-negative integer.
\end{proof}

\subsection{Generating functions}

Let $(b_n)_1^\infty$ be a given sequence of constants and 
consider the toll function $f(T):=b_{|T|}$ and the corresponding additive
functional $F(T)$ given by \eqref{F}.
We are mainly interested in the case $b_n=n^\ga$, but will also consider
$b_n=n^\ga-c$ below for a suitable constant $c$. 
In the present subsection, $b_n$ can be arbitrary if we
regard the generating functions as formal power series; if we assume
$b_n=O(n^K)$ for some $K$, then the generating functions below converge and
are analytic at least in the unit disc.

We are interested in the random variable $F(\cT_n)$. We denote its
moments by
\begin{align}\label{mell}
  m_n\xxl:=
%\E[X_n^\ell] =
\E[F(\cT_n)^\ell] 
\end{align}
for integer $\ell\ge0$.
Define the generating functions
\begin{align}\label{Mell}
  M_\ell(z):=\E \bigsqpar{F(\cT)^\ell z^{|\cT|}}
=\sumn q_n \E \bigsqpar{F(\cT)^\ell z^{|\cT|}\mid |\cT|=n}
=\sumn q_n m\xxl_n z^n
.\end{align}
Note that $M_0(z)=y(z)$, see \eqref{yz}.

The generating functions $M_\ell$ can be calculated recursively as follows,
using Hadamard products and the
generating function 
\begin{align}\label{Bz}
  B(z):=\sumn b_n z^n.
\end{align}

\begin{lemma}
  \label{LH}
For every $\ell\ge1$,
\begin{align}\label{lh}
  M_\ell(z)
=
\frac{z y'(z)}{y(z)}
\summl \frac{1}{m!}\sumxx \binom{\ell}{\ell_0,\dots,\ell_m}
B(z)^{\odot\ell_0} \odot 
\bigsqpar{zM_{\ell_1}(z)\dotsm M_{\ell_m}(z)\Phi\xxm\yz},
\end{align}
where $\sumxx$ is the sum over all $(m+1)$-tuples $(\ell_0,\dots,\ell_m)$ of
non-negative integers summing to $\ell$ such that 
$1\le\ell_1,\dots,\ell_m<\ell$.
\end{lemma}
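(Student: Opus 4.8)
The plan is to derive \eqref{lh} directly from the defining recursion \eqref{F2} by raising it to the $\ell$th power and taking expectations; all steps below are coefficient-wise identities of formal power series, which become genuine analytic identities on the unit disc once $b_n=O(n^K)$ is assumed. Write $k$ for the degree of the root of $\cT$ and $\cT_1,\dots,\cT_k$ for the subtrees rooted at its children, so that $F(\cT)=b_{|\cT|}+\sum_{i=1}^{k}F(\cT_i)$ by \eqref{F2} and, conditionally on $k$, the $\cT_i$ are i.i.d.\ copies of $\cT$. First I would expand $\bigpar{b_{|\cT|}+\sum_{i=1}^{k}F(\cT_i)}^{\ell}$ by the multinomial theorem, grouping the subtree factors according to which children occur and with what multiplicity:
\begin{equation*}
 F(\cT)^{\ell}=\sum_{m=0}^{\ell}\frac1{m!}\sum_{(i_1,\dots,i_m)}\ \sum_{\substack{\ell_0+\dots+\ell_m=\ell\\ \ell_1,\dots,\ell_m\ge1}}\binom{\ell}{\ell_0,\dots,\ell_m}\,b_{|\cT|}^{\ell_0}\prod_{s=1}^{m}F(\cT_{i_s})^{\ell_s},
\end{equation*}
where the middle sum runs over ordered $m$-tuples $(i_1,\dots,i_m)$ of \emph{distinct} children, the factor $1/m!$ accounting for the passage from unordered to ordered tuples of these ``active'' children.

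Next I would take $\E[\,\cdot\,z^{|\cT|}]$ of both sides, conditioning on $k$. The only subtle point is the factor $b_{|\cT|}^{\ell_0}$, which couples the (independent) subtrees only through $|\cT|=1+\sum_i|\cT_i|$; this coupling is exactly a Hadamard product, via the elementary identity
\begin{equation*}
 \E\Bigsqpar{b_{|\cT|}^{\ell_0}\prod_{i=1}^{k}g_i(\cT_i)\,z^{|\cT|}\ \Big|\ \deg(\cT)=k}=B(z)^{\odot\ell_0}\odot\Bigpar{z\prod_{i=1}^{k}\E\bigsqpar{g_i(\cT_i)\,z^{|\cT_i|}}},
\end{equation*}
valid for arbitrary functionals $g_i$. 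Applying this with $g_{i_s}=F(\,\cdot\,)^{\ell_s}$ on the $m$ active children (each contributing $M_{\ell_s}(z)$) and $g_i\equiv1$ on the other $k-m$ children (each contributing $y(z)$), and noting that the sum over ordered distinct tuples $(i_1,\dots,i_m)$ among $k$ children has $k(k-1)\cdots(k-m+1)$ terms, so that after summing over $k$ against $p_k$ one gets $\sum_k p_k\,k(k-1)\cdots(k-m+1)\,y(z)^{k-m}=\Phi^{(m)}\bigpar{y(z)}$, I obtain
\begin{equation*}
 M_\ell(z)=\sum_{m=0}^{\ell}\frac1{m!}\sum_{\substack{\ell_0+\dots+\ell_m=\ell\\ \ell_1,\dots,\ell_m\ge1}}\binom{\ell}{\ell_0,\dots,\ell_m}\,B(z)^{\odot\ell_0}\odot\Bigsqpar{z\,M_{\ell_1}(z)\cdots M_{\ell_m}(z)\,\Phi^{(m)}\bigpar{y(z)}}.
\end{equation*}

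It remains to reconcile this with \eqref{lh}. The only term above in which some $\ell_s$ equals $\ell$ is the one with $m=1$, $\ell_0=0$, $\ell_1=\ell$; there $B(z)^{\odot0}=\sum_{n\ge1}z^n=z/(1-z)$ acts as the $\odot$-identity on $z\Phi'(y(z))M_\ell(z)$, which has no constant term, so that term equals $z\Phi'(y(z))M_\ell(z)$. Moving it to the left gives $M_\ell(z)\bigpar{1-z\Phi'(y(z))}$ equal to the same sum but restricted to $1\le\ell_1,\dots,\ell_m<\ell$; and differentiating $y(z)=z\Phi(y(z))$ from \eqref{ma} yields $y'(z)\bigpar{1-z\Phi'(y(z))}=\Phi(y(z))=y(z)/z$, so $1-z\Phi'(y(z))=y(z)/(zy'(z))$, and dividing through produces the prefactor $zy'(z)/y(z)$ in \eqref{lh}; as a byproduct the right-hand side of \eqref{lh} now involves only $M_0=y,M_1,\dots,M_{\ell-1}$, so it is a genuine recursion. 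I expect the only real difficulty to be bookkeeping: setting up the combinatorial expansion in the first display with the correct multinomial coefficients and the $1/m!$, and then carrying the factor $B(z)^{\odot\ell_0}$ faithfully through the conditioning and through the extraction of the self-referential term. There is no analytic obstacle.
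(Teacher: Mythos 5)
Your proposal is correct and follows essentially the same route as the paper: multinomial expansion of the recursion \eqref{F2} conditioned on the root degree, the Hadamard-product identity to peel off the toll factor $b_{|\cT|}^{\ell_0}$, summation over the root degree to produce $\Phi^{(m)}(y(z))$, extraction of the self-referential term $z\Phi'(y(z))M_\ell(z)$, and the identity $1-z\Phi'(y(z))=y(z)/(zy'(z))$ from differentiating \eqref{ma}. The only difference is cosmetic — you group the children into ``active'' and ``inactive'' ones at the outset (with the $1/m!$ and the sum over ordered distinct tuples), whereas the paper does this regrouping at the end — and your handling of $B(z)^{\odot 0}$ as the Hadamard identity on series without constant term is a correct reading of the convention.
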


\begin{proof}
Condition on the root degree $k$ of $\cT$, and let $\cT_1,\dots,\cT_k$ be
the principal subtrees as at the beginning of \refS{S:more_notation}.
Then \eqref{F2} can be written
\begin{align}\label{lh1}
  F(\cT)=f(\cT)+\sumik F(\cT_i)
= b_{|\cT|}+\sumik F(\cT_i).
\end{align}
Hence, the multinomial theorem yields the following, where for each $k$ 
we let $\sum$ denote the sum over all $(k+1)$-tuples $(\ell_0,\dots,\ell_k)$
summing to $\ell$ such that each $\ell_i\ge0$, and furthermore
$\cT_1,\dots,\cT_k$ are independent copies of $\cT$, 
and $|\cT|$ is
$1+|\cT_1|+\dots+|\cT_k|$:
\begin{align}\label{lh2}
  M_\ell(z)&
=
\sumko p_k \E \Bigsqpar{z^{|\cT|} \Bigpar{b_{|\cT|}+\sumik F(\cT_i)}^\ell}
\notag\\&
=\sumko p_k \sum \binom{\ell}{\ell_0,\dots,\ell_k}
\E \Bigsqpar{z^{|\cT|} b_{|\cT|}^{\ell_0}  F(\cT_1)^{\ell_1}\dotsm F(\cT_k)^{\ell_k}}
\notag\\&
=\sumko p_k \sum \binom{\ell}{\ell_0,\dots,\ell_k}
B(z)^{\odot\ell_0}\odot\E \Bigsqpar{z^{|\cT|} F(\cT_1)^{\ell_1}\dotsm F(\cT_k)^{\ell_k}}
\notag\\&
=\sumko p_k \sum \binom{\ell}{\ell_0,\dots,\ell_k}
B(z)^{\odot\ell_0}\odot\E \Bigsqpar{z\prodik \bigpar{z^{|\cT_i|} F(\cT_i)^{\ell_i}}}
\notag\\&
=\sumko p_k \sum \binom{\ell}{\ell_0,\dots,\ell_k}
B(z)^{\odot\ell_0}\odot \Bigsqpar{z\prodik\E\bigsqpar{ z^{|\cT_i|} F(\cT_i)^{\ell_i}}}
\notag\\&
=\sumko p_k \sum \binom{\ell}{\ell_0,\dots,\ell_k}
B(z)^{\odot\ell_0}\odot \Bigsqpar{z\prodik M_{\ell_i}(z)}.
\end{align}
We consider the terms where $\ell_i=\ell$ for some $1\le i\le k$
separately.
In this case, $\ell_0=0$ and $\ell_j=0$ for $j\neq i$, and thus the combined
contribution of these $k$ terms is, recalling $M_0(z)=y(z)$ and \eqref{Phi},
\begin{align}\label{lh3}
  \sumk p_k k \bigsqpar{z M_\ell(z) y(z)^{k-1}}
=z M_\ell(z)   \sumk p_k k y(z)^{k-1}
=z M_\ell(z) \Phi'(y(z)).
\end{align}
 Let
$\sumx$ denote the sum over the remaining terms, \ie, the terms with
$\ell_1,\dots,\ell_k<\ell$, and 
define
\begin{align}\label{lh4}
R_\ell(z):=
\sumko p_k \sumx \binom{\ell}{\ell_0,\dots,\ell_k}
B(z)^{\odot\ell_0}\odot \Bigsqpar{z\prodik M_{\ell_i}(z)}.
\end{align}
Using \eqref{lh3}--\eqref{lh4}, we can write \eqref{lh2} as
\begin{align}\label{lh5}
  M_\ell(z) = z\Phi'\yz M_\ell(z) + R_\ell(z).
\end{align}
Moreover, differentiating \eqref{ma} yields
\begin{align}\label{y'}
  y'(z)=\Phi\yz+z\Phi'\yz y'(z)
\end{align}
and thus, using \eqref{ma} again,
\begin{align}\label{y'a}
\bigpar{1-z\Phi'\yz}y'(z)
= \Phi\yz = y(z)/z
.\end{align}
Hence, \eqref{lh5} yields
\begin{align}\label{lh6}
  M_\ell(z) = \frac{R_\ell(z)}{1-z\Phi'\yz}
= \frac{zy'(z)}{y(z)}R_\ell(z).
\end{align}

Finally, in each term in the sum $\sumx$ in \eqref{lh4}, let $m\ge0$ be the
number of $\ell_1,\dots,\ell_k$ that equal 0.
By symmetry, we may assume that $\ell_1,\dots,\ell_m\ge1$ and
$\ell_{m+1}=\dots\ell_k=0$, and multiply by the symmetry factor $\binom
km$. Thus,
\begin{align}\label{lh7}
R_\ell(z)&=
\sumko p_k \summk\binom km  \sumxx \binom{\ell}{\ell_0,\dots,\ell_m}
B(z)^{\odot\ell_0}\odot \Bigsqpar{z \Bigpar{\prodim M_{\ell_i}(z)} y(z)^{k-m}}
\notag\\&
=
\summo\sumxx \binom{\ell}{\ell_0,\dots,\ell_m}
B(z)^{\odot\ell_0}\odot \Bigsqpar{z \Bigpar{\prodim M_{\ell_i}(z)}
\sum_{k = m}^{\infty} p_k \binom km y(z)^{k-m}}
\notag\\&
=\summo\sumxx \binom{\ell}{\ell_0,\dots,\ell_m}
B(z)^{\odot\ell_0}\odot \Bigsqpar{z \Bigpar{\prodim M_{\ell_i}(z)}
\frac{1}{m!}\Phi\xxm\bigpar{ y(z)}}
.\end{align}
The result \eqref{lh} follows from \eqref{lh6} and \eqref{lh7}, noting that
the sum $\sumxx$ is empty if $m>\ell$.
\end{proof}

\subsection{The mean}

For $\ell=1$, \eqref{lh} contains only the term $m=0$ and thus
$\ell_0=\ell=1$.
Hence, \refL{LH} yields, recalling \eqref{ma},
\begin{align}\label{sx1}
  M_1(z)=\frac{zy'(z)}{y(z)}\cdot \bigpar{B(z)\odot \bigsqpar{z\Phi(y(z))}}
=\frac{zy'(z)}{y(z)}\cdot\bigpar{ B(z)\odot y(z)}
.\end{align}

Let us first consider the factor $zy'(z)/y(z)$. 
It follows from \eqref{ma} that $y(z)=0$ implies $z=0$, and thus $z/y(z)$ is
analytic in any domain where $y(z)$ is.
Hence, $zy'(z)/y(z)$ is \gda, since $y(z)$ is.
Moreover, by Cauchy's estimates as in \cite[Theorem 6]{FillFK},
\eqref{yz1} implies, as $z\to1$,
\begin{align}\label{sx2}
  y'(z)=2\qqw\gs\qw(1-z)\qqw+o\bigpar{|1-z|\qqw}.
\end{align}
Consequently,
\begin{align}\label{sx3}
\frac{z y'(z)}{y(z)}=2\qqw\gs\qw(1-z)\qqw+o\bigpar{|1-z|\qqw}.
\end{align}

We turn to the second factor $B(z)\odot y(z)$. 
We consider first the case 
\begin{align}
  \label{bn1}
f(n)=
b_n=n^\ga,
\qquad n\ge1,
\end{align}
for some $\ga\in\bbC$;
then 
$F=F_\ga$ and, by \eqref{Xn},
\begin{align}\label{bnf}
F(\ctn)=X_n(\ga).  
\end{align}
By \eqref{bn1} and \eqref{Li}, $B(z)=\Li_{-\ga}(z)$, a  
polylogarithm function, and thus \eqref{li} yields,
at least  for $\rga>-1$,
\begin{align}
\label{Bo}
B(z) = \gG(1+\ga)(1-z)^{-\ga-1}+\oz{-\rga-1}.
\end{align}
Furthermore, 
by the definitions,
\begin{align}\label{by}
  B(z)\odot y(z) 
= \sumn b_n q_n z^n
=\sumn q_n n^\ga z^n
=
\E \bigsqpar{|\cT|^\ga z^{|\cT|}}
.\end{align}

\begin{lemma}\label{LC1>}
Let $\rga>\frac12$ and let $b_n:=n^{\ga}$.
Then, as $z\to1$ in some \GDD,
  \begin{align}\label{lc1>}
 M_1(z) = 
\frac{\gs\qww}{2\sqrt\pi}\gG\bigpar{\ga-\tfrac12}
(1-z)^{-\ga}
+\oz{-\rga}.
  \end{align}
\end{lemma}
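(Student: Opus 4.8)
The plan is to read off the singular behaviour of $M_1(z)$ at $z=1$ from the closed form \eqref{sx1}, $M_1(z)=\frac{zy'(z)}{y(z)}\bigl(B(z)\odot y(z)\bigr)$ with $B(z)=\Li_{-\ga}(z)$, by controlling each of the two factors and then multiplying. The first factor needs no new work: as noted just after \eqref{sx1}, $z/y(z)$ is analytic wherever $y$ is, so $zy'(z)/y(z)$ is \gda, and \eqref{sx3} gives $\frac{zy'(z)}{y(z)}=\frac{1}{\sqrt2\,\gs}(1-z)\qqw+\oz{-1/2}$ as $z\to1$ in a suitable \GDD.

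The substance is the Hadamard product $B(z)\odot y(z)$. First I would insert \eqref{yz1} in the form $y(z)=1-\sqrt2\,\gs\qw(1-z)\qq+\tilde y(z)$, where $\tilde y$ is \gda{} (a sum of \gdaf{s}) and $\tilde y(z)=\oz{1/2}$, so that $B(z)\odot y(z)=B(z)\odot1-\sqrt2\,\gs\qw\bigl(B(z)\odot(1-z)\qq\bigr)+B(z)\odot\tilde y(z)$. The first summand is $0$ since $B$ has no constant term. For the other two I would insert \eqref{Bo} (legitimate as $\rga>1/2>0$), $B(z)=\gG(1+\ga)(1-z)^{-\ga-1}+\oz{-\rga-1}$. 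Then, by \refL{LIH2} applied with exponents $-\ga-1$ and $\tfrac12$ (whose hypothesis $\tfrac12-\rga\notin\NNo$ holds since $\rga>\tfrac12$), $(1-z)^{-\ga-1}\odot(1-z)\qq=\frac{\gG(\ga-1/2)}{\gG(\ga+1)\gG(-1/2)}(1-z)^{1/2-\ga}+\oz{1/2-\rga}$, the stray polynomial of \refL{LIH2} vanishing because its degree bound $\tfrac12-\rga$ is negative; and the three remaining Hadamard products $\oz{-\rga-1}\odot(1-z)\qq$, $(1-z)^{-\ga-1}\odot\tilde y(z)$, and $\oz{-\rga-1}\odot\tilde y(z)$ are all $\oz{1/2-\rga}$ by \refL{LFFK}\ref{LFFKo} (again the auxiliary polynomials drop out for the same reason). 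Using $\gG(1+\ga)=\gG(\ga+1)$ and $\gG(-1/2)=-2\sqrt\pi$, these assemble to $B(z)\odot y(z)=\frac{\gs\qw}{\sqrt{2\pi}}\,\gG(\ga-\tfrac12)(1-z)^{1/2-\ga}+\oz{1/2-\rga}$; in particular $B(z)\odot y(z)$ is \gda{} by \refL{LFFK}, hence so is $M_1$, which justifies reading all estimates as $z\to1$ in one common \GDD.

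Multiplying this by the expansion of $zy'(z)/y(z)$, the leading term is $\frac{1}{\sqrt2\,\gs}\cdot\frac{\gs\qw}{\sqrt{2\pi}}\gG(\ga-\tfrac12)(1-z)^{-\ga}=\frac{\gs\qww}{2\sqrt\pi}\gG(\ga-\tfrac12)(1-z)^{-\ga}$, while the cross terms $\frac{1}{\sqrt2\,\gs}(1-z)\qqw\cdot\oz{1/2-\rga}$ and $\oz{-1/2}\cdot\Oz{1/2-\rga}$ are both $\oz{-\rga}$; this is exactly \eqref{lc1>}. There is no genuine obstacle here beyond bookkeeping: the delicate points are checking that each non-integrality hypothesis of \refLs{LFFK} and~\ref{LIH2} reduces to $\rga>\tfrac12$, that every auxiliary polynomial has negative degree bound and hence vanishes, and that the various \GDD{s} produced by the lemmas can be intersected into one. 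As a sanity check, $B(z)\odot y(z)=\E\bigsqpar{|\cT|^\ga z^{|\cT|}}$, which in view of \eqref{pct} ought to behave like $\frac{1}{\sqrt{2\pi\gss}}\Li_{3/2-\ga}(z)\sim\frac{\gG(\ga-1/2)}{\sqrt{2\pi\gss}}(1-z)^{1/2-\ga}$, and $\frac{1}{\sqrt{2\pi\gss}}=\frac{\gs\qw}{\sqrt{2\pi}}$ matches the coefficient found above.
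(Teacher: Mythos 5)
Your proposal is correct and follows essentially the same route as the paper's proof: expand $B(z)\odot y(z)$ via \eqref{Bo} and \eqref{yz1} together with \refLs{LFFK} and~\ref{LIH2} and the identity $B(z)\odot 1=0$, then multiply by the expansion \eqref{sx3} of $zy'(z)/y(z)$ from \eqref{sx1}. You merely spell out the hypothesis checks (non-integrality of the exponent sums, vanishing of the auxiliary polynomials) that the paper leaves implicit, and your coefficient bookkeeping and final answer agree with \eqref{lc1>}.
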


\begin{proof}
By \eqref{Bo} and \eqref{yz1} together with \refLs{LFFK} and  \ref{LIH2},
and the fact that
$B(z)\odot1=0$,
\begin{align}\label{sixten}
  B(z) \odot y(z )
&=  
-\gG(1+\ga)(1-z)^{-\ga-1}\odot \sqrt2\gs\qw (1-z)\qq +\oz{-\rga+\frac12}
\notag\\&
=- 2\qq\gs\qw\frac{\gG(\ga-\half)}{\gG(-\frac12)}(1-z)^{-\ga+\half}
+\oz{-\rga+\frac12}.
\end{align}
 The result follows by \eqref{sx1} and \eqref{sx3}.
\end{proof}

\subsection{The mean when $0<\rga<\frac12$}\label{SSmean<}

Consider now the case 
$\rga<\frac12$. 
If we still take $b_n=n^\ga$ as in \eqref{bn1},
then \eqref{by} and \eqref{pct} show that $B(z)\odot y(z)$ is continuous in
the closed unit disc, 
and  a comparison with \eqref{mu} yields
\begin{align}\label{by1}
  (B\odot y)(1) = \E |\cT|^\ga = \mu(\ga)
.\end{align}
Hence, \eqref{sixten} cannot hold, since the \rhs{} tends to 0 as $z\to1$.
Actually, it follows from the arguments below that
the leading term in $B(z)\odot y(z)$ is the constant
$\mu(\ga)$, which by \eqref{sx1} and singularity analysis corresponds to the
fact that the leading term in \eqref{te-} is $\mu(\ga)n$.
We recall from \refS{S:intro} that when $\rga<\frac12$, we want to subtract
this term. In the present setting, we achieve this by 
modifying \eqref{bn1} and 
instead taking
\begin{align}
  \label{bn2}
f(n)=b_n:=n^\ga-\mu(\ga)
.\end{align}
Then \eqref{bnf} is modified to
\begin{align}\label{bnf2}
F(\ctn)=
\sum_{v\in\tn}\bigsqpar{|\tnv|^\ga-\mu(\ga)}
=X_n(\ga)-\mu(\ga)n,
\end{align}
and
\eqref{by} is modified to
\begin{align}\label{by2}
  B(z)\odot y(z) 
=\sumn q_n [n^\ga-\mu(\ga)] z^n
=
\E \bigsqpar{|\cT|^\ga z^{|\cT|}}-\mu(\ga)y(z)
.\end{align}
In particular,
\begin{align}\label{by0}
  (B\odot y)(1)=\E|\cT|^\ga- \mu(\ga)=0.
\end{align}

\begin{lemma}\label{LC1<}
Let $0<\rga<\frac12$ and let $b_n:=n^{\ga}-\mu(\ga)$.
As $z\to1$ in some \GDD,
  \begin{align}\label{lc1<}
 M_1(z) = 
\frac{\gs\qww}{2\sqrt\pi}\gG\bigpar{\ga-\tfrac12}
(1-z)^{-\ga}
+\oz{-\rga}.
  \end{align}
\end{lemma}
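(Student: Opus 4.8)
The plan is to follow the proof of \refL{LC1>} essentially verbatim; the only genuinely new point is the appearance of a constant term, which must be handled by a soft argument. As in \refL{LC1>}, the identity \eqref{sx1}, $M_1(z)=\frac{zy'(z)}{y(z)}\bigpar{B(z)\odot y(z)}$, and the expansion \eqref{sx3} of $zy'(z)/y(z)$ hold without change, so the whole problem reduces to the behaviour of $B(z)\odot y(z)$ as $z\to1$ in a suitable \GDD. Since now $b_n=n^\ga-\mu(\ga)$, \eqref{by2} gives $B(z)\odot y(z)=\Li_{-\ga}(z)\odot y(z)-\mu(\ga)y(z)$, where $\Li_{-\ga}(z)\odot y(z)=\E\bigsqpar{|\cT|^\ga z^{|\cT|}}=\sumn n^\ga q_n z^n$.

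First I would repeat the Hadamard-product computation that led to \eqref{sixten}, but applied to $\Li_{-\ga}(z)$ in place of $B(z)$, using \eqref{li} (whose polynomial term vanishes since $\Re(-\ga)<0$), the expansion \eqref{yz1} of $y(z)$, \refLs{LFFK} and \ref{LIH2} (whose non-integrality hypotheses hold here since $0<\rga<\frac12$), and $\Li_{-\ga}(z)\odot1=0$. The one difference from \refL{LC1>} is that $\rga<\frac12$ makes the exponent $\frac12-\rga$ in the ``polynomial'' error terms of \refLs{LFFK} and \ref{LIH2} \emph{positive} rather than negative; but $\frac12-\rga<1$, so those polynomials are in fact constants, and any contributions of order $O(|1-z|)$ or lower are absorbed into an $\oz{\frac12-\rga}$ remainder. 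The only singular contribution is still the $(1-z)^{-\ga+\half}$-term produced by $(1-z)^{-\ga-1}\odot(1-z)\qq$ through \refL{LIH2}, so the outcome should be
\begin{equation*}
\Li_{-\ga}(z)\odot y(z)=-2\qq\gs\qw\frac{\gG(\ga-\half)}{\gG(-\frac12)}(1-z)^{-\ga+\half}+C_0+\oz{\frac12-\rga}
\end{equation*}
for some constant $C_0$, with leading coefficient exactly as in \eqref{sixten}.

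Next, since $\rga>0$, \eqref{yz1} gives $\mu(\ga)y(z)=\mu(\ga)+\oz{\frac12-\rga}$, so subtracting yields $B(z)\odot y(z)=-2\qq\gs\qw\frac{\gG(\ga-\half)}{\gG(-\frac12)}(1-z)^{-\ga+\half}+\bigpar{C_0-\mu(\ga)}+\oz{\frac12-\rga}$. To identify the constant I would invoke \eqref{by0}: because $\rga<\frac12$ and $q_n=\Theta(n^{-3/2})$ (by \eqref{pct}), the power series $B(z)\odot y(z)=\sumn\bigpar{n^\ga-\mu(\ga)}q_n z^n$ converges absolutely on $|z|\le1$ and hence is continuous there, with value $(B\odot y)(1)=0$; letting $z\to1$ radially and noting that both the singular term and the $\oz{\frac12-\rga}$ remainder tend to $0$ (here $\frac12-\rga>0$) forces $C_0-\mu(\ga)=0$. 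Thus $B(z)\odot y(z)$ has precisely the expansion \eqref{sixten}, and \eqref{lc1<} follows from \eqref{sx1} and \eqref{sx3} word for word as in the proof of \refL{LC1>}, using $\gG(-\tfrac12)=-2\sqrt\pi$.

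The main obstacle, relative to \refL{LC1>}, is precisely this constant $C_0$: when $\rga<\frac12$ it no longer gets swallowed by the error term, and computing it directly would require the (unrecorded) constant term in the polynomial error of \refL{LIH2}. Using \eqref{by0} together with continuity of the Hadamard-product power series on the closed disc sidesteps this cleanly. The remaining verifications---that every polynomial error term supplied by \refLs{LFFK} and \ref{LIH2} has degree $<\frac12-\rga<1$ and is therefore a constant, and that its possible $(1-z)$-part, together with the $\Oz{\frac32-\rga}$ remainders, is $\oz{\frac12-\rga}$---are routine, of the same kind already carried out in the proof of \refL{LC1>}.
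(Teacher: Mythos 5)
Your proposal is correct and follows essentially the same route as the paper: reduce to $B(z)\odot y(z)$ via \eqref{sx1} and \eqref{sx3}, observe that the Hadamard-product machinery of \refLs{LFFK} and \ref{LIH2} produces the same singular term as in \eqref{sixten} plus a polynomial error that is a constant because its degree is less than $\frac12-\rga<1$, and then kill that constant by letting $z\to1$ and invoking $(B\odot y)(1)=0$ from \eqref{by0}. The only (cosmetic) difference is that you split $B\odot y$ as $\Li_{-\ga}\odot y-\mu(\ga)y$ before expanding, whereas the paper keeps $B(z)=\Li_{-\ga}(z)-\mu(\ga)z(1-z)\qw$ together and notes it has the same expansion \eqref{Box} as \eqref{Bo}.
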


\begin{proof}
We now have, by \eqref{bn2} and \eqref{li},
\begin{align}\label{Box}
  B(z)&
=\Li_{-\ga}(z)-\mu(\ga)z(1-z)\qw
\notag\\&
= \gG(1+\ga)(1-z)^{-\ga-1}+\oz{-\rga-1},
\end{align}
just as in \eqref{Bo}.
Then, arguing 
as for~\eqref{sixten} 
using \eqref{yz1} and \refLs{LFFK} and \ref{LIH2} 
now yields
\begin{align}\label{sixten2}
  B(z)&\odot y(z )
=  
-\gG(1+\ga)(1-z)^{-\ga-1}\odot \sqrt2\gs\qw (1-z)\qq +P_1(z)
+\oz{-\rga+\frac12}
\notag\\&
=- 2\qq\gs\qw\frac{\gG(\ga-\half)}{\gG(-\frac12)}(1-z)^{-\ga+\half}
+P_2(z)
+\oz{-\rga+\frac12},
\end{align}
where $P_1(z),P_2(z)\in\cP_{\frac12-\rga}$ and thus are constants.
Letting $z\to1$ in \eqref{sixten2}
shows that $P_2(z)=(B\odot y)(1)=0$, by \eqref{by0}.
Hence, the result in \eqref{sixten} holds in the present case too, 
and the result follows again by \eqref{sx1} and \eqref{sx3}.
\end{proof}

\subsection{Higher moments}

In the remainder of this \refS{Smom}, we assume that $\rga>0$,
% and $\ga\neq\frac12$, 
and that we have chosen $b_n$ by 
\eqref{bn1} or \eqref{bn2}
so that
\begin{align}\label{bn12}
  b_n:=
  \begin{cases}
    n^\ga,& \rga\ge\frac12,
\\
n^\ga-\mu(\ga),& 0<\rga<\frac12.
  \end{cases}
\end{align}
In the present subsection we also assume $\rga\neq\frac12$.

We need one more general lemma. 
\begin{lemma}
  \label{LC}
Under our standing assumptions $\E\xi=1$ and\/ \mbox{$0<\Var\xi<\infty$},
the function $\Phi\xxm(y(\cdot))$ is \gda{} for every $m\ge0$, and as $z\to1$ in some \GDD,
\begin{align}\label{lc}
  \Phi\xxm\yz
=
  \begin{cases}
    O(1), & m\le 2,
\\
\oz{1-\frac{m}{2}}, & m\ge3.
  \end{cases}
\end{align}
\end{lemma}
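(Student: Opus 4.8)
The plan is to base everything on the elementary decomposition
\begin{equation*}
  \Phi(w)=w+(w-1)^2H(w),
\qquad
H(w):=\sum_{k\ge2}p_k\,h_k(w),
\end{equation*}
where $h_k$ is the polynomial determined by $(w-1)^2h_k(w)=w^k-1-k(w-1)$, explicitly $h_k(w)=\sum_{j=0}^{k-2}(k-1-j)w^j$; the identity for $\Phi$ follows from $\sum_kp_k=\sum_kkp_k=1$. Since $h_k$ has non-negative coefficients, $|h_k(w)|\le h_k(1)=\binom k2$ for $|w|\le1$, and $\sum_kp_k\binom k2=\tfrac12\E[\xi(\xi-1)]=\gss/2<\infty$ by the standing hypothesis $\gss<\infty$. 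Hence the series defining $H$ converges uniformly on the closed unit disc $\overline{\mathbb{D}}$, so $H$ is continuous on $\overline{\mathbb{D}}$, analytic in $\mathbb{D}$, with $|H|\le\gss/2$ there and $H(1)=\gss/2$. First I would record two consequences. (a)~Applying Cauchy's estimate to $H(\zeta)-H(w)$ on the circle $\set{\zeta:|\zeta-w|=1-|w|}$, which lies in $\overline{\mathbb{D}}$ (legitimate for the $j$-th derivative since $j\ge1$), bounds $(1-|w|)^j\,|H^{(j)}(w)|$ by $j!$ times the modulus of continuity of $H$ evaluated at $1-|w|$; as $H$ is uniformly continuous, this gives $H^{(j)}(w)=o\bigpar{(1-|w|)^{-j}}$ as $|w|\upto1$, for every $j\ge1$. (b)~Differentiating the decomposition, $\Phi^{(m)}(w)=(w-1)^2H^{(m)}(w)+2m(w-1)H^{(m-1)}(w)+m(m-1)H^{(m-2)}(w)$ for $m\ge2$, valid in $\mathbb{D}$.

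For the $\gda$ assertion I would use that $y$ is $\gda$ and that differentiating the functional equation $y=z\Phi(y)$ yields $y'\bigpar{1-z\Phi'(y)}=y/z$; since $y(z)=0$ only at $z=0$ and $y'(0)=q_1>0$, the right-hand side is non-zero throughout the \GDD, forcing $y'\ne0$ there. Then $\Phi(y(z))=y(z)/z$ is $\gda$, and $\Phi^{(m+1)}(y(z))=\bigpar{\Phi^{(m)}(y(z))}'/y'(z)$, so by induction each $\Phi^{(m)}(y(\cdot))$ is a rational expression in $z$, $y(z)$ and the derivatives of $y$, with only powers of $z$ and of $y'(z)$ in the denominator, hence $\gda$. (Alternatively, since $\xi$ has span $1$ one checks $|y(z)|<1$ on $\overline{\mathbb{D}}\setminus\set1$ and, by~\eqref{yz1}, near $z=1$ inside the \GDD, so $y$ maps a suitable \GDD{} into $\mathbb{D}$ and $\Phi^{(m)}\circ y$ is literally a composition of analytic functions there.)

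For the estimates I would start from~\eqref{yz1}, which gives $1-y(z)=\sqrt2\,\gs\qw(1-z)\qq\bigpar{1+o(1)}$ as $z\to1$ in the \GDD. Because a \GDD{} forces $|\arg(1-z)|\le\pi-\gth$, the value $1-y(z)$ eventually lies in a fixed sub-sector of half-angle $<\pi/2$; a direct computation then gives $|1-y(z)|\asymp 1-|y(z)|\asymp|1-z|\qq$. If $m\le2$ there is nothing more to do: $\Phi^{(m)}$ has absolutely summable Taylor coefficients, since $\E[\xi(\xi-1)\dotsm(\xi-m+1)]<\infty$, so $\Phi^{(m)}$ is bounded on $\overline{\mathbb{D}}$ by $\Phi^{(m)}(1)$, and $y(z)\in\overline{\mathbb{D}}$ for $z$ near $1$, whence $\Phi^{(m)}(y(z))=O(1)$. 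If $m\ge3$, I would put $w=y(z)$ in the identity of~(b) and bound its three terms using~(a) and the sector estimate: they are $O(|1-z|)\cdot o(|1-z|^{-m/2})$, $O(|1-z|\qq)\cdot o(|1-z|^{-(m-1)/2})$ and $o(|1-z|^{-(m-2)/2})$ respectively, each of which equals $o\bigpar{|1-z|^{1-m/2}}$. This is the claimed bound.

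The hard part is the little-$o$ when $m\ge3$: a bare Cauchy estimate on the bounded function $H$ yields only $H^{(j)}(w)=O\bigpar{(1-|w|)^{-j}}$, hence only $\Phi^{(m)}(y(z))=O(|1-z|^{1-m/2})$. Upgrading $O$ to $o$ is exactly where the finite-variance hypothesis enters a second time: it makes the tails $\sum_{k>K}p_k\binom k2$ small, i.e.\ makes $H$ \emph{continuous} on $\overline{\mathbb{D}}$ rather than merely bounded on $\mathbb{D}$, so that the Cauchy integral of $H(\zeta)-H(w)$ is controlled by $H$'s modulus of continuity, which vanishes at $0$. A secondary technical point is the Stolz-type geometry: one must check that the radius $1-|y(z)|$ available for Cauchy's estimate at $w=y(z)$ is genuinely comparable to $|1-z|\qq$, not merely bounded above by it.
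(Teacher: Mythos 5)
Your proof is correct and follows essentially the same route as the paper's: both rest on the lower bound $1-|y(z)|\ge c|1-z|^{1/2}$ deduced from \eqref{yz1} via the sector geometry of a \GDD, combined with a Cauchy estimate that carries the modulus of continuity of a second-order quantity which is continuous on the closed unit disc precisely because $\E\xi^2<\infty$. The only difference is packaging: the paper applies that Cauchy-estimate step directly to $\Phi''-\Phi''(1)$ and then composes with $y$, whereas you route it through $H(w)=(\Phi(w)-w)/(w-1)^2$ and a Leibniz expansion, which produces two extra (harmless) terms but also makes the uniform-continuity point more explicit.
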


\begin{proof}
As noted at the beginning of \refS{S:more_notation}, 
$y(z)$ is \gda.
It follows from~\eqref{yz1} that for 
some \GDD{}  $\gD_1$,
if $z\in\gD_1$
with $|1-z|$ small enough, then
\begin{align}\label{ca1}
  |y(z)|<1-c|1-z|\qq.
\end{align}
Moreover, the definition \eqref{yz} implies that $|y(z)|\le1$ for $|z|=1$
with strict inequality unless $z=1$.
Hence, by continuity, for some $\gd,\eta>0$, $|y(z)|\le 1-\eta$ when
$z\in\gD_1$, $|1-z|\ge\eps$, and $|z|\le1+\gd$.
It follows that \eqref{ca1} holds (with a new $c>0$) for all $z$ in the
\GDD{} $\gD_2:=\set{z\in\gD_1:|z|<1+\gd}$.

In particular, $|y(z)|<1$ in $\gD_2$ and thus $\Phi\xxm\yz$ is
analytic in $\gD_2$.

The assumption $\E\xi^2<\infty$ implies that $\Phi$, $\Phi'$ and $\Phi''$ are
bounded and continuous functions on the closed unit disc. Hence,  \eqref{lc}
holds for $m\le2$. 

Now suppose $m\ge3$.
Since $\Phi''$ is continuous, we have $\Phi''(z)-\Phi''(1)=o(1)$ as $z\to1$
with $|z|<1$. Hence it folows from Cauchy's estimates that
\begin{align}\label{ca2}
  \Phi\xxm(z) = o\bigpar{(1-|z|)^{2-m}}
\qquad \text{as  $z\to 1$ with $|z|<1$}
.\end{align}
The result \eqref{lc} for $m\ge3$ follows from \eqref{ca2} and \eqref{ca1}.
\end{proof}

\begin{lemma}\label{LC2}
Assume $\rga\in(0,\frac12)\cup(\frac12,\infty)$ and that \eqref{bn12} holds.
Then,   for every $\ell\ge1$, 
$M_\ell(z)$ is \gda, and 
as $z\to1$ in some \GDD,
  \begin{align}\label{lc2}
 M_\ell(z) = 
%\bigpar{\kkk_\ell\gs^{-\ell-1}+o(1)}(1-z)^{-\ell(\ga+\frac12)+\frac12},
\kkk_\ell\gs^{-\ell-1}(1-z)^{-\ell(\ga+\frac12)+\frac12}
+\oz{-\ell(\rga+\frac12)+\frac12},
  \end{align}
where
the constants $\kkk_\ell$ are given recursively by
\begin{align}\label{kkk1}
\kkk_1&
%=\kkk_1(\ga)
= \frac{1}{2\sqrt\pi}\gG\bigpar{\ga-\tfrac12},
\\\label{kkk2}
\kkk_\ell&=
%\kkk_\ell(\ga)=
2^{-3/2} \sum_{j=1}^{\ell-1} \binom{\ell}{j}\kkk_j\kkk_{\ell-j}
+ 2\qqw \ell \frac{\gG\bigpar{\ell(\ga+\frac12)-1}}
 {\gG\bigpar{(\ell-1)(\ga+\frac12)-\frac12}}\kkk_{\ell-1}
.\end{align}
\end{lemma}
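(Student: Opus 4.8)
The plan is to prove Lemma \ref{LC2} by induction on $\ell$, using the recursion \eqref{lh} from Lemma \ref{LH} as the engine, singularity analysis of \eqref{yz1}--\eqref{sx3} and the Hadamard-product machinery of Lemmas \ref{LFFK} and \ref{LIH2} as the tools. The base case $\ell=1$ is already done: \refL{LC1>} (for $\rga>\frac12$) and \refL{LC1<} (for $0<\rga<\frac12$) give exactly \eqref{lc2} with $\ell=1$, and the constant there, $\frac{\gs\qww}{2\sqrt\pi}\gG(\ga-\frac12)$, matches $\kkk_1\gs^{-2}$ with $\kkk_1$ as in \eqref{kkk1}. (The $\gD$-analyticity of each $M_\ell$ is also inherited from that of $y$, $zy'/y$, $B$, and the polylogarithms, together with the fact that Hadamard products of $\gda$ functions are $\gda$ by \refL{LFFK}.) So the work is entirely in the inductive step.

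For the inductive step, fix $\ell\ge2$ and assume \eqref{lc2} holds for all smaller indices. I would plug the inductive hypotheses into the right-hand side of \eqref{lh}, term by term over the $\sumxx$ decomposition. Each term is of the shape $\frac{zy'(z)}{y(z)}\,\bigpar{B(z)^{\odot\ell_0}\odot\bigsqpar{zM_{\ell_1}\cdots M_{\ell_m}\Phi\xxm(y(z))}}$ with $\ell_0+\ell_1+\dots+\ell_m=\ell$ and $1\le\ell_i<\ell$. The first task is to identify which terms contribute to the leading order $(1-z)^{-\ell\qgay+\frac12}$ and which are lower order. Using \refL{LC}, the factor $\Phi\xxm(y(z))$ is $O(1)$ for $m\le2$ and $o(|1-z|^{1-m/2})$ for $m\ge3$; combined with $M_{\ell_i}(z)=\Theta(|1-z|^{-\ell_i\qgay+\frac12})$ from the inductive hypothesis, the bracket $zM_{\ell_1}\cdots M_{\ell_m}\Phi\xxm(y(z))$ behaves like $|1-z|^{-(\sum\ell_i)\qgay + m/2}$ for $m\le2$ and strictly smaller for $m\ge3$; and $B(z)^{\odot\ell_0}$, which by \eqref{Bo}/\eqref{Box} and iterated \refL{LIH2} is $\Theta(|1-z|^{-\ell_0\ga-\ell_0+\ell_0-1}) = \Theta(|1-z|^{-\ell_0\ga-1})$ when $\ell_0\ge1$ (and is just $1$, hence Hadamard-identity, when $\ell_0=0$, forcing that term to vanish since $B(z)\odot\text{const}$ picks out no contribution... actually $\ell_0=0$ gives $B^{\odot0}=$ the all-ones series, and $\mathbf{1}\odot f = f$, so that term survives). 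Carrying out the Hadamard product via \refL{LIH2}, one more factor of $|1-z|^{+1}$ is gained each time, and after the $\frac{zy'}{y}\sim 2\qqw\gs\qw(1-z)\qqw$ prefactor the total exponent comes out to $-\ell\qgay+\frac12$ precisely for $m=0$ (the term $\ell_0=\ell$, which is excluded) and for $m\in\{1,2\}$. The $m=2$ terms with $\ell_0=0$, $\ell_1+\ell_2=\ell$ produce the quadratic sum $\sum_{j=1}^{\ell-1}\binom{\ell}{j}\kkk_j\kkk_{\ell-j}$; the $m=1$ terms with $\ell_0=1$, $\ell_1=\ell-1$ produce the linear term $\kkk_{\ell-1}$; everything else ($m\ge3$, or $m\le2$ with the wrong split of $\ell_0$) is absorbed into the $o$-error. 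Tracking the Gamma-function constants through \refL{LIH2} (using $\gG(-\frac12)=-2\sqrt\pi$ and the duplication/reflection values) should reproduce \eqref{kkk2} exactly, including the $2^{-3/2}$ and the ratio $\gG(\ell\qgay-1)/\gG((\ell-1)\qgay-\frac12)$.

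The main obstacle is bookkeeping rather than conceptual: one must verify that the ``non-integer'' hypotheses of \refL{LIH2} and \refL{LFFK} (the conditions like $a+b+1\notin\NNo$, i.e. $\rga+\rgb+1\notin\NNo$) are met at every application, or handle the exceptional arithmetic-progression values of $\ga$ separately. Since $\rga>0$ and the relevant exponents are of the form $-j\qgay+\frac12$ and $-j\ga-1$, their pairwise sums plus $1$ land in $\NNo$ only for a discrete set of $\ga$; for those one argues by analytic continuation in $\ga$ (all quantities in \eqref{lc2} are analytic in $\ga$ on $H_+\xq$, and $o$-terms are uniform on compacts by \refL{Ldom}/\refR{Rdom}), exactly as in the last paragraph of the proof of \refL{LIH2}. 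A secondary nuisance is confirming that the polynomial ``junk'' terms $P(z)\in\cP_a$ appearing in \eqref{li}, \eqref{lffko}, \eqref{lih2} are genuinely negligible: here $a<0$ whenever $\rga>0$ makes the relevant $\cP_a=\set0$ outright for the top-order terms, and for the sub-leading Hadamard products one checks the degree bound $\deg P<-\ell\rga-\ell+\dots<0$ so again $P$ vanishes, or is swallowed by the stated $o$-term. Once these routine verifications are in place, equating coefficients of $(1-z)^{-\ell\qgay+\frac12}$ on the two sides of \eqref{lh} delivers \eqref{kkk2}, completing the induction.
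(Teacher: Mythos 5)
Your overall strategy is exactly the paper's: induction on $\ell$ via the recursion \eqref{lh}, with the base case supplied by \refLs{LC1>} and \ref{LC1<}, the factor $zy'(z)/y(z)$ handled by \eqref{sx3}, and the exponent bookkeeping done through \refLs{LFFK}, \ref{LIH2} and \ref{LC}; your identification of the $(m,\ell_0)=(2,0)$ terms as the source of the quadratic sum and the $(m,\ell_0)=(1,1)$ term as the source of the $\kkk_{\ell-1}$ term is also correct. However, there is one concrete gap: you assert that the $m=0$ term (the tuple with $\ell_0=\ell$) ``is excluded.'' It is not. The constraint in $\sumxx$ only restricts $\ell_1,\dots,\ell_m$, which is vacuous when $m=0$, so the term $B(z)^{\odot\ell}\odot\bigsqpar{z\Phi\yz}=B(z)^{\odot\ell}\odot y(z)$ genuinely appears in \eqref{lh}. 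By your own exponent count (and by \eqref{lc4} with $\ell_0=\ell$, $m=0$) its naive order is $O\bigpar{|1-z|^{-\ell\rga}}$ inside $R_\ell$, which for $\ell=2$ coincides exactly with the leading exponent $-\ell(\rga+\frac12)+1$; so simply dropping it would either leave a spurious contribution to $\kkk_2$ or leave the asymptotics unproved at leading order. The term must be shown negligible by an explicit cancellation: write $B(z)^{\odot\ell}\odot y(z)=B(z)^{\odot\ell}\odot 1+B(z)^{\odot\ell}\odot\bigpar{y(z)-1}$, note that $B(z)^{\odot\ell}\odot 1=0$ (the constant $1$ has no coefficients in degrees $\ge1$), and apply \refL{LFFK}\ref{LFFKo} with $y(z)-1=\oz{0}$ to get $\oz{-\ell\rga}=\oz{-\ell(\rga+\frac12)+1}$.

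Two smaller remarks. First, your concern about the hypotheses $a+b+1\notin\NNo$ of \refLs{LFFK} and \ref{LIH2} is moot here: in every application the resulting exponent $a+b+1$ equals the quantity in \eqref{lc4} (or $-\ell_0\rga-1$, or $-\ell\rga$), all of which are strictly negative for $\rga>0$, so no exceptional values of $\ga$ arise and no analytic-continuation patch is needed; the same sign condition forces all the polynomial terms $P(z)\in\cP_a$ to vanish, as you note. Second, to get the stated $o$ (rather than $O$) error in \eqref{lc2} you must propagate the little-$o$ from \eqref{yz1}, \eqref{sx3} and the induction hypothesis through \refL{LFFK}\ref{LFFKo} and \refL{LIH2} at each step; your outline is compatible with this but should say so explicitly when ``tracking the constants.''
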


The \GDD{} may depend on $\ell$.
We write $\kkk_\ell$ in \eqref{kkk1}--\eqref{kkk2} as $\kkk_\ell(\ga)$
when we want to emphasize the dependence on $\ga$.

\begin{proof}
We use induction on $\ell$, based on \refL{LH}.
First, this shows that $M_\ell$ is \gda, using 
the fact that~$B$ and, by \refL{LC}, $\Phi\xxm(y(\cdot))$ are,
together with \refL{LFFK}.

To show \eqref{lc2} by induction, we note that
the base case $\ell=1$ is \refLs{LC1>} and \ref{LC1<}.

Assume thus $\ell\ge2$, and 
let $A:=-\ell(\ga+\frac12)+\frac12<-\frac12$
be the exponent of $1 - z$ in \eqref{lc2}.
Consider one of the terms in \eqref{lh}.
By the induction hypothesis and \refL{LC}, we have
\begin{align}\label{lc1qq}
& zM_{\ell_1}(z)\dotsm M_{\ell_m}(z)\Phi\xxm\yz
= O\bigpar{|1-z|^{-\sumim \ell_i(\Re \ga+\frac12)+\frac{m}{2}}\Phi\xxm\yz}
\notag\\
&\hskip4em
=
  \begin{cases}
    O\bigpar{|1-z|^{-(\ell-\ell_0)(\Re \ga+\frac12)+\frac{m}2}}, & m\le 2,
\\
  o\bigpar{|1-z|^{-(\ell-\ell_0)(\Re \ga+\frac12)+1}}, & m\ge3.
  \end{cases}
\end{align}
Since $\ell - \ell_0 \ge m$, the exponent here is
%has real part 
\begin{align}
\label{exp_real_part}
  -(\ell-\ell_0)(\rga+\tfrac12)+\frac{m\bmin 2}{2}
\le
  -m(\rga+\tfrac12)+\frac{m\bmin 2}2
\le-m\,\rga\le0.
\end{align}
Furthermore, 
\eqref{Bo} and \eqref{Box} show that, 
for both $\rga>\frac12$ and $\rga<\frac12$,
\begin{align}\label{Boy}
B(z)=\Oz{-\rga-1}  
\end{align}
and thus \refL{LFFK} 
applies $\ell_0$ times and yields
\begin{align}\label{lc3}
&B(z)^{\odot\ell_0} \odot 
\bigsqpar{zM_{\ell_1}(z)\dotsm M_{\ell_m}(z)\Phi\xxm\yz}
\notag\\
&\hskip4em
=
  \begin{cases}
    O\bigpar{|1-z|^{-(\ell-\ell_0)(\rga+\frac12)+\frac{m}2-\ell_0\rga}}, & m\le 2,
\\
  o\bigpar{|1-z|^{-(\ell-\ell_0)(\rga+\frac12)+1-\ell_0\rga}}, & m\ge3.
  \end{cases}
\end{align}
The exponent here is
\begin{align}\label{lc4}
  -(\ell-\ell_0)(\rga+\tfrac12)+\frac{m\bmin2}2-\ell_0\rga
=   -\ell(\rga+\tfrac12)+\frac{\ell_0+m\bmin2}2.
\end{align}
For $m\ge3$, this is at least $-\ell(\rga+\frac12)+1=\REA+\frac12$,
and thus the term is
\begin{align}\label{lc5}
  \oz{\REA+\frac12}.
\end{align}
We will see that this contributes only to the error term in \eqref{lc2},
so such terms may be ignored.
Similarly, for every term with $m\le2$ and $\ell_0+m > 2$, 
 the exponent considered in 
\eqref{lc4} is strictly larger than $\REA+\frac12$, and thus
such terms also satisfy \eqref{lc5} and may be ignored.

If $m=1$, then $\ell_1<\ell$, and thus $\ell_0\ge1$.
Hence, the only remaining terms to consider are 
(1) $m=0$ and thus $\ell_0=\ell$;
(2) $m=1$ and $\ell_0=1$; 
(3) $m=2$ and $\ell_0=0$.

Furthermore, also the term with $m=0$ can be ignored, since it is
\begin{align}\label{lc6}
B(z)^{\odot\ell} \odot \bigsqpar{z\Phi\yz}  
&=
B(z)^{\odot\ell} \odot y(z)
\notag\\&
=
B(z)^{\odot\ell} \odot 1
+ B(z)^{\odot\ell} \odot\bigpar{y(z)-1}  
,\end{align}
where $B(z)^{\odot\ell} \odot1$ vanishes
and $y(z)-1=\Oz{\frac12}=\oz{0}$ by \eqref{yz1}; 
hence \refL{LFFK}\ref{LFFKo} yields 
\begin{align}\label{lc7}
B(z)^{\odot\ell} \odot \bigsqpar{z\Phi\yz}  
=\oz{-\ell\,\rga}
=\oz{\REA+\frac12}.
\end{align}

Consequently, recalling %\eqref{lh6}--
\eqref{lh7}, we have
\begin{align}\label{lc8}
  R_l(z) = \ell B(z) \odot \bigsqpar{z M_{\ell-1}(z) \Phi'\yz}
&+ \frac12\sum_{j=1}^{\ell-1}\binom{\ell}{j} z M_j(z)M_{\ell-j}(z) \Phi''\yz
\notag\\
&+\oz{\REA+\frac12}
.\end{align}
Since $\Phi'$ is continuous in the unit disc with $\Phi'(1)=1$,
the induction hypothesis implies that
\begin{align}\label{lc9}
z M_{\ell-1}(z)\Phi'\yz = 
\kkk_{\ell-1}\gs^{-\ell}(1-z)^{-(\ell-1)(\ga+\frac12)+\frac12}
+\oz{-(\ell-1)(\rga+\frac12)+\frac12},
  \end{align}
Hence, \eqref{Bo}, \eqref{Box}, and Lemmas~\ref{LFFK} and~\ref{LIH2} yield
\begin{multline}\label{lc10}
B(z)\odot\bigsqpar{z M_{\ell-1}(z)\Phi'\yz} 
\\= 
\kkk_{\ell-1}\gs^{-\ell}\frac{\gG\bigpar{\ell(\ga+\frac12)-1}}
{\gG\bigpar{(\ell-1)(\ga+\frac12)-\frac12}}(1-z)^{A+\frac12}
%(1-z)^{-\ell(\ga+\frac12)+1}
+\oz{\REA+\frac12}.
  \end{multline}
Similarly, the induction hypothesis yields, using $\Phi''(1)=\gss$,
\begin{multline}\label{lc11}
 \sum_{j=1}^{\ell-1}\binom{\ell}{j} z M_j(z)M_{\ell-j}(z) \Phi''\yz  
\\
=\sum_{j=1}^{\ell-1}\binom{\ell}{j}  \kkk_j\kkk_{\ell-j}\gs^{-\ell}(1-z)^{A+\frac12}
+\oz{\REA+\frac12}.
\end{multline}
The result \eqref{lc2} now follows from 
\eqref{lh6}, \eqref{sx3}, and \eqref{lc8}--\eqref{lc11}.
\end{proof}

\subsection{Mixed moments. Proof of \refT{TXmom}}
\label{SSmom-mix}
We may extend \refT{TXmom} to mixed moments of $X_n(\ga_1),\dots,X_n(\ga_m)$,
for several given $\ga_1,\dots,\ga_m$,
using the
same arguments with only notational differences.
For convenience, define 
\begin{align}\label{ebbe}
  \QX_n(\ga):= 
%n^{-\ga-\frac12}F(\ctn)=
  \begin{cases}
     n^{-\ga-\frac12}X_n(\ga), & \rga>\frac12,
\\
    n^{-\ga-\frac12}\bigpar{X_n(\ga)-\mu(\ga)n}, & 0<\rga<\frac12.
  \end{cases}
\end{align}
We consider for simplicity  only two different 
values of $\ga$; the general case is similar but left to the reader.

\begin{theorem}\label{Tmix}
Let $\rga_1,\rga_2\in(0,\frac12)\cup(\frac12,\infty) $,
and write $\ga_i':=\ga_i+\frac12$. % and $\gb':=\gb+\frac12$.
Then, for any integers $\ell_1,\ell_2\ge0$
with $\ell_1+\ell_2\ge1$,
\begin{align}\label{tmix}
\gs^{\ell_1+\ell_2}\E\bigsqpar{\QX_n(\ga_1)^{\ell_1}\QX_n(\ga_2)^{\ell_2}}
\to
  \E \bigsqpar{Y(\ga_1)^{\ell_1}Y(\ga_2)^{\ell_2}}
=\frac{\sqrt{2\pi}}{\gG(\ell_1\ga_1'+\ell_2\ga_2'-\frac12)}\kkk_{\ell_1,\ell_2},
\end{align}
where
$\kkk_{1,0}=\kkk_1(\ga_1)$ 
and
$\kkk_{0,1}=\kkk_1(\ga_2)$
are given by \eqref{kkk1}, 
and,
for $\ell_1+\ell_2\ge2$,
\begin{align}\label{kkkll}
  \kkk_{\ell_1,\ell_2}&
=
2^{-3/2} \sum_{0<j_1+j_2<\ell_1+\ell_2} 
\binom{\ell_1}{j_1}\binom{\ell_2}{j_2}\kkk_{j_1,j_2}\kkk_{\ell_1-j_1,\ell_2-j_2}
\notag\\&\qquad
+ 2\qqw \ell_1 \frac{\gG\bigpar{\ell_1\ga_1'+\ell_2\ga_2'-1}}
 {\gG\bigpar{\ell_1\ga_1'+\ell_2\ga_2'-1-\ga_1}}\kkk_{\ell_1-1,\ell_2}
\notag\\&\qquad
+ 2\qqw \ell_2 \frac{\gG\bigpar{\ell_1\ga_1'+\ell_2\ga_2'-1}}
 {\gG\bigpar{\ell_1\ga_1'+\ell_2\ga_2'-1-\ga_2}}\kkk_{\ell_1,\ell_2-1}
.\end{align}
\end{theorem}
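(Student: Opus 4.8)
The plan is to prove \refT{Tmix} by exactly the same machinery used for \refT{TXmom}, i.e.\ by setting up a multivariate version of the generating-function recursion of \refL{LH}, extracting the leading singularity by singularity analysis, and then invoking \refL{Lsub} together with the tightness already established. First I would introduce the bivariate additive functional: for two toll sequences $b^{(1)}_n$ and $b^{(2)}_n$ chosen according to \eqref{bn12} (for $\ga_1$ and $\ga_2$ respectively), let $F_1=F_1(T)$ and $F_2=F_2(T)$ be the corresponding additive functionals, so that $F_i(\ctn)=\QX_n(\ga_i)\cdot n^{\ga_i+1/2}$ (after the appropriate centering when $\rga_i<\frac12$). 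Define the mixed-moment generating function
\begin{equation*}
  M_{\ell_1,\ell_2}(z):=\E\bigsqpar{F_1(\cT)^{\ell_1}F_2(\cT)^{\ell_2}z^{|\cT|}}
  =\sumn q_n\,\E\bigsqpar{F_1(\cT)^{\ell_1}F_2(\cT)^{\ell_2}\mid|\cT|=n}\,z^n .
\end{equation*}
Conditioning on the root degree and applying the multinomial theorem twice (once for each of $F_1,F_2$) exactly as in the proof of \refL{LH}, with $F_i(\cT)=b^{(i)}_{|\cT|}+\sum_{j}F_i(\cT_j)$, gives a recursion of the same shape as \eqref{lh}, now with Hadamard factors $B_1(z)^{\odot k_0^{(1)}}\odot B_2(z)^{\odot k_0^{(2)}}\odot[\cdots]$ where $B_i(z)=\Li_{-\ga_i}(z)$ or its centered analogue; separating off the "copy" terms where some child carries the full multi-index $(\ell_1,\ell_2)$ yields the factor $z\Phi'\yz M_{\ell_1,\ell_2}(z)$, and after dividing by $1-z\Phi'\yz=y(z)/(zy'(z))$ one gets the analogue of \eqref{lh6}--\eqref{lh7}.

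Next I would run the induction on $\ell_1+\ell_2$. The base cases $(\ell_1,\ell_2)\in\{(1,0),(0,1)\}$ are \refLs{LC1>} and \ref{LC1<} applied to $\ga_1$ and $\ga_2$, giving $\kkk_{1,0}=\kkk_1(\ga_1)$, $\kkk_{0,1}=\kkk_1(\ga_2)$. For the inductive step I would track the exponent of $1-z$ in each term of the recursion: by the induction hypothesis $M_{j_1,j_2}(z)=\Oz{-j_1\ga_1'-j_2\ga_2'+\frac12}$ with $\ga_i'=\ga_i+\frac12$, and \refL{LC} controls $\Phi\xxm\yz$; the analysis of which terms dominate is identical to \eqref{lc1qq}--\eqref{lc8}, the upshot being that only $m=0$ (which is negligible, as in \eqref{lc6}--\eqref{lc7}), $m=1$ with one Hadamard factor $B_i$, and $m=2$ with no Hadamard factor survive. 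The $m=2$ terms produce the quadratic sum $2^{-3/2}\sum_{0<j_1+j_2<\ell_1+\ell_2}\binom{\ell_1}{j_1}\binom{\ell_2}{j_2}\kkk_{j_1,j_2}\kkk_{\ell_1-j_1,\ell_2-j_2}$ (using $\Phi''(1)=\gss$), while the two $m=1$ terms — one where the surviving Hadamard factor is $B_1$ (coefficient $\ell_1$) and one where it is $B_2$ (coefficient $\ell_2$) — produce the two linear terms in \eqref{kkkll}, with the gamma-factor $\gG(\ell_1\ga_1'+\ell_2\ga_2'-1)/\gG(\ell_1\ga_1'+\ell_2\ga_2'-1-\ga_i)$ coming from \refL{LIH2} applied with $\ga=-\ga_i-1$ and $\gb$ the exponent in \eqref{lc9}. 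Combining via the analogue of \eqref{lh6} and \eqref{sx3} gives
\begin{equation*}
  M_{\ell_1,\ell_2}(z)=\kkk_{\ell_1,\ell_2}\gs^{-\ell_1-\ell_2-1}(1-z)^{-\ell_1\ga_1'-\ell_2\ga_2'+\frac12}+\oz{-\ell_1\rga_1'-\ell_2\rga_2'+\frac12},
\end{equation*}
with $\kkk_{\ell_1,\ell_2}$ satisfying \eqref{kkkll}.

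Finally I would transfer this to moments of $\QX_n$: singularity analysis (as in \cite[Section VI]{FS}) applied to $M_{\ell_1,\ell_2}(z)=\sumn q_n\,\E[\cdots\mid|\cT|=n]z^n$, together with $q_n=\P(|\cT|=n)\sim (2\pi\gss)\qqw n^{-3/2}$ from \eqref{pct}, extracts
\begin{equation*}
  \E\bigsqpar{F_1(\cT_n)^{\ell_1}F_2(\cT_n)^{\ell_2}}
  \sim \frac{\sqrt{2\pi}\,\gs^{-\ell_1-\ell_2}}{\gG(\ell_1\ga_1'+\ell_2\ga_2'-\frac12)}\kkk_{\ell_1,\ell_2}\;n^{\ell_1\ga_1'+\ell_2\ga_2'},
\end{equation*}
which after dividing by $n^{\ell_1\ga_1'+\ell_2\ga_2'}$ is the left equality in \eqref{tmix}; the right equality (identification with $\E[Y(\ga_1)^{\ell_1}Y(\ga_2)^{\ell_2}]$) follows because \refTs{TX} and \ref{TXmom}, extended to joint convergence by \refL{Lsub} and the tightness of \refL{LcH}, already give that $\gs^{\ell_1}\QX_n(\ga_1)^{\ell_1}\gs^{\ell_2}\QX_n(\ga_2)^{\ell_2}\dto Y(\ga_1)^{\ell_1}Y(\ga_2)^{\ell_2}$ with convergence of moments (uniform integrability coming, as in \refS{Smom}, from the bound on $\E\QX_n^{2\ell}$ implicit in the same generating-function estimates). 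I expect the main obstacle to be purely bookkeeping: verifying carefully that in the bivariate recursion no new "dominant" term appears beyond those listed, i.e.\ the exponent-counting argument \eqref{lc1qq}--\eqref{lc5} goes through verbatim with the single exponent $-\ell(\rga+\frac12)$ replaced by $-\ell_1\rga_1'-\ell_2\rga_2'$ and with the condition $\rga_i>0$ ensuring $-m\min(\rga_1,\rga_2)\le 0$; and tracking that the multinomial/symmetry factors collapse correctly to the stated binomial coefficients. Everything else is a transcription of \refS{Smom}.
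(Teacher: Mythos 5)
Your proposal is correct and follows essentially the same route as the paper's own proof: the paper likewise extends \refL{LH} to the bivariate recursion \eqref{lhz}, runs the induction of \refL{LC2} to get the singularity expansion of $M_{\ell_1,\ell_2}(z)$, applies singularity analysis with \eqref{pct}, and identifies the limit with $\E[Y(\ga_1)^{\ell_1}Y(\ga_2)^{\ell_2}]$ via the joint convergence from \refT{TX} together with uniform integrability obtained by taking $\ga_2=\overline{\ga_1}$ and \Holder's inequality. Your accounting of where the gamma factors in \eqref{kkkll} arise (from \refL{LIH2} with $a=-\ga_i-1$) checks out, so nothing further is needed.
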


\begin{proof}[Proof of \refTs{TXmom} and \ref{Tmix}]
For a given $\ga$, we continue to use the choice 
\eqref{bn12} of $b_n$. This yields 
\eqref{bnf} ($\rga\ge\frac12$) or \eqref{bnf2} ($\rga<\frac12$),
i.e., now writing $\dF_\ga$ for $F$,
\begin{align}\label{abba}
\dF_\ga(\ctn)=
  \begin{cases}
     X_n(\ga), & \rga\ge\frac12,
\\
    {X_n(\ga)-\mu(\ga)n}, & 0<\rga<\frac12.
  \end{cases}
\end{align}

Hence, in both cases, 
$\QX_n(\ga)=n^{-\ga-\frac12}\dF_\ga(\ctn)$, and
\refT{TX} yields 
\begin{align}\label{frater}
\QX_n(\ga)=n^{-\ga-\frac12}\dF_\ga(\ctn)\dto \gs\qw Y(\ga);
\end{align}
moreover, this holds jointly for any number of $\ga$
by the proof of \refT{TX}.

The asymptotic formula \eqref{lc2} yields, 
by   \eqref{Mell} and standard singularity analysis
\cite[Chapter VI]{FS},
  \begin{align}\label{tm1}
q_n m_n\xxl  = [z^n]M_\ell(z) \sim \kkk_\ell \gs^{-\ell-1} 
\frac{1}{\gG \bigpar{\ell(\ga+\frac12)-\frac12}} n^{\ell(\ga+\frac12)-\frac32}. 
  \end{align}
Together with \eqref{pct} (with $h = 1$) for $q_n$, this yields
  \begin{align}\label{morm}
 m_n\xxl   
\sim 
\frac{\sqrt{2\pi}\kkk_\ell \gs^{-\ell}}
{\gG \bigpar{\ell(\ga+\frac12)-\frac12}} n^{\ell(\ga+\frac12)}. 
  \end{align}
Recall that $m_n\xxl:=\E \dF_\ga(\ctn)^\ell$ by  \eqref{mell}.
Hence,
\eqref{morm} can
be written as 
\begin{align}\label{farf}
\gs^\ell  \E \QX_n(\ga)^\ell\to 
\frac{\sqrt{2\pi}}
{\gG \bigpar{\ell(\ga+\frac12)-\frac12}}  \kkk_\ell
=:\kk_\ell
,\end{align}
where we thus denote the \rhs{} by $\kk_\ell $.
The recursion \eqref{kk1}--\eqref{kk2} then follows from
\eqref{kkk1}--\eqref{kkk2}.

This shows most parts of \refT{TXmom},
but it remains to show that the $\kk_\ell$'s (the limits of moments) 
are the moments of the limit (in distribution) $Y(\ga)$ of $\gs \QX_n(\ga)$.
For real $\ga$, this follows from \eqref{farf} by a standard argument,
but for general complex $\ga$ we want to consider absolute moments, so we
postpone the proof of this, and first turn to \refT{Tmix}.

Define, in analogy with \eqref{mell}--\eqref{Mell}, 
  \begin{align}
    \label{mell2}
m_n\xxll&:=\E\bigsqpar{\dF_{\ga_1}(\ctn)^{\ell_1}\dF_{\ga_2}(\ctn)^{\ell_2}},
\\\label{Mell2}
  M_{\ell_1,\ell_2}(z)
&:=\E \bigsqpar{\dF_{\ga_1}(\cT)^{\ell_1}\dF_{\ga_2}(\cT)^{\ell_2} z^{|\cT|}}
=\sumn q_n m\xxll_n z^n
.\end{align}
It is straightforward to extend \refL{LH} to the following, valid for every $\ell,r\ge0$
with 
\mbox{$\ell+r\ge1$}: 
\begin{multline}\label{lhz}
  M_{\ell,r}(z)
=
\frac{z y'(z)}{y(z)}
\sum_{m=1}^{\ell+r} \frac{1}{m!}\sumxx \binom{\ell}{\ell_0,\dots,\ell_m}
\binom{r}{r_0,\dots,r_m}
B_{\ga_1}(z)^{\odot\ell_0} 
\\
\odot 
B_{\ga_2}(z)^{\odot r_0} \odot 
\bigsqpar{zM_{\ell_1,r_1}(z)\dotsm M_{\ell_m,r_m}(z)\Phi\xxm\yz},
\end{multline}
where $\sumxx$ is the sum over all pairs of
$(m+1)$-tuples $(\ell_0,\dots,\ell_m)$  and $(r_0,\dots,r_m)$ 
of non-negative integers that sum to $\ell$ and $r$, respectively,
such that $1\le\ell_i+r_i<\ell + r$ for every $i$.

Then, the inductive proof of \refL{LC2} is easily extended to show that
in some \GDD{} (possibly depending on $\ell_1$ and $\ell_2$)
  \begin{align}\label{lc2ll}
 M_{\ell_1,\ell_2}(z) = 
\kkk_{\ell_1,\ell_2}\gs^{-\ell_1-\ell_2-1}(1-z)^{-\ell_1\ga_1'-\ell_2\ga_2'+\frac12}
+\oz{-\ell_1\rga_1'-\ell_2\rga_2'+\frac12},
  \end{align}
with $\kkk_{\ell_1,\ell_2}$ given by \eqref{kkk1} and \eqref{kkkll}.
Singularity analysis yields, as for the special case \eqref{morm},
\begin{align}\label{varin}
\gs^{\ell_1+\ell_2}  \E \bigsqpar{\QX_n(\ga_1)^{\ell_1}\QX_n(\ga_2)^{\ell_2}}
\to 
\frac{\sqrt{2\pi}}
{\gG \bigpar{\ell_1\ga_1'+\ell_2\ga_2'-\frac12}}  \kkk_{\ell_1,\ell_2}
=:\kk_{\ell_1,\ell_2}
.\end{align}

In particular, for any $\ga$ in the domain, we may take $\ga_1:=\ga$ and
$\ga_2:=\bga$. Then \eqref{varin} shows, in particular,
that for any integer $\ell\ge1$,
$\E\bigabs{\QX_n(\ga)}^{2\ell}=
\E \bigsqpar{\QX_n(\ga)^{\ell}\QX_n(\bga)^{\ell}}$
converges as $\ntoo$.

By a standard argument, 
see \eg{} \cite[Theorems 5.4.2 and 5.5.9]{Gut},
this implies uniform
integrability of each smaller power of $\QX_n(\ga)$, 
which together with \eqref{frater} implies
convergence of all lower moments to the moments of the limit $\gs\qw Y(\ga)$.
This completes the proof of
\eqref{mtx1}--\eqref{mtx<} with
\begin{align}
  \label{tm2}
\kk_\ell:=\E Y(\ga)^\ell
=\frac{\sqrt{2\pi}}{\gG \bigpar{\ell(\ga+\frac12)-\frac12}} 
\kkk_\ell.
\end{align}

Similarly, using \Holder's inequality, the sequence
$\QX_n(\ga_1)^{\ell_1}\QX_n(\ga_2)^{\ell_2}$ is uniformly integrable for
every fixed $\ga_1,\ga_2,\ell_1,\ell_2$, and \eqref{tmix} follows from
the joint convergence in~\eqref{frater} and~\eqref{varin}.
\end{proof}

\begin{example}\label{Emix11}
  Taking $\ell_1=\ell_2=1$ in \eqref{kkkll}, we obtain, 
with obvious  notation
and using \eqref{kkk1},
\begin{align}\label{kkk11}
&  \kkk_{1,1}(\ga,\gb)
= 2^{-\frac12}\kkk_1(\ga)\kkk_1(\gb)
+ 2^{-\frac12} \frac{\gG(\ga+\gb)}{\gG(\gb)}\kkk_1(\gb)
+ 2^{-\frac12} \frac{\gG(\ga+\gb)}{\gG(\ga)}\kkk_1(\ga)
\notag\\&
=
\frac{2^{-\frac{5}2}}{\pi}\gG(\ga-\tfrac12)\gG(\gb-\tfrac12)
+\frac{2^{-\frac{3}2}}{\sqrt\pi}\frac{\gG(\ga+\gb)\gG(\gb-\tfrac12)}{\gG(\gb)}
+\frac{2^{-\frac{3}2}}{\sqrt\pi}\frac{\gG(\ga+\gb)\gG(\ga-\tfrac12)}{\gG(\ga)}.
\end{align}
In particular, taking $\gb=\bga$ and using \eqref{tmix} and \eqref{varin},
\begin{align}\label{e|2|}
\E|Y(\ga)|^2&
=  \kk_{1,1}(\ga,\bga)
=\frac{\sqrt{2\pi}}{\gG(2\rga+\frac12)}
\kkk_{1,1}(\ga,\bga)
\notag\\&
=
\frac{|\gG(\ga-\tfrac12)|^2}{4\sqrt{\pi}\gG(2\rga+\frac12)}
+\frac{\gG(2\rga)}{\gG(2\rga+\frac12)}\Re\frac{\gG(\ga-\tfrac12)}{\gG(\ga)}.
\end{align}
\end{example}

\begin{example}\label{Emix12}
As mentioned in \refE{Ega=1}, for 
the case of joint moments of
$Y(1)$ and $Y(2)$, \refT{Tmix} 
yields the recursion formula given in \cite{SJ146};
the method used there is related to the one used here,
but seems to apply only for integer $\ga$.
\end{example}

\begin{remark}\label{Runique}
The mixed moments of $Y(\ga)$ and $\overline{Y(\ga)}=Y(\bga)$ 
determine  the distribution
of $Y(\ga)$ uniquely, for any $\ga\neq\frac12$ with $\rga>0$.
In fact, there exists $C(\ga)>0$ such that for every $\ell\ge1$,
\begin{align}\label{expb}
  \E|Y(\ga)|^\ell \le C(\ga)^\ell \ell!,
\end{align}
and thus $(\Re Y(\ga),\Im Y(\ga))$ has a finite moment generating function
in a neighborhood of the origin. The estimate \eqref{expb} was shown for
real $\ga$ in \cite[Lemma 3.4]{FillK04} (with proof in \cite{FillK04v1});
the general case is similar, considering even $\ell$ and using induction
and \eqref{kkkll}. 

The constant $C(\ga)$ in \eqref{expb} 
can be taken uniformly bounded on compact subsets of
$H_+\setminus\set{\frac12}$. 
Moreover, \eqref{expb} obviously implies the same estimate for
$\tY(\ga)=Y(\ga)-\E Y(\ga)$ [with $C(\ga)$ replaced by $2C(\ga)$], and then
we can argue using analyticity as  in the proof of \refL{LU4} below and
conclude that \eqref{expb} holds also for $\tY(\frac12)$, which thus also is
determined by its moments, as noted in \cite{FillK04}.
\end{remark}

\subsection{Uniform estimates}\label{SSuniform}

In this \refS{Smom}, we have so far estimated moments for a fixed $\ga$, or
mixed moments for a fixed set of different $\ga$.
We turn to uniform estimates for $\ga$ in suitable sets.
This is rather straightforward % by the methods above
if $\rga$ stays away from $\frac12$. However, we want
uniformity also for $\rea$ approaching (or equalling) $\frac12$, and this is
more complicated.
For our proofs, we assume throughout the present subsection the 
weak moment condition
\begin{align}\label{2+gd}
  \E\xi^{2+\gd}<\infty,
\end{align}
for some $\gd>0$. 
Throughout this subsection, $\gd$ is fixed;
we assume without loss of generality that $\gd\le1$.

\begin{problem}
  Do \refLs{LU1}--\ref{LU4} and \refT{T1mom}
hold without the extra condition \eqref{2+gd}?
(Cf.~\refR{RT1mom}.)
\end{problem}

We begin with some preliminaries.
We start with a standard estimate, included for completeness.
\begin{lemma}\label{Lgd}
  If \eqref{2+gd} holds with $0<\gd\le1$, then
  \begin{align}\label{lgd}
    \Phi(z)=z+\tfrac12\gss (1-z)^2 + O\bigpar{|1-z|^{2+\gd}}, 
\qquad |z|\le1.
  \end{align}
\end{lemma}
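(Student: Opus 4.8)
\textbf{Proof proposal for \refL{Lgd}.}
The plan is to obtain \eqref{lgd} by a short Taylor-expansion argument at $z=1$, exploiting the moment normalizations $\E\xi=1$ and $\E(\xi-1)^2=\gss$ together with the extra moment hypothesis \eqref{2+gd}. Writing $w:=1-z$, so $z=1-w$ and $|w|\le 2$ when $|z|\le 1$, I would expand $\Phi(z)=\E z^\xi=\E(1-w)^\xi$ termwise. The zeroth, first, and second ``Taylor coefficients in $w$'' of $(1-w)^\xi$ are $1$, $-\xi$, and $\binom{\xi}{2}=\tfrac12\xi(\xi-1)$ respectively, so the candidate main part is
\begin{equation*}
1-w\,\E\xi+w^2\,\E\tbinom{\xi}{2}=1-w+\tfrac12 w^2\bigpar{\E\xi^2-\E\xi}
=1-w+\tfrac12 w^2\gss,
\end{equation*}
using $\E\xi=1$ and $\E\xi^2-\E\xi=\Var\xi=\gss$. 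Noting $1-w=z$ and $\tfrac12 w^2\gss=\tfrac12\gss(1-z)^2$, this already matches the stated right-hand side up to the claimed error term, so everything reduces to bounding the remainder.

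The main (and only real) step is therefore the remainder estimate: I must show
\begin{equation*}
R(\xi,w):=(1-w)^\xi-1+w\xi-w^2\tbinom{\xi}{2}
\end{equation*}
satisfies $|\E R(\xi,w)|=O\bigpar{|w|^{2+\gd}}$ uniformly for $|w|\le 2$. The standard device is a Taylor-with-integral-remainder bound for the function $g(w):=(1-w)^k$ (for each fixed nonnegative integer $k=\xi$), giving two competing bounds: a ``third-order'' bound $|R(k,w)|\le C\,|w|^3 k^3$ valid for $|w|$ bounded (from $|g'''|$ controlled by $k(k-1)(k-2)$ on the segment, together with $|1-w|^{k-3}$ bounded since $|1-w|\le 3$), and a cruder ``second-order'' bound $|R(k,w)|\le C\,|w|^2 k^2$ (from the triangle inequality on each term, each of which is $O(|w|^2 k^2)$ for $|w|\le 2$). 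Interpolating, $|R(k,w)|\le C\,|w|^{2+\gd}\,k^{2+\gd}$ for $0<\gd\le 1$ and $|w|\le 2$, because $|w|^3 k^3\le |w|^{2+\gd}k^{2+\gd}\cdot(|w|k)^{1-\gd}$ is not quite it — rather one writes $|w|^{2}k^{2}\cdot\min\{1,|w|k\}^{\gd}\le |w|^{2}k^{2}\cdot(|w|k)^{\gd}=|w|^{2+\gd}k^{2+\gd}$, valid when $|w|k\le 1$, and when $|w|k\ge1$ one uses $|w|^{2}k^{2}\le |w|^{2+\gd}k^{2+\gd}$ directly (since $|w|\le 2$, with an absolute constant absorbing the endpoint). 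Either way $|R(k,w)|\le C\,\bigpar{|w|^2 k^2}\wedge\bigpar{|w|^3 k^3}\le C\,|w|^{2+\gd}k^{2+\gd}$.

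Taking expectations and invoking \eqref{2+gd},
\begin{equation*}
\bigabs{\E R(\xi,w)}\le C\,|w|^{2+\gd}\,\E\xi^{2+\gd}=O\bigpar{|w|^{2+\gd}}=O\bigpar{|1-z|^{2+\gd}},
\end{equation*}
uniformly in $|z|\le 1$, which is exactly \eqref{lgd}. The interchange of expectation and the termwise expansion is justified because the series for $\Phi(z)=\sum_k p_k z^k$ and its first two derivatives converge absolutely on $|z|\le1$ (indeed $\E\xi^{2}<\infty$), so $\Phi$, $\Phi'$, $\Phi''$ are continuous on the closed disc and $\E R(\xi,w)=\Phi(z)-z-\tfrac12\gss(1-z)^2$ is a genuine identity, not merely formal. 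I do not anticipate any obstacle beyond organizing this interpolation cleanly; the only mild subtlety is making sure the constants are uniform over the closed disc (rather than only near $z=1$), which is handled by the crude second-order bound away from $z=1$.
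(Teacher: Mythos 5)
Your proposal is essentially the paper's own proof: expand $(1-w)^k$ with $w=1-z$, record the two Taylor remainders $O(k^2|w|^2)$ and $O(k^3|w|^3)$, interpolate to get $O(k^{2+\gd}|w|^{2+\gd})$ (the paper takes a geometric mean of the two $O$-terms, which is the same as your $\min$ argument), and then take expectations using $\E\xi^{2+\gd}<\infty$. The argument is correct, but two of your justifications of the intermediate Taylor bounds need repair. For the cubic bound, "$|1-w|^{k-3}$ bounded since $|1-w|\le 3$" is not uniform in $k$ (it gives $3^{k-3}$); the correct observation is that the Taylor segment from $1$ to $z$ lies in the closed unit disc by convexity, so $|1-\zeta|\le 1$ there and the factor $|1-\zeta|^{k-3}$ is $\le 1$. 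For the quadratic bound, the triangle inequality termwise does not give $O(k^2|w|^2)$ when $k|w|$ is small (e.g.\ the term $kw$ is only $O(k|w|)$); instead apply the second-order Taylor remainder to $(1-w)^k-1+kw$ and add the explicit $\binom{k}{2}|w|^2$ term. With these fixes the proof is complete and matches the paper's.
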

\begin{proof}
Let $z=1-w$, with $|z|\le1$.  Taylor's theorem yields the two estimates,
uniformly for $|z|\le1$ and $k\ge0$,
\begin{align}\label{ru1}
  z^k &=(1-w)^k=1-kw + O\bigpar{k^2|w|^2} 
=1-kw+\binom k2 w^2 + O\bigpar{k^2|w|^2}, 
\\\label{ru2}
z^k&=(1-w)^k=1-kw+\binom k2 w^2 + O\bigpar{k^3|w|^3} ,
\end{align}
and thus, taking a geometric mean of the $O$ terms in \eqref{ru1} and
\eqref{ru2}, 
\begin{align}\label{ru3}
  z^k&=1-kw+\binom k2 w^2 + O\bigpar{k^{2+\gd}|w|^{2+\gd}}.
\end{align}
Hence, \eqref{Phi} yields, using the assumption~\eqref{2+gd},
\begin{align}
  \Phi(z)
= \sumk p_k\Bigsqpar{1-kw+\binom k2 w^2 + O\bigpar{k^{2+\gd}|w|^{2+\gd}}}
= 1- w + \frac{\gss}{2} w^2 + O\bigpar{|w|^{2+\gd}}, 
\end{align}
which is \eqref{lgd}.
\end{proof}

This enables us to improve \eqref{yz1}.
\begin{lemma}\label{Lgdy}
  If \eqref{2+gd} holds with $0<\gd\le1$, then, for $z$ in some \GDD,
  \begin{align}\label{lgdy}
  y(z)=1-\sqrt2 \gs\qw(1-z)\qq+\Oz{\frac12+\gdd}.
  \end{align}
\end{lemma}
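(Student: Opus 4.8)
The plan is to proceed exactly as in the proof of the weaker estimate \eqref{yz1} (cited from \cite[Lemma A.2]{SJ167}), but feeding in the sharper local expansion of $\Phi$ provided by \refL{Lgd} instead of only $\E\xi^2<\infty$. The starting point is the defining functional equation \eqref{ma}, namely $y(z)=z\Phi\yz$. Writing $y=y(z)$ and inserting \refL{Lgd} with the argument $y$ in place of $z$ (legitimate since $|y(z)|<1$ in a suitable \GDD{} by the argument recalled in the proof of \refL{LC}, so that $|1-y|$ is small there), we get
\begin{align}\label{plan1}
  y = z\Bigpar{y + \tfrac12\gss(1-y)^2 + O\bigpar{|1-y|^{2+\gd}}},
\end{align}
which rearranges to
\begin{align}\label{plan2}
  (1-z)y = \tfrac{z\gss}{2}(1-y)^2 + O\bigpar{|1-y|^{2+\gd}}.
\end{align}

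First I would extract the leading behaviour: from \eqref{plan2}, since $y\to1$, we have $(1-y)^2 \sim \tfrac{2}{\gss}(1-z)$, hence $1-y \sim \sqrt2\,\gs\qw(1-z)\qq$ (choosing the branch that makes $y\to1$ from inside the \GDD), recovering \eqref{yz1}. Then I would bootstrap: set $1-y = \sqrt2\,\gs\qw(1-z)\qq + \delta(z)$ with $\delta(z)=\oz{1/2}$ from the first step, substitute back into \eqref{plan2}, and solve for $\delta$. The $O\bigpar{|1-y|^{2+\gd}}$ term becomes $O\bigpar{|1-z|^{1+\gdd}}$; the factor $z=1-(1-z)$ contributes a correction of relative order $|1-z|$, i.e.\ an absolute term of order $|1-z|^{1}$ inside the quadratic; and expanding $(1-y)^2 = 2\gs\qww(1-z) + 2\sqrt2\,\gs\qw(1-z)\qq\delta + \delta^2$ and matching gives, after dividing by the leading $(1-z)\qq$, that $\delta(z) = \Oz{1/2+\gdd}$ together with some explicit lower-order algebraic terms (a constant times $(1-z)$, which is itself $\Oz{1/2+\gdd}$ since $\gd\le1$). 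Collecting everything yields \eqref{lgdy}.

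There is one routine point to be careful about, which I would state but not belabour: \GDD-analyticity of $y(z)$ is already known (noted at the start of \refS{S:more_notation}), so all the estimates are understood to hold as $z\to1$ within a suitable \GDD, and the implicit-function / branch-selection step is the standard one used repeatedly in \cite[Section VI.7]{FS}. The only genuinely substantive input beyond that boilerplate is \refL{Lgd}, i.e.\ the third-moment-type control on $\Phi$ near $1$; without \eqref{2+gd} one only gets the $o$-version \eqref{yz1}.

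The main obstacle — really the only place requiring care — is making the bootstrap rigorous rather than formal: one must check that the correction $\delta(z)$ genuinely satisfies $\delta(z)=\Oz{1/2+\gdd}$ and not merely $\oz{1/2}$, which means verifying that after one substitution the error term in \eqref{plan2}, namely $O\bigpar{|1-y|^{2+\gd}}=O\bigpar{|1-z|^{1+\gdd}}$, is the dominant error and that the algebraic corrections coming from the factor $z$ (as opposed to $1$) in front of the quadratic are no worse. Since $\gd\le1$, the term linear in $1-z$ that appears is $\Oz{1}=\Oz{1/2+\gdd}$, so it is absorbed; this is the one inequality ($1 \ge \tfrac12+\gdd$, equivalently $\gd\le1$) on which the stated exponent depends, and it is exactly why the hypothesis $\gd\le1$ was imposed without loss of generality at the top of the subsection.
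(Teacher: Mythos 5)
Your proposal is correct and follows essentially the same route as the paper: the paper likewise feeds \refL{Lgd} into the functional equation \eqref{ma}, uses the known $\Delta$-analyticity and \eqref{yz1} to get $|1-y(z)|=\Theta\bigpar{|1-z|^{1/2}}$, arrives at $\tfrac{\gss}{2}(1-y)^2=(1-z)\bigsqpar{1+O(|1-z|^{\gd/2})}$ (absorbing the $y/z$ correction exactly via $\gd\le1$, as you note), and concludes. The only cosmetic difference is that the paper takes the square root of this relation directly instead of running your explicit bootstrap substitution for the correction term $\delta(z)$; the two computations are equivalent.
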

\begin{proof}
By  \cite[Lemma A.2]{SJ167},
$y(z)$ is analytic in some \GDD{} $\gD$ such that
$|y(z)|<1$ for $z\in\gD$ and \eqref{yz1} holds as $z\to1$ in $\gD$.
To show the improvement \eqref{lgdy},
it suffices to consider $z\in\gD$ close to 1, since the estimate is trivial when
$|1-z|$ is bounded below.

Let $w:=1-y(z)$. By \eqref{yz1} we have
$|w|=\Theta\bigpar{|1-z|^{\frac12}}$. 
The functional equation \eqref{ma} and \refL{Lgd} yield
\begin{align}
    y(z)/z=\Phi\yz = y(z)+\frac{\gss}2 w^2 + O\bigpar{|w|^{2+\gd}}
= y(z)+\frac{\gss}2 w^2\bigsqpar{1+  O\bigpar{|w|^{\gd}}}
\end{align}
and thus, for $|1-z|$ small,
\begin{align}
  \frac{\gss}2 w^2 
=\frac{1-z}{z}y(z)\bigsqpar{1+ O\bigpar{|w|^{\gd}}}
=(1-z)\bigsqpar{1+ \Oz{\gd/2}}.
\end{align}
The result \eqref{lgdy} follows.
\end{proof}

We need also  a uniform version of 
\refL{LFFK}\ref{LFFKO}. We state it in a rather general form.

\begin{lemma}
  \label{LU}
Let $\cI$ be an arbitrary index set, and suppose that $a_\iota,b_\iota$,
$\iota\in\cI$, are real numbers such that
$\sup_\cI|a_\iota|<\infty$, $\sup_\cI|b_\iota|<\infty$ and
$\sup_\cI(a_\iota+b_\iota+1)<0$.
Suppose that $g_\iota(z)$ and $h_\iota(z)$ are \gdaf{s} such that, in some
fixed \GDD{} $\gD$, 
$g_\iota(z)=\Oz{a_\iota}$ and $h_\iota(z)=\Oz{b_\iota}$,
uniformly in $\iota$. Then
\begin{align}
  g_\iota(z)\odot h_\iota(z) = \Oz{a_\iota+b_\iota+1},
\end{align}
in some fixed \GDD{} $\gD'$, uniformly in $\iota$.
\end{lemma}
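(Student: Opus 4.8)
The plan is to follow the proof of the non-uniform version, \refL{LFFK}\ref{LFFKO} (which is \cite[Propositions 9 and 10(i)]{FillFK}), but to track the dependence of all implied constants on the parameters $a_\iota$, $b_\iota$, $g_\iota$, $h_\iota$ and verify that nothing blows up under our hypotheses. The reason uniformity is not for free is that the Hadamard product representation of \cite{FillFK} involves a contour integral whose contour depends on the exponents, and $\gG$-factors (such as $1/\gG(-a_\iota)$, or $\gG(-a_\iota-b_\iota-1)$) appear in the leading asymptotics; but since $\sup_\cI|a_\iota|<\infty$, $\sup_\cI|b_\iota|<\infty$, and $\sup_\cI(a_\iota+b_\iota+1)<0$ (in particular $a_\iota+b_\iota+1$ is bounded away from $0$ from above), all these $\gG$-values stay in a fixed compact set of $\bbC$, bounded away from the poles, so they are uniformly bounded.

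Concretely, first I would recall the Hadamard-product integral: if $g$ and $h$ are $\gda$ in a common \GDD{} $\gD_0$, then for $z$ in a slightly smaller \GDD{} one has a representation of $g(z)\odot h(z)$ as $\frac1{2\pi\ii}\oint g(w)\,h(z/w)\,\frac{\dd w}{w}$ over a suitable contour (a ``Hankel-type'' contour relative to the singularities at $w=1$ and $w=z$), exactly as in \cite[Section VI.10]{FS} and \cite{FillFK}. Since $g_\iota,h_\iota$ satisfy $g_\iota(z)=\Oz{a_\iota}$ and $h_\iota(z)=\Oz{b_\iota}$ \emph{uniformly} in $\iota$ on the fixed \GDD{} $\gD$, the integrand is bounded, uniformly in $\iota$, by $C\,|1-w|^{a_\iota}\,|1-z/w|^{b_\iota}$ up to harmless factors; the only $\iota$-dependence in the bound is through the exponents. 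I would split the contour into the standard pieces (a small circle near $w=1$, a small circle near $w=z$, and the connecting arcs), estimate each piece, and collect the dominant contribution, which produces the factor $|1-z|^{a_\iota+b_\iota+1}$ times a constant built from Beta/$\Gamma$ integrals in the exponents $a_\iota,b_\iota$. Because the exponents range over a compact set avoiding the bad values (the condition $a_\iota+b_\iota+1<0$ uniformly is what guarantees both convergence of the relevant integrals and that the polynomial part $P(z)\in\cP_{a_\iota+b_\iota+1}$ is empty, so no polynomial term survives), every such constant is bounded by a single $C$ independent of $\iota$. One also needs $\gda$-ness of $g_\iota(z)\odot h_\iota(z)$ itself on a fixed \GDD{} $\gD'$: this follows, as in \cite[Proposition 9]{FillFK}, from the fact that the Hadamard product of two functions analytic in \GDD{s} is analytic in a \GDD{} determined only by the domains $\gD$ (not by $\iota$), so we may fix $\gD'$ once and for all.

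The main obstacle I anticipate is purely bookkeeping: making sure that the passage from the non-uniform argument to the uniform one does not secretly require $a_\iota+b_\iota+1$ bounded away from $0$ from \emph{both} sides, or the individual exponents to avoid non-negative integers uniformly. In the cited results the hypothesis is $a+b+1\notin\NNo$, but for \emph{our} \refL{LU} we only assume $\sup_\cI(a_\iota+b_\iota+1)<0$, so $a_\iota+b_\iota+1$ is automatically bounded away from $\NNo$ (it is negative, uniformly), and the error term is genuinely $\Oz{a_\iota+b_\iota+1}$ with no polynomial correction — this is actually the \emph{easy} regime, and the compactness of the range of $(a_\iota,b_\iota)$ does the rest. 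I would also note that if some $a_\iota$ or $b_\iota$ happens to be a non-negative integer, then $(1-z)^{a_\iota}$ (resp.\ the relevant factor) contributes only a polynomial, and the Hadamard product with it is a polynomial, hence trivially $\Oz{a_\iota+b_\iota+1}$ since that exponent is negative; this borderline case is handled exactly as at the end of the proof of \refL{LIH2}. So the proof is: invoke the contour representation on a fixed \GDD, bound the integrand uniformly using the hypothesized uniform $O$-estimates, extract the leading power $|1-z|^{a_\iota+b_\iota+1}$ with a uniformly bounded constant, and observe that all $\gG$-factors live in a fixed compact set because the exponents do. $ $
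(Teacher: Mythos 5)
Your proposal is correct and takes essentially the same route as the paper: the paper's proof of \refL{LU} simply says that it follows from the proof of \cite[Proposition 9]{FillFK} by taking the same integration contour for all $\iota$, which is exactly your plan of fixing the contour once and checking that all implied constants depend only on the uniform bounds for $|a_\iota|$, $|b_\iota|$ and on the negative upper bound for $a_\iota+b_\iota+1$. Your additional remarks (that the polynomial part of \refL{LFFK}\ref{LFFKO} is empty because the exponent is uniformly negative, and that the relevant $\gG$/Beta constants stay in a compact set) are the right bookkeeping and match the paper's intent.
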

\begin{proof}
  This follows from the proof of \cite[Proposition 9]{FillFK},
taking there the same  integration contour for all $\iota$.
\end{proof}

As a final preparation, we state a uniform version of a special case
of the asymptotic expansion of polylogarithms by \citet{Flajolet1999},
cf.~\eqref{li}.
A proof is given in \refApp{Apoly}.

\begin{lemma}
  \label{LUL1}
For every \GDD{} $\gD$ and every compact set 
$K\subset\bbC\setminus\set{1,2,\dots}$
%$K\subset\set{\ga:\rga<1}$
we have 
\begin{align}
  \label{lul1}
\Li_{\ga}(z) = \gG(1-\ga)(1-z)^{\ga-1}  + \Ozo{\rga}
\end{align}
uniformly for $z\in\gD$ and $\ga\in K$.
\end{lemma}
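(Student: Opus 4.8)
The statement to prove is \refL{LUL1}, the uniform version of the polylogarithm expansion \eqref{li}.

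The plan is to adapt the standard contour-integral / Hankel-contour proof of the asymptotic expansion of $\Li_\ga(z)$ (as in \citet{Flajolet1999} or \cite[Theorem VI.7]{FS}) and check that all error estimates are uniform for $\ga$ in a fixed compact set $K\subset\bbC\setminus\set{1,2,\dots}$ and $z$ in a fixed \GDD{} $\gD$. First I would recall the integral representation: for $\rga$ large enough (say $\rga>0$, which one can always arrange since $K$ is compact, by first subtracting off finitely many terms of the defining series $\sumn n^{-\ga}z^n$ — each term $n^{-\ga}z^n$ is entire in $z$ and analytic in $\ga$, hence contributes to the $\Oz{0}$, i.e.\ the ``$+1$'', part of the error, uniformly on $K$), one uses the Lindel\"of / Mellin--Barnes representation
\begin{equation*}
  \Li_\ga(z) = \frac{1}{2\pi\ii}\int_{c-\ii\infty}^{c+\ii\infty}
  \gG(s)\gG(1-s)\,\zeta(s+\ga)\,(-\log z)^{-s}\,\mathrm d s,
\end{equation*}
or alternatively the Hankel-type contour integral around the real axis; the key point is that the integrand depends analytically on $\ga$ and the contour can be chosen once and for all, independently of $\ga\in K$ and $z\in\gD$. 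Shifting the contour past the pole at $s=1-\ga$ (which gives the main term $\gG(1-\ga)(1-z)^{\ga-1}$, after noting $(-\log z)^{\ga-1}=(1-z)^{\ga-1}(1+\Oz{})$ uniformly on $\gD$) and past the poles of $\gG(s)$ at $s=0,-1,-2,\dots$ (which give the polynomial terms $P(z)$ and, for $\rga\le 0$, nothing beyond the error) yields the expansion; the remainder is a contour integral that is bounded by $C\,|1-z|^{\rga}$ with $C$ depending only on $K$ and $\gD$, because on the shifted contour $\gG(s)\gG(1-s)\zeta(s+\ga)$ is bounded uniformly in $\ga\in K$ (the compactness of $K$ keeps us away from the poles of $\zeta(s+\ga)$ and controls the Stirling decay of $\gG(s)\gG(1-s)$), and $|(-\log z)^{-s}|$ is controlled on $\gD$ uniformly.

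The main obstacle — really the only non-routine point — is bookkeeping the uniformity near the ``bad'' values: one must ensure that as $\ga$ ranges over $K$, the pole at $s=1-\ga$ never collides with a pole of $\gG(s)$ (which is exactly the hypothesis $K\cap\set{1,2,\dots}=\emptyset$, guaranteeing $1-\ga\notin\set{0,-1,-2,\dots}$), so that the residue computation and the choice of shifted contour can be made uniformly. Because $K$ is compact, $\operatorname{dist}(1-K,\,\set{0,-1,-2,\dots})>0$, so one can fix a vertical line for the shifted contour at abscissa $-N-\frac12$ with $N$ chosen so that $-N-\frac12<\min_{\ga\in K}\rga - 1$ suffices; for the purposes of \eqref{lul1} we only need to cross the single pole at $s=1-\ga$ and collect the $\Oz{0}$-error from the poles of $\gG(s)$ at $s=0,-1,\dots$ that lie to the right of the final contour, all of which are absorbed into the $\Ozo{\rga}$ term (the ``$+1$'' accounts for the possibility $\rga>0$, where there genuinely are polynomial terms, uniformly bounded on $\gD$).

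Thus the skeleton is: (i) reduce to $\rga>0$ if needed by peeling off initial series terms, uniformly on $K$; (ii) write the Mellin--Barnes integral with a fixed contour valid for all $\ga\in K$; (iii) shift the contour to a fixed vertical line to the left of $s=\min_K\rga-1$, picking up the residue at $s=1-\ga$ as the main term and residues at non-positive integers as the polynomial/``$+1$'' contribution; (iv) bound the shifted integral by $C(K,\gD)\,|1-z|^{\rga}$ using Stirling for $\gG$, standard bounds for $\zeta$ on vertical lines, and the geometry of $\gD$ to control $|(-\log z)^{-s}|\le C|1-z|^{\Re s}$; (v) replace $(-\log z)^{\ga-1}$ by $(1-z)^{\ga-1}$ at the cost of a further $\Oz{\rga}$ term, uniformly. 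Since every constant in (iii)--(v) depends only on $K$ and $\gD$, this gives \eqref{lul1}. I would present this, citing \cite{Flajolet1999} and \cite[Theorem VI.7]{FS} for the non-uniform version and remarking that the only modification is taking a single contour for all $\ga\in K$, exactly as in the proof of \refL{LU} above.
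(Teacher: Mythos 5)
Your proposal is correct in substance but takes a different route from the paper. The paper does not shift any contours in the proof of \refL{LUL1}: it starts from the classical closed-form identity
$\Li_\ga(z)=\gG(1-\ga)(-\log z)^{\ga-1}+\sum_{n\ge0}\zeta(\ga-n)(\log z)^n/n!$
(NIST 25.12.2), valid for $\ga\notin\set{1,2,\dots}$ and $z$ in a fixed neighbourhood of $1$, so the only work is to check that the tail bound $|\zeta(\ga-n)|/n!=O(n^{-\rga}(2\pi)^{-n})$ (via the reflection formula) and the estimate $(-\log z)^{\ga-1}=(1-z)^{\ga-1}(1+\Oz{})$ are uniform for $\ga\in K$; the case of $z\in\gD$ with $|1-z|$ bounded below is handled by the trivial observation that both sides are then $O(1)$. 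Your plan instead re-derives the expansion by Mellin--Barnes contour shifting with uniform control of the residue at $s=1-\ga$ and of the shifted integral; this works and is essentially a uniformized version of Flajolet's proof, at the cost of more bookkeeping (Stirling bounds for $\gG$, growth of $\zeta$ on vertical lines). Three small points to fix if you write it out: the kernel for $\Li_\ga(e^{-w})$ is $\gG(s)\zeta(s+\ga)w^{-s}$, not $\gG(s)\gG(1-s)\zeta(s+\ga)w^{-s}$ --- the extra factor $\gG(1-s)$ would introduce spurious poles at $s=1,2,\dots$; your step (i) (``peeling off finitely many terms to arrange $\rga>0$'') is both unnecessary and ineffective, since removing initial terms of $\sum_n n^{-\ga}z^n$ does not change $\ga$ or the location of the zeta pole --- for $\ga\in K$ compact one simply places the initial contour at a fixed abscissa $c>\max(1,1-\min_K\rga)$; and you should state explicitly that the integral representation is only used for $z$ near $1$ (where $-\log z\asymp 1-z$), the remaining $z\in\gD$ being trivial by compactness and continuity.
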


We continue to assume \eqref{bn12}.
We now denote the generating function \eqref{Bz} by  $B_\ga(z)$; thus
\begin{align}\label{ululu}
  B_\ga(z)=
  \begin{cases}
    \Li_{-\ga}(z), & \rga\ge\frac12,
\\
    \Li_{-\ga}(z)-\mu(\ga)z(1-z)\qw, & \rga<\frac12.
  \end{cases}
\end{align}

The following lemma is the central step to establishing uniformity in the
estimates above. (Cf.~\refLs{LC1>} and \ref{LC1<}.)
Note that the lemma does not hold for $\ga=\frac12$; it is easily seen from 
\eqref{pct} that $B_{\xfrac12}(z)\odot y(z) =\Theta\bigpar{|\log|1-z||}$ as
$z\upto1$.
\begin{lemma}\label{LU1}
Assume that $\E\xi^{2+\gd}<\infty$.
  Let $K$ be a compact subset of $\set{\ga:\rga>0}\setminus\set{\frac12}$.
Then, %in some fixed \GDD{} $\gD$,
\begin{align}\label{lu1}
  B_\ga(z)\odot y(z) = \Oz{\frac12-\rga}
\end{align}
in some fixed \GDD,
uniformly for $\ga\in K$.
\end{lemma}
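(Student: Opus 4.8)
The plan is to re-run the fixed-$\ga$ computations behind \refLs{LC1>} and \ref{LC1<} while making every step uniform. Shrinking $\gd$ if needed, assume $\gd\le1$ and set $\gdd:=\gd/2\le\tfrac12$. Since $y(z)$ and $B_\ga(z)$ both have vanishing constant term, $B_\ga(z)\odot y(z)=B_\ga(z)\odot\bigpar{y(z)-1}$, and this is what I would estimate. As $K$ is compact and avoids $\tfrac12$, we have $0<c\le\rga\le M$ and $|\ga-\tfrac12|\ge\eta>0$ on $K$; put $K_+:=K\cap\set{\rga\ge\tfrac12}$, $K_-:=K\cap\set{\rga<\tfrac12}$, and note $\overline{K_-}$ is a compact subset of $\set{\rga\le\tfrac12}\setminus\set{\tfrac12}$ on which $\mu$ is bounded (by \refT{TM}, or by \refT{TEgd}\ref{TEgdmu} using $\E\xi^{2+\gd}<\infty$). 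Fix once and for all a single bounded \GDD{} $\gD$ in which every estimate below holds; since $|1-z|$ is then bounded and the exponents occurring are all bounded, $\Ozo{a}$ coincides with $\Oz{a}$ whenever $a<0$, and $\Oz{a}\subseteq\Oz{b}$ whenever $b\le a$ — I use both silently.

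The proof rests on two uniform expansions. First, the uniform polylogarithm estimate \refL{LUL1} (applicable since $\rga\ge c>0$ forces $-\ga\notin\set{1,2,\dots}$) gives $\Li_{-\ga}(z)=\gG(1+\ga)(1-z)^{-\ga-1}+\Oz{-\rga}$ uniformly for $\ga\in K$; hence on $K_+$ we have $B_\ga(z)=\gG(1+\ga)(1-z)^{-\ga-1}+R_\ga(z)$ with $R_\ga=\Oz{-\rga}$ uniformly, and on $K_-$ the same, after also peeling off the (uniformly $\Oz{-1}$) term $\mu(\ga)z(1-z)\qw$, which I keep explicit. Moreover $R_\ga(z)$ decomposes, uniformly in $\ga\in K$, as $c(\ga)(1-z)^{-\ga}$ plus lower-order singular powers plus a function analytic at $z=1$, with $c(\ga)$ bounded on $K$ (the only relevant $\gG$-pole, at $\ga=\tfrac12$, being excluded). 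Second — and this is where $\E\xi^{2+\gd}<\infty$ is genuinely needed — \refL{Lgdy} gives $y(z)-1=-\sqrt2\gs\qw(1-z)\qq+\tilde y(z)$ with $\tilde y(z)=\Oz{\frac12+\gdd}$ uniformly.

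I would then split $B_\ga(z)\odot\bigpar{y(z)-1}$ into the Hadamard products of the proof of \refL{LC1>}, together with (on $K_-$) the term $-\mu(\ga)z(1-z)\qw\odot\bigpar{y(z)-1}=-\mu(\ga)y(z)$, and estimate each by the uniform Hadamard estimate \refL{LU} and by uniform versions of \refL{LFFK}\ref{LFFKO} and \refL{LIH2}, obtained — exactly as \refL{LU} is — from \cite[Propositions 8--10]{FillFK} by fixing one contour for all $\ga$; crucially these versions accept \emph{complex} exponents, so the only genuine obstruction is a complex exponent lying in $\NNo$. The main term is $\gG(1+\ga)(1-z)^{-\ga-1}\odot\bigl(-\sqrt2\gs\qw(1-z)\qq\bigr)$, which by the uniform \refL{LIH2} equals $-\sqrt2\gs\qw\tfrac{\gG(\ga-1/2)}{\gG(-1/2)}(1-z)^{\frac12-\ga}+P_\ga(z)+(\text{error})$: here $\tfrac12-\ga\notin\NNo$ (as $\ga\neq\tfrac12$ and $\rga>0$), the displayed power is $\Oz{\frac12-\rga}$ with coefficient bounded on $K$, and $P_\ga\in\cP_{\frac12-\rga}$ — hence $0$ on $K_+$ and a single constant on $K_-$. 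The remaining products $(1-z)^{-\ga-1}\odot\tilde y$, $R_\ga\odot(1-z)\qq$, $R_\ga\odot\tilde y$, and $-\mu(\ga)(y(z)-1)$ have exponent-sums $\tfrac12-\rga+\gdd$, $\tfrac32-\rga$, $\gdd$, $\tfrac12$, all exceeding $-1$, and each works out to $\Oz{\frac12-\rga}$ uniformly on $K$, leaving on $K_-$ only a possible constant $c_\ga$.

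The real work — and the main obstacle — is making the \emph{borderline} Hadamard products uniform: those whose real exponent-sum approaches or equals a non-negative integer, namely a neighbourhood of the line $\rga=\tfrac12$ inside $K_+$, the isolated real points $\ga\in\set{\tfrac32,\tfrac52,\dots}$ produced by $R_\ga\odot(1-z)\qq$, and possibly $\rga=\tfrac12+\gdd$ produced by $(1-z)^{-\ga-1}\odot\tilde y$. Using the complex-exponent form of the \cite{FillFK} estimates — together with the decomposition of $R_\ga$ above, which makes $R_\ga\odot(1-z)\qq$ fully non-resonant on the line $\rga=\tfrac12$, since there the relevant complex exponents are $1-\ii y+m\notin\NNo$ ($m\ge0$, $y\neq0$) — one finds that each such product is at worst of the target order plus an extra $O\bigl(\log\tfrac1{|1-z|}\bigr)$, and this is readily checked to lie in $\Oz{\frac12-\rga}$ at each such $\ga$ (for instance $\log\tfrac1{|1-z|}=o\bigl(|1-z|\qw\bigr)$ at the half-integer points, where $\rga\ge\tfrac32$). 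Finally, on $K_-$ the leftover constant vanishes: $B_\ga(z)\odot y(z)$ is continuous on the closed unit disc, with value $\sum_n b_n q_n=\E|\cT|^\ga-\mu(\ga)=0$ at $z=1$ by \eqref{by0}, so letting $z\to1$ along $(0,1)\subset\gD$ in the expansion forces $c_\ga=0$. Assembling the pieces gives $B_\ga(z)\odot y(z)=\Oz{\frac12-\rga}$ uniformly on $K_+$ and on $K_-$, hence on $K$.
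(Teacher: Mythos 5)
Your overall architecture (split off the main singular term, use \refL{LUL1} and \refL{Lgdy}, kill the leftover constant on $K_-$ via $(B_\ga\odot y)(1)=0$) matches the spirit of the paper, but the step you rely on to make the Hadamard products uniform does not exist in the paper and is exactly the difficulty the paper's proof is built to avoid. The only uniform Hadamard estimate available is \refL{LU}, and it requires the \emph{real} exponent sum $a_\iota+b_\iota+1$ to be \emph{uniformly negative}. Your argument instead needs uniform versions of \refL{LFFK}\ref{LFFKO} and \refL{LIH2} valid when the real exponent sum crosses $0$ and sits at non-negative integers (at $\rga=\frac12$, \emph{every} term in your decomposition has integer real exponent sum: the main product has sum $0$, the $R_\ga$-pieces have sums $1,2,\dots$), and you assert these follow from \cite{FillFK} ``by fixing one contour'' with only the complex exponent sum required to avoid $\NNo$. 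That assertion is unsupported: the $O$-transfer arguments in \cite{FillFK} bound absolute values along the contour and therefore see only real parts, so at real exponent sum $0$ one generically loses a factor $\log\frac1{|1-z|}$ regardless of the imaginary parts. Your own fallback --- ``target order plus an extra $O(\log\frac1{|1-z|})$, readily checked to lie in $\Oz{\frac12-\rga}$'' --- fails precisely on the line $\rga=\frac12$, where the target is $O(1)$ and the logarithm is unbounded; your parenthetical only disposes of the half-integer points $\rga\ge\frac32$. So the most delicate part of the lemma (uniformity up to and on the line $\rga=\frac12$, $\ga\ne\frac12$) is not actually proved.

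The paper's device for exactly this obstruction is worth internalizing: in the range $0<\rga<\frac12+\gdq$ it writes $y(z)=1+c_2\Li_{3/2}(z)+C_1+h(z)$ with $h=\Oz{\frac12+\gdd}$, applies $\zddz=z\,\ddx/\ddx z$ using $\zddz(g_1\odot g_2)=\zddz g_1\odot g_2$ and $\zddz\Li_\ga=\Li_{\ga-1}$, so that the non-explicit piece becomes $\Li_{-\ga-1}(z)\odot h(z)$ with real exponent sum $-\rga-\frac12+\gdd<0$ uniformly --- now \refL{LU} applies --- while the explicit piece is computed exactly as $\Li_{1/2-\ga}(z)$ and expanded uniformly by \refL{LUL1} (the one genuinely complex-exponent uniform statement, proved for polylogarithms in \refApp{Apoly}); integrating back along a fixed contour recovers \eqref{lu1}, with the constant of integration handled on $K_3$ by \eqref{by0}. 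If you want to salvage your route, you would have to prove the uniform complex-exponent transfer theorems you invoke (including ruling out the logarithmic loss at real exponent sum $0$ when the complex sum is $-\ii y$ with $|y|\ge\eta$), which is a nontrivial addition to \cite{FillFK}; as written, the proof has a genuine gap.
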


\begin{proof}
  We consider three different cases, and therefore define
$K_1:=\set{\ga\in K:\rga\ge\frac12+\gdq}$,
$K_2:=\set{\ga\in K:\frac12\le \rga<\frac12+\gdq}$,
$K_3:=\set{\ga\in K: \rga<\frac12}$.
Estimates of the type $\Oz{a}$ below are valid
%uniformly for  $\ga$ in the indicated ranges,
in some fixed \GDD, which may change from line to line.

\pfcase1{$\rga\ge\frac12+\gdq$.}
In this range, we have by \refL{LUL1}
\begin{align}\label{u1}
    B_\ga(z)=\Li_{-\ga}(z) = \Oz{-\rga-1},
\end{align}
uniformly in $\ga\in K_1$.
Furthermore, $y(z)=1+\Oz{\frac12}$ by \eqref{yz1} (or \refL{Lgdy}),
and $\frac12-\rga\le -\gdq$ for $\ga\in K_1$.
Hence, \refL{LU} yields
\begin{align}\label{u2}
  B_\ga(z)\odot y(z)
=
  B_\ga(z)\odot \bigpar{y(z)-1}
=\Oz{\frac12-\rga},
\end{align}
uniformly in $\ga\in K_1$.

\pfcase{2 and 3}{$0<\rga<\frac12+\gdq$.}
We have, by \eqref{lgdy} and \eqref{li},
\begin{align}\label{u3}
  y(z) = 1- c_1(1-z)\qq + \Oz{\frac12+\gdd}
= 1+ c_2\Li_{3/2}(z)+P(z) + \Oz{\frac12+\gdd},
\end{align}
where $P(z)$ is a polynomial that can be assumed to have degree less than
$\frac12+\gdd$, and thus $P(z)=C_1$, a constant.
Let
\begin{align}\label{u4}
  h(z):=y(z)-c_2\Li_{3/2}(z)-1-C_1
=\Oz{\frac12+\gdd}.
\end{align}

Let $\zddz$ denote the differential operator $z\ddz$.
Note the identity 
$\zddz(g_1(z) \odot g_2(z)) = \zddz g_1(z) \odot g_2(z)$
and that
$\zddz\Li_\ga(z)=\Li_{\ga-1}(z)$.
Thus,
\begin{align}\label{u5}
  \zddz\bigpar{\Li_{-\ga}(z)\odot h(z)}
=  {\zddz\Li_{-\ga}(z)\odot h(z)}
=  \Li_{-\ga-1}(z)\odot h(z)
%=\Oz{-\rea-1+\frac12+\gdd}
.\end{align}
We have $\Li_{-\ga-1}(z)=\Oz{-\rea-2}$ uniformly in $\ga\in K$  by \refL{LUL1},
which together with \eqref{u4} and \refL{LU} yields
\begin{align}\label{u6}
  \zddz\bigpar{\Li_{-\ga}(z)\odot h(z)}
&=\Oz{-\rea-\frac12+\gdd}
,\end{align}
uniformly in $\ga\in K_2\cup K_3$.

Furthermore, by \refL{LUL1},
\begin{align}\label{u7}
\zddz\bigpar{ \Li_{-\ga}(z)\odot\Li_{3/2}(z)}&
=\zddz\Li_{-\ga+3/2}(z) 
=\Li_{-\ga+1/2}(z) 
\notag\\&
= \gG(\ga+\tfrac12) (1-z)^{-\ga-\frac12}+O\bigpar{|1-z|^{-\rga+\frac12}+1}
\end{align}
uniformly in $\ga\in K_2\cup K_3$.

The exponent $-\rga-\frac12+\gdd$ in \eqref{u6} lies in $[-1+\gdq,0)$,
and thus \eqref{u6} and \eqref{u7} yield, after division by $z$,
\begin{align}\label{u8}
\ddz\bigpar{\Li_{-\ga}(z) \odot y(z)}
&=
c_2\ddz\bigpar{\Li_{-\ga}(z) \odot \Li_{3/2}(z))}
+\ddz\bigpar{\Li_{-\ga}(z) \odot h(z))}
\notag\\&
=c_2 \gG(\ga+\tfrac12) (1-z)^{-\ga-\frac12}+\Oz{-\rga-\frac12+\gdd},
\end{align}
again uniformly in $\ga \in K_2 \cup K_3$.

We now consider Cases 2 and 3 separately.

\pfcase2{$\frac12\le \rga<\frac12+\gdq$.}
By integrating \eqref{u8} along a suitable contour, for example from 0 along
the negative real axis to $-|z|$ and then along the circle with radius $|z|$
to~$z$, 
\begin{align}\label{u9}
B_\ga(z)\odot y(z)=
{\Li_{-\ga}(z) \odot y(z)}
%\notag\\&
=c_2 \gG(\ga-\tfrac12) (1-z)^{-\ga+\frac12}+ O(1),
\end{align}
uniformly in $\ga\in K_2$, which implies \eqref{lu1}.

\pfcase{3}{$0<\rga<\frac12$.}
Recall that now
\begin{align}
  \label{u11}
B_\ga(z)\odot y(z)=\Li_{-\ga}(z)\odot y(z)-\mu(\ga)y(z),
\end{align}
see \eqref{ululu} and 
\eqref{by2}.
The estimate \eqref{yz1} implies, in a smaller \GDD, 
\begin{align}
\label{u12}
  y'(z)=\Oz{-\frac12}.
\end{align}
Furthermore, $\mu(\ga)=O(1)$ on $K_3$, as a consequence of \refT{TM}.
Hence \eqref{u11}, \eqref{u8}, and \eqref{u12} 
imply
\begin{equation}
\label{u13}
\ddz\bigpar{B_{\ga}(z) \odot y(z)}
%\notag\\&
=c_2 \gG(\ga+\tfrac12) (1-z)^{-\ga-\frac12}+\Oz{-((\rga+\frac12-\gdd)\vee\frac12)}.
\end{equation}
We now have $(B_\ga\odot y)(1)=0$ by \eqref{by0}, 
and thus \eqref{lu1} follows from \eqref{u13} by integration,
noting that the exponents in \eqref{u13} stay away from $-1$ for $\ga\in K_3$.
\end{proof}

\begin{lemma}
  \label{LU2}
Assume that $\E\xi^{2+\gd}<\infty$.
  Let $K$ be a compact subset of $\set{\ga:\rga>0}\setminus\set{\frac12}$.
Then, %in some fixed \GDD{} $\gD$,
with notations as in \eqref{Mell} and \eqref{Mell2},
for every $\ell\ge1$, 
\begin{align}\label{lu2i}
  M_\ell(z) = \Oz{-\ell(\rga+\frac12)+\frac12}
\end{align}
in some fixed \GDD{} (depending on $\ell$)
uniformly for all $\ga\in K$. 
 More generally,
\begin{align}\label{lu2ii}
  M_{\ell_1,\ell_2}(z) = \Oz{-\ell_1\rga_1'-\ell_2\rga_2'+\frac12},
\end{align}
in some fixed \GDD{} (depending on $\ell_1,\ell_2$), 
uniformly for all $\ga_1,\ga_2\in K$. 
\end{lemma}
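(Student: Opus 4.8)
The plan is to prove \eqref{lu2i} by induction on $\ell$ and \eqref{lu2ii} by induction on $\ell_1+\ell_2$, following the proof of \refL{LC2} step by step but replacing every estimate that was only pointwise in $\ga$ by its uniform counterpart. First I would make some harmless reductions. Since $K$ is compact and avoids $\frac12$, we may split $K=K^+\cup K^-$ with $K^+,K^-$ compact, $K^+\subseteq\set{\ga:\rga>\frac12}$ and $K^-\subseteq\set{\ga:0<\rga<\frac12}$, and treat each piece separately; on such a piece there are constants $\eps\in(0,\frac12)$ and $M<\infty$ with $\eps\le\rga$, $|\rga-\tfrac12|\ge\eps$ and $|\ga|\le M$ throughout, and the choice \eqref{bn12} of $b_n$ is the \emph{same} formula (either $n^\ga$ or $n^\ga-\mu(\ga)$) for all $\ga$ in the piece. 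I would then record the $\ga$-free ingredients: $y(z)$ is \gda{} with $|y(z)|<1$ in a suitable \GDD{} (as in \refL{LC}); $zy'(z)/y(z)=\Oz{-1/2}$ by \eqref{sx3}; the improved expansion $y(z)=1-\sqrt2\,\gs\qw(1-z)\qq+\Oz{1/2+\gdd}$ of \refL{Lgdy} --- this is where the hypothesis $\E\xi^{2+\gd}<\infty$ is spent, as it converts the unquantified $o(|1-z|\qq)$ of \eqref{yz1} into an explicit rate that can be carried uniformly through all later Hadamard products; and the bounds on $\Phi\xxm\bigl(y(z)\bigr)$ from \refL{LC}. Finally I would note the uniform bound $B_\ga(z)=\Oz{-\rga-1}$ on $K$, which follows from \refL{LUL1} for the $\Li_{-\ga}(z)$ part (the leading $(1-z)^{-\ga-1}$ term dominates since $\rga>0$), together with boundedness of $\mu(\ga)$ on $K^-$ (\refT{TM}) for the $\mu(\ga)z(1-z)\qw$ correction present when $\rga<\frac12$.

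The base case $\ell=1$ is then immediate: by \eqref{sx1}, $M_1(z)=\frac{zy'(z)}{y(z)}\bigl(B_\ga(z)\odot y(z)\bigr)$, the first factor is $\Oz{-1/2}$, and \refL{LU1} supplies exactly $B_\ga(z)\odot y(z)=\Oz{1/2-\rga}$ uniformly on $K$, whence $M_1(z)=\Oz{-\rga}$, which is \eqref{lu2i} for $\ell=1$. For $\ell\ge2$ I would expand $M_\ell(z)$ by \refL{LH}, \eqref{lh}, and estimate each term indexed by $(m;\ell_0,\dots,\ell_m)$. By the inductive hypothesis each $M_{\ell_i}(z)=\Oz{-\ell_i(\rga+1/2)+1/2}$ uniformly, and \refL{LC} gives $\Phi\xxm\bigl(y(z)\bigr)=O(1)$ for $m\le2$ and $\oz{1-m/2}$ for $m\ge3$; multiplying, for $m\ge1$ the inner bracket $zM_{\ell_1}(z)\dotsm M_{\ell_m}(z)\Phi\xxm\bigl(y(z)\bigr)$ is, uniformly on $K$, $\Oz{-(\ell-\ell_0)(\rga+1/2)+(m\bmin2)/2}$, using $\sum_{i=1}^m\ell_i=\ell-\ell_0$ (the $m=0$ term equals $y(z)$ and is treated below). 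Folding in the $\ell_0$ Hadamard factors of $B_\ga(z)$ by the uniform Hadamard lemma \refL{LU}, the resulting exponent is $-\ell\rga-\tfrac{\ell-\ell_0}{2}+\tfrac{m\bmin2}{2}$, which is $\ge-\ell(\rga+\tfrac12)+\tfrac12$ --- the target exponent of \eqref{lu2i} --- precisely because $\ell_0+(m\bmin2)\ge1$ in every case ($m=1$ forces $\ell_0\ge1$; for $m\ge2$ it is trivial), so every such term is absorbed.

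The one point needing genuine care, and the step I expect to be the main obstacle, is the \emph{uniformity} of the Hadamard estimates as $\rga\downarrow0$ (and as $\rga$ approaches $\tfrac12$ from either side): \refL{LU} requires the running sum ``$a_\iota+b_\iota+1$'' to be \emph{uniformly} negative, and this can fail, in particular for the $m=0$ term $B_\ga(z)^{\odot\ell}\odot y(z)=B_\ga(z)^{\odot\ell}\odot(y(z)-1)$ and for terms with $\ell-\ell_0$ small when $\rga$ is small. I would handle these by a separate argument rather than a blind application of \refL{LU}: expand $y(z)-1$ via \refL{Lgdy} and \eqref{li} as a constant plus a multiple of $\Li_{3/2}(z)$ plus $\Oz{1/2+\gdd}$, exactly as in the proof of \refL{LU1}, so that $B_\ga(z)^{\odot\ell}\odot(y(z)-1)$ reduces to iterated polylog Hadamard products $\Li_{-\ga}(z)^{\odot\ell}\odot\Li_{3/2}(z)=\Li_{3/2-\ell\ga}(z)$ estimated uniformly by \refL{LUL1} (with a harmless logarithmic modification when $\ell\ga$ happens to equal $\tfrac12$), giving $\Oz{1/2-\ell\rga}$, and $\tfrac12-\ell\rga\ge-\ell(\rga+\tfrac12)+\tfrac12$; the non-leading pieces are only better. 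The general principle making this work is that the target exponent $-\ell(\rga+\tfrac12)+\tfrac12$ is \emph{strictly negative} for all $\ell\ge1$ and $\rga>0$, so one never needs a power bound stronger than it, and a crude $O(1)$ bound on a Hadamard product always suffices where the exponents do not cooperate. Finally, \eqref{lu2ii} is proved by the same induction on $\ell_1+\ell_2$ using the mixed recursion \eqref{lhz} in place of \eqref{lh}, with $B_{\ga_1}(z)$ and $B_{\ga_2}(z)$ both satisfying the uniform bound above and \refL{LU1} applied to each of $B_{\ga_1}(z)\odot y(z)$ and $B_{\ga_2}(z)\odot y(z)$; the exponent bookkeeping is identical with $\ell_i(\rga+\tfrac12)$ replaced by $\ell_1(\rga_1+\tfrac12)+\ell_2(\rga_2+\tfrac12)$ throughout.
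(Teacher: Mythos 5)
Your argument is essentially the paper's proof: the base case $\ell=1$ is \eqref{sx1}, \eqref{sx3} and \refL{LU1}, and the induction step reruns the proof of \refL{LC2} with the uniform Hadamard bound of \refL{LU} in place of \refL{LFFK}, the whole content being that the exponent \eqref{lc4} is at least $\REA+\frac12$; the mixed case \eqref{lu2ii} uses \eqref{lhz} in the same way. Two corrections. First, your opening reduction is false: a compact $K\subset\set{\ga:\rga>0}\setminus\set{\frac12}$ need not split as $K^+\cup K^-$ with $K^+\subseteq\set{\ga:\rga>\frac12}$ and $K^-\subseteq\set{\ga:0<\rga<\frac12}$, and $|\rga-\frac12|$ need not be bounded below on $K$, since $K$ may meet the line $\rga=\frac12$ away from the point $\frac12$ (e.g.\ at $\ga=\frac12+\ii$). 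Fortunately nothing downstream depends on this: \refL{LU1} and the uniform bound $B_\ga(z)=\Oz{-\rga-1}$ are stated on all of $K$ at once, so simply delete the reduction. Second, the obstruction you identify at the $m=0$ term evaporates if you use only $y(z)-1=\Oz{0}$ rather than the sharper $\Oz{\frac12}$, exactly as in \eqref{lc6}--\eqref{lc7}: the exponent sum for $B_\ga(z)^{\odot\ell}\odot\bigpar{y(z)-1}$ is then $-\ell\rga-1+0+1=-\ell\rga$, which is uniformly negative on $K$ because $\rga$ is bounded below there, so \refL{LU} applies directly and gives $\Oz{-\ell\rga}$, which beats the target since $-\ell\rga\ge-\ell(\rga+\frac12)+\frac12$ for $\ell\ge1$. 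Your polylogarithm workaround is workable in principle but imports genuine complications that you only gesture at (the exceptional points $j\ga=\frac12$ for $j\le\ell$, the binomial expansion of $B_\ga(z)^{\odot\ell}$ when $\rga<\frac12$, and the remainder term from \refL{Lgdy}), so the cruder bound is the better move.
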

\begin{proof}
For \eqref{lu2i}, 
the case $\ell=1$ follows from \eqref{sx1}, \eqref{sx3},  and \refL{LU1}.
We then proceed by induction as in the proof of \refL{LC2}. [But the induction is now simpler; it
suffices to note that \eqref{lc4} is at least $\REA+\frac12$.]

The proof of \eqref{lu2ii} is essentially the same, see the proof of
\refT{Tmix}. 
\end{proof}

\begin{lemma}
  \label{LU3}
Assume that $\E\xi^{2+\gd}<\infty$.
  Let $K$ be a compact subset of $\set{\ga:\rga>0}\setminus\set{\frac12}$.
Then, 
for every fixed $r>0$, 
\begin{align}\label{lu3<}
\E\bigsqpar{|X_n(\ga)-n\mu(\ga)|^r} = O\bigpar{n^{r(\rqgay)}}, 
\end{align}
uniformly for all $\ga\in K$ with $\rga<\frac12$, and
\begin{align}\label{lu3>}
\E\bigsqpar{|X_n(\ga)|^r} = O\bigpar{n^{r(\rqgay)}},
\end{align}
uniformly for all $\ga\in K$ with $\rea\ge\frac12$.
\end{lemma}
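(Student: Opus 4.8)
The plan is to obtain \refL{LU3} as a consequence of the uniform generating-function bound \refL{LU2}, via uniform singularity analysis, after first reducing to even integer exponents.

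\emph{Reduction to even moments.} By Lyapunov's inequality, for any complex random variable $Z$ and any $0<r\le 2\ell$ one has $\E|Z|^r\le\bigpar{\E|Z|^{2\ell}}^{r/(2\ell)}$. Hence it suffices to prove \eqref{lu3<} and \eqref{lu3>} when $r=2\ell$ is an even integer: for general fixed $r>0$ one takes $\ell:=\ceil{r/2}$ and uses $\bigpar{n^{2\ell(\rqgay)}}^{r/(2\ell)}=n^{r(\rqgay)}$, the uniformity over $\ga\in K$ being preserved.

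\emph{The even case.} Fix an integer $\ell\ge1$. Recall from \eqref{abba} that $\dF_\ga(\ctn)=X_n(\ga)-n\mu(\ga)$ when $0<\rga<\frac12$ and $\dF_\ga(\ctn)=X_n(\ga)$ when $\rga\ge\frac12$. Since the defining sequence in \eqref{bn12} is $b_n=n^\ga$ or $n^\ga-\mu(\ga)$, replacing $\ga$ by $\bga$ simply conjugates it (using $\overline{\mu(\ga)}=\mu(\bga)$, clear from \eqref{mua}), so $\dF_{\bga}(\ctn)=\overline{\dF_\ga(\ctn)}$ and therefore, in the notation \eqref{mell2}--\eqref{Mell2} applied to the pair $(\ga_1,\ga_2):=(\ga,\bga)$ (for which $\rga_1'=\rga_2'=\rqgay$),
\begin{equation*}
m_n^{(\ell,\ell)}=\E\bigsqpar{\dF_\ga(\ctn)^\ell\,\dF_{\bga}(\ctn)^\ell}
=\E\bigsqpar{|\dF_\ga(\ctn)|^{2\ell}},
\qquad
M_{\ell,\ell}(z)=\sumn q_n\,\E\bigsqpar{|\dF_\ga(\ctn)|^{2\ell}}\,z^n.
\end{equation*}
Replacing the given compact set $K$ by $K\cup\overline K$ (still a compact subset of $\set{\rga>0}\setminus\set{\tfrac12}$), \refL{LU2}\,\eqref{lu2ii} gives, in a single \GDD{} depending only on $\ell$,
\begin{equation*}
M_{\ell,\ell}(z)=\Oz{-2\ell(\rqgay)+\frac12}
\end{equation*}
uniformly for $\ga\in K$. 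Singularity analysis \cite[Chapter VI]{FS}, applied exactly as in \eqref{tm1}--\eqref{morm} but now keeping track of the implied constant — which may be chosen uniform because the \GDD{} is fixed and the exponent $-2\ell(\rqgay)+\frac12$ ranges over a bounded set as $\ga$ runs over the compact $K$ — yields
\begin{equation*}
q_n\,\E\bigsqpar{|\dF_\ga(\ctn)|^{2\ell}}=[z^n]M_{\ell,\ell}(z)=O\bigpar{n^{2\ell(\rqgay)-\frac32}}
\end{equation*}
uniformly for $\ga\in K$ and all $n$ with $q_n>0$. Since $\spann\xi=1$ is assumed throughout this section, \eqref{pct} gives $q_n=\P(|\cT|=n)\sim(2\pi\gss)\qqw n^{-3/2}$, hence $q_n\ge c\,n^{-3/2}$ for all such $n$; dividing we obtain $\E|\dF_\ga(\ctn)|^{2\ell}=O\bigpar{n^{2\ell(\rqgay)}}$ uniformly for $\ga\in K$. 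Recalling \eqref{abba}, this is \eqref{lu3<} for $\ga\in K$ with $\rga<\frac12$ and \eqref{lu3>} for $\ga\in K$ with $\rga\ge\frac12$, in the case $r=2\ell$; together with the reduction above this proves the lemma.

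\emph{Main obstacle.} All the analytic work is already contained in \refL{LU2}; the only point that needs care is the uniformity of the singularity-analysis transfer and of the lower bound $q_n\gtrsim n^{-3/2}$. The latter is immediate from \eqref{pct}, and the former is genuine but routine — the standard transfer integral of \cite[Chapter VI]{FS} produces an error constant depending continuously on the exponent and on the parameters $(R,\gth)$ of the \GDD, all of which are controlled here — so it does not present a real difficulty.
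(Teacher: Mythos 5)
Your proof is correct and follows essentially the same route as the paper's: reduce to even integer exponents by Lyapunov's inequality, identify $\E|\dF_\ga(\ctn)|^{2\ell}$ with $m_n^{(\ell,\ell)}$ for the pair $(\ga,\bga)$, apply \refL{LU2}\,\eqref{lu2ii} and uniform singularity analysis, and divide by $q_n\gtrsim n^{-3/2}$ from \eqref{pct}. The only additions are explicit bookkeeping details (replacing $K$ by $K\cup\overline K$, noting $\dF_{\bga}=\overline{\dF_\ga}$) that the paper leaves implicit.
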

\begin{proof}
Using the notation \eqref{abba}, \eqref{lu3<} and \eqref{lu3>} can be
combined as
\begin{align}\label{lu3a}
  \E |\dF_\ga(\ctn)|^r  = O\bigpar{n^{r(\rqgay)}},
\end{align}
uniformly in $\ga\in K$.
By \Holder's (or Lyapounov's) inequality, it suffice to prove \eqref{lu3a}
  when $r=2\ell$, an even integer.
In this case, 
we let
$\ga_1=\ga$, $\ga_2=\bga$ and 
$\ell_1=\ell_2=\ell$; 
then \eqref{mell2}--\eqref{Mell2} show that, using also \eqref{pct},
\begin{align}\label{lu3b}
  \E |\dF_\ga(\ctn)|^{2\ell}&
=
  \E \bigsqpar{\dF_\ga(\ctn)^{\ell}\dF_{\bga}(\ctn)^\ell}
=
m_n\xxllo 
= q_n\qw [z^n] M_{\ell,\ell}(z)
%\notag\\&
\le C n^{3/2} [z^n] M_{\ell,\ell}(z),
\end{align}
and the desired result \eqref{lu3a} (with $r=2\ell$) follows
from \eqref{lu3b} and \eqref{lu2ii} by standard singularity analysis,
see \cite[Proof of Theorem VI.3, p.~390--392]{FS}.
\end{proof}

\begin{lemma}
  \label{LU4}
Assume that $\E\xi^{2+\gd}<\infty$.
  Let $K$ be a compact subset of $\set{\ga:\rga>0}$.
Then, 
for every $r>0$, 
\begin{align}\label{lu4}
\E\bigsqpar{|X_n(\ga)-\E X_n(\ga)|^r} = O\bigpar{n^{r(\rqgay)}},
\end{align}
uniformly for all $\ga\in K$.
\end{lemma}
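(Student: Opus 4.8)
The plan is to reduce \refL{LU4} to \refL{LU3}, adding an argument based on the analyticity of $\ga\mapsto X_n(\ga)$ to handle the exceptional point $\ga=\tfrac12$.

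\emph{Step 1 (away from $\tfrac12$).} First I would recall the notation \eqref{abba}: $\dF_\ga(\ctn)=X_n(\ga)$ when $\rga\ge\tfrac12$ and $\dF_\ga(\ctn)=X_n(\ga)-\mu(\ga)n$ when $0<\rga<\tfrac12$, so that in either case the deterministic shift cancels upon centering and
\begin{equation*}
X_n(\ga)-\E X_n(\ga)=\dF_\ga(\ctn)-\E\dF_\ga(\ctn).
\end{equation*}
For $r\ge1$, Jensen's inequality gives $\bigabs{\E\dF_\ga(\ctn)}^r\le\E\bigabs{\dF_\ga(\ctn)}^r$, so with a constant $C_r$ depending only on~$r$,
\begin{equation*}
\E\bigsqpar{\bigabs{X_n(\ga)-\E X_n(\ga)}^r}\le C_r\,\E\bigsqpar{\bigabs{\dF_\ga(\ctn)}^r}=\On{r(\rqgay)},
\end{equation*}
uniformly on any compact subset of $\set{\ga:\rga>0}\setminus\set{\tfrac12}$, by \refL{LU3} (combining \eqref{lu3<} and \eqref{lu3>}). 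The case $0<r<1$ then follows from the case $r=1$ by Lyapunov's inequality $\E|W|^r\le(\E|W|)^r$. This already gives \eqref{lu4} uniformly on any compact subset of $\set{\rga>0}$ not containing~$\tfrac12$.

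\emph{Step 2 (near $\tfrac12$).} For each fixed $n$, $X_n(\ga)=\sum_{v\in\ctn}|\tnv|^\ga$ is an entire function of~$\ga$, and $\E X_n(\ga)$ is entire as well (a finite average of such functions), so $h_n(\ga):=n^{\wgay}\bigpar{X_n(\ga)-\E X_n(\ga)}$ is a random entire function of~$\ga$; by Step~1, $\E|h_n(\ga)|^r=O(1)$ uniformly on every compact subset of $\set{\rga>0}$ avoiding~$\tfrac12$. I would then fix $\eps\in(0,\tfrac14)$ and put $\gG:=\set{\ga:|\ga-\tfrac12|=2\eps}$, a compact subset of $\set{\rga>0}$ with $\tfrac12\notin\gG$, whence $\sup_{w\in\gG}\E|h_n(w)|^r=O(1)$. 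Since $h_n$ is analytic, hence harmonic, every $\ga_0$ inside~$\gG$ satisfies the Poisson representation $h_n(\ga_0)=\int_\gG P(\ga_0,w)\,h_n(w)\,|\ddx w|$ with $P\ge0$ and $\int_\gG P(\ga_0,w)\,|\ddx w|=1$; for $r\ge1$, Jensen's inequality yields $|h_n(\ga_0)|^r\le\int_\gG P(\ga_0,w)|h_n(w)|^r|\ddx w|$, and taking expectations,
\begin{equation*}
\E|h_n(\ga_0)|^r\le\sup_{w\in\gG}\E|h_n(w)|^r=O(1),
\end{equation*}
uniformly for $\ga_0$ in the closed disc $\set{\ga:|\ga-\tfrac12|\le\eps}$ (the Poisson kernel depends only on the geometry, not on~$n$). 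Unwinding the definition of $h_n$ gives \eqref{lu4} uniformly on $\set{|\ga-\tfrac12|\le\eps}$ for $r\ge1$, and for $0<r<1$ by Lyapunov once more.

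\emph{Step 3 (conclusion) and the main obstacle.} Any prescribed compact $K\subset\set{\rga>0}$ satisfies $K\subseteq\set{|\ga-\tfrac12|\le\eps}\cup\bigpar{K\cap\set{|\ga-\tfrac12|\ge\eps}}$; the first piece is handled by Step~2 and the second, being compact and avoiding~$\tfrac12$, by Step~1, so \eqref{lu4} holds uniformly on~$K$. The hard part is exactly the behaviour at $\ga=\tfrac12$: the generating-function estimates underlying \refL{LU3} degenerate there (the Hadamard product $B_{1/2}(z)\odot y(z)$ has a logarithmic rather than a power singularity at $z=1$, and the factor $\gG(\ga-\tfrac12)$ in the leading term has a pole), so uniformity across the line $\rga=\tfrac12$ is not available from \refL{LU3} directly; analyticity of $X_n(\ga)-\E X_n(\ga)$ is precisely what lets one control it at and near~$\tfrac12$ using only its values on a circle of fixed positive radius, where \refL{LU3} is perfectly good.
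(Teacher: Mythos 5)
Your proof is correct and follows essentially the same route as the paper: reduce to Lemma~\ref{LU3} on compact sets avoiding $\frac12$ (with the same centering step $\norm{X-\E X}_r\le 2\norm{X}_r$), then exploit analyticity of $\ga\mapsto\tY_n(\ga)$ to transfer the uniform $L^r$ bound from a circle around $\frac12$ to the enclosed disc. The only (immaterial) difference is the tool used for that transfer: you use the scalar Poisson representation plus Jensen and Fubini, whereas the paper invokes the maximum modulus principle for the $L^r$-valued analytic function $\tY_n(\cdot)$.
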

\begin{proof}
  It suffices to show this for $r\ge1$.
Let
$L^r$ be the Banach space of all complex random variables $X$ defined on
our underlying probability space such that %the norm
\begin{align}\label{normr}
  \norm{X}_r:=\bigpar{\E|X|^r}^{1/r}<\infty.
\end{align}

\pfcase1{$\frac12\notin K$.}
In this case, \refL{LU3} applies and thus \eqref{lu3<} and \eqref{lu3>}
hold, uniformly for $\ga$ in the specified sets.
We may write these as
$\norm{X_n(\ga)-n\mu(\ga)}_r \le C n^{\rqgay}$ 
and $\norm{X_n(\ga)}_r\le C n^{\rqgay}$,
respectively. As is well known, for any (complex) random variable $X$,
\begin{align}\label{lu4a}
\norm{X-\E X}_r\le \norm{X}_r +|\E X| \le 2\norm{X}_r.
\end{align}
Hence we obtain in both cases, and thus uniformly for all $\ga\in K$,
\begin{align}\label{lu4b}
  \norm{X_n(\ga)-\E X_n(\ga)}_r\le C n^{\rqgay},
\end{align}
which is equivalent to \eqref{lu4}.

\pfcase2{$\frac12\in K$.}
Consider first the special case $K_1:=\set{\ga\in \bbC:|\ga-\frac12|\le0.1}$
and let $K_2:=\partial K_1=\set{\ga\in \bbC:|\ga-\frac12|=0.1}$.
Then Case 1 applies to $K_2$.
Moreover,  
recalling the notation \eqref{tY}, 
we can write \eqref{lu4} and \eqref{lu4b}  as
\begin{align}\label{lu4c}
  \norm{\tY_n(\ga)}_r \le C,
\end{align}
where $\tY_n(\ga)=n^{-\ga-\frac12}\bigpar{X_n(\ga)-\E X_n(\ga)}$ is, for each
$n\ge1$,  
an $L^r$-valued analytic function of $\ga$.
[Recall that for a fixed~$n$, there are only finitely many choices for the
tree $\ctn$, and for each choice, \eqref{Xn} is an entire function of~$\ga$.]
The maximum modulus principle holds for Banach space valued analytic functions,
see \eg{} \cite[p.~230]{Dunford-Schwartz},
and thus, using \eqref{lu4c} for $K_2$, 
\begin{align}
%  \sup_{\ga\in K_1}n^{-\rqgay}\norm{X_n(\ga)-\E X_n(\ga)}_r
%=  \sup_{\ga\in K_2}n^{-\rqgay}\norm{X_n(\ga)-\E X_n(\ga)}_r
  \sup_{\ga\in K_1}\norm{\tY_n(\ga)}_r
=
  \sup_{\ga\in K_2}\norm{\tY_n(\ga)}_r
\le C
.\end{align}
Hence, \eqref{lu4c} holds uniformly for $\ga\in K_1$, and thus so does
\eqref{lu4}. 

For a general compact set $K$, Case 1 applies to 
$\set{\ga\in K:|\ga-\frac12|\ge 0.1}$, which together with the case $K_1$
just proved yields the result \eqref{lu4} uniformly for all $\ga\in K$.
\end{proof}

\begin{proof}[Proof of \refT{T1mom}]
We give the proof for ordinary moments, \ie, \eqref{t1mom}. The other cases
are similar, with mainly notational differences.

Let $\ell\ge1$ and choose $r:=\ell+1$.
  First, consider a fixed $\ga$ with $\rga>0$.
Then
\refL{LU4} shows that
$\E|\tY_n(\ga)|^r=O(1)$, and thus the sequence
$\tY_n(\ga)^\ell$ is \ui, which 
together with  \eqref{t1} implies \eqref{t1mom}.
(See again \cite[Theorems 5.4.2 and 5.5.9]{Gut}.)

To show uniform convergence on compact sets of $\ga$,
consider first a convergent sequence $(\ga_k)$ in $H_+$ with
$\ga_k\to\gaoo\in H_+$ as \ktoo, and a sequence $n_k\to\infty$.
By \refT{T1}, $\tY_n(\ga)\dto \gs\qw\tY(\ga)$ in $\cH(H_+)$, and
by the Skorohod coupling theorem \cite[Theorem~4.30]{Kallenberg},
we may assume that \as{} $\tY_n(\ga)\to \gs\qw\tY(\ga)$ in $\cH(H_+)$, 
\ie, uniformly on compact sets. It then follows that
$\tY_{n_k}(\ga_k)\asto \gs\qw\tY(\gaoo)$ as \ktoo.
Furthermore, 
\refL{LU4} applies to the compact set $\{\alpha_1, \alpha_2, \dots\} \cup \set{\gaoo}$, and
thus \eqref{lu4c} holds and shows that
$\E|\tY_{n_k}(\ga_k)|^r\le C$.
Hence, similarly to the case of a fixed $\ga$,
the sequence $\tY_{n_k}(\ga_k)^\ell$ is \ui, and
\begin{align}\label{ul1}
  \E \tY_{n_k}(\ga_k)^\ell \to \gs^{-\ell}\E \tY(\gaoo)^\ell,
\qquad\text{as }\ktoo
.\end{align}
This holds for any sequence $n_k\to\infty$. 
In particular, we may for each $k$, 
using \eqref{t1mom} which we just have proved for each fixed $\ga$, 
choose $n_k$ so large that 
$\bigabs{ \E \tY_{n_k}(\ga_k)^\ell -\gs^{-\ell} \E \tY(\ga_k)^\ell}<1/k$
for each $k$. Then \eqref{ul1} implies
\begin{align}\label{ul2}
  \E \tY(\ga_k)^\ell \to \E \tY(\gaoo)^\ell
\qquad\text{as }\ktoo
.\end{align}
Since this hold for any sequence $\ga_k\to\gaoo$, \eqref{ul2} shows that
$\E\tY(\ga)^\ell$ is a continuous function of $\ga\in H_+$.

Moreover, \eqref{ul1} and \eqref{ul2} show that for any convergent sequence 
$(\ga_k)$ in $H_+$, and any $n_k\to\infty$,
\begin{align}\label{ul3}
  \E \tY_{n_k}(\ga_k)^\ell- \gs^{-\ell}\E \tY(\ga_k)^\ell \to0.
\end{align}

Let $K\subset H_+$ be compact.
We claim that $  \E \tY_{n}(\ga)^\ell\to \gs^{-\ell}\E \tY(\ga)^\ell $
uniformly for  $\ga\in K$. 
Suppose not. Then there exists $\eps>0$, a subsequence
$n_k\to\infty$ and a sequence $(\ga_k)\in K$ such that
$\bigabs{\E \tY_{n_k}(\ga_k)^\ell-\gs^{-\ell} \E \tY(\ga_k)^\ell }>\eps$ for
every $k$. 
Since $K$ is compact, we may by selecting a subsequence assume that
$\ga_k\to\gaoo$ for some $\gaoo\in K$.
But then \eqref{ul3} holds, which is a contradiction.
This shows the claimed uniform convergence on $K$.

Finally $\E\tY(\ga)^\ell$ is an analytic function of $\ga\in H_+$ since it
is the uniform limit on compact sets of the sequence of analytic functions
$\E \tY_n(\ga)^\ell$.
\end{proof}

\subsection{Final remark}

\begin{remark}
  In this \refS{Smom} we have only considered the case $\rga>0$. It seems likely
  that similar arguments can be used to show moment convergence  in
  \refT{T<0} for $\rga<0$, but we have not pursued this, and we leave it as an
  open problem.
\end{remark}

\appendix

\section{Some examples of $\mu(\ga)$} \label{Amuexamples} 

Although $\mu(\ga)$ easily can be evaluated numerically for a given $\xi$ by
\eqref{mua} or perhaps \eqref{pyr1}, neither formula seems to yield exact
values for a given $\ga$ in any simple form, not even for, e.g., $\ga=-1$.
We give here alternative formulas that can be used to find
exact values  in some important examples when $\ga$ is a negative integer.

Let $U\sim U(0,1)$ and $E:=-\log U \sim\Exp(1)$ be independent of $\cT$.
Define the random variable
\begin{equation}
  \VZ:=U^{1/|\cT|}=e^{-E/|\cT|}.
\end{equation}
Then $0<\VZ<1$, and $\VZ$ has the distribution function,
for $ 0\le x\le 1$,
\begin{equation}\label{g}
  \P(\VZ\le x) = \P\bigpar{U\le x^{|T|}}=\sumn \P(|\cT|=n)x^n=:g(x),
\end{equation}
the \pgf{} of $|\cT|$.
Hence, the density function of $\VZ$ is, for $0\le x<1$,
\begin{equation}\label{g'}
  g'(x)=\sumn n\P(\act=n)x^{n-1}
=\sumn \P(S_n=n-1)x^{n-1}.
\end{equation}

Since $-\log \VZ = E/\act$, we have, for $\rea<\frac12$,
\begin{equation}
  \E(-\log \VZ)^{-\ga} = \E (E/\act)^{-\ga}
=\E E^{-\ga} \E\act^{\ga}
=\gG(1-\ga)\mu(\ga)
\end{equation}
and thus
\begin{equation}\label{E5}
  \mu(\ga)=\frac{1}{\gG(1-\ga)}\E(-\log \VZ)^{-\ga}
=\frac{1}{\gG(1-\ga)}\intoi (-\log x)^{-\ga}\dd g(x).
\end{equation}
This can also be written as
\begin{equation}\label{E6}
  \mu(\ga)
=\frac{1}{\gG(1-\ga)}\intoi (-\log x)^{-\ga}g'(x)\dd x
=\frac{1}{\gG(1-\ga)}\intoo y^{-\ga}g'(e^{-y})e^{-y}\dd y.
\end{equation}

Define the generating function
\begin{equation}
  \label{H}
H(z):=\sumko\mu(-k)z^k,
\end{equation}
which converges absolutely for $|z| < 1$, since $|\mu(-k)| = \mu(-k) = \E| \cT |^{-k} \le~1$.
Then \eqref{E5} yields, for $z\in[0,1)$ say,
using an integration by parts in the final equality,
\begin{align}
  H(z)&=\sumko\frac{1}{k!}\E(-\log V)^{k}z^k
=\E e^{-z\log V}
=\E V^{-z}
\label{Hg1}\\&
= \intoi x^{-z}g'(x) \dd x
= 1+z\intoi x^{-z-1}g(x) \dd x.\label{Hg2}
\end{align}
Note that both integrals in \eqref{Hg2} converge for all $z$ with $\Re z<1$;
hence, \eqref{Hg2} shows that $H(z)$ extends analytically to this halfplane.

We will see below several examples where $H(z)$ can be found explicitly;
then $\mu(-k)$ can be found by extracting Taylor coefficients.
In particular, by \eqref{H} and \eqref{Hg2},
\begin{equation}\label{E7}
  \mu(-1)=H'(0)=\intoi \frac{g(x)}x\dd x,
\end{equation}
which also follows directly from \eqref{mu} and \eqref{g}.

\begin{example}[labelled trees; $\Po(1)$]
Consider uniformly random labelled trees; this is the case $\xi \sim \Po(1)$.
Then $S_n\sim\Po(n)$, and thus \eqref{g} and \eqref{ptk} give
\begin{equation}
  g(x)=\sumn \frac{1}n\P(S_n=n-1)x^n
=\sumn \frac{n^{n-1}e^{-n}}{n\cdot (n-1)!}x^n
=\sumn \frac{n^{n-1}}{n!}(x/e)^n
=T(x/e),
\end{equation}
where~$T$ is the well-known tree function, satisfying 
\begin{equation}
  \label{T}
T(x)e^{-T(x)}=x
, \qquad |x|\le e\qw.
\end{equation}

Since $\VZ$ has the distribution function $g$,
\begin{equation}
  U\eqd g(\VZ)=T(\VZ/e)
\end{equation}
and thus, using \eqref{T},
\begin{equation}
  \VZ/e = T(\VZ/e)e^{-T(\VZ/e)}\eqd U e^{-U}.
\end{equation}
Hence,
\begin{equation}\label{gryta}
  \log \VZ \eqd 1+\log U -U
\end{equation}
and
\begin{equation}
  \E(-\log \VZ)^{-\ga} = \E (U-\log U-1)^{-\ga}
=\intoi (u-\log u-1)^{-\ga}\dd u
.
%=\intoo \bigpar{e^{-x}-1+x}^{-\ga}e^{-x}\dd x.
\end{equation}
Consequently, by \eqref{E5}, for $\rea<\frac12$,
\begin{equation}
  \begin{split}
  \mu(\ga)&
=\frac{1}{\gG(1-\ga)}\intoi (u-\log u-1)^{-\ga}\dd u
\\&
=\frac{1}{\gG(1-\ga)}\intoo \bigpar{e^{-x}-1+x}^{-\ga}e^{-x}\dd x.	
  \end{split}
\end{equation}

In particular, when $\ga$ is a negative integer, $\mu(\ga)$ can be evaluated
as a finite combination of gamma integrals, yielding a rational value.
For example,
$\mu(0)=1$ (as always!), 
$\mu(-1)=1/2$, 
$\mu(-2)=5/12$, 
$\mu(-3)=7/18$.
$\mu(-4)=1631/4320$,
$\mu(-5)=96547/259200$.

In this example, by \eqref{Hg1} and \eqref{gryta},
\begin{align}
  H(z)&=\E e^{-z(1+\log U-U)}=e^{-z}\E\bigpar{U^{-z}e^{zU}}
=e^{-z}\intoi u^{-z}e^{zu}\dd u
\notag\\&
=e^{-z}\gG(1-z)\gamma^*(1-z,-z)
=e^{-z}(-z)^{z-1}\gamma(1-z,-z),
\end{align}
where~$\gamma$ is an incomplete gamma function and $\gamma^*$ is closely
related,
see \cite[\S8.2(i)]{NIST} for both.
\end{example}

\begin{example}[Ordered trees; $\Ge(1/2)$]
For uniformly random ordered trees we have $\xi\sim\Ge(1/2)$, with
$\P(\xi=k)=2^{-k-1}$, $k\ge0$.
Thus $S_n$ has a Negative Binomial distribution,
and, using \eqref{ptk},
\begin{align}
  \P(\act=n)
&=\frac{1}n\P(S_n=n-1)
=\frac{1}n 2^{1-2n}\binom{2n-2}{n-1}
=2^{1-2n}\frac{(2n-2)!}{n!\,(n-1)!}
\notag\\&
=(-1)^{n-1}\binom{\frac12}{n}.	
\end{align}
  Hence, the distribution function $g(x)$ of $V$ is by \eqref{g}
  \begin{equation}
	g(x)
=\sumn (-1)^{n-1}\binom{\frac12}{n} x^n
=1 - (1-x)\qq
  \end{equation}
and the density function is
\begin{equation}\label{g'Ge}
  g'(x)=\frac12(1-x)\qqw.
\end{equation}
Thus, $\VZ$ has a Beta distribution: $\VZ\sim B(1,\frac12)$.

By \eqref{Hg2} and~\eqref{g'Ge},
\begin{align}\label{Hge}
  H(z) %=\E V^{-z}
=\frac12\intoi x^{-z}(1-x)\qqw \ddx x
=\frac12B\bigpar{1-z,\tfrac12}
=\frac{\gG(1-z)\gG(\frac32)}{\gG(\frac32-z)}.
\end{align}
By repeated differentiations we obtain for example, 
assisted by \cite[\S5.15]{NIST} and Maple, and
using again
$\psi(x):=\gG'(x)/\gG(x)$,
\begin{align}
  \mu(-1)&=H'(0)=\psi\bigpar{\tfrac32}-\psi(1)
=2-2\log 2 \doteq 0.6137,
\\
\mu(-2)&=\frac12H''(0)
=\frac12\Bigpar{\bigpar{\psi\bigpar{\tfrac32}-\psi(1)}^2
-\bigpar{\psi'\bigpar{\tfrac32}-\psi'(1)}}
\notag\\&=
2 \log^2 2-4\log 2
-\tfrac16{\pi }^{2}+4 \doteq 0.5434,
\\
\mu(-3)&
% =\frac16\bigpar{\psi\xpar{\tfrac32}-\psi(1)}^3
% -\frac12\bigpar{\psi\xpar{\tfrac32}-\psi(1)}
% \bigpar{\psi'\bigpar{\tfrac32}-\psi'(1)}
% \notag\\&\qquad{}
% +\frac16\bigpar{\psi''\bigpar{\tfrac32}-\psi''(1)}
% \notag\\&
=
\tfrac{1}3\xpar{\log2-1}{\pi }^{2}
  -\tfrac{4}3 \log^3 2 +4 \log^2 2 -8\log  2 
-2\zeta(3) + 8 \nonumber\\ 
&\doteq 0.5190,
%\end{align}
\\
\mu(-4)&=
  - \tfrac{1}{40} {\pi }^{4} 
+\bigpar{-\tfrac{1}{3} \log^2 2 + \tfrac{2}{3} \log 2 - \tfrac{2}{3}}{\pi }^{2}
 + \tfrac{2}{3} \log^4 2 
\nonumber \\&\qquad{}
- \tfrac{8}{3} \log^3 2 
%\nonumber \\
 +8 \log^2 2 -16 \log 2
 + (4\log 2 -4) \zeta(3) 
 +16 \nonumber \\
 &\doteq 0.5088.
\end{align}
%
% For $\ga=-1$ we obtain from \eqref{Ge6}, or simpler from \eqref{E7}, after
% a short calculation 
% $\mu(-1)=2-2\log 2 \doteq 0.6137$.
%
% Maple and \eqref{Ge6} yield further, for example,
%$\mu(-2)= 
%-1/6\,{\pi }^{2}+4+2\, \left( \log  \left( 2 \right)  \right) ^{2}-4\,
%\log  \left( 2 \right) 
%$
% \begin{align}
% \mu(-2)&= 
% 2 \log^2 2-4\log 2
% -\tfrac16{\pi }^{2}+4 \doteq 0.5434,
% \\
% \mu(-3)&=
% \tfrac{1}3{\pi }^{2}\log 2 -\tfrac{1}3{\pi}^{2}
%  -\tfrac{4}3 \log^3 2 +4 \log^2 2 -8\log  2 
% -2\zeta(3) + 8 \nonumber\\ 
% &\doteq 0.5190,
%\end{align}
% \\
%
%\begin{align} \mu(3)=
%-4/3\, \left( \log  \left( 2 \right)  \right) ^{3}+1/3\,{\pi }^{2}\log 
% \left( 2 \right) +4\, \left( \log  \left( 2 \right)  \right) ^{2}-1/3
%\,{\pi }^{2}-2\,\zeta  \left( 3 \right) -8\,\log  \left( 2 \right) +8
%\end{align}
%
%\begin{align}
%  \mu(-4)&=
%   - \tfrac{1}{40} {\pi }^{4} - \tfrac{1}{3} {\pi }^{2} \log^2  2
%  + \tfrac{2}{3} {\pi }^{2} \log 2 - \tfrac{2}{3}{\pi }^{2}
%  + \tfrac{2}{3} \log^4 2 - \tfrac{8}{3} \log^3 2 \nonumber \\
%  &{} \qquad \qquad\, +8 \log^2 2 -16 \log 2
%  + 4 \zeta(3) \log 2 -4 \zeta(3) 
%  +16 \nonumber \\
%  &\doteq 0.5088.
% \end{align}
\end{example}

\begin{example}[Binary trees; $\Bi(2,\frac12)$]
 Uniformly random binary trees is an example with $\xi\sim\Bi(2,\frac12)$,
Thus $S_n\sim \Bi(2n,\frac12)$ and, using \eqref{ptk},
\begin{align}
 \P(\act=n)
&= \frac{1}n\P(S_n=n-1)
=\frac{1}n2^{-2n}\binom{2n}{n-1}
=2^{-2n}\frac{(2n)!}{n!\,(n+1)!}
\notag\\&
=2(-1)^n\binom{\frac12}{n+1}.	
\end{align}
Hence, 
\begin{equation}
  g(x)=\sumn 2(-x)^n \binom{\frac12}{n+1}
=\frac2{-x}\bigpar{(1-x)\qq-1+\tfrac12x}
=\frac{2-x-2\sqrt{1-x}}{x}
\end{equation}
and \eqref{Hg2} yields, first for $z<-1$ and then for $\Re z<1$ by analytic
continuation,
\begin{align}
  H(z)&=1+z\intoi\Bigpar{ 2x^{-z-2}-x^{-z-1}-2x^{-z-2}(1-x)\qq}\dd x
\notag\\&
=1+\frac{2z}{-z-1}-\frac{z}{-z}-2z\frac{\gG(-z-1)\gG(\frac32)}{\gG(\frac12-z)}
\notag\\&
= \frac{2}{1+z}
-\gG(\tfrac12)\frac{\gG(1-z)}{(1+z)\gG(\frac12-z)}.\label{HBi}
\end{align}
Taking Taylor coefficients at~$0$ yields, for example, 
again using \cite[\S5.15]{NIST} and Maple, 
\begin{align}
  \mu(-1)&=H'(0)=-1+\psi(1)-\psi(\tfrac12)=2\log 2 -1 
\doteq 0.3863,
\\
\mu(-2)&=
\tfrac{1}6\pi^2-2\log^22-2\log2+1
\doteq0.2977.
%  \\
% \mu(-3)&=
% -1+2\,\ln  \left( 2 \right) +2\, \left( \ln  \left( 2 \right) 
%  \right) ^{2}-1/6\,{\pi }^{2}+4/3\, \left( \ln  \left( 2 \right) 
%  \right) ^{3}+2\,\zeta  \left( 3 \right) -1/3\,{\pi }^{2}\ln  \left( 2
%  \right)
\end{align}
\end{example}

\begin{example}[Full binary trees; $2\Bi(1,\frac12)$]
 Uniformly random full binary trees is an example with 
$\xi/2\sim\Bi(1,\frac12)$, \ie, $\P(\xi=0)=\P(\xi=2)=\frac12$.
Thus $S_n/2\sim \Bi(n,\frac12)$ and, using \eqref{ptk},
if $n=2m+1$ is odd,
\begin{equation}
  \begin{split}
 \P(\act=n)
&= \frac{1}n\P(S_n=n-1)
=\frac{1}n2^{-n}\binom{n}{m}
=2^{-2m-1}\frac{(2m)!}{m!\,(m+1)!}
\\&
=(-1)^m\binom{\frac12}{m+1}.	
  \end{split}
\end{equation}
Hence,
\begin{equation}
  g(x)=\sumko (-1)^m x^{2m+1} \binom{\frac12}{m+1}
=\frac{1-\sqrt{1-x^2}}{x}
\end{equation}
and \eqref{Hg2} yields, 
similarly to \eqref{HBi}, 
omitting some details,
\begin{align}
  H(z)&=1+z\intoi\!x^{-z-2}\bigpar{1-\sqrt{1-x^2}}\dd x
=\frac{1}{1+z}+\frac{\gG(-\frac{1+z}{2})\gG(\frac{3}{2})}{\gG(-\frac{z}{2})}.
\end{align}
This yields, for example,
\begin{align}
  \mu(-1) &
=\frac{\pi}2 -1
\doteq0.5708,
\\
\mu(-2)&=1-\tfrac12(1-\log2)\pi
\doteq0.5180.
\end{align}
\end{example}

\section{Polylogarithms}\label{Apoly}

As said in \eqref{Li},
the polylogarithm function is defined, for $\ga\in\bbC$, by
\begin{align}\label{pLi}
  \Li_\ga(z):=\sumn n^{-\ga}z^n,
\qquad |z|<1;
\end{align}
%when $|z|<1$; 
the function is then extended analytically to $z\in\bbC\setminus\ooo$,
for example by the integral formula \cite[(VI.48)]{FS}.
%, p.~409]{FS}.
As a bivariate function, $\Li_\ga(z)$ is analytic in
both variables $(\ga,z)\in\bbC\times(\bbC\setminus\ooo)$. 

Let $U:=\set{z\in\bbC\setminus(-\infty,0]:|\log z|<2\pi}$
(where $\log z$  denotes the principal value),
and note that $U$
is a neighborhood of 1. 
In particular, $U$ contains, for example, 
the disc $U_1:=\set{z:|z-1|<\frac12}$.
If $\ga\notin\set{1,2,\dots}$, $z\notin [1,\infty)$, and 
furthermore $z\in U':=U\setminus\ooo$,
then,
see \cite[25.12.2]{NIST}
and \cite[(1.11.8)]{ErdelyiI},
\begin{align}\label{pla}
  \Li_\ga(z) 
= \gG(1-\ga)\xpar{-\log z}^{\ga-1}+\sumno \zeta(\ga-n)\frac{(\log z)^n}{n!}.
\end{align}
We denote the infinite sum in \eqref{pla} by $h_\ga(z)$, and
note that it
converges absolutely for $z\in U$, and thus is analytic there, 
since the reflection formula 
for the Riemann zeta function \cite[25.4.2]{NIST} easily implies
\begin{align}\label{plb}
\frac{ |\zeta(\ga-n)|}{n!}
=O\Bigpar{\frac{(2\pi)^{-n}\gG(n+1-\ga)\zeta(n+1-\ga)}{n!}}
=O\Bigpar{n^{-\Re \ga}(2\pi)^{-n}}.
\end{align}
for each fixed complex $\ga$ and $n\ge\Re\ga+1$.

Moreover, we define  the analytic function
\begin{align}\label{plc}
  G(z):=\frac{-\log z}{1-z},
\qquad z\in U_1,
\end{align}
where by continuity $G(1)=1$. Since $G(z)\neq0$ in $U_1$,
\begin{align}\label{pld}
  g(z):=\log(G(z)),
\qquad z\in U_1,
\end{align}
also defines an analytic function in $U_1$, with $g(1)=0$.
Then, for $z\in U_1':=U_1\setminus[1,\infty)$,
\begin{align}\label{ple}
  (-\log z)^{\ga-1}
=\bigpar{(1-z)G(z)}^{\ga-1}
=\bigpar{(1-z)e^{g(z)}}^{\ga-1}
=(1-z)^{\ga-1}e^{(\ga-1)g(z)}.
\end{align}

Consequently, \eqref{pla} yields
\begin{align}\label{plf}
  \Li_\ga(z) =\gG(1-\ga)(1-z)^{\ga-1}e^{(\ga-1)g(z)}+h_\ga(z),
\qquad z\in U_1'.
\end{align}

The functions $e^{(\ga-1)g(z)}$ and $h_\ga(z)$ are analytic functions of
$z\in U_1$, and can thus be expanded as Taylor series in $1-z$.
Hence, \eqref{plf} yields, for $z\in U_1'$, an absolutely convergent expansion
\begin{align}\label{plg}
  \Li_{\ga}(z)=\sumjo a_j(\ga)(1-z)^{\ga-1+j}
+\sumko b_k(\ga)(1-z)^k
\end{align}
for some coefficients $a_j(\ga)$ and $b_k(\ga)$.
This is the asymptotic expansion given in 
\citet{Flajolet1999} and 
\cite[Theorem VI.7]{FS}; 
%p.~408]{FS}; 
we see now that the
expansion actually converges for $z\in U_1'$.

The coefficients $a_j(\ga)$ and $b_k(\ga)$
can be found from the formulas above by repeated
differentiations at $z=1$, or 
(as in \cite{Flajolet1999} and \cite{FS})
by substitution 
in \eqref{plf} of
\begin{align}\label{plh}
  \log z =\log\bigpar{1-(1-z)}
=-\sumk \frac{(1-z)^k}{k}
=-(1-z)\sumko \frac{(1-z)^k}{k+1}
\end{align}
and its consequence
\begin{align}\label{pli}
g(z)=\log\Bigsqpar{1+\sumk\frac{(1-z)^k}{k+1}}
=\summ\frac{(-1)^{m-1}}{m}\Bigsqpar{\sumk\frac{(1-z)^k}{k+1}}^m,
\end{align}
followed by rearrangements into single power series.
Note that $a_j(\ga)$ and $b_k(\ga)$ are analytic functions of
$\ga\in\bbC\setminus\set{1,2,\dots}$. 

In particular, $a_0(\ga)=\gG(1-\ga)$, and thus by keeping only the first term
in the first sum in \eqref{plg}, we obtain
\eqref{li}.

\begin{proof}[Proof of \refL{LUL1}]
It is easily checked that the estimate \eqref{plb} holds uniformly for
$\ga\in K$ and large enough $n$.
Hence, uniformly for $\ga\in K$ and $z\in U_1$, 
\begin{align}
 | h_\ga(z)| = O(1),
\end{align}
Similarly, since $g(1)=0$, we have $g(z)=O(|1-z|)$ in $U_1$, and
\begin{align}
  e^{(\ga-1)g(z)}= 1+\Oz{},
\end{align}
again uniformly for $\ga\in K$ and $z\in U_1$.
Hence, for $z\in U_1'$, \eqref{lul1} follows from \eqref{plf}, 
with the $O$ term uniform for $\ga\in K$.
The case $z\in\gD\setminus U_1'$ is trivial, since $|1-z|$ is bounded above
and below in that set, and $\Li_\ga(z)$ is uniformly bounded 
in the compact set $\overline{\gD}\setminus U_1$ by continuity.
\end{proof}

In the same way we see that we may expand the two sums in \eqref{plg} to any
number of finite terms, and the resulting expansion will have error terms
that are uniform in $\ga\in K$, for any compact
$K\subset\bbC\setminus\set{1,2,\dots}$.

\section{The limit as $\ga\to0$}\label{A0}

We show here the claim in \refR{R0} about limits (in distribution) of
$Y(\ga)$ as $\ga\to0$ (with $\rga>0$; recall that $Y(\ga)$ is defined only for such $\ga$).
It turns out that the limit depends on how $\ga$
appoaches 0. We consider for simplicity only the case when $\ga$ approaches
on a straight line, i.e., with 
%$\arg\ga$ constant.
constant argument (necessarily with $|\arg\ga|<\pi/2$).
In this case, $\ga\qw Y(\ga)$ has a complex normal limiting distribution,
but the limit depends on $\arg\ga$.

\begin{theorem}\label{TD}
Let $\ga=re^{\ii\gth}$ with $|\gth|<\pi/2$, and let $r\to0$ with $\gth$
fixed.
Then
\begin{align}\label{td1}
  \ga\qqw Y(\ga) \dto \zeta,
\end{align}
where $\zeta$ is a centered complex normal variable, which is characterized by
the covariance matrix
\begin{align}\label{td2}
\Cov
\begin{pmatrix}\Re\zeta\\\Im\zeta\end{pmatrix}
=
\frac{1-\log 2}{\cos\gth} \begin{pmatrix} 1+\cos\gth & 0 \\ 
0& 1-\cos\gth\end{pmatrix}
.\end{align}
In other words, $\Re\zeta$ and $\Im\zeta$ are independent centered normal
variables with respective variances $(1-\log2)[(1/\cos\gth)\pm1]$;
equivalently, with
\begin{align}\label{td3}
  \E \zeta^2= 2(1-\log2)
\qquad\text{and}\qquad
  \E |\zeta|^2= 2(1-\log2)/\cos\gth.
\end{align}
\end{theorem}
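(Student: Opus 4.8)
The plan is to prove \refT{TD} by the method of moments, feeding the exact moment recursion of \refT{Tmix} into an asymptotic analysis as $\ga\to0$. Write $\ga=re^{\ii\gth}$ and $\rho:=\rga=r\cos\gth$ throughout. Two preliminary reductions. First, $\E Y(\ga)=\kk_1(\ga)$, which by \eqref{kk1} equals $\gG(\ga-\tfrac12)/\bigpar{\sqrt2\,\gG(\ga)}=O(\ga)$ as $\ga\to0$; hence $\ga\qqw\E Y(\ga)\to0$, so $\ga\qqw\tY(\ga)$ and $\ga\qqw Y(\ga)$ differ by a deterministic quantity tending to $0$, and it is equivalent to show $\ga\qqw Y(\ga)\dto\zeta$. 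Second, a centered complex normal is jointly normal in its real and imaginary parts, hence has a finite moment generating function and is determined by its moments, and convergence of all mixed $(\Re,\Im)$-moments of a complex random variable is equivalent to convergence of all mixed moments in the variable and its conjugate. Thus by the multivariate method of moments it suffices to prove that
\[
 \ga^{-\ell_1/2}\,\bga^{-\ell_2/2}\,\E\bigsqpar{Y(\ga)^{\ell_1}\,\overline{Y(\ga)}^{\ell_2}}\longrightarrow M_{\ell_1,\ell_2}\qquad(r\to0)
\]
for all integers $\ell_1,\ell_2\ge0$, where the $M_{\ell_1,\ell_2}$ are the mixed moments of \emph{some} centered complex normal. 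Using $\overline{Y(\ga)}=Y(\bga)$ a.s.\ (valid for $\rga>\tfrac12$ by the real-coefficient representation \eqref{wa0}, hence on all of $H_+$ by uniqueness of analytic functions), \refT{Tmix} applies with $\ga_1=\ga$, $\ga_2=\bga$, $\ga_i':=\ga_i+\tfrac12$, so that $m_{\ell_1,\ell_2}:=\E[Y(\ga)^{\ell_1}Y(\bga)^{\ell_2}]=\sqrt{2\pi}\,\kkk_{\ell_1,\ell_2}/\gG\bigpar{\ell_1\ga_1'+\ell_2\ga_2'-\tfrac12}$ with $\kkk_{\ell_1,\ell_2}$ given by \eqref{kkk1}, \eqref{kkkll}.

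The core is an induction on $n:=\ell_1+\ell_2$. Since $\ell_1\ga_1'+\ell_2\ga_2'=\tfrac n2+(\ell_1\ga+\ell_2\bga)$, every Gamma factor in \eqref{kkkll} and in the conversion formula has argument tending to a half-integer (for $n$ even) or an integer (for $n$ odd); expanding each $\gG$ by its Taylor or Laurent series about the limiting point, I would prove $m_{\ell_1,\ell_2}=\ga^{\ell_1/2}\bga^{\ell_2/2}\bigpar{M_{\ell_1,\ell_2}+o(1)}$ for $n$ even and $m_{\ell_1,\ell_2}=o\bigpar{r^{n/2}}$ for $n$ odd, with the surviving leading coefficients $M_{\ell_1,\ell_2}$ obeying the Wick/Isserlis recursion for the moments of a centered complex Gaussian. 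Concretely, the three term-groups on the right of \eqref{kkkll} should match, in the limit and after dividing by $\ga^{\ell_1/2}\bga^{\ell_2/2}$, the three ways to build a pair-partition of $n$ labelled slots (of which $\ell_1$ carry one type and $\ell_2$ the other) by adjoining one pair, with pair-weights $\E\zeta^2$, $\E|\zeta|^2$, $\E\overline\zeta^{\,2}$; the products $\kkk_{j_1,j_2}\kkk_{\ell_1-j_1,\ell_2-j_2}$ with $j_1+j_2$ odd turn out to be of order $o(r^{n/2})$ and contribute only to remainders, which is why the odd cases feed back merely through error terms.

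The two second-order cases pin down $\zeta$. From \eqref{evar}, continued analytically to complex $\ga$, a short Taylor/Laurent expansion gives $\ga\qw\Var Y(\ga)=\ga\qw\E\tY(\ga)^2\to2-2\log2$, so $M_{2,0}=\E\zeta^2=2(1-\log2)$. From \eqref{e|2|}, which exhibits $\E|Y(\ga)|^2$ as a sum of two terms tending to $+1$ and $-1$, expanding $\gG(2\rga\pm\tfrac12)$, $\gG(2\rga)$, $\gG(\ga-\tfrac12)$, $\gG(\ga)$ to first order and using $\psi(-\tfrac12)-\psi(\tfrac12)=2$ (with $\psi=\gG'/\gG$) together with $2\cos^2\gth-\cos2\gth=1$, one finds $\E|Y(\ga)|^2=\frac{2(1-\log2)}{\cos\gth}\,r+O(r^2)$, so $M_{1,1}=\E|\zeta|^2=2(1-\log2)/\cos\gth$; and $M_{0,2}=\overline{M_{2,0}}=M_{2,0}$. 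These three numbers force $\Cov(\Re\zeta,\Im\zeta)=0$, $\Var\Re\zeta=(1-\log2)(1+1/\cos\gth)$ and $\Var\Im\zeta=(1-\log2)(1/\cos\gth-1)$, which is exactly \eqref{td2} (and \eqref{td3} is the same data repackaged); positive-semidefiniteness, i.e.\ the existence of such a $\zeta$, is automatic from $1/\cos\gth\ge1$. Since the $M_{\ell_1,\ell_2}$ generated by the induction are then by construction the moments of this particular complex normal, \eqref{td1} follows.

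I expect the main obstacle to be the bookkeeping in the inductive step. In each Gamma-prefactor the naive leading terms cancel --- already visible in the second-moment computations, where $\E|Y(\ga)|^2$ is a near-cancelling difference and $\Var Y(\ga)$ is a sum of three terms that individually tend to $0$ --- so every expansion must be carried one order beyond what looks necessary, and one must keep scrupulous track of the distinction between $\ga=re^{\ii\gth}$ and $\rho=\rga=r\cos\gth$: it is precisely this mismatch (powers of $\ga$ versus powers of $\Re\ga$ emerging from the $\gG$-arguments, $2\rga$ in \eqref{e|2|} and $\ell_1\ga+\ell_2\bga$ in general) that produces the $\cos\gth$ in \eqref{td2}. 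A secondary, purely combinatorial, point is to check that the three limiting term-groups in \eqref{kkkll}, with their binomial coefficients and powers of $2\qqw$, reassemble term-by-term into Wick's recursion --- the familiar fact that Gaussian moments satisfy a Catalan-type recursion, here to be reconciled with the specific shape of \eqref{kkkll}.
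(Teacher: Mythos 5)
Your overall strategy is the one the paper itself follows in \refApp{A0} (Lemmas \ref{LOA}--\ref{LOC} and the proof of \refT{TD}): the method of moments applied to the mixed moments $\E\bigsqpar{Y(\ga)^{\ell_1}Y(\bga)^{\ell_2}}$ supplied by \refT{Tmix}, an induction on $\ell_1+\ell_2$ driven by Gamma-function asymptotics in \eqref{kkkll}, and the two second-moment computations pinning down $\E\zeta^2$ and $\E|\zeta|^2$ (which you carry out correctly; they agree with \eqref{loa}--\eqref{lob} and \eqref{e|2|}). The preliminary reductions ($\E Y(\ga)=O(\ga)$, moment-determinacy of the complex normal) are also as in the paper.

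The one place where your proposed mechanism would not go through as described is the identification of the limiting recursion with Wick's formula. You assert that the three term-groups on the right of \eqref{kkkll} match, in the limit, the three pair-types with weights $\E\zeta^2$, $\E|\zeta|^2$, $\E\overline\zeta^{\,2}$. But Wick's recursion is \emph{linear} in lower-order moments (remove one pair), whereas what survives from \eqref{kkkll} is \emph{quadratic}: to leading order the two single-$\gG$-ratio terms cancel against the boundary terms $j_1+j_2\in\set{1,\ell_1+\ell_2-1}$ of the double sum (because $\kkk_{1,0},\kkk_{0,1}\to-1$ while the $\gG$-ratios tend to $+1$), and the limit recursion is the Catalan-type convolution \eqref{lob3} over the middle range $2\le j_1+j_2\le\ell_1+\ell_2-2$. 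One must then solve this quadratic recursion explicitly and only afterwards compare with Gaussian moments; in the paper this is done for the generating polynomial of $tY(\ga)+uY(\gb)$, yielding $(\ell-1)!!\,\gS^{\ell/2}$ after multiplication by the conversion factor $\sqrt{2\pi}/\gG\bigpar{\ell_1\ga_1'+\ell_2\ga_2'-\frac12}$ from \eqref{tmix} --- it is this Gamma factor, not the recursion itself, that turns Catalan numbers into double factorials. The paper also closes the argument via Cram\'er--Wold applied to the real variables $2\Re\bigpar{tY(\ga)}$, $t\in\bbC$, rather than by matching all mixed $(\zeta,\overline\zeta)$-moments directly, which avoids having to posit the target mixed moments in advance. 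Your route is salvageable, but the inductive step must be reorganized around this cancellation-plus-Catalan structure rather than a term-by-term Wick matching.
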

%\begin{align}\label{td1}
%  r\qqw Y(\ga) \dto \zeta,
%\end{align}
%where $\zeta$ is a centered complex normal variable, which is characterized by
%\begin{align}
%  \E \zeta^2= 2(1-\log2) e^{\ii\gth}
%\qquad\text{and}\qquad
%  \E |\zeta|^2= 2(1-\log2)/\cos\gth,
%\end{align}
%or, equivalently, 
%the covariance matrix
%\begin{align}\label{td2}
%\Cov
%\begin{pmatrix}\Re\zeta\\\Im\zeta\end{pmatrix}
%=
%\frac{1-\log 2}{\cos\gth} \begin{pmatrix} 1+\cos^2\gth & \sin\gth\cos\gth \\ 
%\sin\gth\cos\gth & \sin^2\gth\end{pmatrix}
%\end{align}

The case $\ga$ real, i.e., $\gth=0$, was noted
in \cite[Remark 3.6(e)]{FillK04}. As stated in \eqref{r0}, then
$\zeta$
%$\zeta\sim N\bigpar{0,2(1-\log2)}$ 
is a real normal variable $N\bigpar{0,2(1-\log2)}$.

We prove \refT{TD} by the method of moments, using \refT{Tmix}.
We procced via a series of lemmas that are stated for somewhat more general
situations.  

\begin{lemma}\label{LOA}
  As $\ga,\gb\to0$, with $\rga,\rgb>0$, we have
  \begin{align}\label{loa}
\E Y(\ga)
&=
\kk_1(\ga)=
\frac{\sqrt{2\pi}} {\gG(\ga)}\kkk_1(\ga)  
\sim -\sqrt{2\pi}\,\ga,
\\\label{lob}
\E\bigsqpar{Y(\ga)Y(\gb)}
&
=\kk_{1,1}(\ga,\gb)
\sim\sqrt2 \kkk_{1,1}(\ga,\gb)
\sim 4(1-\log 2)\frac{\ga\gb}{\ga+\gb}
.  \end{align}
\end{lemma}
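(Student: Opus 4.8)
Both estimates are computational consequences of the closed forms already in hand: \eqref{kk1} (equivalently \eqref{kkk1}) for the mean, and the $\ell_1=\ell_2=1$ case of \eqref{tmix} together with \eqref{kkk11} for the second mixed moment. The only genuine subtlety is that in \eqref{lob} the naive leading order vanishes, so one has to carry first‑order corrections through all three terms of \eqref{kkk11} at once.

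First I would dispatch \eqref{loa}: by \eqref{kk1}, $\kk_1(\ga)=\gG(\ga-\tfrac12)/\bigpar{\sqrt2\,\gG(\ga)}$, and since $\gG(\ga-\tfrac12)\to\gG(-\tfrac12)=-2\sqrt\pi$ while $\gG(\ga)\sim\ga\qw$ as $\ga\to0$, this gives $\kk_1(\ga)\sim-2\sqrt\pi\,\ga/\sqrt2=-\sqrt{2\pi}\,\ga$. For \eqref{lob} I would start from $\kk_{1,1}(\ga,\gb)=\sqrt{2\pi}\,\kkk_{1,1}(\ga,\gb)/\gG(\ga+\gb+\tfrac12)$; since $\gG(\ga+\gb+\tfrac12)\to\gG(\tfrac12)=\sqrt\pi$, this already yields $\kk_{1,1}\sim\sqrt2\,\kkk_{1,1}$, so it remains only to evaluate the three‑term sum \eqref{kkk11} to the needed precision. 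Using $\gG(z)=z\qw-\gam+\Oz{}$ near $0$ (hence $1/\gG(z)=z\bigpar{1+\gam z+\Oz2}$) and $\gG(z-\tfrac12)=\gG(-\tfrac12)\bigpar{1+\psi(-\tfrac12)z+\Oz2}$, I would expand each summand of \eqref{kkk11} through first order in $\ga,\gb$. The $O(1)$ parts are $2^{-1/2}$ from the first term and $-2^{-1/2}\gb/(\ga+\gb)$, $-2^{-1/2}\ga/(\ga+\gb)$ from the other two, and they cancel because $\ga/(\ga+\gb)+\gb/(\ga+\gb)=1$. Among the first‑order parts, the $\gam$‑contributions from $\gG(\ga+\gb)$ and from $1/\gG(\ga)$, $1/\gG(\gb)$ partly cancel, and the survivors combine, via $(\ga+\gb)-(\ga^2+\gb^2)/(\ga+\gb)=2\ga\gb/(\ga+\gb)$, into $2^{-1/2}\cdot\frac{2\ga\gb}{\ga+\gb}\bigpar{\psi(-\tfrac12)+\gam}$, up to an error of smaller order than $\ga\gb/(\ga+\gb)$ when $\ga,\gb\to0$ with $|\ga|\asymp|\gb|$ — in particular for $\gb=\bga$, which is the case used afterwards. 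Finally $\psi(-\tfrac12)+\gam=2-2\log2$ by \cite[5.4.13]{NIST} and $\psi(z+1)=\psi(z)+z\qw$, giving $\kkk_{1,1}(\ga,\gb)\sim2\sqrt2(1-\log2)\,\ga\gb/(\ga+\gb)$ and hence $\kk_{1,1}(\ga,\gb)\sim4(1-\log2)\,\ga\gb/(\ga+\gb)$.

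The step I expect to require the most care is precisely this cancellation in \eqref{lob}: since the $O(1)$ terms annihilate each other, any imprecision in the first‑order coefficients — especially in the cross terms of the products of three Laurent/Taylor series, and in keeping track of the Euler constants — would corrupt the final constant. It is cleanest to first bring the three summands to a common normalized shape (each of the form $\mathrm{const}\cdot\frac{\gb}{\ga+\gb}\bigsqpar{1-\gam\ga+\psi(-\tfrac12)\gb+\cdots}$, and symmetrically), after which the algebra collapses in one line.
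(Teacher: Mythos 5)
Your computation of \eqref{loa} and your strategy for \eqref{lob} — expand the three summands of \eqref{kkk11}, observe that the leading parts cancel exactly because $1-\frac{\gb}{\ga+\gb}-\frac{\ga}{\ga+\gb}=0$, and extract the surviving first-order contribution — is precisely the paper's argument, and your constants check out: $\psi(-\tfrac12)+\gam=2-2\log2$, so $\kkk_{1,1}(\ga,\gb)\sim2\sqrt2(1-\log2)\ga\gb/(\ga+\gb)$ and $\kk_{1,1}\sim\sqrt2\,\kkk_{1,1}\sim4(1-\log2)\ga\gb/(\ga+\gb)$.

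The one place where your write-up falls short of the statement is the error control in \eqref{lob}. Multiplying the separate expansions $\gG(\ga+\gb)=(\ga+\gb)\qw-\gam+O(|\ga+\gb|)$ and $1/\gG(\gb)=\gb\bigpar{1+\gam\gb+O(|\gb|^2)}$ leaves remainders such as $O(|\ga+\gb|)\cdot|\gb|$ and $|\gb|^3/|\ga+\gb|$, which are \emph{not} $o\bigpar{|\ga\gb/(\ga+\gb)|}$ unless $|\ga|\asymp|\gb|$ — as you acknowledge. The lemma, however, is stated for arbitrary $\ga,\gb\to0$ with $\rga,\rgb>0$. The paper sidesteps the restriction by normalizing \emph{before} expanding: it writes $\gG(\ga+\gb)/\gG(\ga)=\frac{\ga}{\ga+\gb}\cdot\gG(1+\ga+\gb)/\gG(1+\ga)$ and expands the second factor as $1+\psi(1)\gb+O(|\ga\gb|+|\gb|^2)$. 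Every remainder in that bracket then carries a factor of $\gb$ (resp.\ $\ga$ in the symmetric term), so after multiplying by the prefactor the total error is $|\ga\gb/(\ga+\gb)|\cdot O(|\ga|+|\gb|)$, relatively negligible with no assumption on how $\ga$ and $\gb$ approach $0$. Your proposed ``common normalized shape'' is essentially this; just make sure the remainder in each bracket is $O(|\gb|(|\ga|+|\gb|))$ rather than $O(|\ga|^2+|\gb|^2)$, and the restriction $|\ga|\asymp|\gb|$ becomes unnecessary (though, as you note, that restricted case would suffice for the paper's applications, where $|\ga|=|\gb|$).
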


\begin{proof}
All asymptotic notions in the proof are as $\ga,\gb\to0$.
We assume throughout  that $|\ga|$ and $|\gb|$ are small.

Recall again the standard notation
\begin{align}\label{psi}
\psi(x):=
\ddxx \log \gG(x)
=\frac{\gG'(x)}{\gG(x)}.
\end{align}

First, by \eqref{kkk1} and \eqref{psi},
  \begin{align}\label{loc}
    \kkk_1(\ga) 
= \frac{\gG(\ga-\frac12)}{2\sqrt\pi}
= -\frac{\gG(-\frac12+\ga)}{\gG(-\frac12)}
= -\bigpar{1+\psi(-\tfrac12)\ga+O(|\ga|^2)}
.  \end{align}
In particular, $\kkk_1(\ga)\sim -1$ and thus \eqref{loa} follows 
by~\eqref{farf} [or~\eqref{kk1}].

For the second moment \eqref{lob}, we first note that by \eqref{tmix} and
\eqref{varin}, 
\begin{align}\label{lof}
  \E\bigsqpar{Y(\ga)Y(\gb)}=\kk_{1,1}(\ga,\gb)
=\frac{\sqrt{2\pi}}{\gG(\frac12+\ga+\gb)}\kkk_{1,1}(\ga,\gb)
\sim \sqrt{2} \kkk_{1,1}(\ga,\gb).
\end{align}

Finally, by \eqref{kkkll}, as in \eqref{kkk11},
\begin{align}\label{kkk110}
 \sqrt2 \kkk_{1,1}(\ga,\gb)
&= \kkk_1(\ga)\kkk_1(\gb)
+ \frac{\gG(\ga+\gb)}{\gG(\gb)}\kkk_1(\gb)
+ \frac{\gG(\ga+\gb)}{\gG(\ga)}\kkk_1(\ga)
\notag\\&
=
 \kkk_1(\ga)\kkk_1(\gb)\Bigsqpar{
1
+\frac{\gG(\ga+\gb)}{\gG(\gb)\kkk_1(\ga)}
+\frac{\gG(\ga+\gb)}{\gG(\ga)\kkk_1(\gb)}
}
.\end{align}
We have, using \eqref{psi},
\begin{align}\label{lod}
  \frac{\gG(\ga+\gb)}{\gG(\ga)}
&=
\frac{\ga}{\ga+\gb}\cdot  \frac{\gG(1+\ga+\gb)}{\gG(1+\ga)}
=
\frac{\ga}{\ga+\gb}\Bigsqpar{1+\psi(1+\ga)\gb + O(|\gb|^2)}
\notag\\&
=\frac{\ga}{\ga+\gb}\Bigsqpar{1+\psi(1)\gb + O(|\ga\gb|+|\gb|^2)},
\end{align}
which together with \eqref{loc} yields
\begin{align}\label{loe}
  \frac{\gG(\ga+\gb)}{\gG(\ga)\kkk_1(\gb)}
&
%\notag\\&
=-\frac{\ga}{\ga+\gb}
\Bigsqpar{1+\bigpar{\psi(1)-\psi(-\tfrac12)}\gb + O(|\ga\gb|+|\gb|^2)}.
\end{align}
Using \eqref{loe}, and the same with $\ga$ and $\gb$ interchanged, in
\eqref{kkk110} we obtain, recalling $\kkk_1(\ga)\sim\kkk_1(\gb)\sim-1$,
\begin{align}\label{kkk1100}
 \sqrt2 \kkk_{1,1}(\ga,\gb)
& 
\sim
1
+\frac{\gG(\ga+\gb)}{\gG(\gb)\kkk_1(\ga)}
+\frac{\gG(\ga+\gb)}{\gG(\ga)\kkk_1(\gb)}
\notag\\&
=-2\frac{\ga\gb}{\ga+\gb}\Bigsqpar{\psi(1)-\psi(-\tfrac12)
+ O\bigpar{|\ga|+|\gb|}}
.\end{align}
The result \eqref{lob} now follows because $\psi(1)=-\gam$
and 
\begin{align}\label{psi-1/2}
 \psi(-\tfrac12)=\psi(\tfrac12)+2=-\gam-2\log 2+2, 
\end{align}
%$\psi(-\frac12)=\psi(\frac12)+2=-\gam-2\log 2+2$
see \cite[5.4.12--13 and 5.5.2]{NIST}.
\end{proof}

\begin{lemma}\label{LOB}
Let $\ga=re^{\ii\gthx}$ and $\gb=re^{\ii\gthy}$ with
$\gthx,\gthy\in(-\frac{\pi}2,\frac{\pi}2)$,
and let $r\to0$ with $\gthx,\gthy$ fixed.
Then,
for every fixed $\ell_1,\ell_2\ge0$ with $\ell_1+\ell_2\ge2$,
\begin{align}\label{lobb}
r^{-(\ell_1+\ell_2)/2} \kkk_{\ell_1,\ell_2} (\ga,\gb) \to \vpx_{\ell_1,\ell_2},
\end{align}
where $\vpx_{\ell_1,\ell_2}$ is given recursively by
\begin{align}\label{lob0}
  \vpx_{0,0}&=0,
\\\label{lob1}
\vpx_{\ell_1,\ell_2}&=0,
\qquad\text{when $\ell_1+\ell_2=1$},
\\\label{lob20}
\vpx_{2,0}&=\sqrt2(1-\log2){e^{\ii\gth_1}},
\\\label{lob02}
\vpx_{0,2}&=\sqrt2(1-\log2){e^{\ii\gth_2}},
\\\label{lob11}
\vpx_{1,1}&=2\sqrt2(1-\log2)\frac{e^{\ii(\gth_1+\gth_2)}}{e^{\ii\gth_1}+e^{\ii\gth_2}},
\\\label{lob3}
\vpx_{\ell_1,\ell_2}&=
2^{-3/2} \sum_{j_1,j_2} 
\binom{\ell_1}{j_1}\binom{\ell_2}{j_2}\vpx_{j_1,j_2}\vpx_{\ell_1-j_1,\ell_2-j_2},
\qquad\text{when $\ell_1+\ell_2\ge3$}
.
%\end{align}
\intertext{Moreover,}
%\begin{align}
\label{lobu}
\vpx_{\ell_1,\ell_2}&=0,
\qquad\text{when $\ell_1+\ell_2$ is odd}
.\end{align}
\end{lemma}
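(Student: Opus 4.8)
The plan is to prove \refL{LOB} by induction on $\ell_1+\ell_2$, following the recursion \eqref{kkkll} that defines $\kkk_{\ell_1,\ell_2}$ and tracking the leading order in $r$ as $r\to0$. The base cases $\ell_1+\ell_2\le 1$ are trivial (by convention $\kkk_{0,0}=0$, and $\kkk_{1,0}=\kkk_1(\ga)$, $\kkk_{0,1}=\kkk_1(\gb)$, so $r^{-1/2}\kkk_{1,0}\to 0$ since $\kkk_1(\ga)\to -1=O(1)$), giving \eqref{lob0}--\eqref{lob1}. The cases $\ell_1+\ell_2=2$, namely $\vpx_{2,0},\vpx_{0,2},\vpx_{1,1}$, are exactly the content of \refL{LOA}: by \eqref{lob} we have $\kkk_{1,1}(\ga,\gb)\sim 2\sqrt2(1-\log2)\,\ga\gb/(\ga+\gb)$, and substituting $\ga=re^{\ii\gthx}$, $\gb=re^{\ii\gthy}$ gives $\ga\gb/(\ga+\gb)=r\,e^{\ii(\gthx+\gthy)}/(e^{\ii\gthx}+e^{\ii\gthy})$, so $r^{-1}\kkk_{1,1}\to \vpx_{1,1}$ as in \eqref{lob11}; similarly $\kkk_{2,0}(\ga,\gb)=\kkk_2(\ga)$, and from \eqref{kkk2} (with $\ell=2$, and using $\kkk_1(\ga)\to-1$, $\gG(2\ga)\sim 1/(2\ga)$, etc.) one gets $\kkk_2(\ga)\sim \sqrt2(1-\log2)\,\ga$, hence $r^{-1}\kkk_{2,0}\to\sqrt2(1-\log2)e^{\ii\gthx}=\vpx_{2,0}$, and symmetrically for $\vpx_{0,2}$. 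Actually the cleanest route for the $2,0$ case is to note $\kkk_{2,0}(\ga,\gb)=\kkk_{1,1}(\ga,\ga)$ formally, or to recompute directly from \eqref{kkkll} with $(\ell_1,\ell_2)=(2,0)$.

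For the inductive step with $N:=\ell_1+\ell_2\ge 3$, I would feed the asymptotics into the three groups of terms in \eqref{kkkll}. The quadratic sum $2^{-3/2}\sum_{0<j_1+j_2<N}\binom{\ell_1}{j_1}\binom{\ell_2}{j_2}\kkk_{j_1,j_2}\kkk_{\ell_1-j_1,\ell_2-j_2}$: by the induction hypothesis each factor $\kkk_{j_1,j_2}$ is asymptotic to $\vpx_{j_1,j_2}r^{(j_1+j_2)/2}$ (interpreting $\vpx=0$ when $j_1+j_2\le1$ or when the index is $0,0$), so each summand is $\sim \vpx_{j_1,j_2}\vpx_{\ell_1-j_1,\ell_2-j_2}\,r^{N/2}$, and the whole sum is $\sim \bigl(2^{-3/2}\sum\binom{\ell_1}{j_1}\binom{\ell_2}{j_2}\vpx_{j_1,j_2}\vpx_{\ell_1-j_1,\ell_2-j_2}\bigr)r^{N/2}$, which is precisely $\vpx_{\ell_1,\ell_2}r^{N/2}$ from \eqref{lob3}. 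The two linear terms, e.g. $2^{-1/2}\ell_1\frac{\gG(\ell_1\ga_1'+\ell_2\ga_2'-1)}{\gG(\ell_1\ga_1'+\ell_2\ga_2'-1-\ga)}\kkk_{\ell_1-1,\ell_2}$: here $\ell_1\ga_1'+\ell_2\ga_2'-1=\tfrac{N}{2}-1+(\ell_1\ga+\ell_2\gb)\to \tfrac N2-1$, and since $N\ge3$ this argument tends to a value $\ge\tfrac12$ which is \emph{not} a nonpositive integer, so the ratio of Gamma functions tends to the finite nonzero constant $1$ (as $\ga\to0$); meanwhile $\kkk_{\ell_1-1,\ell_2}\sim \vpx_{\ell_1-1,\ell_2}r^{(N-1)/2}=o(r^{N/2})$. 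Hence both linear terms are $o(r^{N/2})$ and contribute nothing to the limit. This establishes \eqref{lobb} with $\vpx_{\ell_1,\ell_2}$ given by \eqref{lob3} for $N\ge3$, completing the induction.

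Finally, \eqref{lobu} (vanishing of $\vpx_{\ell_1,\ell_2}$ when $\ell_1+\ell_2$ is odd) follows by a separate, simpler induction on $N$ using the recursion \eqref{lob3}: the base cases $N=1$ and the fact that $\vpx_{2,0},\vpx_{1,1},\vpx_{0,2}$ handle even $N$ up to $2$; for odd $N\ge3$, in each term of \eqref{lob3} we have $(j_1+j_2)+((\ell_1-j_1)+(\ell_2-j_2))=N$ odd, so one of the two blocks has odd total index and is strictly between $0$ and $N$, hence vanishes by the induction hypothesis (the edge cases where a block equals $0$ in total are excluded or give $\vpx_{0,0}=0$). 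Therefore every summand vanishes and $\vpx_{\ell_1,\ell_2}=0$.

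The main obstacle I anticipate is bookkeeping around the degenerate indices in the quadratic sum: one must be careful that the ``$0<j_1+j_2<N$'' range plus the convention $\vpx_{0,0}=0$, $\vpx_{1,0}=\vpx_{0,1}=0$ are consistently applied so that the induction hypothesis genuinely covers every factor appearing, and that no spurious lower-order-but-nonzero contribution sneaks in from a term like $(j_1,j_2)=(1,0)$ (where $\kkk_{1,0}=\kkk_1(\ga)=O(1)$, \emph{not} $o(1)$, so it is $O(r^0)$ rather than $O(r^{1/2})$ — but it is multiplied by $\kkk_{\ell_1-1,\ell_2}$ which is $O(r^{(N-1)/2})$, and $0+(N-1)/2\ge N/2$ fails when $N=1$ only, so for $N\ge3$ such a term is $O(r^{(N-1)/2})=o(r^{N/2})$ and is harmless). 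Verifying this order count term-by-term, rather than blindly substituting $\kkk_{j_1,j_2}\sim\vpx_{j_1,j_2}r^{(j_1+j_2)/2}$, is the one place where care is needed; everything else is routine Gamma-function asymptotics already packaged in \refL{LOA} and the recursion \eqref{kkkll}.
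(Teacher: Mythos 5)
Your overall strategy (induction on $\ell_1+\ell_2$ via the recursion \eqref{kkkll}, with \refL{LOA} supplying the base cases) is the same as the paper's, and your treatment of the base cases and of the interior terms $2\le j_1+j_2\le \ell_1+\ell_2-2$ of the double sum is fine. But the inductive step contains a genuine error in the order count, and it occurs exactly at the place you flagged as needing care. You claim that the linear term $2^{-1/2}\ell_1\frac{\gG(\ell_1\ga_1'+\ell_2\ga_2'-1)}{\gG(\ell_1\ga_1'+\ell_2\ga_2'-1-\ga_1)}\kkk_{\ell_1-1,\ell_2}$ is $o(r^{N/2})$ because $\kkk_{\ell_1-1,\ell_2}=O(r^{(N-1)/2})$, and likewise that the boundary terms $(j_1,j_2)=(1,0)$ and $(\ell_1-1,\ell_2)$ of the double sum, which are $\Theta(1)\cdot O(r^{(N-1)/2})$, are harmless because ``$0+(N-1)/2\ge N/2$ fails when $N=1$ only.'' The inequality is backwards: $(N-1)/2<N/2$ for \emph{every} $N$, and since $r\to0$ a \emph{smaller} exponent means a \emph{larger} quantity, so $O(r^{(N-1)/2})$ is not $o(r^{N/2})$ — it is in general of strictly larger order than the target $r^{N/2}$ (e.g.\ for odd $N$, where $\vpx_{\ell_1-1,\ell_2}\ne0$ in general, $\kkk_{\ell_1-1,\ell_2}\asymp r^{(N-1)/2}\gg r^{N/2}$). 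Neither the linear terms nor those boundary terms of the quadratic sum are individually negligible.

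The missing idea — and the heart of the paper's proof — is a cancellation between precisely these two groups. The two equal boundary terms $(j_1,j_2)=(1,0)$ and $(\ell_1-1,\ell_2)$ sum to $2^{-1/2}\ell_1\kkk_1(\ga)\kkk_{\ell_1-1,\ell_2}=-2^{-1/2}\ell_1[1+O(r)]\kkk_{\ell_1-1,\ell_2}$ by \eqref{loc}, while the Gamma ratio in the second term of \eqref{kkkll} is $1+O(r)$ (its two arguments differ by $\ga_1=O(r)$ and tend to $N/2-1\ge\frac12$, away from the poles), so that term is $+2^{-1/2}\ell_1[1+O(r)]\kkk_{\ell_1-1,\ell_2}$. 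The leading parts cancel, and the remainder is $O(r)\cdot\kkk_{\ell_1-1,\ell_2}=O\bigl(r^{1+(N-1)/2}\bigr)=o(r^{N/2})$; the same happens for $(0,1)$, $(\ell_1,\ell_2-1)$ and the third term. Without exhibiting this cancellation your inductive step does not close, so the proof as written has a real gap rather than a mere bookkeeping omission. The remainder of your argument (the limit of the interior sum giving \eqref{lob3}, and the parity induction for \eqref{lobu}) is correct once this is repaired.
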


\begin{proof}
 We define for convenience $\vpx_{\ell_1,\ell_2}:=0$ for
 $\ell_1+\ell_2\le1$, and note 
  that then \eqref{lob0}--\eqref{lob1} hold, but not \eqref{lobb}.

For $\ell_1+\ell_2=2$, \eqref{lob20}--\eqref{lob11} hold by \refL{LOA}.

It remains to treat the case $\ell_1+\ell_2\ge3$, where we use induction on
$\ell_1+\ell_2$. 
We use \eqref{kkkll}. In the double sum there, the two terms with
$(j_1,j_2)=(1,0)$ and $(j_1, j_2) = (\ell_1-1,\ell_2)$ are equal, and together,
using \eqref{loc}, %and the inductive hypothesis
sum to
\begin{align}\label{foa}
  2\qqw \ell_1\kkk_{1}(\ga)\kkk_{\ell_1-1,\ell_2}(\ga,\gb)
=
-  2\qqw \ell_1\bigsqpar{1+O(r)}\kkk_{\ell_1-1,\ell_2}(\ga,\gb).
\end{align}
On the other hand, 
%the last but one term in \eqref{kkkll} 
the second of the three terms on the right in~\eqref{kkkll}
is
\begin{align}\label{fob}
  2\qqw &\ell_1
\frac{\gG((\ell_1+\ell_2-2)/2+\ell_1\ga+\ell_2\gb)}
{\gG((\ell_1+\ell_2-2)/2+(\ell_1-1)\ga+\ell_2\gb)}
\kkk_{\ell_1-1,\ell_2}(\ga,\gb)
\notag\\&=
  2\qqw\ell_1
\bigsqpar{1+O(r)}
\kkk_{\ell_1-1,\ell_2}(\ga,\gb)
.\end{align}
Hence the main terms of the contributions \eqref{foa} and \eqref{fob} cancel, 
and 
together, using the induction hypothesis, \eqref{foa} and~\eqref{fob} sum to
\begin{align}\label{foc}
  O(r)\cdot \kkk_{\ell_1-1,\ell_2}(\ga,\gb)
=O\bigpar{r^{1+(\ell_1-1+\ell_2)/2}}
=o\bigpar{r^{(\ell_1+\ell_2)/2}}
.\end{align}
Similarly, the terms in the double sum with
$(j_1,j_2)=(0,1)$ and $(\ell_1,\ell_2-1)$ together
cancel the last term in \eqref{kkkll} up to another error 
$o\bigpar{r^{(\ell_1+\ell_2)/2}}$.

This shows that \eqref{kkkll} yields
\begin{align}\label{kkkllq}
  \kkk_{\ell_1,\ell_2}(\ga,\gb)&
=
2^{-3/2} \sum_{2\le j_1+j_2\le \ell_1+\ell_2-2} 
\binom{\ell_1}{j_1}\binom{\ell_2}{j_2}\kkk_{j_1,j_2}\kkk_{\ell_1-j_1,\ell_2-j_2}
+o\bigpar{r^{(\ell_1+\ell_2)/2}},
\end{align}
and \eqref{lobb} together with \eqref{lob3} follows by the induction
hypothesis,
noting that the terms in \eqref{lob3} with $j_1+j_2\le1$ or $j_1+j_2\ge
\ell_1+\ell_2-1$ vanish by \eqref{lob0}--\eqref{lob1}.

The conclusion~\eqref{lobu} follows from~\eqref{lob1} and~\eqref{lob3} by induction, since each of the terms in~\eqref{lob3} vanishes.
\end{proof}

Recall that if $\ell=2k$ is an even integer, then
\begin{align}\label{!!}
  (\ell-1)!!=(2k-1)!!:=1\cdot3\cdot\dotsm\cdot(2k-1)=\frac{(2k)!}{2^kk!}
=2^k\frac{\gG(k+\frac12)}{\gG(\frac12)}
.\end{align}

\begin{lemma}
  \label{LOC}
Let $\ga=re^{\ii\gthx}$ and $\gb=re^{\ii\gthy}$ with
$\gthx,\gthy\in(-\frac{\pi}2,\frac{\pi}2)$,
and let $r\to0$ with $\gthx,\gthy$ fixed.
Let $t$ and $u$ be fixed complex numbers.
Then,
for every  $\ell\ge1$,
\begin{align}\label{locc}
r^{-\ell/2} 
\E\bigpar{t Y(\ga)+u Y(\gb)}^\ell
\to
  \begin{cases}
    0, & \text{$\ell$ is odd},\\
(\ell-1)!!\, \gS^{\ell/2}, & \text{$\ell$ is even},
  \end{cases}
\end{align}
where 
\begin{align}\label{loca}
\gS & 
=2(1-\log2)\Bigpar{t^2 e^{\ii\gthx} + u^2e^{\ii\gthy}
 +4tu \frac{e^{\ii(\gth_1+\gth_2)}}{e^{\ii\gth_1}+e^{\ii\gth_2}}
}
.\end{align}
\end{lemma}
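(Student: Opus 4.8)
The plan is to prove Lemma~\ref{LOC} by expanding the $\ell$th power multinomially and applying Lemma~\ref{LOB} term by term. First I would write
\begin{align*}
\E\bigpar{t Y(\ga)+u Y(\gb)}^\ell
=\sum_{\ell_1+\ell_2=\ell}\binom{\ell}{\ell_1}t^{\ell_1}u^{\ell_2}
\E\bigsqpar{Y(\ga)^{\ell_1}Y(\gb)^{\ell_2}}
=\sum_{\ell_1+\ell_2=\ell}\binom{\ell}{\ell_1}t^{\ell_1}u^{\ell_2}
\kk_{\ell_1,\ell_2}(\ga,\gb),
\end{align*}
using \eqref{tmix}. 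By \eqref{varin}, $\kk_{\ell_1,\ell_2}=\sqrt{2\pi}\,\gG(\ell_1\ga_1'+\ell_2\ga_2'-\tfrac12)\qw\kkk_{\ell_1,\ell_2}(\ga,\gb)$, and since $\ell_1\ga_1'+\ell_2\ga_2'=\tfrac{\ell}{2}+O(r)\to\tfrac{\ell}{2}$, the Gamma factor tends to $\gG(\tfrac{\ell-1}{2})$; in particular it stays bounded and bounded away from zero (note $\gG$ has no pole at positive half-integers or positive integers, and $\tfrac{\ell-1}{2}\ge0$, with the value at $0$ being the pole---so one must be slightly careful when $\ell=1$, but there the odd case gives limit $0$ anyway and $\kkk_{1,0},\kkk_{0,1}$ are $O(r)$ by \eqref{loc}, so the whole expression is $O(r^{1/2})=o(r^{1/2})$ is false---rather it is $\Theta(r^{1/2})\cdot O(1)$; I will handle $\ell=1$ directly from \refL{LOA}).

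The key point is the scaling: by Lemma~\ref{LOB}, $\kkk_{\ell_1,\ell_2}(\ga,\gb)=r^{(\ell_1+\ell_2)/2}\vpx_{\ell_1,\ell_2}(1+o(1))=r^{\ell/2}\vpx_{\ell_1,\ell_2}(1+o(1))$. Hence
\begin{align*}
r^{-\ell/2}\E\bigpar{t Y(\ga)+u Y(\gb)}^\ell
\to \frac{\sqrt{2\pi}}{\gG(\tfrac{\ell-1}{2})}
\sum_{\ell_1+\ell_2=\ell}\binom{\ell}{\ell_1}t^{\ell_1}u^{\ell_2}\vpx_{\ell_1,\ell_2}.
\end{align*}
When $\ell$ is odd this is $0$ by \eqref{lobu}. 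When $\ell=2k$ is even, I must show the right-hand side equals $(\ell-1)!!\,\gS^{\ell/2}$ with $\gS$ as in \eqref{loca}. For this I would use the recursion \eqref{lob3} to identify the generating function of the $\vpx$'s: setting $h(t,u):=\sum_{\ell_1,\ell_2}\vpx_{\ell_1,\ell_2}\frac{t^{\ell_1}u^{\ell_2}}{\ell_1!\,\ell_2!}$, the recursion \eqref{lob3} (valid for $\ell_1+\ell_2\ge3$), together with the explicit low-order values \eqref{lob0}--\eqref{lob11}, says precisely that $h$ satisfies $h=\tfrac{1}{\sqrt2}\bigpar{\tfrac12 h^2}+(\text{quadratic part})$, i.e. $h-\tfrac{1}{2\sqrt2}h^2$ equals its own degree-$\le2$ Taylor polynomial $\tfrac12\bigpar{\vpx_{2,0}t^2+2\vpx_{1,1}tu+\vpx_{0,2}u^2}=\tfrac{1}{2\sqrt2}\gS$ (checking the constant $1/\sqrt2$ against \eqref{lob20}--\eqref{lob11}). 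Solving the quadratic gives $h(t,u)=\sqrt2\bigpar{1-\sqrt{1-\gS/\sqrt2}}=\tfrac{1}{\sqrt2}\gS\cdot\tfrac{1}{2}\cdot(\dots)$; more usefully, the even part extracts as $\sum_k \vpx$-coefficients $=\ (\ell-1)!!\,\gS^{k}$ after multiplying by $\ell!$. Concretely, $\sum_{\ell_1+\ell_2=\ell}\binom{\ell}{\ell_1}\vpx_{\ell_1,\ell_2}t^{\ell_1}u^{\ell_2}=\ell!\,[\text{coeff of degree }\ell\text{ in }h]$, and I claim $h$ is the unique solution with $h(0,0)=0$ of $\tfrac{1}{2\sqrt2}h^2-h+\tfrac{1}{2\sqrt2}\gS=0$, whence by Lagrange inversion or by direct expansion the degree-$2k$ part of $h$ is $\tfrac{(2k-1)!!}{(2k)!}\gS^k\cdot\sqrt2^{\,?}$ --- the normalizing constant is pinned down by comparing with $\vpx_{2,0},\vpx_{0,2},\vpx_{1,1}$ and with the prefactor $\sqrt{2\pi}/\gG(\tfrac{\ell-1}{2})$ via \eqref{!!}, which gives exactly $(\ell-1)!!$.

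The main obstacle is bookkeeping the constants: one must verify that the recursion \eqref{lob3} is \emph{exactly} the Cauchy-product recursion for the coefficients of a function satisfying $h=\tfrac{1}{\sqrt2}\,\tfrac12 h^2 + (\text{quadratic})$, including that there are no extra boundary terms (the $j_1+j_2\le1$ and $j_1+j_2\ge\ell-1$ terms vanish by \eqref{lob0}--\eqref{lob1}, which is already noted in the proof of \refL{LOB}), and then to check numerically that the scalar identity $(\ell-1)!!=\sqrt{2\pi}/\gG(\tfrac{\ell-1}{2})\cdot(\text{coefficient of }\gS^{\ell/2}\text{ in the relevant series})$ holds using \eqref{!!}: indeed $(2k-1)!!=2^k\gG(k+\tfrac12)/\gG(\tfrac12)=2^k\gG(k+\tfrac12)/\sqrt\pi$, and $\gG(\tfrac{\ell-1}{2})=\gG(k-\tfrac12)$, so the ratio $\sqrt{2\pi}/\gG(k-\tfrac12)$ must combine with the series coefficient $\tfrac12\cdot 2^{-k}\gG(k-\tfrac12)/\gG(\tfrac12)\cdot(\text{something})$ to give $2^k\gG(k+\tfrac12)/\sqrt\pi$; tracking the $2^{\pm1/2}$ factors from the $2^{-3/2}$ in \eqref{lob3} against the $\sqrt2$ in \eqref{lob20}--\eqref{lob11} is where errors are most likely. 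An alternative, cleaner route that sidesteps the generating-function algebra: observe that by Theorem~\ref{TXmom}/\ref{Tmix} the joint moments of $(r^{-1/2}Y(\ga),r^{-1/2}Y(\gb))$ converge, and since they converge to the moments of \emph{some} limit whose mixed moments $r^{-(\ell_1+\ell_2)/2}\kk_{\ell_1,\ell_2}\to\sqrt{2\pi}\gG(\tfrac{\ell-1}{2})\qw\vpx_{\ell_1,\ell_2}$ are determined (via \eqref{lob3}) by a recursion of exactly the Wick/Isserlis form for a centered Gaussian pair with covariance read off from $\vpx_{2,0},\vpx_{1,1},\vpx_{0,2}$, the limit is forced to be jointly complex Gaussian; then \eqref{locc}--\eqref{loca} is just the statement that for a centered complex Gaussian pair $(\zeta_1,\zeta_2)$, $\E(t\zeta_1+u\zeta_2)^{2k}=(2k-1)!!\bigpar{\E(t\zeta_1+u\zeta_2)^2}^k$ with $\E(t\zeta_1+u\zeta_2)^2=t^2\E\zeta_1^2+2tu\E(\zeta_1\zeta_2)+u^2\E\zeta_2^2=\gS$. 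I would present the proof this second way, invoking the Wick formula and reading off $\E\zeta_1^2=\sqrt2\,\vpx_{2,0}/?$ --- again the only delicate part is normalizing so that $\E(t\zeta_1+u\zeta_2)^2$ comes out to the $\gS$ in \eqref{loca}, which one fixes by the $\ell=2$ case \eqref{lob20}--\eqref{lob11} combined with $\gG(\tfrac12)=\sqrt\pi$.
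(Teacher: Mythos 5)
Your first route is essentially the paper's proof: expand by the binomial theorem, invoke \eqref{tmix}, \eqref{varin} and \refL{LOB} termwise, handle $\ell=1$ separately (the paper does this via \eqref{kk1}, using $1/\gG(0)=0$), and then solve the quadratic recursion inherited from \eqref{lob3}. The paper packages that last step as a Catalan-number recursion for $d_\ell:=2^{-3/2}\xxi_{2\ell}(t,u)/(2\ell)!$ rather than as a generating-function quadratic, but these are the same computation, and the duplication-formula identity \eqref{!!} then produces $(\ell-1)!!$. However, you leave precisely this decisive step as "bookkeeping" with unresolved normalizing constants, and the shortcut you say you would actually present contains a genuine gap.

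The gap is this: the recursion \eqref{lob3} is \emph{not} of Wick/Isserlis form, so you cannot conclude by inspection that the limit is jointly complex Gaussian. The Isserlis recursion for a centered jointly normal pair is \emph{linear} in the moment array, e.g.
\begin{align*}
\E\bigsqpar{\zeta_1^{\ell_1}\zeta_2^{\ell_2}}=(\ell_1-1)\,\E[\zeta_1^2]\,\E\bigsqpar{\zeta_1^{\ell_1-2}\zeta_2^{\ell_2}}+\ell_2\,\E[\zeta_1\zeta_2]\,\E\bigsqpar{\zeta_1^{\ell_1-1}\zeta_2^{\ell_2-1}},
\end{align*}
whereas \eqref{lob3} is a \emph{quadratic} convolution of the whole family $(\vpx_{j_1,j_2})$ with a fixed constant $2^{-3/2}$. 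Moreover, the quantities that converge to the limit moments are not the $\vpx_{\ell_1,\ell_2}$ but $\sqrt{2\pi}\,\vpx_{\ell_1,\ell_2}/\gG((\ell_1+\ell_2-1)/2)$ (see \eqref{ick1}); because of this $\ell$-dependent Gamma factor, the Gaussian moments do not satisfy \eqref{lob3} and the $\vpx$'s do not satisfy Isserlis. (A quick sanity check: a one-variable Gaussian with $m_0$ replaced by $0$ has $\sum_j\binom{4}{j}m_jm_{4-j}=2m_4$ but $\sum_j\binom{6}{j}m_jm_{6-j}=6m_6$, so no fixed constant works.) Gaussianity of the limit is thus a \emph{conclusion} of solving the Catalan recursion and applying the duplication formula, as in \eqref{ick6b}--\eqref{mars}, not something that can be read off from the shape of \eqref{lob3}; your "cleaner route" would have to fall back on exactly the computation you hoped to avoid.
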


\begin{remark}\label{RC}
If $\gS\ge0$, then the limits in \eqref{locc} are the moments of a normal
distribution $N(0,\gS)$. 
Hence, if $tY(\ga)+uY(\gb)$ is a real random variable (and $\gS\neq0$), then
\refL{LOC} implies asymptotic normality by the method of moments.
However, in general, $tY(\ga)+uY(\gb)$ is a complex random variable
and $\gS$ is complex. 
Nevertheless, the \rhs{} can be interpreted as the moments of a complex normal
random variable, since the relation
$\E\zeta^{2\ell}=(2\ell-1)!!\,(\E\zeta^2)^\ell$ holds for arbitrary 
centered complex normal variables, 
see \eg{}
\cite[Theorem 1.28 and Section I.4]{SJIII}.
\end{remark}

\begin{proof}
\refT{Tmix} and \refL{LOB} imply that, if $\ell_1+\ell_2=\ell\ge2$, then
\begin{align}\label{ick1}
r^{-\ell/2} \E\bigsqpar{Y(\ga)^{\ell_1}Y(\gb)^{\ell_2}}
=r^{-\ell/2} \kk_{\ell_1,\ell_2}(\ga,\gb)
\to \frac{\sqrt{2\pi}}{\gG((\ell-1)/2)}\vpx_{\ell_1,\ell_2}.
\end{align}
For $\ell_1+\ell_2=1$, \eqref{lobb} does not hold, but a direct appeal to
\eqref{kk1} yields
\begin{align}\label{ick2}
 \E Y(\ga)=\frac{\gG(\ga-\frac12)}{\sqrt2\gG(\ga)}
= O(|\ga|)=O\bigpar{r}, 
\end{align}
and similarly $\E Y(\gb)=O(r)$; 
hence, \eqref{ick1} holds in the case $\ell=1$ too, with the limit 0.
(Recall that $1/\gG(0)=0$.)

By the binomial formula, 
  \begin{align}\label{ick3}
    \E(tY(\ga)+uY(\gb))^\ell
=\sum_{\ell_1+\ell_2=\ell}\binom{\ell}{\ell_1}t^{\ell_1}u^{\ell_2}
\kk_{\ell_1,\ell_2}(\ga,\gb) 
  \end{align}
which together with \eqref{ick1} yields, for every $\ell\ge1$,
\begin{align}\label{ick4}
  r^{-\ell/2}\E(tY(\ga)+uY(\gb))^\ell
&\to 
\sum_{\ell_1+\ell_2=\ell}\binom{\ell}{\ell_1}t^{\ell_1}u^{\ell_2}
\frac{\sqrt{2\pi}}{\gG((\ell-1)/2)}\vpx_{\ell_1,\ell_2}
\notag\\&
=
\frac{\sqrt{2\pi}}{\gG((\ell-1)/2)}\xxi_{\ell}(t,u)
,\end{align}
where we define
\begin{align}\label{ick5}
\xxi_\ell(t,u)
:=
\sum_{\ell_1+\ell_2=\ell}\binom{\ell}{\ell_1}t^{\ell_1}u^{\ell_2}
\vpx_{\ell_1,\ell_2}
.\end{align}

We have $\xxi_\ell(t,u)=0$ when $\ell$ is odd or $\ell=0$, by \eqref{ick5} 
together with
\eqref{lobu} and \eqref{lob0}. Hence \eqref{locc} for odd $\ell$ follows
from \eqref{ick4}.

Moreover,
if $\ell\ge3$, then \eqref{ick5} (thrice) and the recursion \eqref{lob3} imply
\begin{align}\label{ick6}
2^{3/2}
\xxi_\ell(t,u)
&=
\sum_{\ell_1+\ell_2=\ell}\,\sum_{j_1,j_2}
%\frac{\ell!}{j_1!\,(\ell_1-j_1)!,\j_2!\,(\ell_2-j_2)!}
\binom{\ell}{\ell_1}\binom{\ell_1}{j_1}\binom{\ell_2}{j_2}
t^{\ell_1}u^{\ell_2}
\vpx_{j_1,j_2}
\vpx_{\ell_1-j_1,\ell_2-j_2}
\notag\\&
=\sum_j \sum_{j_1+j_2=j}\,\sum_{\ell_1+\ell_2=\ell}
\binom{\ell}{j}\binom{j}{j_1}\binom{\ell-j}{\ell_1-j_1}
t^{\ell_1}u^{\ell_2}
\vpx_{j_1,j_2}
\vpx_{\ell_1-j_1,\ell_2-j_2}
\notag\\&
=\sum_j\binom\ell{j}\xxi_j(t,u)\xxi_{\ell-j}(t,u)
.\end{align}
Since $\xxi_\ell(t,u)=0$ when $\ell$ is odd or $\ell=0$,
\eqref{ick6} yields
\begin{align}\label{ick6b}
2^{3/2}
\xxi_{2\ell}(t,u)
=\sum_{j=1}^{\ell-1}\binom{2\ell}{2j}\xxi_{2j}(t,u)\xxi_{2(\ell-j)}(t,u)
.\end{align}
The recursion \eqref{ick6b} is easily
solved, by defining 
\begin{align}\label{ick7}
  d_\ell&:=2^{-3/2}\xxi_{2\ell}(t,u)/(2\ell)!,
\\
e_\ell&:=d_1^{-\ell}d_\ell.  \label{icke}
\end{align}
Then \eqref{ick6b} yields
\begin{align}\label{lriderec}
  d_\ell:=\sum_{j=1}^{\ell-1}d_jd_{\ell-j}&&&\text{and}&&
  e_\ell:=\sum_{j=1}^{\ell-1}e_je_{\ell-j},&&
\ell\ge2.
\end{align}
This is a version of the Catalan recursion, and since $e_1=1$, it is solved by
\begin{align}\label{lric}
  e_\ell = C_{\ell-1}=\frac{(2\ell-2)!}{(\ell-1)!\,\ell!},
\qquad \ell\ge1;
\end{align}
%
%The recursion \eqref{ick6b} is  similar to \eqref{lri2}. We now define,
%instead of \eqref{lrid},
%\begin{align}\label{ick7}
%  d_\ell:=2^{-3/2}\xxi_{2\ell}(t,u)/(2\ell)!.
%\end{align}
%With \eqref{lrie} as above, we again have \eqref{lriderec}--\eqref{lric},
and thus, by \eqref{ick7} and \eqref{icke},
\begin{align}\label{ick8}
  \xxi_{2\ell}(t,u)=2^{3/2}\frac{(2\ell)!\,(2\ell-2)!}{(\ell-1)!\,\ell!} d_1^\ell.
\end{align}
Hence, \eqref{ick4} yields
\begin{align}\label{mars}
    r^{-\ell}\E(tY(\ga)+uY(\gb))^{2\ell}
&\to 
\frac{\sqrt{2\pi}}{\gG(\ell-\frac12)}\xxi_{2\ell}(t,u)
=
\frac{4\sqrt{\pi}}{\gG(\ell-\frac12)}
\frac{(2\ell)!\,(2\ell-2)!}{(\ell-1)!\,\ell!} d_1^\ell
\notag\\&=
2^{2\ell}\frac{(2\ell)!}{\ell!}d_1^\ell
=(2\ell-1)!!\, (8d_1)^\ell
.\end{align}
This proves \eqref{locc} for even $\ell$ 
with, recalling \eqref{ick7} and \eqref{ick5},
\begin{align}
  \gS:=8d_1=\sqrt2\xxi_2(t,u)
=\sqrt2\bigpar{t^2\vpx_{2,0}+u^2\vpx_{0,2}+2tu\vpx_{1,1}}.
\end{align}
Finally, \eqref{loca} follows from \eqref{lob20}--\eqref{lob11}.
\end{proof}

\begin{proof}  [Proof of \refT{TD}]
We apply \refL{LOC} with  $\gb:=\bga$ and thus $\gthx=\gth$ and $\gthy=-\gth$.
Let $t\in\bbC$ and take
$u:=\bar t$. Then
$tY(\ga)+uY(\gb) = 2\Re\bigpar{tY(\ga)}$ is a real random variable, and thus
\eqref{locc} shows by  the method of moments that 
\begin{align}\label{luc}
 2 r\qqw \Re\bigpar{tY(\ga)} \dto N(0,\gS),
\end{align}
with $\gS=\gS(t)$ (now real) given by \eqref{loca}.
Since $t\in\bbC$ is arbitrary and
$\Re\bigpar{tY(\ga)}$ can be regarded as the (real) scalar
product of $\bar t$ and $Y(\ga)$ if we identify $\bbC$ and $\bbR^2$,
\eqref{luc} and the Cram\'er--Wold device
show that
\begin{align}\label{luca}
 2 r\qqw Y(\ga) \dto \zeta',
\end{align}
for some centered complex normal variable $\zeta'$.
Consequently,
\begin{align}\label{lucb}
 \ga\qqw Y(\ga)
=e^{-\ii\gth/2} r\qqw Y(\ga) \dto \zeta
:=\frac{e^{-\ii\gth/2}}2\zeta',
\end{align}
which proves \eqref{td1}.
Moreover, the argument above shows that
\eqref{luca} holds with all moments 
(including mixed moments with the complex conjugate),
and thus so does
\eqref{lucb}.
Taking $t=1$, $u=0$ and $\ell=2$ in \eqref{locc}--\eqref{loca} yields
\begin{align}
  \E\bigpar{\ga\qqw Y(\ga)}^2
=e^{-\ii\gth}r\qw  \E{Y(\ga)}^2 \to 2(1-\log2).
\end{align}
Similarly, by extracting the $tu$ terms in \eqref{locc} and \eqref{loca},
\begin{align}
  \E\bigabs{\ga\qqw Y(\ga)}^2
=r\qw  \E\bigabs{Y(\ga)}^2 \to 2(1-\log2)\frac{2}{e^{\ii\gth}+e^{-\ii\gth}}
= \frac{2(1-\log2)}{\cos\gth}.
\end{align}
This shows \eqref{td3}, and \eqref{td2} follows by elementary calculations.
\end{proof}

\section{The limit towards the imaginary axis}\label{Ait}

Let $\ga=a+\ii b \to\ii t$ in the right half-plane, \ie, with $a=\rga>0$.
The case $t=0$ is treated in \refApp{A0}; recall that then, 
if say $\ga$ is real for simplicity, $Y(\ga)\pto0$
and that  $a\qqw Y(\ga)$ converges in distribution
to a normal limit; see also \refR{R0} and \cite[Remark 3.6(e)]{FillK04}.

Assume in the sequel $t\neq0$.  In this case, we have instead
$|Y(\ga)|\pto\infty$, and we obtain a complex normal limit by the following
normalization. (Note that, unlike the case $t=0$ in \refT{TD}, here $\ga$ can
approach its limit $\ii t$ in any way, as long as $\rga>0$.)
\begin{theorem}\label{TRI}
Let $a\downto0$ and $b\to t\neq0$.
Then
  \begin{align}\label{ri}
    a\qq Y(a+\ii b) \dto \zeta,
  \end{align}
where $\zeta$ is a symmetric complex normal variable with 
\begin{align}\label{ri2}
    \E|\zeta|^2
=\frac{1}{2\sqrt\pi} \Re\frac{\gG(\ii t-\frac12)}{\gG(\ii t-1)}>0.
\end{align}
\end{theorem}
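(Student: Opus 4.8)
The plan is to prove \refT{TRI} by the method of moments, following very closely the proof of \refT{TD} in \refApp{A0}. The one structural difference is that $\ga$ now tends to a point $\ii t$ on the imaginary axis with $t\neq0$, so the ``generic'' gamma values that appear, such as $\gG(\ii t-\tfrac12)$ and $\gG(\ii t)$, are finite and nonzero, whereas the blow-up of the moments comes entirely from $\gG$-factors whose argument has \emph{real part} tending to a non-positive integer. Throughout I would work with the pair $\ga_1:=\ga=a+\ii b$ and $\ga_2:=\bga=a-\ii b$ (so $a=\rga\downto0$ while $b\to t$), apply \refT{Tmix} to the mixed moments $\kk_{\ell_1,\ell_2}(\ga,\bga)=\E\bigsqpar{Y(\ga)^{\ell_1}Y(\bga)^{\ell_2}}$, and analyze the recursion \eqref{kkkll} for the coefficients $\kkk_{\ell_1,\ell_2}$.

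The core is an asymptotic lemma, the analogue of \refL{LOB}: as $a\downto0$ with $b\to t\neq0$ fixed, for all $\ell_1,\ell_2\ge0$ with $\ell:=\ell_1+\ell_2\ge1$,
\begin{equation*}
a^{\ell/2}\,\E\bigsqpar{Y(\ga)^{\ell_1}Y(\bga)^{\ell_2}}\to
\begin{cases}
0,& \ell_1\neq\ell_2,\\[2pt]
\ell_1!\,c^{\ell_1},& \ell_1=\ell_2,
\end{cases}
\end{equation*}
where $c=\E|\zeta|^2$ is the constant in \eqref{ri2}. I would prove this by induction on $\ell$. The base cases are $\ell=1$, where $\E Y(\ga)=\kk_1(\ga)=O(1)$ by \eqref{kk1}; $\ell=2$ with $(\ell_1,\ell_2)=(2,0)$, where $\E Y(\ga)^2=\kk_2(\ga)=O(1)$ since every $\gG$-argument in \eqref{kk2} tends to a finite point; and $\ell=2$ with $\ell_1=\ell_2=1$, where \eqref{e|2|} (equivalently \eqref{kkk11}) writes $\E|Y(\ga)|^2=\kk_{1,1}(\ga,\bga)$ as a bounded term plus a term of exact order $a\qw$, so that $c:=\lim_{a\downto0}a\,\E|Y(\ga)|^2$ exists, is nonnegative (as $\E|Y(\ga)|^2\ge0$), and equals the positive value \eqref{ri2} by a direct computation with the reflection formula for $\gG$.

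For the induction step I would feed the hypothesis into \eqref{kkkll} and track the exponent of $a$ in each term. First one establishes the crude bound $\kkk_{\ell_1,\ell_2}(\ga,\bga)=O\bigpar{a^{-\min(\ell_1,\ell_2)}}$: the two ``linear'' terms carry $\gG$-ratios that stay bounded (for $\ell\ge3$ the arguments involved have limiting real part $\tfrac\ell2-1\ge\tfrac12>0$) and involve $\kkk_{\ell_1-1,\ell_2}$ or $\kkk_{\ell_1,\ell_2-1}$, which are $O\bigpar{a^{-\min(\ell_1,\ell_2)}}$; and in a quadratic term $\kkk_{j_1,j_2}\kkk_{\ell_1-j_1,\ell_2-j_2}$ the exponent $\min(j_1,j_2)+\min(\ell_1-j_1,\ell_2-j_2)$ is at most $\min(\ell_1,\ell_2)$, with equality only for a balanced split $j_1=j_2$. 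Since $\min(\ell_1,\ell_2)<\ell/2$ whenever $\ell_1\neq\ell_2$, this already gives the first line of the lemma. When $\ell_1=\ell_2=m$, only the balanced quadratic terms reach the full order $a^{-m}$ (the linear terms give at most $a^{-(m-1)}$), so, with $e_m:=\lim a^m\kkk_{m,m}(\ga,\bga)$, one obtains the Catalan-type recursion $e_m=2^{-3/2}\sum_{j=1}^{m-1}\binom mj^2 e_je_{m-j}$. Together with $\kk_{m,m}=\sqrt{2\pi}\,\gG\bigpar{2ma+m-\tfrac12}\qw\kkk_{m,m}$ from \eqref{tmix} and the gamma identity $\sum_{j=1}^{m-1}\binom mj\gG(j-\tfrac12)\gG(m-j-\tfrac12)=4\sqrt\pi\,\gG(m-\tfrac12)$ — a degenerate case of $\sum_j\binom mj\gG(j+p)\gG(m-j+q)=\gG(m+p+q)\gG(p)\gG(q)/\gG(p+q)$ at $p=q=-\tfrac12$, where the right-hand side vanishes — this yields by induction $d_m:=\lim a^m\,\E|Y(\ga)|^{2m}=m!\,c^m$, which completes the lemma.

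With the lemma in hand, \refT{TRI} follows exactly as \refT{TD} follows from \refL{LOC}. For any $s\in\bbC$ the random variable $\Re\bigpar{s\,a^{1/2}Y(\ga)}=\tfrac12 a^{1/2}\bigpar{sY(\ga)+\bar sY(\bga)}$ is real; expanding its $\ell$-th power and applying the lemma, only the balanced index survives the $a^{\ell/2}$ scaling, and one finds that its moments converge to those of $N\bigpar{0,\tfrac12|s|^2c}$. By the method of moments and the Cram\'er--Wold device, $a^{1/2}Y(\ga)\dto\zeta$ for a centered complex normal $\zeta$ whose limiting variance in direction $s$ depends on $s$ only through $|s|$; hence $\zeta$ is rotationally invariant, i.e.\ symmetric complex normal, and taking $s=1$ gives $\E|\zeta|^2=c$, which is \eqref{ri}--\eqref{ri2}. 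As anticipated in \refR{R0}, one can then add that, since $\P(\zeta=0)=0$, $|Y(a+\ii t)|=a^{-1/2}\bigabs{a^{1/2}Y(a+\ii t)}\pto\infty$ as $a\downto0$ for each fixed real $t\neq0$, so $Y$ and hence $\tY$ admit no analytic continuation to a neighbourhood of any $\ii t$ with $t$ a nonzero rational, and therefore the imaginary axis is a.s.\ a natural boundary for $Y$ and $\tY$. The step I expect to be the main obstacle is the bookkeeping just sketched for the induction: pinning down exactly which terms of \eqref{kkkll} are of leading order, proving the ``balanced split dominates'' claim together with the crude bound $\kkk_{\ell_1,\ell_2}=O(a^{-\min(\ell_1,\ell_2)})$, and verifying that the surviving coefficients obey the stated recursion — the last point resting on the gamma identity above.
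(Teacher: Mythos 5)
Your proposal follows essentially the same route as the paper's proof: the method of moments applied to the mixed moments $\kk_{\ell_1,\ell_2}(\ga,\bga)$ via \refT{Tmix}, an induction on $\ell_1+\ell_2$ through the recursion \eqref{kkkll} in which the only source of blow-up is the factor $\gG(\ell_1\ga_1'+\ell_2\ga_2'-1)=\gG(2a)\sim(2a)\qw$ at $(\ell_1,\ell_2)=(1,1)$, a Catalan-type recursion for the balanced limits, and identification of the limiting moments as those of a symmetric complex normal law. Your variations are all workable: the paper bookkeeps by proving $a^{(\ell_1+\ell_2)/2}\kkk_{\ell_1,\ell_2}\to\rho_{\ell_1,\ell_2}$ for every pair (with $\rho=0$ off the diagonal), where you instead use the crude bound $\kkk_{\ell_1,\ell_2}=O(a^{-\min(\ell_1,\ell_2)})$; the paper solves the limit recursion by the Catalan substitution plus the duplication formula, where you pass back to $\kk_{m,m}$ and invoke the degenerate Vandermonde identity $\sum_{j=1}^{m-1}\binom mj\gG(j-\tfrac12)\gG(m-j-\tfrac12)=4\sqrt\pi\,\gG(m-\tfrac12)$ (which is correct and does give $d_m=m!\,c^m$); and you finish with Cram\'er--Wold applied to $\Re(sY)$ where the paper cites the moment characterization of symmetric complex normal variables.

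Two points need attention. First, the strict positivity asserted in \eqref{ri2} is a genuine step that your phrase ``a direct computation with the reflection formula'' does not cover: nonnegativity is free from $\E|Y(\ga)|^2\ge0$, but to exclude that the real part vanishes the paper must, after the reflection formula reduces the question to whether $\gG(2-\ii t)/\gG(\tfrac32-\ii t)$ is real, establish the digamma estimate $0<\Im\psi(s+\ii t)<\pi/2$ for $s\ge1$ and $t>0$, so that the argument of that ratio lies strictly in $(0,\pi/4)$; you would need to supply this (or an equivalent) argument. Second, the limit you compute from \eqref{e|2|} is $\tfrac1{2\sqrt\pi}\Re\bigsqpar{\gG(\ii t-\tfrac12)/\gG(\ii t)}$, with $\gG(\ii t)$ rather than $\gG(\ii t-1)$ in the denominator; this agrees with the paper's own displayed computation of $\lim a\kkk_{1,1}$, so the denominators $\gG(\ii t-1)$ in \eqref{lri1} and \eqref{ri2} appear to be typos, and you should not try to force your (correct) constant to match the printed formula literally.
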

That $\zeta$ is symmetric complex normal
means that $\zeta \eqd \go\zeta$ for every complex
constant $\go$ with $|\go|=1$; equivalently, $\E\zeta=0$ and the real and
imaginary parts are independent and have the same variance.
(See \eg{} \cite[Proposition 1.31]{SJIII}.)

%The expression~\eqref{ri2} vanishes at, and is symmetric about, $t = 0$.  
%For $t \geq 0$, it appears to be strictly increasing, 
%strictly convex over $[0, c]$, and strictly concave over $[c, \infty)$, where
%$c \approx 0.36436$.

\begin{proof}
We use the method of moments, and argue similarly as for the related \refT{TD}.
Take $\ga_1:=a+\ii b$ and $\ga_2:=\overline{\ga_1}=a-\ii b$ in \refT{Tmix}.
We claim that, for any $\ell_1,\ell_2\ge0$,
\begin{align}\label{lri}
  a^{(\ell_1+\ell_2)/2}\kkk_{\ell_1,\ell_2}(\ga_1,\ga_2) \to \rho_{\ell_1,\ell_2},
\end{align}
where
\begin{align}\label{lri0}
  \rho_{\ell_1,\ell_2}&=0   \qquad\text{if $\ell_1\neq\ell_2$},
\\\label{lri1}
\rho_{1,1}&=\frac{1}{\sqrt{8\pi}}\Re\frac{\gG(\ii t-\frac12)}{\gG(\ii t-1)},
\\\label{lri2}
\rho_{\ell,\ell}&=2^{-3/2}\sum_{j=1}^{\ell-1}\binom{l}{j}^2\rho_{j,j}\rho_{\ell-j,\ell-j},
\qquad l\ge2.
\end{align}
We prove this using induction on $\ell_1+\ell_2$.
First, if $\ell_1+\ell_2=1$, so $(\ell_1,\ell_2)=(1,0)$ or $(0,1)$, then
\eqref{kkk1} shows that $\kkk_{\ell_1,\ell_2}$ is bounded (and converges) as
$\ga\to\ii t$, so \eqref{lri} holds with $\rho_{\ell_1,\ell_2}=0$ as stated
in \eqref{lri0}.

If $\ell_1+\ell_2\ge2$, we use \eqref{kkkll}. We have
\begin{align}
\ell_1\ga_1'+\ell_2\ga'_2-1 
\to \ell_1\bigpar{\ii t+\tfrac12} + \ell_2\bigpar{-\ii t+\tfrac12}-1
=\bigpar{\ell_1+\ell_2}/2-1 + \bigpar{\ell_1-\ell_2}\ii t.
\end{align}
If $\ell_1+\ell_2\ge3$, or if $\ell_1\neq \ell_2$, the limit is not a pole
of $\gG(z)$, and thus the factor $\gG\bigpar{\ell_1\ga'_1+\ell_2\ga'_2-1}=O(1)$;
hence, \eqref{kkkll} together with the induction hypothesis
yields  \eqref{lri} with \eqref{lri0} and \eqref{lri2}.

In the remaining case $\ell_1=\ell_2=1$,
$\gG\bigpar{\ell_1\ga'_1+\ell_2\ga'_2-1}=\gG(2a)\sim (2a)\qw$, and
\eqref{kkkll} yields, using \eqref{kkk1},
\begin{align}
  a\kkk_{1,1} 
= 2\qqw \frac{1}{2\gG(-\ii t)}\kkk_{0,1} + 2\qqw \frac{1}{2\gG(\ii t)}\kkk_{1,0} 
+o(1)
\to 2\qqw \Re \frac{\gG\bigpar{\ii t-\frac12}}{2\sqrt\pi\gG(\ii t)},
\end{align}
which verifies \eqref{lri} with \eqref{lri1}.

This proves \eqref{lri}--\eqref{lri2}. 
The recursion \eqref{lri2} is similar to \eqref{ick6b} and can be solved in
the same way.
We now define, 
instead of \eqref{ick7},
\begin{align}\label{lrid}
d_\ell&:=2^{-3/2}\rho_{\ell,\ell}/\ell!^2
%\\
%e_\ell&:=d_1^{-\ell}d_\ell  \label{lrie}
.\end{align}
With \eqref{icke} as above, we again have \eqref{lriderec}--\eqref{lric},
%
%Then \eqref{lri2} yields
%\begin{align}\label{lriderec}
%  d_\ell:=\sum_{j=1}^{\ell-1}d_jd_{\ell-j}&&&\text{and}&&
%  e_\ell:=\sum_{j=1}^{\ell-1}e_je_{\ell-j},&&
%\ell\ge2.
%\end{align}
%This is a version of the Catalan recursion, and since $e_1=1$, it is solved by
%\begin{align}\label{lric}
%  e_\ell = C_{\ell1}=\frac{(2\ell-2)!}{(\ell-1)!\,\ell!},
%\qquad \ell\ge1.
%\end{align}
Hence, using \eqref{icke} and \eqref{lrid},
\begin{align}\label{lril}
  \rho_{\ell,\ell}=2^{3/2}\ell!^2 d_1^\ell e_\ell
= 2^{3/2} \frac{\ell!\,(2\ell-2)!}{(\ell-1)!}d_1^\ell.
\end{align}
Finally, \eqref{tmix} and \eqref{lri} yield, using the duplication formula
for the Gamma function,
\begin{align}\label{lrimom}
  \E\bigsqpar{a^\ell|Y(\ga)|^{2\ell}}
%=a^\ell \frac{\sqrt{2\pi}}{\gG(\ell+2\ell a-\frac12)}\kkk_{\ell,\ell}
\to  \frac{\sqrt{2\pi}}{\gG(\ell-\frac12)}\rho_{\ell,\ell}
= 4\sqrt\pi\frac{\ell!\,\gG(2\ell-1)}{\gG(\ell-\frac12)\gG(\ell)}d_1^\ell
=2^{2\ell}d_1^\ell \ell!
=(4d_1)^\ell \ell!
,\end{align}
and, whenever $\ell_1\neq\ell_2$,
\begin{align}\label{lrineq}
    \E\bigsqpar{a^{(\ell_1+\ell_2)/2}Y(\ga)^{\ell_1}\overline{Y(\ga)}^{\ell_2}}\to0.
\end{align}
These moment limits are the moments of a symmetric complex normal variable
with
\begin{align}\label{rio}
  \E|\zeta|^2=4d_1 
%= 2\qq \rho_{1,1}
%=\frac{1}{2\sqrt\pi} \Re\frac{\gG(\ii b-\frac12)}{\gG(\ii b-1)}
,\end{align}
(See \eg{} \cite[Theorem 1.28]{SJIII}.)
Hence, \eqref{ri} follows by the method of moments, 
with \eqref{ri2} following by \eqref{rio}, \eqref{lrid}, and \eqref{lri1}.

It remains to prove that the expression in \eqref{ri2} in non-zero.
(It can obviously not be negative by the case $\ell=1$ in the argument above.)
In other words, we must show that $\gG(\ii t-\frac12)/\gG(\ii t-1)$ cannot be imaginary
when $t\neq0$.
To see this, we first use the reflection formula for the Gamma function
\cite[5.5.3]{NIST}
to obtain
\begin{align}\label{gg1}
  \frac{\gG(\ii t-\frac12)}{\gG(\ii t-1)}
=
  \frac{\gG(2-\ii t)\sin\xpar{(\ii t-1)\pi}}{\gG(\frac32-\ii t)
\sin\xpar{(\ii t-\frac12)\pi}}
=
\frac{\ii\sinh\xpar{\pi t}}{\cosh\xpar{\pi t}}\cdot
  \frac{\gG(2-\ii t)}{\gG(\frac32-\ii t)}
.\end{align}
Hence, it is enough to show that
$  \xfrac{\gG(2-\ii t)}{\gG(\frac32-\ii t)}$ is not real for $t\neq0$.
Since $(\log \gG(z))'=\gG'(z)/\gG(z)=\psi(z)$, we have
\begin{align}\label{gg2}
\arg \frac{\gG(2-\ii t)}{\gG(\frac32-\ii t)}
= \Im \log\frac{\gG(2-\ii t)}{\gG(\frac32-\ii t)}
=\Im \int_{3/2}^2\psi(s-\ii t)\dd s.
\end{align}
Moreover, 
see \cite[5.7.6]{NIST},
\begin{align}\label{gg3}
  \psi(s+\ii t)
=
-\gam + \sumko\Bigpar{\frac{1}{k+1}-\frac{1}{k+s+\ii t}},
\end{align}
and thus
\begin{align}\label{gg4}
\Im \psi(s+\ii t)
= 
- \sumko\Im\frac{1}{k+s+\ii t}
= 
 \sumko\frac{t}{(k+s)^2+t^2}
.\end{align}
Hence,
if $s\ge1$ and $t>0$, then 
\begin{align}\label{gg5}
0<\Im \psi(s+\ii t)
%=  \sumko\frac{t}{(k+s)^2+t^2}
< \int_{s-1}^\infty\frac{t}{x^2+t^2}\dd x
\le 
\int_{0}^\infty\frac{t}{x^2+t^2}\dd x
=\frac{\pi}2.
\end{align}
Consequently, if $t<0$, then \eqref{gg2} yields
$0 < \arg\bigpar{\gG(2-\ii t)/\gG(\frac{3}2-\ii t)} < \pi/4$,
and thus ${\gG(2-\ii t)/\gG(\frac{3}2-\ii t)}$ is not real.
The case $t>0$ follows by conjugation. 
As said above, using \eqref{gg1}, this completes the proof that
$\E|\zeta|^2>0$. 
\end{proof}

\begin{remark}
  A similar argument shows that if also $\ga'=a'+\ii b'\to \ii t'$, 
for some  $t'\notin\set{0,\pm t}$, then the covariances 
$\Cov\bigpar{Y(\ga),Y(\ga')}$ and 
$\Cov\bigpar{Y(\ga),\overline{Y(\ga')}}
=\Cov\bigpar{Y(\ga),Y(\bga')}$ are $O(1)$, and thus after normalization as
in \eqref{ri}, the covariances tend to 0.
It follows that we have joint convergence in \eqref{ri} with independent
complex normal limits,
for any number of
$\ga_k=a_k+\ii b_k\to\ii t_k$ with $t_k>0$.
We thus find as limits an uncountable family of
independent
complex normal variables.
\end{remark}

As a corollary to \refT{TRI} we see that $|Y(\ga)|\pto\infty$ as $\ga\to\ii
t$, with $t\neq0$.
\begin{problem}\label{Pitoo}
For $t\neq0$,   does $|Y(\ga)|\asto\infty$ as $\ga\to\ii t$?
\end{problem}

Nevertheless, the divergence in probability is enough to show the following.

\begin{corollary}\label{Cit}
Almost surely,  the imaginary axis is a natural boundary for the analytic functions
$Y(\cdot)$ and $\tY(\cdot)$. 
\end{corollary}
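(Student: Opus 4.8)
The plan is to derive Corollary~\ref{Cit} from \refT{TRI} in the way announced just before it: since $a\qq Y(a+\ii b)$ has a non‑degenerate complex normal limit as $a\downto0$, the function $Y$ cannot be analytically continued across any fixed point $\ii t_0$ of the imaginary axis (with $t_0\neq0$) off a null event, and since this holds simultaneously for a countable dense set of such $t_0$, a standard topological argument promotes this to: almost surely every point of the imaginary axis is a singular point of $Y$.

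First I would fix $t_0\in\bbR\setminus\set0$ and show that
\begin{equation*}
\P\bigpar{Y\text{ admits an analytic continuation to a neighbourhood of }\ii t_0}=0 .
\end{equation*}
If $Y$ does continue past $\ii t_0$, then $Y(a+\ii t_0)\to Y(\ii t_0)$, a finite value, as $a\downto0$, and hence $a\qq Y(a+\ii t_0)\to0$. So it suffices to show that the (measurable) event $E:=\set{a\qq Y(a+\ii t_0)\to0\text{ as }a\downto0}$ has probability $0$. By \refT{TRI} with $b\equiv t_0$ we have $a\qq Y(a+\ii t_0)\dto\zeta$, where $\zeta$ is a symmetric complex normal variable with $\E|\zeta|^2>0$; in particular $\zeta$ has a density, so $\P(\zeta=0)=0$. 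Choose any sequence $a_n\downto0$ and set $X_n:=a_n\qq Y(a_n+\ii t_0)$. For each $\gd>0$, on $E$ we have $|X_n|\le\gd$ eventually, so by dominated convergence $\P(|X_n|>\gd,E)\to0$, whence $\P(|X_n|\le\gd)\ge\P(E)-o(1)$; on the other hand $\set{|z|\le\gd}$ is closed, so by the portmanteau theorem $\limsup_n\P(|X_n|\le\gd)\le\P(|\zeta|\le\gd)$. Combining and letting $\gd\downto0$ gives $\P(E)\le\P(\zeta=0)=0$, as claimed. (Joint measurability of $(a,\go)\mapsto Y(a+\ii t_0)(\go)$ and measurability of $E$ are routine, since $Y$ is continuous in $a$ and one may restrict to rational $a$.)

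Next I would globalise. Let $T$ be a countable dense subset of $\bbR\setminus\set0$, e.g.\ the nonzero rationals. By the previous step and countable subadditivity, almost surely $Y$ has no analytic continuation to a neighbourhood of $\ii t_0$ for any $t_0\in T$. Now invoke the standard fact that the set of regular points of an analytic function on the boundary of its domain is relatively open: if $Y$ continues to a disc about a boundary point $z_0$, it continues to a disc about every nearby boundary point. Hence the set of singular points of $Y$ on the imaginary axis is relatively closed, and almost surely it contains the dense set $\set{\ii t_0:t_0\in T}$; therefore it is the whole imaginary axis, which is precisely the assertion that the imaginary axis is a natural boundary for $Y$. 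Finally, $\tY(\ga)=Y(\ga)-\frac1{\sqrt2}\gG(\ga-\tfrac12)/\gG(\ga)$, and the deterministic function $\ga\mapsto\gG(\ga-\tfrac12)/\gG(\ga)$ is analytic in a neighbourhood of every point of the imaginary axis, so $Y$ continues past $\ii t_0$ if and only if $\tY$ does; the statement for $\tY$ follows.

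The argument is essentially routine once \refT{TRI} is available. The one step needing a little care is the passage from convergence in distribution to a non‑degenerate limit to the almost‑sure impossibility of $a\qq Y(a+\ii t_0)\to0$ along $a\downto0$; this is exactly the portmanteau estimate above, and it is where the hypothesis $\E|\zeta|^2>0$ (equivalently $\P(\zeta=0)=0$) is used. The only other ingredient is the topological observation that regular boundary points form a relatively open set, which turns density of the singular set on the axis into the full axis; I do not expect any genuine obstacle there.
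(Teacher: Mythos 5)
Your proposal is correct and follows essentially the same route as the paper: apply \refT{TRI} at each fixed $\ii t_0$ with $t_0\neq0$ to rule out analytic continuation there almost surely, then use countability of the rationals together with the openness of the set of regular boundary points, and handle $\tY$ via \eqref{Y}. The only (immaterial) difference is at the single-point step, where the paper upgrades $|Y(s+\ii t)|\pto\infty$ to almost sure divergence along a subsequence, while you run a portmanteau estimate against the event $\set{a\qq Y(a+\ii t_0)\to0}$; both are standard and equivalent in effect.
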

\begin{proof}
  Let $t\neq0$. Then \refT{TRI} implies that
$|Y(s+\ii t)|\pto\infty$ as $s\downto0$. Hence, there exists a sequence
$s_n\to0$ such that $|Y(s_n+\ii t)|\to\infty$ a.s.
In particular, \as{} $Y(\ga)$ cannot be extended analytically to a
neighbourhood of $\ii t$. 

Almost surely, this holds for every rational $t\neq0$, and thus $Y(\cdot)$ cannot be
extended analytically across the imaginary axis at any point.
The same holds for $\tY(\cdot)$ by \eqref{Y}.
\end{proof}

\newcommand\AAP{\emph{Adv. Appl. Probab.} }
\newcommand\JAP{\emph{J. Appl. Probab.} }
\newcommand\JAMS{\emph{J. \AMS} }
\newcommand\MAMS{\emph{Memoirs \AMS} }
\newcommand\PAMS{\emph{Proc. \AMS} }
\newcommand\TAMS{\emph{Trans. \AMS} }
\newcommand\AnnMS{\emph{Ann. Math. Statist.} }
\newcommand\AnnPr{\emph{Ann. Probab.} }
\newcommand\CPC{\emph{Combin. Probab. Comput.} }
\newcommand\JMAA{\emph{J. Math. Anal. Appl.} }
\newcommand\RSA{\emph{Random Struct. Alg.} }
\newcommand\ZW{\emph{Z. Wahrsch. Verw. Gebiete} }
\newcommand\DMTCS{\jour{Discr. Math. Theor. Comput. Sci.} }

\newcommand\AMS{Amer. Math. Soc.}
\newcommand\Springer{Springer-Verlag}
\newcommand\Wiley{Wiley}

\newcommand\vol{\textbf}
\newcommand\jour{\emph}
\newcommand\book{\emph}
\newcommand\inbook{\emph}
\def\no#1#2,{\unskip#2, no. #1,} %(typeset after year) 
\newcommand\toappear{\unskip, to appear}

\newcommand\arxiv[1]{\texttt{arXiv:#1}}
\newcommand\arXiv{\arxiv}
\newcommand\xand{\& }

\end{document}